\newcommand{\EE}{{\mathbb E}}
\newcommand{\NN}{{\mathbb N}}
\newcommand{\PP}{{\mathbb P}}
\newcommand{\RR}{{\mathbb R}}
\newcommand{\cadlag}{}
\def\cadlag/{c\`adl\`ag}
\newcommand{\abs}[1]{ \left| #1 \right|}
\newcommand{\del}{\partial}
\renewcommand{\div}{\on{div}}
\newcommand{\eps}{\varepsilon}
\newcommand{\Id}{\on{Id}}
\newcommand{\ind}{\mathbf{1}}
\newcommand{\loc}{\mathrm{loc}}
\newcommand{\norm}[1]{ \left\| #1 \right\| }
\newcommand{\oline}[1]{\overline{#1}}
\newcommand{\oo}{\infty}
\newcommand{\sgn}{\on{sgn}}
\newcommand{\supp}{\on{supp }}
\newcommand{\Sym}{\mathrm{Sym}}
\newcommand{\tr}{\on{tr}}
\newcommand{\uline}[1]{\underline{#1}}
\newcommand{\w}{\mathrm{w}}
\newcommand{\ws}{{\mathrm{w}\text{-}\star}}
\DeclareMathOperator*{\esssup}{ess\,sup}
\DeclareMathOperator*{\essinf}{ess\,inf}
\DeclareMathOperator*{\argmax}{arg\,max}
\DeclareMathOperator*{\argmin}{arg\,min}
\newcommand{\mcl}{\mathcal}
\newcommand{\mbb}{\mathbb}
\newcommand{\on}{\operatorname}
\newtheorem{lemma}{Lemma}
\newtheorem{proposition}{Proposition}
\newtheorem{theorem}{Theorem}
\newtheorem{corollary}{Corollary}
\newtheorem{remark}{Remark}
\theoremstyle{definition}
\newtheorem{definition}{Definition}
\numberwithin{equation}{section}
\numberwithin{lemma}{section}
\numberwithin{proposition}{section}
\numberwithin{theorem}{section}
\numberwithin{corollary}{section}
\numberwithin{definition}{section}
\numberwithin{remark}{section}
\begin{document}
\title{Transport equations and flows with one-sided Lipschitz velocity fields}
\author{Pierre-Louis Lions}
\affil{Universit\'e Paris-Dauphine and Coll\`ege de France\\
\href{lions@ceremade.dauphine.fr}{\nolinkurl{lions@ceremade.dauphine.fr}}}
\author{Benjamin Seeger
}
\affil{University of Texas at Austin\\ 
\href{mailto:seeger@math.utexas.edu}{\nolinkurl{seeger@math.utexas.edu}}\\
{\normalsize Partially supported by the National Science Foundation under award number DMS-1840314}
}

\maketitle

\begin{abstract}
	We study first- and second-order linear transport equations, as well as ODE and SDE flows, with velocity fields satisfying a one-sided Lipschitz condition. Depending on the time direction, the flows are either compressive or expansive. In the compressive regime, we characterize the stable continuous distributional solutions of both the first and second-order nonconservative transport equations as the unique viscosity solution. Our results in the expansive regime complement the theory of Bouchut, James, and Mancini \cite{BJM}, and we provide a complete theory for both the conservative and nonconservative equations in Lebesgue spaces, as well as proving the existence, uniqueness, and stability of the regular Lagrangian ODE flow. We also provide analogous results in this context for second order equations and SDEs with degenerate noise coefficients that are constant in the spatial variable.
\end{abstract}

\tableofcontents

\section{Introduction}

For a fixed, finite time horizon $T > 0$ and a velocity field $b: [0,T] \times \RR^d \to \RR^d$, we study the linear transport equation
\begin{equation}\label{TE:intro}
	\del_t u + b(t,x) \cdot \nabla u = 0 \quad \text{in } [0,T] \times \RR^d,
\end{equation}
along with the dual, continuity equation
\begin{equation}\label{CE:intro}
	\del_t f + \div( b(t,x) f) = 0 \quad \text{in } [0,T] \times \RR^d, 
\end{equation}
and the associated ordinary differential equation (ODE) flow
\begin{equation}\label{ODE:intro}
	\del_t \phi_{t,s}(x) = b(t,\phi_{t,s}(x)), \quad (s,t,x) \in [0,T] \times [0,T] \times \RR^d, \quad \phi_{s,s} = \Id.
\end{equation}
The goal of the paper is to analyze the three problems, and the relations between them, for vector fields $b$ satisfying the one-sided Lipschitz condition
\begin{equation}\label{intro:monotone}
	\left\{
	\begin{split}
	&(b(t,x) - b(t,y)) \cdot (x-y) \ge -C(t)|x-y|^2 \quad \text{for a.e. } (t,x,y) \in [0,T] \times \RR^d \times \RR^d\\
	&\text{for some nonnegative $C \in L^1([0,T])$}.
	\end{split}
	\right.
\end{equation}

When $b$ is Lipschitz continuous in the space variable, the ODE flow \eqref{ODE:intro} admits a unique global solution, and, through the method of characteristics, \eqref{TE:intro} and \eqref{CE:intro} are uniquely solved for any given smooth initial or terminal data. Moreover, the flow is a diffeomorphism, and therefore the solution operators for either the initial value problem (IVP) or terminal value problem (TVP) for \eqref{TE:intro} and \eqref{CE:intro} are continuous on $L^p_\loc$ for any $p \in [1,\oo]$.

Under the assumption \eqref{intro:monotone}, the time direction plays a nontrivial role, and there is a fundamental difference between the solvability of the flow \eqref{ODE:intro} forward versus backward in time. Indeed, $b$ need not even be continuous, and \eqref{intro:monotone} is equivalent to
\[
	\frac{\nabla b(t,\cdot) + \nabla b(t,\cdot)^T}{2} \ge -C(t)\Id \quad \text{in the sense of distributions.}
\]
In particular, the distribution $\div b$ is a signed measure that is bounded from below, but not in general absolutely continuous with respect to Lebesgue measure. Thus, when $t < s$, the flow \eqref{ODE:intro} is expected to concentrate at sets of Lebesgue measure zero, while the formation of vacuum is witnessed for $t > s$. 

A general study of transport equations and ODEs with irregular velocity fields was initiated by DiPerna and the first author \cite{DL89}, who introduced the notion of renormalized solutions to prove the well-posedness for \eqref{TE:intro} and \eqref{CE:intro} and the almost-everywhere solvability of the flow \eqref{ODE:intro} for $b$ with Sobolev regularity. The DiPerna-Lions theory was extended to equations where only $\mathrm{Sym}(\nabla b) \in L^1$, to Vlasov equations with $BV_\loc$ velocity fields \cite{B_Vlasov_01}, and to two-dimensional problems with a Hamiltonian structure \cite{ABC_09,ABC_13,ABC_14,BD_01,Hauray_03}. Using deep results from geometric measure theory, the renormalization property was extended to the very general case where $b \in BV_\loc$ and $\div b \in L^1$ by Ambrosio \cite{A04}, who also provided a new, measure-theoretic viewpoint on the relationship between uniqueness of nonnegative solutions of \eqref{CE:intro} and the unique solvability of the flow \eqref{ODE:intro} through the idea of superposition. Further developments include equations with velocity fields having a particular structure allowing for less regularity \cite{LbL_04, ChJ_10} and velocity fields belonging to $SBD$ (i.e. $\mathrm{Sym}(\nabla b)$ is a signed measure with no singular Cantor-like part) \cite{ACM_05}. Fine regularity properties of DiPerna-Lions flows were established in \cite{ALM_05, CrDL_08}, and the study of so-called ``nearly incompressible flows'' \cite{ADlM_07} led to the resolution by Bianchini and Bonicatto \cite{BB_20} of Bressan's compactness conjecture \cite{Br_conj_03,Br_app_03}; see also \cite{KMW_21} for related results. For many more details and references, we refer the reader to the surveys \cite{ACetraro, Am_survey_17, Am_Tr_survey_17, Am_Cr_survey_14}.

In the majority of these works, the divergence $\div b$ is assumed to be bounded, or at least absolutely continuous with respect to Lebesgue measure. This is not the case in general for velocity fields satisfying \eqref{intro:monotone}, and so the equations \eqref{TE:intro} and \eqref{CE:intro} do not even have a sense as distributions, because the products $(\div b) u$ and $bf$ are ill-defined for general $u \in L^1_\loc$ or measures $f$. The DiPerna-Lions theory does not, therefore, cover this situation. Moreover, the choice of an appropriate function space of solutions is very sensitive to whether the equations are posed as initial or terminal value problems.

The problems \eqref{TE:intro}-\eqref{ODE:intro} for velocity fields with a one-sided Lipschitz condition have been approached with a variety of methods \cite{CSmfg,Pe_Po_1d_99,Pe_Po_01,Conway_67, PR_97, BJ, BJM}. Our main purpose is to complement these works, and in particular the theory of Bouchut, James, and Mancini \cite{BJM}, by providing complete characterizations of the stable solutions to all three problems in both the compressive and expansive regimes. We also provide some results on the corresponding parabolic equations with a degenerate, second-order term, as well as the SDE analogue of \eqref{ODE:intro} for both the velocity field $b$ and $-b$.

\subsection{Main results}

We relegate a full description of the results, discussions, and examples to the body of the paper. Here, we briefly outline the different sections and the types of results proved within them, and we compare them to the existing literature.

\subsubsection{The compressive regime}

In Section \ref{sec:flow}, we record properties of the backward Filippov flow for \eqref{ODE:intro}, as well as for its Jacobian $J_{t,s}(x) := \det(\nabla \phi_{t,s}(x))$, which is well-defined in $L^\oo$ for a.e. $t \le s$ and $x \in \RR^d$. We employ measure-theoretic arguments to make sense of the right-inverse of the flow in an almost-everywhere sense, as a preliminary step to understanding the forward, regular Lagrangian flow, and prove several properties, the most important of which is its almost-everywhere continuity.

In Section \ref{sec:antimonotone}, we turn to the study of the nonconservative equation\footnote{For a consistent presentation throughout the paper, and in order to emphasize the dual relationship between the two equations, the transport equation \eqref{TE:intro} will always be posed as a terminal value problem, and the continuity equation \eqref{CE:intro} as an initial value problem. The compressive and expansive regimes will be distinguished by the choice of sign in front of the velocity field $b$.}
\begin{equation}\label{ATE:intro}
	\del_t u - b(t,x) \cdot \nabla u = 0 \quad \text{in } (0,T) \times \RR^d, \quad u(T,\cdot) = u_T,
\end{equation}
for which the uniqueness of continuous distributional solutions fails in general. We introduce a new PDE characterization of the ``good'' (stable) solution of \eqref{ATE:intro} as the unique viscosity solution, in the sense of Crandall, Ishii, and the first author \cite{CIL}. This is done by proving a comparison principle for sub and supersolutions. The viscosity solution characterization coincides with the selection of ``good'' solutions by other authors in particular settings \cite{Conway_67, PR_97, Pe_Po_1d_99, Pe_Po_01, BJ, BJM}, allows for robust stability statements, and, moreover, generalizes to the setting of degenerate parabolic problems (see the discussion below).

The ``usual'' viscosity solution theory must be modified due to the lack of global continuity of $b$. In view of the evolution nature of the equations, the $L^1$-dependence in time does not present a problem, and the equations can be treated with the methods of \cite{N90,N92,Idisc85,LP}. To deal with the discontinuity of $b$ in space, sub and supersolutions must be defined with appropriate semicontinuous envelopes of $b$ in the space variable. The direction of the one-sided Lipschitz assumption \eqref{intro:monotone} accounts for the beneficial inequalities in the proof of the comparison principle.

We then introduce further conditions on the velocity field $b$ and terminal data $u_T$ that ensure uniqueness of arbitrary continuous distributional solutions. In particular, the interplay between the regularity of $b$ and $u_T$ plays an important role: if $b \in C^\alpha$ and $u_T \in C^\beta$, then distributional solutions are unique if $\alpha + \beta > 1$, while uniqueness may fail in general if $\alpha + \beta \le 1$, as we show by example.

The latter half of Section \ref{sec:antimonotone} deals with the study of the dual problem to \eqref{ATE:intro}, namely
\begin{equation}\label{ACE:intro}
	\del_t f - \div ( b(t,x)  f) = 0 \quad \text{in }(0,T) \times \RR^d \quad f(0,\cdot) = f_0.
\end{equation}
Even if $f_0 \in L^1_\loc$, the concentrative nature of the flow causes the measure $f(t,\cdot)$ to develop a singular part, and therefore we are led to seek measure-valued solutions. This prevents the duality solution of \eqref{ACE:intro} from being understood in the distributional sense, due to the lack of continuity of $b$. Nevertheless, we prove that, if $b$ is continuous, or if it happens that $f(t,\cdot)$ is absolutely continuous with respect to Lebesgue measure on the time interval $[0,T]$, then the notions of duality and distributional solutions are equivalent.

An important feature of the continuity equation \eqref{ACE:intro} is the failure of renormalization; that is, if $f$ is a duality solution, the measure $|f|$ may fail to be a distributional solution, and may even violate conservation of mass. This is in contrast with the DiPerna-Lions theory, and is a direct consequence of the compressive nature of the backward flow, which can lead to cancellation of the positive and negative parts of $f$. A related phenomenon is the nonuniqueness of distributional solutions of the continuity equation \eqref{CE:intro} with the reverse sign (see below).

\subsubsection{The expansive regime}

In Section \ref{sec:monotone}, we reverse the sign on the velocity field, and study the corresponding problems
\begin{equation}\label{MTE:intro}
	\del_t u + b(t,x) \cdot \nabla u = 0 \quad \text{in } (0,T) \times \RR^d, \quad u(T,\cdot) = u_T
\end{equation}
and
\begin{equation}\label{MCE:intro}
	\del_t f + \div (b(t,x) f) = 0 \quad \text{in }(0,T) \times \RR^d, \quad f(0,\cdot) = f_0.
\end{equation}
In view of the lower bound on the divergence of $b$, we are motivated to seek an $L^p$-based theory for both equations, based on a priori estimates, or equivalently, on the fact that the characteristic flow (the forward ODE \eqref{ODE:intro}) does not concentrate on sets of measure zero.

The initial value problem for the continuity equation \eqref{MCE:intro} was studied in \cite{BJ,BJM}, where a large part of the analysis is based on the fact that locally integrable distributional solutions are \emph{not} unique in general\footnote{Note that in \cite{BJM}, the velocity field $a$ takes the role of $-b$ in \eqref{intro:monotone}, while the time-directions of \eqref{MTE:intro} and \eqref{MCE:intro} are switched, so their setting corresponds to the expansive regime discussed here.}. The same setting is studied in \cite{CSmfg}, where the existence and uniqueness of the forward Filippov flow for \eqref{ODE:intro} is established for a.e. $x \in \RR^d$. 

In the first part of Section \ref{sec:monotone}, we identify a unique ``good'' distributional solution, and prove that the resulting solution operator is continuous on $L^p_\loc$ for all $p \in [1,\oo]$, and stable with respect to regularizations. This coincides with the notion of reversible solution in \cite{BJ,BJM}. 

We then obtain strong stability results for the Bouchut-James-Mancini duality solutions of the nonconservative problem \eqref{MTE:intro} in all $L^p$-spaces, which allow us to prove the renormalization property. Moreover, we introduce a PDE characterization of this duality solution in terms of regularization by $\essinf$- and $\esssup$-convolution. An important ingredient in establishing this characterization is the propagation of almost-everywhere continuity, which, in turn, follows from the renormalization property and the almost-everywhere continuity of the forward flow proved in Section \ref{sec:flow}.

As a consequence of this new characterization, we give a PDE-based proof of the fact that \emph{nonnegative} distributional $L^p$-solutions of \eqref{MCE:intro} are unique, which was established in \cite{CSmfg} using the superposition principle. This result, along with the renormalization property for \eqref{MTE:intro}, allows us to establish the existence, uniqueness, and stability of the forward regular Lagrangian flow for the ODE \eqref{ODE:intro} identified in \cite{CSmfg}. As a byproduct, this also provides a full characterization of the Bouchut-James-Mancini notion of ``good'' (reversible) solution as the pushforward of $f_0$ by the forward flow. Moreover, a distributional solution $f$ is a reversible solution if and only if $|f|$ is also a distributional solution (cf. \cite[Proposition 3.12]{BJM}, which operates under the criterion that $f$ be a so-called ``Jacobian'' solution). 

\subsubsection{SDEs and second order equations}

This paper also contains various results regarding second order versions of \eqref{TE:intro} and \eqref{CE:intro}, as well as stochastic differential equation (SDE) flows. SDEs and degenerate second-order Fokker-Planck equations have been studied from many perspectives, using both the DiPerna-Lions theory and adaptations of the superposition principle, by many authors, including Le Bris and Lions \cite{LbL_08}, Figalli \cite{Fig}, Trevisan \cite{Trevisan_16}, and Champagnat and Jabin \cite{ChJ_SDE_18}; see also the book \cite{LbL_19}. Just as in the first-order setting, the fact that the measure $\div b$ may contain a singular part prevents the application of these theories to the present situation.

In the compressive regime, we extend the viscosity solution theory of Section \ref{sec:antimonotone} to the second order equation
\begin{equation}\label{A2TE:intro}
	-\del_t u + b(t,x) \cdot \nabla u - \tr[ a(t,x) \nabla^2 u] = 0 \quad \text{in }(0,T) \times \RR^d, \quad u(T,\cdot) = u_T,
\end{equation}
where $b$ satisfies the one-sided Lipschitz condition \eqref{intro:monotone} and $a$ is a regular, but possibly degenerate, symmetric matrix. This equation, as well as the dual problem
\begin{equation}\label{A2CE:intro}
	\del_t f - \div (b(t,x) f) - \nabla^2 \cdot (a(t,x) f) = 0 \quad \text{in }(0,T) \times \RR^d, \quad f(0,\cdot) = f_0,
\end{equation}
can be related to the SDE
\begin{equation}\label{backwardSDE:intro}
	d_t \Phi_{t,s}(x) = -b(t,\Phi_{t,s}(x))dt + \sigma(t, \Phi_{t,s}(x))dW_t, \quad t > s, \quad \Phi_{s,s}(x) = x,
\end{equation}
which is the SDE analogue of the backward flow for \eqref{ODE:intro}. Here $W$ is a given Brownian motion and $a =\frac{1}{2} \sigma\sigma^T$. We establish the existence and uniqueness, for every $x \in \RR^d$, of a strong solution in the Filippov sense, and we show that, with probability one, $\Phi_{t,s}$ is H\"older continuous for any exponent less than $1$.

The situation is more complicated in the expansive regime, namely, for the equations
\begin{equation}\label{2TE:intro}
	-\del_t u - b(t,x) \cdot \nabla u - \tr[ a(t,x) \nabla^2 u] = 0 \quad \text{in }(0,T) \times \RR^d, \quad u(T,\cdot) = u_T
\end{equation}
and
\begin{equation}\label{2CE:intro}
	\del_t f + \div (b(t,x) f) - \nabla^2 \cdot (a(t,x) f) = 0 \quad \text{in }(0,T) \times \RR^d, \quad f(0,\cdot) = f_0.
\end{equation}
In the first-order setting, the characterization of the ``good'' distributional solution of the continuity equation \eqref{MCE:intro} relies on the Lipschitz continuity of the backward ODE flow. Adapting similar methods for the second order equation \eqref{2CE:intro} involves establishing Lipschitz continuity of a stochastic flow like \eqref{backwardSDE:intro} with certain time-reversed coefficients (see \eqref{SDE:2MCE} below). While flows of the form \eqref{backwardSDE:intro} are H\"older continuous for any exponent less than $1$, it is an open question as to whether it is Lipschitz with probability one. We relegate a general study of \eqref{2TE:intro} and \eqref{2CE:intro}, and of the stochastic regular Lagrangian flow for
\begin{equation}\label{SDE:intro}
	d_t \Phi_{t,s}(x) = b(t, \Phi_{t,s}(x))dt + \sigma(t, \Phi_{t,s}(x))dW_t, \quad t >s, \quad \Phi_{s,s}(x) = x,
\end{equation}
to future work. The exception\footnote{Another case of interest is when the diffusion matrix $a$ is nondegenerate in which case very general results can be obtained even for locally bounded $b$; see \cite{Fig}.}  is when $\sigma$ is constant in the $\RR^d$-variable. In this case, we prove that a suitable stochastic flow of the form \eqref{backwardSDE:intro} can be inverted, leading, as in the deterministic case, to the existence and uniqueness of a strong solution to \eqref{SDE:intro} for a.e. $x \in \RR^d$, and a corresponding solution theory for the PDEs \eqref{2TE:intro} and \eqref{2CE:intro}.

\subsection{Applications and further study} While interesting in their own right, linear transport equations and ODEs with nonregular velocity fields arise naturally in several equations in fluid dynamics, in which the velocity fields depend nonlinearly on various other physical quantities that are coupled with the transported quantity. Since these equations must be posed a priori in a weak sense, this leads to velocity fields with limited regularity. The DiPerna-Lions and Ambrosio theories have been successfully applied to a number of such situations; see \cite{Ke_Kr_79, AM_DL_KK_03, Am_Bo_DL_04,Cu_Fe_06,ACdPF_12, ACdPF_14,LeF_90,ZM_94}. The one-dimensional Bouchut-James theory of reversible solutions for transport equations with semi-Lipschitz velocity fields has been successfully applied in applications to conservation laws and pressureless gasses; see \cite{BJ_pg_99, BJ_diff_99, GOR_98,GOR_99}.

Nonlinear transport equations also arise in certain models for large population dynamics, specifically mean field games (MFG). In \cite{La_Li_07}, the first author and Lasry introduced a forward-backward system of PDEs modeling a large population of agents in a state of Nash equilibrium. The evolution of the density $f$ of players is described by a continuity equation \eqref{MCE:intro} (or Fokker-Planck equation \eqref{2CE:intro}), where the velocity field $b$ is given by
\begin{equation}\label{MFGb}
	b(t,x) = -\nabla_p H(t, x,\nabla u(t,x)).
\end{equation}
Here, $H$ is a convex Hamiltonian, and $u$ is the solution of the terminal value problem
\begin{equation}\label{HJB}
	-\del_t u - \tr[ a(t,x) \nabla^2 u] + H(t,x, \nabla u(t,x)) = F[f(t,\cdot)] \quad \text{in }(0,T) \times \RR^d,\quad  u(T,\cdot) = G[f(t,\cdot)],
\end{equation}
which is a Hamilton-Jacobi-Bellman equation encoding the optimization problem for a typical agent, and whose influence by the population of agents is described by the coupling functions $F$ and $G$. The velocity field \eqref{MFGb} is the consensus optimal feedback policy of the population of agents at a Nash equilibrium. 

When $a$ is degenerate, or even zero, the function $u$ has limited regularity, and is no better than semiconcave in the spatial variable in general. Therefore, even if $H$ is smooth, the velocity field \eqref{MFGb} may satisfy at most
\begin{equation}\label{open}
	b \in BV_\loc \quad \text{and} \quad (\div b)_- \in L^\oo.
\end{equation}
This falls just outside the DiPerna-Lions-Ambrosio regime, since the measure $(\div b)_+$ may still fail to be absolutely continuous in general. In fact, the well-posedness of a suitable notion of solution for the transport and ODE problems under the general assumptions \eqref{open} remains an open problem.

Many simple but useful MFG models involve a linear-quadratic Hamiltonian of the form
\[
	H(t,x,p) = A(t,x) |p|^2 + B(t,x) \cdot p + C(t,x)
\]
for smooth, real-valued $A,B,C$ with $A > 0$. In this case, it is easy to see that \eqref{MFGb} satisfies the half-Lipschitz condition \eqref{intro:monotone}. This situation was studied by Cardaliaguet and Souganidis \cite{CSmfg} for first-order, stochastic mean field games systems with common noise. In particular, it is proved there that the uniqueness of probability density solutions of \eqref{MTE:intro} gives rise, through the superposition principle, to the uniqueness of optimal trajectories for the probabilistic formulation of the MFG problem, and, moreover, the solution of the stochastic forward-backward system can be used to construct approximate Nash equilibria for the $N$-player game. Our analysis for the Fokker-Planck equation \eqref{2CE:intro} may therefore be expected to yield similar results for stochastic MFG systems with common noise and degenerate, spatially-homogenous, idiosyncratic noise, a special case of the equations considered by Cardaliaguet, Souganidis, and the second author in \cite{CSSmfg}.

The second application of nonlinear transport equations in mean field games is involved with the master equation for a MFG with a finite state space. These equations generally take the form
\begin{equation}\label{FSS}
	\del_t u + b(t,x,u) \cdot \nabla u = c(t,x,u) \quad \text{in } (0,T) \times \RR^d,
\end{equation}
where $u$, $b$, and $c$ all take values in $\RR^d$; coordinate-by-coordinate, \eqref{FSS} is written as
\[
	\del_t u^i + b^j(t,x,u) \del_{x_j} u^i = c^i(t,x,u), \quad i = 1,2,\ldots, d.
\]
Therefore, \eqref{FSS} is a nonconservative hyperbolic system, whose general well-posedness is a difficult question in general; note that, when $d = 1$, \eqref{FSS} becomes a scalar conservation law.

We do not discuss \eqref{FSS} here, but, in a forthcoming paper \cite{LS_increasing}, we study a particular regime of equations taking the form \eqref{FSS}, using a new theory for linear transport equations with velocity fields $b$ that are increasing \emph{coordinate by coordinate}, that is, $\del_{x_j} b^i \ge 0$ for $i \ne j$. 

The extension to infinite dimensions, of both the linear problems \eqref{TE:intro}-\eqref{CE:intro}, as well as the nonlinear equation \eqref{FSS}, remains an interesting question, with numerous applications, including the study of mean field game master equations on the Hilbert space of square-integrable random variables. We aim to study these situations in future work.

\subsection{Notation}
Given a function space $X(\RR^d)$, or $X(\Omega)$ for an appropriate subdomain of $\RR^d$, $X_\loc$ denotes the space of functions (or distributions) $f$ such that $\phi f \in X$ for all $\phi \in C^\oo_c(\RR^d)$. If $X$ is a normed space, the same is not necessarily true for $X_\loc$, but it inherits the topology of local $X$-convergence. For example, $\lim_{n \to \oo }f_n = f$ in $L^p_\loc(\RR^d)$ means that $\lim_{n \to\oo}\norm{f_n - f}_{L^p(B_R)} = 0$ for all $R > 0$. We denote by $L^p_+([0,T])$ the subset of $L^p([0,T])$ consisting of nonnegative functions.

Unless other specified, Banach or Fr\'echet spaces of functions are endowed with the strong topology. For a function space $X$, the subscripts $X_\w$ and $X_{\ws}$ indicate the weak (resp. weak-$\star$) topology.

For $1 \le p < \oo$, $\mcl P_p$ is the space of probability measures $\mu$, with $\int |x|^p \mu(dx) < \oo$, which becomes a complete metric space for the $p$-Wasserstein distance $\mcl W_p$.

The transpose of a matrix $\sigma$ is denoted by $\sigma^T$, and, if $\sigma$ is a square matrix, its symmetric part is denoted by $\mathrm{Sym}(\sigma) := \frac{1}{2}(\sigma + \sigma^T)$. The symbol $\Id$ stands for either the identity map or the identity matrix, the precise meaning being clear from context.

\section{The ODE flow}\label{sec:flow}

This section is focused on the solvability and properties of the flow associated to a velocity field $b$ satisfying\footnote{The linear growth assumption is a standard way to ensure that the a priori estimates for solutions do not blow up. Otherwise, the results of the paper would need a corresponding local theory, as for example in \cite{ACF_15}.}
\begin{equation}\label{A:monotoneB}
	\left\{
	\begin{split}
		&\text{for some } C_0,C_1 \in L^1_+([0,T]) \text{ and for all }t \in [0,T] \text{ and } x,y \in \RR^d,\\
		& |b(t,x)| \le C_0(t)(1 + |x|) \quad \text{and} \\
		& (b(t,x) - b(t,y)) \cdot (x-y) \ge -C_1(t)|x-y|^2.
	\end{split}
	\right.
\end{equation}
Because $b(t,\cdot)$ is not necessarily continuous, the ODE must be interpreted in the Filippov sense \cite{Fil60}, that is, abusing notation, we denote by $b(t,x)$ the convex hull of all limit points of $b(t,y)$ as $y \to x$. For $s \in [0,T]$, we seek absolutely continuous solutions $t \mapsto \phi_{t,s}(x)$ of the problem
\begin{equation}\label{monotone:flow}
	\begin{dcases}
	\del_t \phi_{t,s}(x) \in b(t,\phi_{t,s}(x)), & t \in [0,T],\\
	\phi_{s,s}(x) = x. &
	\end{dcases}
\end{equation}

\begin{remark}\label{R:C0=0}
	If $\dot X(t) \in b(t,X(t))$ and
	\[
		\tilde X(t) := \exp\left( \int_0^t C_1(s)ds\right) X(t) \quad \text{and} \quad \tilde b(t,x) := \exp\left( \int_0^t C_1(s)ds\right) b\left( t, \exp\left( - \int_0^t C_1(s)ds x \right) \right),
	\]
	so that $\dot {\tilde X} \in \tilde b(t, \tilde X(t))$, then $\tilde b$ satisfies \eqref{A:monotoneB} with $C_1 \equiv 0$ and a possibly different $C_0$. In other words, with a change of variables, one may always assume $b$ is monotone without loss of generality.
\end{remark}

We sometimes use the following characterization and properties of half-Lipschitz maps; see \cite[Lemma 2.2]{BJM}. 

\begin{lemma}\label{L:matrixbound}
	A vector field $B: \RR^d \to \RR^d$ satisfies
	\[
		(B(x) - B(y)) \cdot (x-y) \ge -C|x-y|^2 \quad \text{for some } C \ge 0 \text{ and all } x,y \in \RR^d
	\]
	if and only if $\Sym(\nabla B)\ge -C \Id$ in the sense of distributions. We then also have $B \in BD_\loc(\RR^d)$, and
	\[
		\Sym(\nabla B) - (\tr \nabla B) \Id \le (d-1)C \Id.
	\]
\end{lemma}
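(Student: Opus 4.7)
The plan is to prove the equivalence by a standard mollification argument, then read off the $BD_\loc$ conclusion and the trace-type inequality from the fact that $\Sym(\nabla B) + C\Id$ is a positive semidefinite matrix-valued distribution.

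First, for the equivalence, I would handle the smooth case directly and then mollify. If $B$ is smooth, fix $x \in \RR^d$ and a unit vector $v$, set $y = x + \eps v$, divide the one-sided Lipschitz inequality by $\eps^2$, and let $\eps \downarrow 0$ to get $\nabla B(x) v \cdot v \ge -C$, which is exactly $\Sym(\nabla B)\ge -C\Id$ pointwise. Conversely, if $\Sym(\nabla B) \ge -C\Id$ pointwise and $B$ is smooth, then for $x,y \in \RR^d$ the fundamental theorem of calculus gives
\[
(B(y) - B(x))\cdot(y-x) = \int_0^1 \nabla B(x + t(y-x))(y-x)\cdot(y-x)\, dt \ge -C|y-x|^2.
\]
For general $B$, let $B_\eps := B * \rho_\eps$ be a standard mollification. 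Convolution preserves the one-sided Lipschitz bound (by writing $(B_\eps(x) - B_\eps(y))\cdot(x-y) = \int (B(x-z)-B(y-z))\cdot(x-y)\rho_\eps(z)\,dz$), and it commutes with the symmetrized gradient. Since $B_\eps \to B$ locally uniformly (or at least a.e., which is enough for the a.e.\ pointwise inequality after a further subsequence), one can pass to the limit in either direction to obtain the claimed equivalence; for the distributional inequality one simply tests $\Sym(\nabla B_\eps) \ge -C\Id$ against a compactly supported nonnegative symmetric matrix field and passes $\eps \downarrow 0$.

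Next, for the $BD_\loc$ conclusion, I would exploit that $M := \Sym(\nabla B) + C\Id$ is a positive semidefinite symmetric matrix of distributions. Testing against $\varphi(x) e_i e_i^T$ with $\varphi \ge 0$ shows that each diagonal entry $M_{ii}$ is a nonnegative distribution, hence a Radon measure by the Riesz representation theorem. For $i \ne j$, testing against $\varphi(x)(e_i \pm e_j)(e_i \pm e_j)^T$ gives that $M_{ii} + M_{jj} \pm 2 M_{ij}$ are both nonnegative distributions, hence Radon measures; taking the difference shows that each off-diagonal $M_{ij}$ is a signed Radon measure. This yields $\Sym(\nabla B) \in \mcl M_\loc$, i.e., $B \in BD_\loc$.

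Finally, the trace-type upper bound is a linear-algebra observation applied to $M = \Sym(\nabla B) + C\Id \ge 0$. For a positive semidefinite symmetric matrix, all eigenvalues are nonnegative and bounded above by the trace, so $M \le (\tr M)\Id$ as symmetric matrices. Unwinding,
\[
\Sym(\nabla B) + C\Id \;\le\; (\tr \nabla B + dC)\Id,
\]
which rearranges to $\Sym(\nabla B) - (\tr\nabla B)\Id \le (d-1)C\Id$. To make this rigorous at the level of distributions (or Radon measures, now that we know $B \in BD_\loc$), I would again argue by mollification: apply the pointwise matrix inequality to the smooth $M_\eps := \Sym(\nabla B_\eps) + C\Id \ge 0$ and pass to the limit in the distributional sense. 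The only genuinely delicate point is the passage from pointwise PSD of symmetric matrix-valued distributions to individual entries being Radon measures, but this is handled cleanly by the diagonal-plus-off-diagonal test-matrix argument above.
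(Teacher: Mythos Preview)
Your argument is correct. The paper does not prove this lemma; it simply cites \cite[Lemma~2.2]{BJM}, so there is no in-paper proof to compare against. Your approach---reducing to the smooth case by mollification, reading off the $BD_\loc$ conclusion by testing the matrix-valued nonnegative distribution against rank-one test matrices $e_i e_i^T$ and $(e_i\pm e_j)(e_i\pm e_j)^T$, and deducing the trace inequality from the elementary fact that a positive semidefinite matrix is dominated by its trace times the identity---is exactly the standard route and matches what one finds in the cited reference. One minor remark: in the converse direction you only recover the one-sided Lipschitz inequality for a.e.\ $x,y$ (since $B_\eps\to B$ only a.e.), but this is consistent with how the condition is used in the paper (cf.\ \eqref{intro:monotone}); if one insists on the pointwise statement, one uses that $x\mapsto B(x)+Cx$ is a monotone map and hence has a well-defined value at every point up to redefinition on a null set.
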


The space $BD(\RR^d)$ of bounded deformations is the space of vector fields $B: \RR^d \to \RR^d$ such that the symmetric part of the distribution $\nabla B$ is a locally bounded Radon measure, and, as such, is a strictly larger space than $BV(\RR^d)$. For more details, see \cite{Suq}.

We fix a family of regularizations such that
\begin{equation}\label{bregs}
	\left\{
	\begin{split}
		&(b^\eps)_{\eps > 0} \subset L^1([0,T], C^{0,1}(\RR^d)), \quad \lim_{\eps \to 0} b^\eps = b \text{ a.e. in } [0,T] \times \RR^d, \text{ and}\\
		&b^\eps \text{ satisfies \eqref{A:monotoneB} uniformly in $\eps > 0$}.
	\end{split}
	\right.
\end{equation}
For example, we may take $b^\eps(t,\cdot) = b(t,\cdot) * \rho_\eps$ for $\rho_\eps = \eps^{-d} \rho(\cdot/\eps)$, with $\rho \in C^\oo_+(\RR^d)$, $\supp \rho \in B_1$, and $\int \rho = 1$.

\subsection{The backward flow}

We begin the analysis with the backward flow, that is, \eqref{monotone:flow} for $t < s$. This is the time-direction for which the one-sided Lipschitz condition \eqref{A:monotoneB} yields a unique, Lipschitz flow. We record its properties here and refer to \cite{Fil60, Pe_Po_1d_99, Pe_Po_01, Conway_67} for the proofs; see also the work of Dafermos \cite{Daf_77} for the connection to generalized characteristics of conservation laws.

\begin{lemma}\label{L:backwardsflow}
	For every $(s,x) \in [0,T] \times \RR^d$, there exists a unique solution $\phi_{t,s}(x)$ of \eqref{monotone:flow} defined for $(t,x) \in [0,s] \times \RR^d$, satisfying the Lipschitz bound
\begin{equation}\label{LipschitzODE}
	|\phi_{t,s}(x) - \phi_{t,s}(y)| \le \exp\left( \int_t^s C_1(r)dr \right) |x-y| \quad \text{for all } 0 \le t \le s \le T \text{ and } x,y \in \RR^d.
\end{equation}
Moreover, there exists a constant $C > 0$ depending only on $T$ and $C_0$ from \eqref{A:monotoneB} such that
\begin{equation}\label{boundODE}
		|\phi_{t,s}(x)| \le C(|x| + 1) \quad \text{for all } 0 \le t \le s \le T \text{ and } x \in \RR^d,
	\end{equation}
	and
	\begin{equation}\label{timeLipODE}
		\left\{
		\begin{split}
		&|\phi_{t_1,s}(x) - \phi_{t_2,s,x}| \le C (1 + |x|)|t_1 - t_2| \quad \text{and} \\
		&|\phi_{t,s_1}(x) - \phi_{t,s_2}(x)| \le C (1 + |x|)|s_1 - s_2|\\
		&\text{for all } t_1,t_2 \in [0, s], \; s_2,s_2 \in [t, T], \text{ and } x \in \RR^d.
 		\end{split}
		\right.
	\end{equation}
	For all $0 \le r \le s \le t \le T$, $\phi_{r,s} \circ \phi_{s,t} = \phi_{r,t}$. If $(b^\eps)_{\eps > 0}$ are regularizations satisfying \eqref{bregs}, then the corresponding backward flows $\phi^\eps$ converge locally uniformly as $\eps \to 0$ to $\phi$.
\end{lemma}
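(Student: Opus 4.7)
The plan is to construct $\phi$ as the local uniform limit of the regularized flows $\phi^\eps$ associated to smooth approximations $b^\eps$ satisfying \eqref{bregs}, and to derive all the claimed estimates from uniform bounds at the regularized level. For each $\eps$, classical Cauchy--Lipschitz theory produces a smooth flow $\phi^\eps_{t,s}$ on $[0,T]^2 \times \RR^d$. Applying Gronwall to $|\phi^\eps_{t,s}(x)|$ together with the linear growth $|b^\eps(t,x)| \le C_0(t)(1+|x|)$ gives \eqref{boundODE} uniformly in $\eps$, and integrating the ODE against this bound yields the first inequality of \eqref{timeLipODE}; the second will then follow via the semigroup identity $\phi^\eps_{t,s_2} = \phi^\eps_{t,s_1}\circ \phi^\eps_{s_1,s_2}$ combined with the spatial Lipschitz bound. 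The crucial estimate \eqref{LipschitzODE} uses the one-sided Lipschitz condition: for fixed $s$ and $t\le s$, differentiating gives
\[
\frac{d}{dt}\bigl|\phi^\eps_{t,s}(x)-\phi^\eps_{t,s}(y)\bigr|^2 = 2\bigl(b^\eps(t,\phi^\eps_{t,s}(x)) - b^\eps(t,\phi^\eps_{t,s}(y))\bigr)\cdot\bigl(\phi^\eps_{t,s}(x)-\phi^\eps_{t,s}(y)\bigr) \ge -2 C_1(t)\bigl|\phi^\eps_{t,s}(x)-\phi^\eps_{t,s}(y)\bigr|^2,
\]
and integrating \emph{upward} from $t$ to $s$ (where the two trajectories coincide at $x,y$) yields \eqref{LipschitzODE}.

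For existence, these uniform Lipschitz estimates in all three variables, combined with the linear growth bound, let me apply Arzel\`a--Ascoli on compact subsets of $\{(t,s,x):0\le t\le s\le T\}\times\RR^d$ to extract a locally uniformly convergent subsequence $\phi^{\eps_k}\to\phi$, and the limit inherits \eqref{LipschitzODE}--\eqref{timeLipODE}. To see that $\phi$ is a Filippov solution of \eqref{monotone:flow}, I pass to the limit in the integral equation $\phi^\eps_{t,s}(x) = x + \int_s^t b^\eps(r,\phi^\eps_{r,s}(x))\,dr$: along the converging trajectory, $b^\eps(r,\phi^{\eps_k}_{r,s}(x))$ is bounded and converges (weakly in $L^1$, up to a further subsequence) to a limit $v(r)$ which, by the a.e.\ convergence $b^\eps\to b$ and an application of Mazur's lemma together with the upper semicontinuity built into the Filippov convex hull, satisfies $v(r)\in b(r,\phi_{r,s}(x))$ for a.e.\ $r$.

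Uniqueness and the semigroup property both rest on extending the one-sided Lipschitz inequality to the Filippov convex hull: if $v_i \in b(t,X_i)$ then writing each $v_i$ as a convex combination of genuine limit points $\lim_n b(t,x_n^{(i)})$ and exploiting bilinearity of the inner product preserves the bound $(v_1-v_2)\cdot(X_1-X_2)\ge -C_1(t)|X_1-X_2|^2$. Given two Filippov solutions starting from $x$ at time $s$, the same Gronwall estimate run backward from $s$ forces them to coincide, which simultaneously upgrades the subsequential convergence above to convergence of the whole family $\phi^\eps\to\phi$, yielding the stability statement. The semigroup identity $\phi_{r,s}\circ\phi_{s,t}=\phi_{r,t}$ for $r\le s\le t$ holds for every $\phi^\eps$ by classical ODE theory and passes to the limit via local uniform convergence. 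The most delicate step in this outline is verifying that the limit trajectory is a genuine Filippov solution at a.e.\ time; this is where the convex-hull definition of $b(t,\cdot)$ and the uniform one-sided Lipschitz property of the regularizations play an essential role.
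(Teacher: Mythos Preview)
The paper does not actually prove this lemma: immediately before its statement the authors write ``We record its properties here and refer to \cite{Fil60, Pe_Po_1d_99, Pe_Po_01, Conway_67} for the proofs.'' So there is no in-paper argument to compare against; your task was essentially to reconstruct a standard proof, and you have done so correctly. The approximation-by-smooth-$b^\eps$ strategy, the Gr\"onwall computation for \eqref{LipschitzODE}, the linear-growth bound for \eqref{boundODE}, the Arzel\`a--Ascoli compactness, and the passage of the semigroup identity to the limit are all standard and correctly executed.

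Two minor comments. First, your verification that the one-sided Lipschitz inequality extends to the Filippov set-valued map is correct but slightly compressed: the point is that for limit points $w_j = \lim_n b(t,x_n^{(j)})$ with $x_n^{(j)}\to X_1$ and $z_k = \lim_m b(t,y_m^{(k)})$ with $y_m^{(k)}\to X_2$, the local boundedness of $b$ (from the linear growth) lets you replace $(X_1-X_2)$ by $(x_n^{(j)}-y_m^{(k)})$ up to a vanishing error before taking limits; only then does the original one-sided bound apply. Second, you correctly flag the passage to the limit in the integral equation as the delicate step. A slightly cleaner route than Mazur's lemma is to note that the set-valued map $x\mapsto b(t,x)$ (Filippov hull) is upper semicontinuous with closed convex values, so any $L^1$-weak limit of $b^{\eps_k}(r,\phi^{\eps_k}_{r,s}(x))$ automatically takes values in $b(r,\phi_{r,s}(x))$ for a.e.\ $r$; this is exactly the content of Filippov's original compactness argument.
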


\begin{remark}\label{R:apriori}
	The a priori local boundedness and time-regularity estimates \eqref{boundODE} and \eqref{timeLipODE}, depending only on $C_0$ and not $C_1$, do not require the half-Lipschitz assumption on $b(t,\cdot)$, and are therefore satisfied for any limiting solutions of the ODE when $b$ satisfies the first condition in \eqref{A:monotoneB}. On the other hand, the half-Lipschitz assumption is crucial for the Lipschitz continuity of the flow \eqref{LipschitzODE}, as well as the uniqueness of the solution. 
\end{remark}
\begin{remark}\label{R:sgnx}
Consider the backward flow in $\RR$ corresponding to $b(t,x) = b(x) = \sgn x$, which is given, for $x \in \RR$ and $s < t$, by
\begin{equation}\label{sgnx}
	\phi_{s,t}(x) = 
	\begin{dcases}
		x+(t-s) & \text{if } x<-t -s,\\
		0 & \text{if } |x| \le t-s, \text{ and}\\
		x-(t-s) & \text{if } x > t-s.
	\end{dcases}
\end{equation}
This demonstrates that, in general, the trajectories of the backward flow may concentrate on sets of measures $0$, in particular, where $b$ has jump discontinuities.

	We will often consider the examples $b(x) = \sgn x$ in subsequent parts of the paper in order to illustrate certain general phenomena and to present counterexamples. Note that, by Remark \ref{R:C0=0}, one can consider similar examples for arbitrary $C_1 \in L_+^1([0,1])$.
\end{remark}

\subsection{The Jacobian for the backward flow}

In view of the Lipschitz regularity \eqref{LipschitzODE}, $\nabla_x \phi_{t,s} \in L^\oo$ for $t \le s$, and so we can define the Jacobian
\begin{equation}\label{backwardsJ}
	J_{t,s}(x) := \det( \nabla_x \phi_{t,s}(x)) \quad \text{for } 0 \le t \le s \le T \quad \text{ and a.e. } x \in \RR^d.
\end{equation}

\begin{lemma}\label{L:backwardsJ}
	Let $J$ be defined as in \eqref{backwardsJ}. Then $J \ge 0$, 
	\begin{equation}\label{Jbound}
		\left\{
		\begin{split}
		&J_{\cdot, s} \in L^\oo([0,s] \times \RR^d) \cap C([0,s], L^1_\loc(\RR^d)) \quad \forall s \in [0,T]
		\quad \text{and} \\
		&J_{t,\cdot} \in L^\oo([t,T] \times \RR^d) \cap C([t,T], L^1_\loc(\RR^d)) \quad \forall t \in [0,T],
		\end{split}
		\right.
	\end{equation}
	\begin{equation}\label{Jestimates}
		\norm{J_{t,s}}_{L^\oo} \le \exp\left( d\int_t^s C_1(r)dr \right) \quad \text{for all } 0 \le t \le s \le T,
	\end{equation}
	and, for all $R > 0$, there exists a modulus of continuity $\omega_R$, which depends on $b$ only through the constants $C_0$ and $C_1$ in \eqref{A:monotoneB}, such that
	\begin{equation}\label{Junifcts}
		\left\{
		\begin{split}
		&\norm{J_{t_1,s} - J_{t_2,s}}_{L^1(B_R)} \le \omega_R(|t_1 - t_2|) \quad \text{for all } t_1,t_2 \in [0,s]
		\quad \text{and} \\
		&\norm{J_{t,s_1} - J_{t,s_2}}_{L^1(B_R)} \le \omega_R(|s_1 - s_2|) \quad \text{for all } s_1,s_2 \in [t,T].
		\end{split}
		\right.
	\end{equation}
	If $(b^\eps)_{\eps > 0}$ are as in \eqref{bregs}, $(\phi^\eps)_{\eps > 0}$ are the corresponding solutions of \eqref{monotone:flow}, and, for $\eps > 0$, $J^\eps = \det(\nabla_x \phi^\eps)$, then
	\begin{equation}\label{Jconv}
		\lim_{\eps \to 0} J^\eps_{\cdot,s} = J_{\cdot,s} \quad \text{weak-$\star$ in } L^\oo([0,s] \times \RR^d)
		\quad \text{and} \quad
		\lim_{\eps \to 0} J^\eps_{t,\cdot} = J_{t,\cdot} \quad \text{weak-$\star$ in } L^\oo([t,T] \times \RR^d).
	\end{equation}
\end{lemma}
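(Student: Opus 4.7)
The plan is to first establish the estimates for the smooth regularizations $b^\eps$ with uniform-in-$\eps$ bounds, then pass to the limit via Lemma \ref{L:backwardsflow}. For smooth $b^\eps$, $\phi^\eps$ is a $C^1$-diffeomorphism; differentiating the ODE in $x$ and taking traces yields the Liouville formula
\[
J^\eps_{t,s}(x) = \exp\left(-\int_t^s \div b^\eps(r, \phi^\eps_{r,s}(x))\, dr\right) \geq 0.
\]
Applying Lemma \ref{L:matrixbound} to $b^\eps(r,\cdot)$ gives the pointwise bound $\div b^\eps \geq -d C_1(r)$, which immediately yields \eqref{Jestimates}. Nonnegativity is preserved in the limit.

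For equicontinuity in $t$, I differentiate to get $\partial_r J^\eps_{r,s} = J^\eps_{r,s}\, \div b^\eps(r, \phi^\eps_{r,s})$, and change variables $y = \phi^\eps_{r,s}(x)$ so the Jacobian factor cancels: using $\phi^\eps_{r,s}(B_R) \subset B_{CR}$ from \eqref{boundODE},
\[
\int_{B_R} |\partial_r J^\eps_{r, s}(x)|\, dx \leq \int_{B_{CR}} |\div b^\eps(r, y)|\, dy.
\]
Since $(\div b^\eps)_- \leq d C_1$, the inequality $|\div b^\eps| \leq \div b^\eps + 2d C_1$ combines with the divergence theorem and \eqref{A:monotoneB} to bound this by $K_R(C_0(r) + C_1(r))$, whose integral in $r$ provides the modulus, uniform in $s$. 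For equicontinuity in $s$, I would use the semigroup identity $J^\eps_{t, s_2}(x) = J^\eps_{t, s_1}(\phi^\eps_{s_1, s_2}(x))\, J^\eps_{s_1, s_2}(x)$ (for $t \leq s_1 \leq s_2$) to split the difference into the piece $J^\eps_{t, s_1}(\phi^\eps_{s_1, s_2}(x))[J^\eps_{s_1, s_2}(x) - 1]$, controlled by the $L^\infty$ bound and the $t$-estimate between $s_1$ and $s_2$ (using $J^\eps_{s_2, s_2} = 1$), plus the shift term $J^\eps_{t, s_1}(\phi^\eps_{s_1, s_2}(x)) - J^\eps_{t, s_1}(x)$, which I would treat using the continuity equation $\partial_s J^\eps + \div_x(b^\eps(s, \cdot) J^\eps) = 0$ (derived by differentiating the change-of-variables identity $\int \zeta(\phi^\eps_{t, s}(x)) J^\eps_{t, s}(x)\, dx = \int \zeta(y)\, dy$ in $s$) combined with the smallness of $\phi^\eps_{s_1, s_2}(x) - x$ from \eqref{timeLipODE}.

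For the convergence \eqref{Jconv}, Lemma \ref{L:backwardsflow} gives $\phi^\eps \to \phi$ locally uniformly with $\nabla \phi^\eps$ uniformly bounded in $L^\infty$, hence $\phi^\eps \to \phi$ weakly in $W^{1,p}_\loc$ for every $p < \infty$. The determinant being a null Lagrangian---writable via the Piola identity as $\det \nabla \phi = \partial_j(\phi_1\, \mathrm{Cof}_{1j}(\nabla \phi))$ and handled by induction on dimension---gives $J^\eps_{t,s} \to J_{t,s} := \det \nabla \phi_{t,s}$ weak-$\star$ in $L^\infty_\loc$ for each fixed $(t,s)$; the uniform bound \eqref{Jestimates} then upgrades this to the slab convergence \eqref{Jconv} by testing against tensor products $\xi(r)\zeta(x)$ and applying dominated convergence in $r$. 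The main obstacle is the $L^1_\loc$-continuity in $s$: the continuity equation in $s$ alone only controls $\partial_s J^\eps$ in negative Sobolev norms in $x$, so the shift step above must be carried out delicately, taking advantage of the fact that the shifts $\phi^\eps_{s_1, s_2}(x) - x$ are nearly independent of $x$ on bounded sets, rather than being arbitrary $L^1$ shifts of an $L^\infty$-equibounded family.
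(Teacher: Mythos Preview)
Your treatment of the $L^\infty$ bound, nonnegativity, and the weak-$\star$ convergence \eqref{Jconv} matches the paper's: the null-Lagrangian/Piola argument you invoke is exactly the compensated-compactness result the paper cites. For the $t$-equicontinuity your route differs from the paper's. You bound $\int_{B_R}|\partial_r J^\eps_{r,s}|\,dx$ directly by changing variables $y=\phi^\eps_{r,s}(x)$ and controlling $\int_{B_{CR}}|\div b^\eps|$ via the one-sided bound plus the divergence theorem; this is correct. The paper instead exploits the pointwise monotonicity $\partial_r\bigl(J^\eps_{r,s}(x)\,e^{-d\int_r^s C_1}\bigr)\ge 0$ to reduce $\|J^\eps_{t_1,s}-J^\eps_{t_2,s}\|_{L^1(B_R)}$ to the continuity of $t\mapsto \int_{B_R}J^\eps_{t,s}$, and then reads the latter off the change-of-variables identity $\int f(x)J^\eps_{t,s}(x)\,dx=\int f(\phi^\eps_{s,t}(y))\,dy$. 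Your argument is more direct; the paper's avoids touching $|\div b^\eps|$ and makes the role of the forward flow explicit.

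The genuine gap is in your $s$-continuity. The semigroup splitting is correct and the first piece is handled, but your proposed control of the shift term $\|J^\eps_{t,s_1}\circ\phi^\eps_{s_1,s_2}-J^\eps_{t,s_1}\|_{L^1(B_R)}$ does not work as stated: the displacement $\phi^\eps_{s_1,s_2}(x)-x\approx -(s_2-s_1)\,b^\eps(s_1,x)$ depends nontrivially on $x$, and an $L^\infty$-equibounded family composed with such space-dependent shifts is \emph{not} $L^1$-equicontinuous in general---one would need exactly the $x$-equicontinuity of $J^\eps_{t,s_1}$ that is unavailable uniformly in $\eps$. The continuity equation $\partial_s J^\eps+\div(b^\eps J^\eps)=0$ that you derive only yields weak (distributional) equicontinuity in $s$, not $L^1$. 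The paper does not go through a shift estimate at all: it argues that the $s$-statements follow by the same scheme as the $t$-statements, using that in the identity $\int f(x)J^\eps_{t,s}(x)\,dx=\int f(\phi^\eps_{s,t}(y))\,dy$ the $s$-dependence on the right is governed by $\partial_s\phi^\eps_{s,t}(y)=b^\eps(s,\phi^\eps_{s,t}(y))$, i.e.\ the ODE itself, so the Lipschitz-in-$s$ bound is immediate from $C_0$ alone. You should drop the shift-based decomposition and follow this reduce-to-weak-continuity strategy instead.
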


\begin{proof}
	It suffices to prove all statements about $J_{\cdot,s}$ on $[0, s]$. The arguments are exactly the same for the other halves using the fact that $s \mapsto \phi_{t,s}$ is the forward flow corresponding to the velocity $-b$. 
	
	The convergence \eqref{Jconv} goes through by compensated compactness arguments for determinants; see the Appendix of \cite{BJM}. The nonnegativity of $J$ now follows, because $J^\eps \ge 0$ for all $\eps$. 
	
	For fixed $\eps > 0$ and $(s,x) \in [0,T] \times \RR^d$, we have 
	\[
		\del_t J^\eps_{t,s}(x) = \div_x b^\eps(t, \phi^\eps_{t,s}(x)) J^\eps_{t,s}(x) \quad \text{for } t \in [0,s].
	\]
	Then \eqref{bregs} implies $\del_t J^\eps_{t,s}(x) \ge -dC_1(t) J^\eps_{t,s}(x)$, and so
	\[
		\frac{\del}{\del t} \left( J^\eps_{t,s}(x) e^{- d\int_t^s C_1(r)dr} \right) \ge 0.
	\]	
	In particular, for $t_1 < t_2 \le s$ and $R > 0$,
	\[
		\int_{B_R} |J^\eps_{t_2,s} - J^\eps_{t_1,s}| \le e^{\int_{t_1}^{t_2} C_1(r)dr} \int_{B_R} J^\eps_{t_2,s} - \int_{B_R} J^\eps_{t_1,s} + \left( e^{\int_{t_1}^{t_2} C_1(r)dr} - 1 \right) \int_{B_R} J^\eps_{t_2,s}.
	\]
	Identifying the modulus of continuity $\omega_R$ in the statement of the Lemma then reduces to proving the uniform-in-$\eps$ continuity of 
	\[
		[0,s] \ni t \mapsto \int_{B_R} J^\eps_{t,s}(x)dx;
	\]
	note that $\int_{B_R} J^\eps_{s,s}(x)dx = |B_R|$, so this will also imply that $\int_{B_R} J^\eps_{t,s}(x)dx$ is bounded uniformly in $\eps$.
	
	In view of the $L^\oo$-boundedness of $J^\eps$, it suffices to prove the uniform-in-$\eps$ continuity in $t$ of $\int  f(x) J^\eps_{t,s}(x)dx$ for any $f \in C_c(\RR^d)$. The change of variables formula gives
	\[
		\int  f(x) J^\eps_{t,s}(x)dx = \int f(\phi^\eps_{s,t}(x))dx.
	\]
	Note that $\del_t \phi^\eps_{s,t}(x) = -b^\eps(t,\phi^\eps_{s,t}(x))$, and the Lipschitz constant in $t$ of $\phi^\eps_{s,t}(x)$ depends only on an upper bound for $|x|$ and the constant $C_0$ in \eqref{A:monotoneB}, and, therefore, is independent of $\eps$.
\end{proof}

When $d = 1$, the $L^\oo$-weak-$\star$ convergence of $J^\eps = \del_x \phi^\eps$ to $J$ can be strengthened via an Aubin-Lions type compactness result.

\begin{proposition}\label{P:Jstrong}
	Assume $d = 1$, and let $J^\eps$ and $J$ be as in Lemma \ref{L:backwardsJ}. Then
	\[
		\lim_{\eps \to 0} J^\eps_{\cdot,s} = J_{\cdot,s} \quad \text{strongly in }L^1_\loc([0,s] \times \RR) \quad \text{and} \quad
		\lim_{\eps \to 0} J^\eps_{t,\cdot} = J_{t,\cdot} \quad \text{strongly in }L^1_\loc([t,T] \times \RR).
	\]
\end{proposition}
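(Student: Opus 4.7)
The plan is to apply an Aubin--Lions style compactness argument. Lemma~\ref{L:backwardsJ} provides, uniformly in $\eps$, both the $L^\infty$ bound $\norm{J^\eps}_{L^\infty([0,s]\times\RR)} \le \exp(\int_0^s C_1)$ and the uniform modulus of continuity in $L^1_\loc(\RR)$:
\[
\|J^\eps_{t_1,s}-J^\eps_{t_2,s}\|_{L^1(B_R)} \le \omega_R(|t_1-t_2|).
\]
Combined with spatial precompactness of $\{J^\eps_{t,s}\}_\eps$ in $L^1_\loc(\RR)$ at each fixed $t$, this is enough to upgrade the weak-$\star$ convergence from Lemma~\ref{L:backwardsJ} to strong convergence in $L^1_\loc([0,s]\times\RR)$. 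The symmetric statement for $J^\eps_{t,\cdot}$ is obtained by the same reasoning, exactly as in Lemma~\ref{L:backwardsJ}.

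The new input, specific to $d=1$, is the spatial compactness at each fixed time. Here the flow $\phi^\eps_{t,s}$ is monotone nondecreasing and uniformly Lipschitz with $J^\eps_{t,s}=\partial_x\phi^\eps_{t,s}\ge 0$, and $\phi^\eps_{t,s}\to\phi_{t,s}$ locally uniformly by Lemma~\ref{L:backwardsflow}. The fundamental theorem of calculus yields, for every $a<b$,
\[
\int_a^b J^\eps_{t,s}(x)\,dx=\phi^\eps_{t,s}(b)-\phi^\eps_{t,s}(a)\longrightarrow \phi_{t,s}(b)-\phi_{t,s}(a)=\int_a^b J_{t,s}(x)\,dx.
\]
To pass from this interval-wise $L^1$-norm convergence to strong $L^1_\loc$ convergence, I would split $\RR$ according to the behavior of the limit flow. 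On the open ``vacuum'' set $U=\{x:\phi_{t,s}\text{ is locally constant near }x\}$, the identity above combined with $J^\eps\ge 0$ and $\phi_{t,s}(b)-\phi_{t,s}(a)=0$ on each component $(a,b)\subset U$ forces $J^\eps_{t,s}\to 0=J_{t,s}$ in $L^1_\loc(U)$. On the complementary set, $\phi_{t,s}$ is strictly increasing, hence locally invertible with a locally Lipschitz inverse; the inverse maps $(\phi^\eps_{t,s})^{-1}=\phi^\eps_{s,t}$ converge locally uniformly to $(\phi_{t,s})^{-1}$, and the dimensional identity $J^\eps_{t,s}(x)=1/\partial_y\phi^\eps_{s,t}(\phi^\eps_{t,s}(x))$ transfers the interval-wise control of the inverse-flow derivatives to a.e.\ convergence of $J^\eps_{t,s}$; the uniform $L^\infty$ bound then upgrades this to strong $L^1_\loc$ convergence by dominated convergence.

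The principal obstacle is the second step, since interval-wise convergence of integrals does not in general imply strong $L^1$ convergence (as shown by $1+\sin(nx)$ on $[0,2\pi]$), and even monotonicity plus uniform convergence of Lipschitz primitives is insufficient (take $\phi_n(x)=x+n^{-1}(1-\cos nx)$). One must therefore invoke the ODE structure of $\phi^\eps$, not just the monotonicity and uniform Lipschitz bound. The natural tool, as indicated above, is the strict monotonicity of $\phi_{t,s}$ on the non-vacuum set together with the uniform-in-$\eps$ Lipschitz continuity of $(\phi^\eps_{t,s})^{-1}$ on compact subsets away from the vacuum, which allows the analysis of $J^\eps_{t,s}$ to be reduced to that of the inverse flow, where the bound $J^\eps_{t,s}\ge$ const $>0$ (locally, away from vacuum) makes the passage from weak to strong convergence straightforward.
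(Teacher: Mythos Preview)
Your overall plan (Aubin--Lions via time equicontinuity plus spatial compactness at fixed times) is reasonable, and the treatment of the vacuum set $U$ is correct: $J^\eps\ge 0$ together with $\int_a^b J^\eps_{t,s}\to 0$ on each component of $U$ does force $J^\eps_{t,s}\to 0$ in $L^1_\loc(U)$. The gap is in the non-vacuum region. You assert ``uniform-in-$\eps$ Lipschitz continuity of $(\phi^\eps_{t,s})^{-1}$ on compact subsets away from the vacuum'' and, equivalently, ``$J^\eps_{t,s}\ge\text{const}>0$ locally, away from vacuum.'' Neither is justified. The inverse $(\phi^\eps_{t,s})^{-1}=\phi^\eps_{s,t}$ has derivative $1/J^\eps_{t,s}(\phi^\eps_{s,t}(\cdot))$, and the only uniform control is $J^\eps_{t,s}\le \exp(\int_t^s C_1)$ from above, giving a \emph{lower} bound on $\partial_y\phi^\eps_{s,t}$, not an upper one. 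The non-vacuum set is defined via the \emph{limit} flow $\phi_{t,s}$, and strict monotonicity of $\phi_{t,s}$ there says nothing about a uniform positive lower bound for $J^\eps_{t,s}$; indeed even the limit $J_{t,s}$ need not be bounded away from zero on compact subsets of $\RR^d\setminus U$. You yourself note that interval-wise convergence plus monotone Lipschitz primitives is not enough (your $\phi_n(x)=x+n^{-1}(1-\cos nx)$ example lives entirely on the ``non-vacuum'' set), so the ODE structure must enter in a more substantive way than you have indicated.

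The paper's route is genuinely different and sidesteps this issue. It does not attempt spatial compactness of $J^\eps_{t,s}$ directly. Instead it observes that $f^\eps(s,x):=J^\eps_{t,s}(x)$ solves the one-dimensional continuity equation $\partial_s f^\eps+\partial_x(b^\eps f^\eps)=0$, so after time-mollification $\partial_x[\rho_n*_t(b^\eps f^\eps)]=-\partial_s(\rho_n*_t f^\eps)$, and the right-hand side is controlled in $L^\infty_s L^1_x$ by the uniform time modulus $\omega_R$. Thus $(\rho_n*_t(b^\eps f^\eps))_\eps$ is spatially $BV$ uniformly in $\eps$ for each fixed $n$, hence precompact in $L^1_\loc$; letting $n\to\infty$ uniformly in $\eps$ gives precompactness of $(b^\eps f^\eps)_\eps$. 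A preliminary reduction to $b\ge 1$ (by localization and a time-dependent shift) then allows one to divide and conclude strong $L^1_\loc$ convergence of $f^\eps$ itself. The one-dimensionality is used exactly in reading the continuity equation as $\partial_x(\cdot)=-\partial_s(\cdot)$, converting time regularity into a spatial $BV$ bound for the product $b^\eps f^\eps$ rather than for $f^\eps$.
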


\begin{proof}

Fix $t \in [0,T]$ and $R > 0$. Lemma \ref{L:backwardsflow} implies that there exists $M$ independent of $\eps$ such that $|\phi^\eps_{t,s}(x)| \le M$ for all $s \in [t,T]$ and $x \in [-R,R]$. Upon redefining $b$ outside of $[0,T] \times [-2R,2R]$, we find that $\phi_{t,s}(x)$, and therefore $J_{t,s}(x)$, is unchanged, and therefore, in order to prove the $L^1$-convergence in $[t,T] \times [-R,R]$, we may assume without loss of generality that $b$ is bounded uniformly. Applying the transformation $\tilde \phi_{t,s}(x) = \phi_{t,s}(x) - \int_t^s C(r)dr$ for an appropriate $C \in L^1_+([0,T])$ depending on $C_0$ from \eqref{A:monotoneB}, we may also assume $b \ge 1$. 

For $(s,x) \in [t,T] \times \RR$, set $f^\eps(s,x) = J^\eps_{t,s}(x)$. Then $f^\eps$ solves the continuity equation
\[
	\del_s f^\eps + \del_x  \left( b^\eps(s,x) f^\eps \right) = 0 \quad \text{in } [t,T] \times \RR \quad \text{and} \quad f^\eps(t,\cdot) = 1.
\]
For a standard mollifier $\rho \in C^\oo_c([-1,1])$, let $\rho_n = n \rho(\cdot/n)$ and $f^{\eps,n} = \rho_n *_t  f^\eps$ be the mollification of $f^\eps$ only in the time variable. We then have
\[
	\del_s f^{\eps,n} + \del_x \left[ \rho_n *_t  (b^\eps f^\eps)  \right] = 0 \quad \text{in } \left[ t + \frac{1}{n}, T \right] \times \RR
\]
and, for any  $R > 0$,
\[
	\sup_{s \in [t+1/n,T]} \norm{\del_x\left[ \rho_n *_t (b^\eps f^\eps)  \right](s,\cdot)   }_{L^1([-R,R])}  \le
	\sup_{s \in [t+1/n,T]} \norm{ \del_s f^{\eps,n}(s,\cdot) }_{L^1([-R,R])} \le n \norm{\rho'}_{L^1(\RR)} \omega_R\left( \frac{1}{n} \right),
\]
where $\omega_R$ is as in \eqref{Junifcts}. It follows that, for fixed $n \in \NN$, $(\rho_n *_t (b^\eps f^\eps))_{\eps > 0}$ is precompact  in $L^1([t,T] \times [-R,R])$, and so, because
\[
	\lim_{n \to \oo} \rho_n *_t (b^\eps f^\eps)  = bf
\]
in $L^1([t,T] \times [-R,R])$, uniformly in $\eps$, we conclude that $(b^\eps f^\eps)_{\eps > 0}$ is precompact in $L^1([t,T] \times [-R,R])$. This implies that, as $\eps \to 0$, $b^\eps f^\eps$ converges strongly in $L^1([t,T] \times [-R,R])$ to $bf$.

Fix any subsequence $(\eps_n)_{n \ge 0}$ approaching zero as $n \to \oo$. Then there exists a further subsequence such that $f^{\eps_{n_k}} b^{\eps_{n_k}} \xrightarrow{k \to \oo} fb$ almost everywhere, and therefore $f^{\eps_{n_k}} \xrightarrow{k \to \oo} f$ a.e. in $[t,T] \times [-R,R]$ because $b \ge 1$ and 
\begin{align*}
	f^\eps(s,x) - f(s,x) &= \frac{b(s,x)f^\eps(s,x) - b(s,x) f(s,x)}{b(s,x)} \\
	&= \frac{b^\eps(s,x)f^\eps(s,x) - b(s,x) f(s,x)}{b(s,x)} + \frac{\left( b(s,x) - b^\eps(s,x)\right) f^\eps(s,x)}{b(s,x)}.
\end{align*}
The convergence of $f^{\eps_{n_k}}$ to $f$ in $L^1([t,T] \times [-R,R])$, and therefore the convergence of the full family $(f^\eps)_{\eps > 0}$ to $f$, is a consequence of the Lebesgue dominated convergence theorem.
\end{proof}

\begin{remark}\label{R:extendJstrong?}
	The one-dimensional structure is important in the proof of Proposition \ref{P:Jstrong}, in particular, in deducing from the equicontinuity of $J^\eps$ in time that $(b^\eps J^\eps)_{\eps > 0}$ belongs to a precompact subset of $L^1$. It is not immediately clear whether this argument can be extended to multiple dimensions.
\end{remark}

\subsection{The forward flow as the right-inverse of the backward flow}

We next investigate the solvability of \eqref{monotone:flow} forward in time. This is done by analyzing the Jacobian $J$ from the previous subsection in order to invert the backward flow. Similar methods are used in \cite{CSmfg}, and, by including the Jacobian in the analysis, we obtain additionally the almost-everywhere continuity of the inverse.

We will revisit this topic in Section \ref{sec:monotone} when we analyze the forward flow, which will arise from the theory of renormalized solutions of the appropriate transport equation.

\begin{proposition}\label{P:invertbackward}
	For $t \le s$, there exists a set $A_{ts} \subset \RR^d$ of full measure such that, for all $y \in A_{ts}$, $\phi_{t,s}^{-1}(\{y\})$ is a singleton, which we denote by $\left\{ \phi_{s,t}(y) \right\}$. Moreover, there exists a version of the map $\phi_{s,t}:\RR^d \to \RR^d$ such that $\phi_{s,t}$ is continuous a.e.
\end{proposition}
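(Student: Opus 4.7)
My plan is to invert the backward flow on a full-measure set via an area-formula argument and then extend it to a measurable right-inverse. First, I would establish surjectivity of $\phi_{t,s}$ with controlled preimages: for any $y \in \RR^d$, the regularized inverse $x^\eps := \phi^\eps_{s,t}(y) = (\phi^\eps_{t,s})^{-1}(y)$ is well-defined since $\phi^\eps_{t,s}$ is a diffeomorphism, and the growth bound \eqref{boundODE}, which depends only on $C_0$ by Remark \ref{R:apriori} and applies to Filippov solutions in both time directions, forces $|x^\eps| \le C(1+|y|)$ uniformly in $\eps$. A subsequential limit $x$ then satisfies $\phi_{t,s}(x) = y$ by the uniform convergence $\phi^\eps_{t,s} \to \phi_{t,s}$ from Lemma \ref{L:backwardsflow}; in particular, $\phi_{t,s}^{-1}(B_R) \subset B_{M(R)}$ for some $M(R) = C(1+R)$.

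The core step is to show $\#\phi_{t,s}^{-1}(\{y\}) = 1$ for a.e.\ $y$. For any $f \in C_c(B_R)$, the diffeomorphism change of variables gives
\[
	\int_{B_{M(R)}} f(\phi^\eps_{t,s}(x)) J^\eps_{t,s}(x)\, dx = \int_{\RR^d} f(y)\, dy,
\]
where the $x$-support of the integrand is confined to $B_{M(R)}$ by the preimage bound applied to $\phi^\eps_{t,s}$. Sending $\eps \to 0$, the composition $f(\phi^\eps_{t,s})$ converges uniformly on $B_{M(R)}$ to $f(\phi_{t,s})$ (Lemma \ref{L:backwardsflow}), while $J^\eps_{t,s}$ converges to $J_{t,s}$ weak-$\star$ in $L^\oo$: the space-time convergence provided by Lemma \ref{L:backwardsJ} is upgraded to fixed-$t$ weak-$\star$ against the bounded test function $f(\phi_{t,s})$ via averaging over $[t-\delta, t+\delta]$, the uniform-in-$\eps$ $L^1_\loc$-modulus \eqref{Junifcts}, and then letting $\delta \to 0$. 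Combining with the area formula for the Lipschitz map $\phi_{t,s}$ produces
\[
	\int_{\RR^d} f(y)\, \#\bigl(\phi_{t,s}^{-1}(\{y\}) \cap B_{M(R)}\bigr)\, dy = \int_{\RR^d} f(y)\, dy;
\]
since for $y \in B_R$ the localized multiplicity equals $\#\phi_{t,s}^{-1}(\{y\})$ by the preimage bound, arbitrariness of $f$ gives $\#\phi_{t,s}^{-1}(\{y\}) = 1$ for a.e.\ $y \in B_R$. Taking a countable union over $R \in \NN$ yields the full-measure set $A_{ts}$.

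On $A_{ts}$ I define $\phi_{s,t}(y)$ to be the unique preimage; off the null set $A_{ts}^c$, the closed-valued, nonempty multifunction $y \mapsto \phi_{t,s}^{-1}(\{y\})$, which is measurable by continuity of $\phi_{t,s}$, admits a measurable selector via Kuratowski--Ryll-Nardzewski, extending $\phi_{s,t}$ to a measurable right-inverse of $\phi_{t,s}$ on all of $\RR^d$. To check a.e.\ continuity, fix $y_0 \in A_{ts}$ and $y_n \to y_0$: the points $x_n := \phi_{s,t}(y_n)$ are bounded by the surjectivity estimate, and every subsequential limit $x_\infty$ satisfies $\phi_{t,s}(x_\infty) = y_0$ by continuity of $\phi_{t,s}$, forcing $x_\infty = \phi_{s,t}(y_0)$ by uniqueness on $A_{ts}$, so $x_n \to \phi_{s,t}(y_0)$. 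The main technical obstacle is the upgrade from space-time to fixed-$t$ weak-$\star$ convergence of the Jacobians, for which the uniform time-modulus \eqref{Junifcts} is essential; the rest is routine bookkeeping between the area formula and measurable selection.
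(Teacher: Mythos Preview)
Your argument is correct and takes a genuinely different route from the paper. The paper first proves an auxiliary topological lemma showing that $\phi_{t,s}^{-1}(K)$ is compact and connected for every compact connected $K$ (obtained by writing the preimage as a nested intersection/union of preimages under the regularized diffeomorphisms). It then defines $A_{ts}$ as the set of $y$ whose fiber contains a differentiability point with nonzero Jacobian, checks by change of variables that $A_{ts}$ has full measure, and uses a Taylor expansion to see that such a point is isolated in the fiber; connectedness then forces the fiber to be a singleton. The a.e.\ continuity is deduced from the same topological lemma.

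Your approach bypasses the topological lemma entirely: you pass to the limit in the smooth change-of-variables identity and match it against the area formula for the Lipschitz map $\phi_{t,s}$, reading off that the multiplicity is a.e.\ equal to one. This is cleaner and more measure-theoretic, at the cost of invoking the Federer area formula (and the observation $J_{t,s}\ge 0$ from Lemma~\ref{L:backwardsJ} so that $|J_{t,s}|=J_{t,s}$). The paper's route, on the other hand, yields extra structural information---compactness and connectedness of \emph{all} fibers, not just the generic ones---and an explicit description of $A_{ts}$ in terms of the Jacobian. Your upgrade from the space-time weak-$\star$ convergence of $J^\eps$ to fixed-$t$ convergence via the uniform time modulus \eqref{Junifcts} is the right way to handle that point, and your a.e.\ continuity argument coincides with the paper's.
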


As an intermediate step, we first prove the following.

\begin{lemma}\label{L:topology}
	Assume $0 \le t \le s \le T$ and $K \subset \RR^d$ is nonempty, compact, and connected. Then $\phi_{t,s}^{-1}(K)$ is nonempty, compact, and connected.
\end{lemma}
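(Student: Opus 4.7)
The plan is to realize $\phi_{t,s}^{-1}(K)$ as a Hausdorff limit of compact connected sets obtained from the smooth approximations $\phi^\eps_{t,s}$, exploiting the fact that, for the regularized problem, $\phi^\eps_{t,s}$ is a global homeomorphism of $\RR^d$. Compactness and nonemptiness should be handled first as easy preliminaries, and the main work will go into the connectedness argument.

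For compactness, closedness of $\phi_{t,s}^{-1}(K)$ follows from continuity of $\phi_{t,s}$ in Lemma \ref{L:backwardsflow}. Boundedness comes from running the ODE $\dot g(r) = b(r,g(r))$ with $g(r) = \phi_{r,s}(x)$ backward from $r = s$ (where $g(s)=x$) to $r = t$ (where $g(t)=\phi_{t,s}(x)\in K$): the linear growth bound on $b$ from \eqref{A:monotoneB} and the Gronwall inequality give $1+|x| \le (1+|y|)\exp\bigl(\int_t^s C_0(r)\,dr\bigr)$ whenever $\phi_{t,s}(x) = y \in K$, and taking $|y|$ bounded on the compact set $K$ yields a common ball containing $\phi_{t,s}^{-1}(K)$. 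For nonemptiness, fix $y \in K$ and let $x^\eps := (\phi^\eps_{t,s})^{-1}(y)$, which exists since $\phi^\eps_{t,s}$ is a homeomorphism. The same Gronwall argument applied to $b^\eps$ (which satisfies \eqref{A:monotoneB} uniformly in $\eps$) bounds $|x^\eps|$ uniformly, so along a subsequence $x^\eps \to x^*$, and the local uniform convergence $\phi^\eps_{t,s} \to \phi_{t,s}$ gives $\phi_{t,s}(x^*) = y$, hence $x^* \in \phi_{t,s}^{-1}(K)$.

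For connectedness, pick $\eps_n \to 0$, set $\delta_n := \|\phi^{\eps_n}_{t,s} - \phi_{t,s}\|_{L^\oo(B_R)}$ on a ball $B_R$ containing $\phi_{t,s}^{-1}(K)$, and define
\[
	\tilde K_n := (\phi^{\eps_n}_{t,s})^{-1}\bigl(\overline{K^{\delta_n}}\bigr),
\]
where $K^\delta$ is the open $\delta$-neighborhood of $K$. Since $K$ is compact and connected, $\overline{K^{\delta_n}}$ is compact and connected; as $\phi^{\eps_n}_{t,s}$ is a homeomorphism, $\tilde K_n$ is compact and connected, and the Gronwall argument applied to $\phi^{\eps_n}$ bounds $\tilde K_n$ uniformly in $n$. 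I will then prove the Hausdorff convergence $\tilde K_n \to \phi_{t,s}^{-1}(K)$ via two Kuratowski inclusions. First, $\phi_{t,s}^{-1}(K) \subset \tilde K_n$: for $z \in \phi_{t,s}^{-1}(K)$ the point $k := \phi_{t,s}(z) \in K$ satisfies $|\phi^{\eps_n}_{t,s}(z) - k| \le \delta_n$, so $\phi^{\eps_n}_{t,s}(z) \in \overline{K^{\delta_n}}$. Second, if $z_n \in \tilde K_n$ with $z_n \to z^*$, then $|\phi^{\eps_n}_{t,s}(z_n) - k_n| \le \delta_n$ for some $k_n \in K$; by compactness of $K$ and joint continuity/local uniform convergence one gets $\phi_{t,s}(z^*) = \lim k_n \in K$, so $z^* \in \phi_{t,s}^{-1}(K)$. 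These together pin both Kuratowski limits equal to $\phi_{t,s}^{-1}(K)$, which, for uniformly bounded sets in $\RR^d$, is the same as Hausdorff convergence. Since Hausdorff limits of compact connected sets remain compact and connected, $\phi_{t,s}^{-1}(K)$ is connected.

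The delicate point — and the main obstacle — is choosing the thickening parameter $\delta_n$ correctly. Taking the naive candidate $(\phi^{\eps_n}_{t,s})^{-1}(K)$ fails already in the scalar sign example of Remark \ref{R:sgnx}, where $(\phi^{\eps_n}_{t,s})^{-1}(\{0\}) = \{0\}$ while $\phi_{t,s}^{-1}(\{0\}) = [-(s-t),s-t]$; these singletons cannot Hausdorff-converge to the interval. Using $\overline{K^{\delta_n}}$ with $\delta_n$ slightly exceeding $\|\phi^{\eps_n}_{t,s}-\phi_{t,s}\|_\infty$ is precisely what allows the inclusion $\phi_{t,s}^{-1}(K) \subset \tilde K_n$ to hold while keeping the Kuratowski upper limit trapped inside $\phi_{t,s}^{-1}(K)$.
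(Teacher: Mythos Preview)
Your proof is correct. Both approaches hinge on the same engine---the regularized flows $\phi^{\eps}_{t,s}$ are homeomorphisms, so preimages of thickened neighborhoods of $K$ are compact and connected, and these converge to $\phi_{t,s}^{-1}(K)$ as the regularization vanishes---but the implementations differ. The paper works with the set identity
\[
	\phi_{t,s}^{-1}(K) = \bigcap_{r>0}\,\bigcup_{n\in\NN}\,\bigcap_{k\ge n}\,(\phi^{k}_{t,s})^{-1}(K_r)
\]
and propagates connectedness through the nested unions and intersections, using a fixed thickening radius $r$ independent of the approximation index. You instead choose the thickening $\delta_n$ adaptively, tied to the uniform distance $\|\phi^{\eps_n}_{t,s}-\phi_{t,s}\|_{L^\oo(B_R)}$, and pass to the limit by Hausdorff convergence of a single sequence $\tilde K_n$. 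Your choice buys a cleaner endgame: the two Kuratowski inclusions are straightforward once $\delta_n$ is set correctly, and connectedness then comes for free from the standard fact that Hausdorff limits of compact connected sets are connected. The paper's decomposition, by contrast, requires some care at the step ``$\bigcap_{k\ge n}(\phi^k_{t,s})^{-1}(K_r)$ is connected,'' since intersections of connected sets need not be connected without a nesting argument; your route sidesteps this entirely. Your observation about the $\sgn x$ example, showing why naive preimages $(\phi^{\eps_n}_{t,s})^{-1}(K)$ fail, is a good diagnostic for why both proofs must thicken $K$.
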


\begin{proof}
	For $r > 0$, denote $K_r := \bigcup_{y \in K} B_r(y)$. Fix a sequence $(b^n)_{n \in \NN}$ satisfying \eqref{bregs}\footnote{That is, we abuse notation and suppose that $b^n = b^{\eps_n}$ for $(b^\eps)_{\eps >0}$ satisfying \eqref{bregs} and some $(\eps_n)_{n \in \NN}$ satisfying $\lim_{n \to \oo} \eps_n = 0$.}, and let $\phi^n_{t,s}$ denote the corresponding backward flow from the previous subsections.
	
	We first show that
	\begin{equation}\label{upperlimit}
		\phi_{t,s}^{-1}(K) = \bigcap_{r > 0} \bigcup_{n \in\NN} \bigcap_{k \ge n} (\phi_{t,s}^k)^{-1}(K_r).
	\end{equation}
	Suppose $x \in \phi_{t,s}^{-1}(K)$. Then $y = \phi_{t,s}(x) \in K$. Setting $y_n := \phi^n_{t,s}(x)$, we have $\lim_{n \to \oo} y_n = y$ by Lemma \ref{L:backwardsflow}, which means that, for all $r > 0$, there exists $n \in \NN$ such that, for all $k \ge n$, $\phi^k_{t,s}(x) \in B_r(y) \subset K_r$. This proves the $\subset$ direction of \eqref{upperlimit}. 
	
	Now suppose $x$ belongs to the right-hand side of \eqref{upperlimit}. Then, for all $r > 0$, there exists $n \in \NN$ such that $x \in (\phi_{t,s}^k)^{-1}(K_r)$ for all $k \ge n$. Set $y_k := \phi_{t,s}^k(x)$, so that we have $y_k \subset K_r$ for all $k \ge n$. We have $y := \lim_{k \to \oo} y_k = \lim_{k \to \oo} \phi_{t,s}^k(x) = \phi_{t,s}(x)$ by Lemma \ref{L:backwardsflow}. On the other hand, we also have $y \in \oline{K_r}$, and so
	\[
		\phi_{t,s}(x) \subset \bigcap_{r > 0} \oline{K_r} = K.
	\]
	Thus, the $\supset$ direction of \eqref{upperlimit} is established.
	
	The continuity of $\phi_{t,s}$ and the compactness of $K$ imply that $\phi_{t,s}^{-1}(K)$ is closed. We note also that $(\phi^k_{t,s})^{-1} = \phi^k_{s,t}$ satisfies \eqref{boundODE} uniformly in $k$, because the bound only depends on the constant $C_0$ in the linear growth bound of \eqref{A:monotoneB}, which is also satisfied by $-b^k$. This along with \eqref{upperlimit} implies that $\phi_{t,s}^{-1}(K)$ is bounded, and thus compact.
	
	We now show that $\phi_{t,s}$ is surjective. Using again the bound \eqref{boundODE} satisfied uniformly in $k$ for $\phi^k_{s,t}$, we set $x_n := (\phi^n_{t,s})^{-1}(y) = \phi^n_{s,t}(y)$ and note that $(x_n)_{n \in \NN}$ is bounded. Passing to a subsequence, we have $\lim_{k \to \oo} x_{n_k} = x$ for some $x \in \RR^d$, and then $y = \phi^{n_k}_{t,s}(x_k)$, so that $y = \lim_{k \to \oo} \phi^{n_k}_{t,s}(x_k) = \phi_{t,s}(x)$.
	
	Finally, we show $\phi_{t,s}^{-1}(K)$ is connected. For each $k \in \NN$, $(\phi^k_{t,s})^{-1}(K_r)$ is connected, and therefore so is the intersection $\bigcap_{k \ge n} (\phi^k_{t,s})^{-1}(K_r)$ for each $n$. These sets are nested in $n$, so taking the union in $n \in \NN$ yields a connected set. Taking the intersection over $r > 0$ gives the connectedness of $\phi^{-1}_{t,s}(K$).
\end{proof}

\begin{remark}
	The fact that the approximate backward flows converge \emph{uniformly} to $\phi_{t,s}$ is used in the second-to-last paragraph of the proof, in order to show that $\phi_{t,s}$ is surjective.
\end{remark}

\begin{proof}[Proof of Proposition \ref{P:invertbackward}]
	We identify the set by
	\[
		A_{t,s} = \left\{ y \in \RR^d: \text{there exists } x \in \phi_{t,s}^{-1}(\{y\}) \text{ such that $\phi_{t,s}$ is differentiable at $x$ and } J_{t,s}(x) \ne 0 \right\}.
	\]
	We first check that $A_{t,s}$ has full measure. Its complement consists of
	\begin{align*}
		\RR^d \backslash A_{ts} = &\left\{ y \in \RR^d: J_{t,s} = 0 \text{ at the points of differentiability of $\phi_{t,s}$ on } \phi_{t,s}^{-1}(\{y\}) \right\} \\
		&\cup \left\{ y \in \RR^d: \phi_{t,s} \text{ is not differentiable anywhere in } \phi_{t,s}^{-1}(\{y\}) \right\}.
	\end{align*}
	The fact that $\phi_{t,s}$ is differentiable a.e. and the change of variables formula then give
	\[
		|\RR^d \backslash A_{t,s} | = \int_{\RR^d} \ind\{ \phi_{t,s}(x) \in \RR^d \backslash A_{t,s} \} J_{t,s}(x)dx = 0.
	\]
	
	It remains to show that $\phi_{t,s}^{-1}(\{y\})$ is a singleton for all $y \in A_{t,s}$. By Lemma \ref{L:topology}, $\phi_{t,s}^{-1}(\{y\})$ is nonempty, compact, and connected. Suppose $x, \tilde x \in \phi_{t,s}^{-1}(\{y\})$ are such that $J_{t,s}(x) \ne 0$. A Taylor expansion gives
	\[
		y= \phi_{t,s}(\tilde x) = \phi_{t,s}(x) + \nabla_x \phi_{t,s}(x) \cdot (x - \tilde x) + o(|x - \tilde x|) = y + \nabla_x \phi_{t,s}(x)\cdot (x - \tilde x) + o(|x - \tilde x|).
	\]
	The invertibility of $\nabla_x \phi_{t,s}(x)$ then implies that, if $|\tilde x - x|$ is sufficiently small, then $\tilde x = x$, or, in other words, $x$ is an isolated point. But then the connected set $\phi_{t,s}(\{y\})$ must be equal to $\{x\}$, and we call $x = \phi_{s,t}(y)$.
	
	For $y \in A_{t,s}$, we then have $( \phi_{t,s} \circ \phi_{s,t})(y) = y$. Since $\phi_{t,s}^{-1}(\{y\})$ is nonempty for any $y \in \RR^d$, we may define a version of $\phi_{s,t}$ on all of $\RR^d$ by imposing that $\phi_{s,t}(y) \in (\phi_{t,s}^{-1})(\{y\})$ for any $y \in A_{t,s}$. For this version, we have $\phi_{t,s} \circ \phi_{s,t} = \Id$ everywhere on $\RR^d$. Suppose now that $y \in A_{t,s}$ and $\lim_{n \to \oo} y_n = y$ for some sequence $(y_n)_{n \in \NN} \subset \RR^d$. Then
	\[
		\lim_{n \to \oo} (\phi_{t,s} \circ \phi_{s,t})(y_n) = (\phi_{t,s} \circ \phi_{s,t})(y). 
	\]
	We have
	\[
		(\phi_{s,t}(y_n))_{n \in \NN} \subset \bigcup_{n \in \NN} (\phi_{t,s})^{-1}(\{y_n\}),
	\]
	which implies by Lemma \ref{L:topology} that $(\phi_{s,t}(y_n))_{n \in \NN}$ is bounded. Letting $z$ be a limit point of this set, we have, by continuity of the backward flow, that $y = \lim_{n \to \oo} y_n = \phi_{t,s}(z)$, and therefore $z = \phi_{s,t}(y)$.
\end{proof}

\begin{remark}\label{R:fJ?}
We shall see in Section \ref{sec:monotone} that the forward flow is always $BV$ in space. Therefore, the ``forward Jacobian'' $J_{t,s}$ for $t > s$ can only be understood as a measure. Indeed, returning to the example $b(t,x) = \sgn x$ on $\RR$, the right inverse $\phi_{t,s}$ of $\phi_{s,t}$ given by \eqref{sgnx} is $\phi_{t,s}(x) = x + (\sgn x)(t-s)$ for $s \le t$, which is discontinuous only at $0$. The backward Jacobian is given by $J_{s,t}(x) = \ind\left\{ |x| \ge t-s \right\}$, and the forward one is $J_{t,s} = 1 + 2(t-s) \delta_0$.
\end{remark}

\begin{remark}\label{R:leftinv?}
The formula $\phi_{s,t} \circ \phi_{t,s} = \Id$ makes sense a.e. if $s < t$, because $\phi_{s,t}$ is Lipschitz and $\phi_{t,s}$ is measurable. On the other hand, $\phi_{t,s}$ is not also a left-inverse, since the formula $\phi_{t,s} \circ \phi_{s,t}$ does not make sense. In the above example, $\phi_{s,t}(x)$ is equal to $0$, for $|x| \le t-s$, and $0$ is a point of discontinuity for $\phi_{t,s}$. In general, the concentration of $\phi_{s,t}$ on sets of measure $0$ forbids applying $\phi_{t,s}$ as a left-inverse.
\end{remark}

\subsection{Compressive stochastic flows} We now fix a matrix-valued map
\begin{equation}\label{A:Sigma}
	\Sigma \in L^2([0,T], C^{0,1}(\RR^d; \RR^{d \times m})),
\end{equation}
and assume that
\begin{equation}\label{A:BM}
	W: \Omega \times [0,T] \to \RR^m \quad \text{is a standard Brownian motion on a given probability space } (\Omega, \mcl F, \mbb P, \mbb E).
\end{equation}
In order to extend the results in the preceding subsections, and, in particular, to bypass the difficulties of the backward time direction, we consider forward SDEs with drift satisfying the opposite of \eqref{A:monotoneB}, that is,
\begin{equation}\label{A:antimonotoneB}
	B: [0,T] \times \RR^d \to \RR^d, \quad -B \text{ satisfies } \eqref{A:monotoneB},
\end{equation}
and consider the flow
\begin{equation}\label{monotone:SDE}
	\begin{dcases}
	d_s \Phi_{s,t}(x) = B(s,\Phi_{s,t}(x))ds + \Sigma(s,\Phi_{s,t}(x)) dW_s, & s \in [t,T],\\
	\Phi_{t,t}(x) = x.
	\end{dcases}
\end{equation}
Once again, \eqref{monotone:SDE} must be understood in the Filippov sense, which means, for $s \in [t,T]$,
\begin{equation}\label{integraleq}
	\Phi_{s,t}(x) = x + \int_t^s \alpha_r dr + \int_t^s \Sigma(r,\Phi_{r,t}(x))dW_r, \quad \alpha_s \in B(s, \Phi_{s,t}(x)),
\end{equation}
and we remark that our assumptions will allow us to always consider probabilistically strong solutions; that is, we solve \eqref{integraleq} path by path for almost every continuous $W$ with respect to the Wiener measure. Depending on the context in later sections (in particular, the time direction of solvability for the transport and continuity equations), we consider different examples for $B$ and $\Sigma$ for which these assumptions are satisfied.

\begin{lemma}\label{L:backwardsstochflow}
	For every $(t,x) \in [0,T] \times \RR^d$ and $\PP$-almost surely, there exists a unique strong solution $\Phi_{s,t}(x)$ of \eqref{monotone:SDE} defined on $[t,T] \times \RR^d$. Moreover, for all $p \in [2,\oo)$, there exists a constant $C = C_{p} > 0$ depending only on the assumptions \eqref{A:monotoneB} and \eqref{A:Sigma} such that
\begin{equation}\label{ELipschitzSDE}
	\mbb E |\Phi_{s,t}(x) - \Phi_{s,t}(y)|^p \le C |x-y|^p \quad \text{for all } 0 \le t \le s \le T \text{ and } x,y \in \RR^d,
\end{equation}
\begin{equation}\label{EboundSDE}
		\EE|\Phi_{s,t}(x)|^p \le C(|x|^p + 1) \quad \text{for all } -0 \le t \le s \le T \text{ and } x \in \RR^d,
	\end{equation}
	and
	\begin{equation}\label{EtimeregSDE}
		\left\{
		\begin{split}
		&\EE |\Phi_{s_1,t}(x) - \Phi_{s_2,t}(x)|^p  \le C (1 + |x|)|s_1 - s_2|^{p/2}\\
		&\text{for all $t \in [0,T]$, $s_1,s_2 \in [t,T]$, and } x \in \RR^d.
 		\end{split}
		\right.
	\end{equation}
	With probability one, for all $0 \le r \le s \le t \le T$, $\Phi_{t,s} \circ \Phi_{s,r}= \Phi_{t,r}$. If $(b^\eps)_{\eps > 0}$ are regularizations satisfying \eqref{bregs}, then, with probability one, the corresponding stochastic flows $\Phi^\eps$ converge locally uniformly as $\eps \to 0$ to $\Phi$.
\end{lemma}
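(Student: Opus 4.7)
The plan is to mimic the deterministic construction of Lemma \ref{L:backwardsflow}, replacing pointwise ODE arguments by stochastic ones based on It\^o's formula. Fix a family of regularizations $(B^\eps)_{\eps > 0}$ with $-B^\eps$ satisfying \eqref{A:monotoneB} uniformly in $\eps$ and $B^\eps \in L^1([0,T], C^{0,1}(\RR^d))$; combined with the Lipschitz diffusion $\Sigma$, classical strong-solution theory yields a unique $\Phi^\eps$ solving the regularized version of \eqref{monotone:SDE}. I would then derive the estimates \eqref{ELipschitzSDE}, \eqref{EboundSDE}, and \eqref{EtimeregSDE} uniformly in $\eps$. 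For \eqref{ELipschitzSDE}, apply It\^o to $|\Phi^\eps_{s,t}(x) - \Phi^\eps_{s,t}(y)|^p$: the drift contribution is controlled by the upper one-sided Lipschitz property $(B^\eps(r,\xi) - B^\eps(r,\eta)) \cdot (\xi - \eta) \le C_1(r) |\xi - \eta|^2$, the It\^o correction from $\Sigma$ by its Lipschitz constant, and the stochastic integral by Burkholder--Davis--Gundy; Gr\"onwall completes the argument. The bound \eqref{EboundSDE} is similar via It\^o on $(1 + |\Phi^\eps|^2)^{p/2}$ and the linear growth, while \eqref{EtimeregSDE} follows from the integral form by Cauchy--Schwarz on the drift and BDG on the stochastic term.

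Pathwise uniqueness in the Filippov sense comes from the fact that the upper one-sided Lipschitz property transfers to the convex hull $B(r,\cdot)$: if $\alpha \in B(r,\xi)$ and $\beta \in B(r,\eta)$, writing each as a convex combination of limit points of $B$ at $\xi$ and $\eta$ and passing to the limit in the a.e.\ OSL estimate yields $(\alpha - \beta) \cdot (\xi - \eta) \le C_1(r) |\xi - \eta|^2$. For two strong Filippov solutions $\Phi, \tilde\Phi$ with the same initial data, It\^o applied to $|\Phi - \tilde\Phi|^2$ together with Gr\"onwall then gives $\Phi \equiv \tilde\Phi$ almost surely. For existence, the same It\^o computation applied to $|\Phi^\eps - \Phi^\delta|^2$ yields
\[
    \EE \sup_{r \in [t,s]} |\Phi^\eps_{r,t}(x) - \Phi^\delta_{r,t}(x)|^2 \le C\, \EE \int_t^s |B^\eps(r, \Phi^\delta_{r,t}(x)) - B^\delta(r, \Phi^\delta_{r,t}(x))|^2\, dr.
\]

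The main obstacle is extracting the limit, since without a non-degeneracy hypothesis on $\Sigma$ the law of $\Phi^\delta_r$ need not admit a Lebesgue density, and so the a.e.\ convergence $B^\eps \to B$ does not directly imply vanishing of the right-hand side above. I would therefore argue by compactness: the uniform moment and time-regularity estimates yield tightness of the laws of $(\Phi^\eps)_{\eps > 0}$, while the uniform linear growth gives weak-$L^2$ compactness of $(B^\eps(\cdot, \Phi^\eps_\cdot))$ on $\Omega \times [t,T]$. A joint subsequential limit $(\Phi, \alpha)$ can be extracted, if necessary via Skorokhod representation followed by a Yamada--Watanabe argument to recover the original probability space, and identification of $\alpha_r$ with an element of the Filippov convex hull $B(r, \Phi_r)$ follows from the definition of the set-valued map combined with the a.e.\ convergence of $B^\eps$; plugging into the integral equation \eqref{integraleq} shows that $\Phi$ is a strong Filippov solution. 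Pathwise uniqueness then upgrades the subsequential convergence to convergence of the full family $\Phi^\eps \to \Phi$, and the flow property $\Phi_{t,s} \circ \Phi_{s,r} = \Phi_{t,r}$ together with the local uniform convergence pass to the limit from the regularized flows using the a priori estimates.
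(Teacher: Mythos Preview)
Your proposal is correct and follows essentially the same route as the paper: regularize the drift, derive the moment estimates \eqref{ELipschitzSDE}--\eqref{EtimeregSDE} uniformly in $\eps$ via It\^o's formula and Gr\"onwall, use these to obtain tightness (the paper invokes Kolmogorov's criterion explicitly) and extract a weak solution, then combine pathwise uniqueness with Yamada--Watanabe to conclude strong existence and full-family convergence. Your initial Cauchy-sequence attempt and the explicit discussion of identifying the limiting drift inside the Filippov hull add detail the paper omits, but the overall architecture is the same.
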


\begin{proof}
	For $\eps > 0$, let $B^\eps$ be the convolution of $B$ in space by a standard mollifier (so that $b^\eps := -B^\eps$ satisfies \eqref{bregs}), and let $\Phi^\eps_{t,s}$ denote the corresponding stochastic flow. It\^o's formula, the one-sided Lipschitz assumption, and the Lipschitz continuity of $\Sigma$ yield, for any $p \ge 2$ and some $C \in L^1_+([0,T])$,
	\begin{align*}
		\frac{\del}{\del t} \EE |\Phi^\eps_{t,s}(x) - \Phi^\eps_{t,s}(y)|^p
		\le C(t) \EE |\Phi^\eps_{t,s}(x) - \Phi^\eps_{t,s}(y)|^p,
	\end{align*}
	which, along with Gr\"onwall's inequality, leads to the first statement. The other two estimates are proved similarly, with constants independent of $\eps > 0$.
	
	In view of \eqref{ELipschitzSDE} and \eqref{EtimeregSDE}, the Kolmogorov continuity criterion then yields, for any $R > 0$, $p \ge 2$ and $\delta  \in (0,1)$, a constant $C = C_{R,p,\delta} > 0$ such that, for all $s \in [0,T]$, $\lambda \ge 1$ and $\eps > 0$,
	\[
		\PP\left( \sup_{x,y \in B_R} \sup_{r,s \in [s,T]} \frac{ |\Phi^\eps_{t,s}(x) - \Phi^\eps_{r,s}(y) |}{|x-y|^{1-\delta} + |t-s|^{\frac{1}{2}(1-\delta)} } > \lambda \right) \le \frac{C}{\lambda^p}.
	\]
	It follow that the probability measures on $C([s,T] \times \RR^d; \RR^d)$ induced by the random variables $(\Phi^\eps_{\cdot,s})_{\eps > 0}$ are tight with respect to the topology of locally uniform convergence, and therefore converge weakly along a subsequence as $\eps \to 0$ to a probability measure that gives rise to a weak (in the probabilistic sense) solution of \eqref{monotone:SDE}, for which the estimates in the statement of the lemma continue to hold.
	
A similar computation to the one above reveals that, for a fixed probability space and almost every Brownian path $W$, the solution of \eqref{monotone:SDE} is unique. The pathwise uniqueness then implies, by a standard argument due to Yamada and Watanabe \cite{YW}, that there is a unique strong solution for every $x \in \RR^d$.
	\end{proof}
	
\begin{remark}\label{R:Lipstoch?}
	It is an open question whether $\Phi_{t,s}$ is Lipschitz continuous, even if $B$ is Lipschitz. When $B$ is Lipschitz and $\Sigma \in C^{1,\alpha}$ for some $\alpha \in (0,1]$, it turns out the flow $\Phi_{t,s}$ is $C^{1,\alpha'}$ for any $\alpha' \in (0,\alpha)$, but it is not clear how to extend this to the case where $-B$ satisfies the one-sided Lipschitz bound from below. 
	
	As a consequence, an understanding of the Jacobian $\det(\nabla_x\Phi_{t,s}(x))$, or of the stability with respect to regularizations of $B$, is considerably more complicated in the stochastic case. The results of Section \ref{sec:monotone}, where we discuss the expansive regime, are therefore constrained to the first-order case, and we relegate the second-order analysis to future work. One exception is when $\Sigma$ is independent of the spatial variable, in which case a change of variables relates the SDE to an ODE of the form \eqref{monotone:flow} with a random $b$. 
\end{remark}

\subsection{Small noise approximations} We return to the backward flow $\phi_{t,s}$, $0 \le t \le s \le T$, from Lemma \ref{L:backwardsflow}. Recall that the backward flow also corresponds to the forward flow for $-b$; that is, 
\begin{equation}\label{monotone:reversed}
	\frac{\del}{\del s} \phi_{t,s}(x) = - b(s, \phi_{t,s}(x)), \quad s \ge t, \quad \phi_{t,t}(x) = x.
\end{equation}
For $\eps > 0$, let $\phi^\eps_{t,s}(x)$ denote the following stochastic flow
\begin{equation}\label{monotone:epsSDE}
	d_s \phi^\eps_{t,s}(x) = - b(s,\phi^\eps_{t,s}(x))ds + \eps dW_s \quad s \ge t, \quad \phi^\eps_{t,t}(x) = x,
\end{equation}
where $W$ is now a $d$-dimensional Brownian motion. We note that \eqref{monotone:SDE} falls under the assumptions of Lemma \ref{L:backwardsstochflow}, but in fact \eqref{monotone:epsSDE} admits a unique strong solution as soon as $b$ is merely locally bounded \cite{D07, V80}. In general, the limiting solutions as $\eps \to 0$ are not unique; however, we immediately have the following as a consequence of Lemma \ref{L:backwardsstochflow}.

\begin{proposition}\label{P:monotoneepsSDE}
	For every $\eps > 0$, there exists a unique strong solution of \eqref{monotone:epsSDE}. Moreover, as $\eps \to 0$, $\phi^\eps$ converges locally uniformly to $\phi$.
	
	If $J^\eps = \det(\nabla_x \phi^\eps)$, then, as $\eps \to 0$, $J^\eps$ converges weak-$\star$ in $L^\oo([s,T] \times \RR^d)$ and weakly in $C([s,T], L^1_{\loc}(\RR^d))$ to $J$.
\end{proposition}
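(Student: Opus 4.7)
The plan is to deduce existence and uniqueness together with the flow convergence $\phi^\eps\to\phi$ directly from Lemma \ref{L:backwardsstochflow} combined with a one-sided Lipschitz It\^o energy estimate, and then to establish the Jacobian convergence by exploiting that, since $\Sigma \equiv \eps \Id$ is spatially constant, the variational equation for $\nabla_x \phi^\eps$ contains no stochastic integral, so the pathwise reasoning of Lemma \ref{L:backwardsJ} applies almost verbatim.

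The strong solvability of \eqref{monotone:epsSDE}, together with the a priori bounds \eqref{ELipschitzSDE}--\eqref{EtimeregSDE}, is immediate from Lemma \ref{L:backwardsstochflow} applied with $B := -b$ (so that $-B = b$ satisfies \eqref{A:monotoneB}) and $\Sigma \equiv \eps \Id$, which trivially fulfills \eqref{A:Sigma}. To obtain $\phi^\eps \to \phi$, fix regularizations $(b^\delta)_{\delta > 0}$ as in \eqref{bregs}, and let $\phi^{\eps,\delta}$ and $\phi^\delta$ denote the corresponding stochastic and deterministic flows driven by $-b^\delta$. Setting $X_s := \phi^{\eps,\delta}_{t,s}(x) - \phi^\delta_{t,s}(x)$, It\^o's formula and the one-sided Lipschitz bound on $b^\delta$ yield
\[
	\frac{d}{ds}\EE |X_s|^2 \le 2C_1(s) \EE |X_s|^2 + d\eps^2,
\]
so Gr\"onwall's inequality gives $\EE |X_s|^2 \le C \eps^2$ with $C$ independent of $\delta$. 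Combining this with the a.s. locally uniform convergences $\phi^\delta \to \phi$ (Lemma \ref{L:backwardsflow}) and $\phi^{\eps,\delta} \to \phi^\eps$ (Lemma \ref{L:backwardsstochflow}), the uniform higher-moment bounds from \eqref{ELipschitzSDE}--\eqref{EtimeregSDE}, and Kolmogorov's continuity criterion applied to the random fields $(\eps,s,x) \mapsto \phi^\eps_{t,s}(x)$, a diagonal extraction produces a.s. locally uniform convergence $\phi^\eps \to \phi$.

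For the Jacobian, the key observation is that, since the noise in \eqref{monotone:epsSDE} is additive and independent of $x$, differentiating the regularized equation in $x$ gives the pathwise random ODE
\[
	\frac{d}{ds} \nabla_x \phi^{\eps,\delta}_{t,s}(x) = -\nabla b^\delta\bigl(s, \phi^{\eps,\delta}_{t,s}(x)\bigr) \nabla_x \phi^{\eps,\delta}_{t,s}(x), \qquad \nabla_x \phi^{\eps,\delta}_{t,t}(x) = \Id,
\]
with no It\^o correction. Liouville's formula then gives
\[
	\frac{d}{ds} J^{\eps,\delta}_{t,s}(x) = -(\div b^\delta)\bigl(s, \phi^{\eps,\delta}_{t,s}(x)\bigr) J^{\eps,\delta}_{t,s}(x),
\]
and Lemma \ref{L:matrixbound} supplies the uniform lower bound $\div b^\delta \ge -d C_1(s)$, producing the pathwise ceiling $J^{\eps,\delta} \le \exp\bigl(d \int_t^s C_1(r)dr\bigr)$ uniform in $(\eps, \delta)$. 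The proof of Lemma \ref{L:backwardsJ} now transcribes path-by-path: the compensated compactness argument for determinants yields weak-$\star$ $L^\oo$ convergence, while the change-of-variables equicontinuity argument produces a modulus of continuity for $t \mapsto \int_{B_R} J^{\eps,\delta}_{t,s}$ that depends only on $C_0$ and $C_1$, and hence is uniform in $(\eps, \delta)$, upgrading the convergence to weakly in $C([s,T], L^1_\loc(\RR^d))$.

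The main obstacle is the joint diagonal limit: fixing $\eps$ and letting $\delta \to 0$ recovers the defining convergence for $J^\eps$, while fixing $\delta$ and letting $\eps \to 0$ recovers $J^\delta$, and one must reconcile these two separate limits with Lemma \ref{L:backwardsJ}'s conclusion $J^\delta \to J$. Because all uniform bounds above are symmetric in $(\eps, \delta)$, a diagonal sequence $(\eps_n, \delta_n)$ with $\delta_n$ chosen sufficiently small relative to $\eps_n$ forces $J^{\eps_n,\delta_n}$ to approximate $J^{\eps_n}$ arbitrarily well in both topologies, while simultaneously $J^{\eps_n,\delta_n} \to J$ along the diagonal, forcing $J^\eps \to J$ in both $L^\oo$-weak-$\star$ and $C([s,T], L^1_\loc)$-weak senses.
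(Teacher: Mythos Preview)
Your argument is more detailed than the paper's, which simply asserts that the proposition is ``a consequence of Lemma~\ref{L:backwardsstochflow}'' and gives no further justification (note that lemma says nothing about Jacobians). Your central observation---that because the noise $\eps\,dW$ is additive, the variational equation and hence the Liouville formula for $J^{\eps,\delta}$ carry no stochastic integral---is exactly right, and the pathwise uniform bounds you derive from it are correct.

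The weak point is the closing diagonal argument. You assert that choosing $\delta_n$ small relative to $\eps_n$ forces $J^{\eps_n,\delta_n}$ to approximate $J^{\eps_n}$ ``arbitrarily well in both topologies'', but these are weak topologies and you have not established any uniformity in $\eps$ of the convergence $J^{\eps,\delta}\to J^\eps$ as $\delta\to 0$; likewise the claimed diagonal convergence $J^{\eps_n,\delta_n}\to J$ needs its own argument (it would follow from joint locally uniform convergence $\phi^{\eps,\delta}\to\phi$ as $(\eps,\delta)\to(0,0)$, which your estimates do support but which you have not stated). The scheme can be repaired, but as written this step is a sketch rather than a proof.

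A cleaner route, implicit in the paper (compare the additive-noise transformation used later in \eqref{randomODE}), avoids the double regularization and the diagonal step entirely. Since the noise is additive, the difference $\phi^\eps_{t,s}(x)-\phi^\eps_{t,s}(y)$ obeys a \emph{pathwise} ODE (the $\eps\,dW$ terms cancel), and the one-sided Lipschitz bound gives directly the pathwise Lipschitz estimate $|\phi^\eps_{t,s}(x)-\phi^\eps_{t,s}(y)|\le e^{\int_t^s C_1}|x-y|$, uniformly in $\eps$. Thus $J^\eps$ is well-defined with the bound \eqref{Jestimates} pathwise, and once $\phi^\eps\to\phi$ a.s.\ locally uniformly is in hand, compensated compactness yields $J^\eps\to J$ weak-$\star$ in $L^\oo$ directly, without ever introducing $b^\delta$. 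For the time-equicontinuity and the $C([s,T],L^1_\loc)$ convergence, set $\psi^\eps_s(x):=\phi^\eps_{t,s}(x)-\eps(W_s-W_t)$; then $\psi^\eps$ solves pathwise the random backward ODE $\partial_s\psi^\eps_s=-b\bigl(s,\psi^\eps_s+\eps(W_s-W_t)\bigr)$, whose drift satisfies \eqref{A:monotoneB} with the \emph{same} $C_0,C_1$. Since $\nabla\psi^\eps=\nabla\phi^\eps$ and hence $\det\nabla\psi^\eps=J^\eps$, Lemma~\ref{L:backwardsJ} applies verbatim, pathwise, with modulus $\omega_R$ independent of $\eps$.
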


\section{The compressive regime}\label{sec:antimonotone}

In this section, we consider the transport and continuity equations in the so-called compressive regime. That is, for velocity field $b$ satisfying \eqref{A:monotoneB}, we study the TVP for the nonconservative equation
\begin{equation}\label{E:AMTE}
	-\frac{\del u}{\del t} + b(t,x) \cdot \nabla u = 0 \quad \text{in } (0,T) \times \RR^d \quad \text{and} \quad u(T,\cdot) = u_T \quad \text{in } \RR^d,
\end{equation}
and the IVP for the conservative equation
\begin{equation}\label{E:AMCE}
	\frac{\del f}{\del t} - \div (b(t,x) f) = 0 \quad \text{in } (0,T) \times \RR^d \quad \text{and} \quad f(0,\cdot) = f_0.
\end{equation}
We recall that $\div b$ is bounded from below, and therefore, the direction of time for \eqref{E:AMTE} and \eqref{E:AMCE} does not allow for a solution theory in Lebesgue spaces, due to the concentrative nature of the backward flow analyzed in the previous section. The TVP \eqref{E:AMTE} will be solved in the space of continuous functions, while \eqref{E:AMCE} is solved in the dual space of locally bounded Radon measures. 

We also obtain analogous result for the second-order equations
\begin{equation}\label{E:2AMTE}
	-\frac{\del u}{\del t} - \tr[a(t,x)\nabla^2 u] + b(t,x) \cdot \nabla u = 0 \quad \text{in } (0,T) \times \RR^d \quad \text{and} \quad u(T,\cdot) = u_T
\end{equation}
and
\begin{equation}\label{E:2AMCE}
	\frac{\del f}{\del t} - \div\big[  \div(a(t,x)f) - b(t,x) f \big] = 0 \quad \text{in } (0,T) \times \RR^d \quad \text{and} \quad f(0,\cdot) = f_0,
\end{equation}
where $a = \frac{1}{2} \sigma \sigma^T$ for $\sigma: [0,T] \times \RR^d \to \RR^{d \times m}$ satisfying
\begin{equation}\label{A:sigma}
	\sup_{x \in \RR^d} \frac{ |\sigma(\cdot,x)|}{1 + |x|} + \sup_{y,z \in \RR^d} \frac{|\sigma(\cdot,y) - \sigma(\cdot,z)|}{|y-z|} \in L^2([0,T]).
\end{equation}

\subsection{The nonconservative equation}\label{ss:AMNC}

\subsubsection{Representation formula}

When interpreting \eqref{E:AMTE} in the distributional sense, we are constrained to seek solutions that are continuous. Indeed, the distribution
\[
	b \cdot \nabla u = \div( bu) -( \div b) u
\]
pairs the solution $u$ with $\div b$, which is a measure in general. The other motivating factor is the formal representation formula for the solution of the TVP \eqref{E:AMTE}, which is given in terms of the backward flow:
 \begin{equation}\label{formula:AM}
	u(t,x) = u_T( \phi_{t,T}(x)) \quad \text{for } (t,x) \in [0,T] \times \RR^d.
\end{equation}
This formula and the Lipschitz continuity of $\phi_{t,T}$ given in Lemma \ref{L:backwardsflow} suggest that the solution operator for \eqref{E:AMTE} should preserve continuity. In fact, the formula \eqref{formula:AM} defines a distributional solution, which is uniquely obtained from limits of natural regularizations of the equation.

\begin{theorem}\label{T:AMTE}
	If $u_T \in C(\RR^d)$, then the function $u$ in \eqref{formula:AM} is a distributional solution of \eqref{E:AMTE}. Moreover, if $(b^\eps)_{\eps > 0}$ satisfy \eqref{bregs} and $u^\eps$ is the corresponding solution of \eqref{E:AMTE} with velocity field $b^\eps$, then, as $\eps \to 0$, $u^\eps$ converges locally uniformly to $u$.
\end{theorem}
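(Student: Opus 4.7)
The plan is to combine the representation formula with a limit of regularized problems in four steps: (a) verify $u$ from \eqref{formula:AM} is continuous; (b) establish locally uniform convergence $u^\eps \to u$; (c) show each $u^\eps = u_T \circ \phi^\eps_{\cdot,T}$ is a distributional solution of the $b^\eps$-regularized equation; (d) pass to the distributional limit as $\eps \to 0$. Step (a) will follow immediately from the joint continuity of $\phi_{\cdot,T}$ contained in Lemma~\ref{L:backwardsflow} (via \eqref{LipschitzODE} and \eqref{timeLipODE}). The main obstacle is step (d): the distributional interpretation of $b \cdot \nabla u$ pairs the continuous function $u$ with the merely Radon measure $\div b$, so we will need vague, rather than merely distributional, convergence of $\div b^\eps$.

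For (b), fix a compact $K_0 \subset \RR^d$. The a priori bound \eqref{boundODE} confines $\phi^\eps_{t,T}(K_0) \cup \phi_{t,T}(K_0)$ to a fixed compact $K \subset \RR^d$, uniformly in $\eps > 0$ and $t \in [0,T]$. Lemma~\ref{L:backwardsflow} provides $\phi^\eps_{t,T} \to \phi_{t,T}$ uniformly on $[0,T] \times K_0$, so uniform continuity of $u_T$ on $K$ yields $u^\eps \to u$ uniformly on $[0,T] \times K_0$.

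For (c), since $b^\eps \in L^1([0,T], C^{0,1}(\RR^d))$, the flow $\phi^\eps_{\cdot,T}$ is a classical Lipschitz flow. Approximate $u_T$ uniformly on compact sets by smooth $u_T^\delta$, and set $u^{\eps,\delta} := u_T^\delta \circ \phi^\eps_{\cdot,T}$; these are classical solutions, so integration by parts gives
\[
\int_0^T\int_{\RR^d} u^{\eps,\delta}\bigl[\del_t\varphi - \div(b^\eps \varphi)\bigr]\,dx\,dt = 0, \qquad \varphi \in C^\infty_c((0,T) \times \RR^d).
\]
For fixed $\eps$ one has $\div b^\eps \in L^1([0,T], L^\infty_\loc(\RR^d))$, so $\del_t\varphi - \div(b^\eps\varphi) \in L^1$ on the support of $\varphi$; sending $\delta \to 0$ by dominated convergence gives the same identity with $u^{\eps,\delta}$ replaced by $u^\eps$.

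Finally, for step (d), expanding $\div(b^\eps \varphi) = b^\eps \cdot \nabla\varphi + \varphi\,\div b^\eps$ splits the identity into three integrals. The first two pass to the limit by locally uniform convergence of $u^\eps$ together with the bound $|b^\eps| \le C_0(t)(1+|x|)$ from \eqref{A:monotoneB} and dominated convergence. For the remaining integral, Lemma~\ref{L:matrixbound} guarantees that $\div b^\eps(t,\cdot) + d C_1(t) \ge 0$ is a positive Radon measure on $\RR^d$ for a.e. $t$; applying the divergence theorem on each ball $B_R$ with the linear growth of $b^\eps$ yields uniform-in-$\eps$ mass bounds on compact subsets of $(0,T) \times \RR^d$. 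Since $b^\eps \to b$ in $L^1_\loc$ (again by dominated convergence), we have $\div b^\eps \to \div b$ as distributions, and the uniform mass bounds upgrade this to vague convergence of $\div b^\eps + dC_1(t)$ to $\div b + d C_1(t)$ as Radon measures. Pairing against the uniformly convergent $u^\eps\varphi$ and subtracting the elementary $dC_1(t)$ contribution then gives
\[
\int_0^T\int_{\RR^d} u^\eps \varphi\,\div b^\eps\,dx\,dt \longrightarrow \int_0^T\int_{\RR^d} u\varphi\,d(\div b),
\]
so $u$ satisfies the distributional form of \eqref{E:AMTE}.
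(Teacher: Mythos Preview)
Your proof is correct and follows essentially the same approach as the paper's: represent $u^\eps$ via the regularized backward flow, use Lemma~\ref{L:backwardsflow} for locally uniform convergence, write the distributional identity for $u^\eps$, and pass to the limit using $b^\eps \to b$ a.e.\ together with weak convergence of $\div b^\eps$ as measures. You supply more detail than the paper in two places---the approximation of $u_T$ in step (c) and the explicit mass bound via the divergence theorem to upgrade distributional to vague convergence in step (d)---but the skeleton is identical.
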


\begin{proof}
	The unique solution $u^\eps$ for the regularized velocity field is given by $u^\eps(t,\cdot) = u_T \circ \phi^\eps_{t,T}$, where $\phi^\eps$ is the flow corresponding to $b^\eps$. By Lemma \ref{L:backwardsflow}, as $\eps \to 0$, $\phi^\eps$ converges locally uniformly to $\phi$, and so the local-uniform convergence to $u$ follows from the continuity of $u_0$.

	Multiplying the equation for $u^\eps$ by some $\psi \in C^1_c((0,T) \times \RR^d)$ and integrating by parts gives
	\[
		\int_0^T \int_{\RR^d} u^\eps(t,x)\left( \del_t \psi(t,x) - b^\eps(t,x) \cdot \nabla \psi(t,x) + (\div b^\eps(t,x)) \psi(t,x) \right)dxdt = 0.
	\]
	As $\eps \to 0$, $b^\eps \to b$ almost everywhere and $\div b^\eps \rightharpoonup \div b$ weakly in the sense of measures, and so the fact that $u$ is a distributional solution follows.
\end{proof}

Turning next to the second-order equation \eqref{E:2AMTE}, we identify a solution candidate with the appropriate stochastic flow. We do so by changing the time direction in $b$ and $\sigma$ and considering the SDE
\begin{equation}\label{AM:SDE}
	\begin{dcases}
	d_s \Phi_{s,t}(x) = -b(s, \Phi_{s,t}(x))ds + \sigma(s, \Phi_{s,t}(x))dW_s, & s \in [t,T], \\
	\Phi_{t,t} = \Id,
	\end{dcases}
\end{equation}
where $W$ is as in \eqref{A:BM}. Note that \eqref{AM:SDE} is of the type in \eqref{monotone:SDE} and thus falls within the assumptions of Lemma \ref{L:backwardsstochflow}. In particular, if $u_T$ is continuous, then, in view of \eqref{ELipschitzSDE}-\eqref{EtimeregSDE}, the formula
\begin{equation}\label{formula:2AM}
	u(t,x) = \EE[u_T(\Phi_{T,t}(x))]
\end{equation}
defines a continuous function. Moreover, if $u_T$ is Lipschitz, then $u(t,\cdot)$ is Lipschitz for all $t > 0$, and $1/2$-H\"older continuous in time. Note that, in this case, the distribution $\tr[a\nabla^2 u] = \div(a \nabla u) - \div a \cdot \nabla u$ makes sense, because $\nabla u$ and $\div a$ both belong to $L^\oo$.

The following is proved exactly as for Theorem \ref{T:AMTE}, with the use of the estimates in Lemma \ref{L:backwardsstochflow}.

\begin{theorem}\label{T:2AMTE}
	Let $u_T \in C(\RR^d)$ and define $u$ by \eqref{formula:2AM}. If $(b^\eps)_{\eps > 0}$ satisfy \eqref{bregs} and $u^\eps$ is the corresponding solution of \eqref{E:2AMTE} with velocity $b^\eps$, then, as $\eps \to 0$, $u^\eps$ converges locally uniformly to $u$. Moreover, if $u_T \in C^{0,1}$, then 
	\[
		\sup_{(t,x,y) \in [0,T] \times \RR^d \times \RR^d} \frac{|u(t,x) - u(t,y)|}{|x-y|} + \sup_{(r,s,z) \in [0,T] \times \RR^d} \frac{|u(r,z) - u(s,z)|}{|r-s|^{1/2}(1 + |z|) } < \oo,
	\]
	and $u$ is a distributional solution of \eqref{E:2AMTE}.
\end{theorem}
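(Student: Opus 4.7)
The plan is to follow closely the proof of Theorem \ref{T:AMTE}, substituting the stochastic flow $\Phi$ of \eqref{AM:SDE} for the deterministic backward flow and using Lemma \ref{L:backwardsstochflow} in place of Lemma \ref{L:backwardsflow}. Fix regularizations $(b^\eps)_{\eps > 0}$ satisfying \eqref{bregs} and let $\Phi^\eps$ denote the corresponding strong solution of \eqref{AM:SDE} with $b^\eps$ in place of $b$; by standard theory (possibly also regularizing $u_T$ and then removing that regularization by stability), $u^\eps(t,x) := \EE[u_T(\Phi^\eps_{T,t}(x))]$ is a classical solution of \eqref{E:2AMTE} with velocity $b^\eps$. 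For the locally uniform convergence $u^\eps \to u$, Lemma \ref{L:backwardsstochflow} yields $\Phi^\eps \to \Phi$ locally uniformly $\PP$-almost surely, and the moment bound \eqref{EboundSDE} combined with the continuity and local boundedness of $u_T$ justifies passage of the expectation through the limit by dominated convergence, uniformly on compact sets in $(t,x)$.

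Assume now $u_T \in C^{0,1}$. The spatial Lipschitz bound on $u$ follows directly from \eqref{ELipschitzSDE} (with $p=2$ and Jensen) applied under the expectation. For the $1/2$-H\"older bound in time, the key step is to combine the semigroup property $\Phi_{T,r} = \Phi_{T,s} \circ \Phi_{s,r}$ valid for $r \le s \le T$ with the strong Markov property of the SDE: conditioning on the filtration of $W$ up to time $s$, the independence of post-$s$ increments gives
\begin{equation*}
u(r,z) = \EE[u_T(\Phi_{T,s}(\Phi_{s,r}(z)))] = \EE[u(s,\Phi_{s,r}(z))],
\end{equation*}
so the already-established spatial Lipschitz regularity of $u(s,\cdot)$, together with \eqref{EtimeregSDE} (for $p=2$) and Jensen, yields $|u(r,z) - u(s,z)| \le C \nor{u_T}{C^{0,1}}(1+|z|)|r-s|^{1/2}$.

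For the distributional statement, test the classical equation for $u^\eps$ against $\psi \in C^\oo_c((0,T) \times \RR^d)$; using $\tr[a \nabla^2 u^\eps] = \div(a \nabla u^\eps) - \div a \cdot \nabla u^\eps$ (valid since $\sigma$ Lipschitz implies $\div a \in L^\oo_\loc$) and integrating by parts once produces the identity
\begin{equation*}
\int_0^T\!\!\int_{\RR^d}\!\bigl[u^\eps \del_t \psi + (a\nabla u^\eps)\cdot\nabla\psi + (\div a \cdot \nabla u^\eps)\psi - b^\eps u^\eps \cdot \nabla\psi - (\div b^\eps) u^\eps \psi\bigr]\,dx\,dt = 0.
\end{equation*}
The uniform-in-$\eps$ Lipschitz bound from the previous paragraph gives $\nabla u^\eps \rightharpoonup \nabla u$ weak-$\star$ in $L^\oo_\loc$, and this, together with $u^\eps \to u$ locally uniformly, $b^\eps \to b$ almost everywhere, and $\div b^\eps \rightharpoonup \div b$ weakly as measures (the latter two from \eqref{bregs} and Lemma \ref{L:matrixbound}), lets each term pass to the limit, yielding the distributional formulation for $u$.

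The main obstacle I foresee is the time-H\"older estimate, which (unlike its first-order analog in Theorem \ref{T:AMTE}) requires the strong Markov property of the SDE in order to identify the inner conditional expectation as $u(s,\cdot)$ and to propagate the spatial Lipschitz regularity uniformly in $s$. A secondary technical point is that $a$ need not be $C^2$ under assumption \eqref{A:sigma}; this is why the integration by parts in the last step is organized to transfer only one derivative onto $a$, relying on $\div a \in L^\oo_\loc$.
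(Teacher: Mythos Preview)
Your proposal is correct and follows the same approach as the paper, which simply states that the result ``is proved exactly as for Theorem \ref{T:AMTE}, with the use of the estimates in Lemma \ref{L:backwardsstochflow}.'' You have filled in the details the paper leaves implicit: in particular, the Markov-property argument you give for the time-H\"older estimate is the natural way to extract it from \eqref{EtimeregSDE}, and your organization of the integration by parts (transferring only one derivative onto $a$ so as to use merely $\div a \in L^\oo_\loc$) matches the paper's own remark preceding the theorem that $\tr[a\nabla^2 u] = \div(a\nabla u) - \div a \cdot \nabla u$.
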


As a special case, we consider, for $\eps > 0$, the ``viscous'' version of \eqref{E:AMTE}, that is
\begin{equation}\label{E:AMVTE}
	-\del_t u^\eps - \frac{\eps^2}{2} \Delta u^\eps + b(t,x) \cdot \nabla u^\eps = 0 \quad \text{in }(0,T) \times \RR^d, \quad u^\eps(T,\cdot) = u_T.
\end{equation}
This uniformly parabolic equation has a unique classical solution for any $u_T \in C(\RR^d)$, which, moreover, is given by $u^\eps(t,x) = \EE[ u_T(\phi^\eps_{t,T}(x))]$, where now $\phi^\eps$ denotes the solution of the SDE \eqref{monotone:epsSDE} from the previous section. Arguing just as in Theorem \ref{T:AMTE} and invoking Proposition \ref{P:monotoneepsSDE} immediately gives the following.

\begin{theorem}\label{T:AMTEvisc}
	As $\eps \to 0$, the solution $u^\eps$ converges locally uniformly to the function $u$ given by \eqref{formula:AM}.
\end{theorem}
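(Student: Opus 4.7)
\textbf{Proof plan for Theorem \ref{T:AMTEvisc}.}

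The starting point is the Feynman–Kac representation $u^\eps(t,x) = \EE[u_T(\phi^\eps_{t,T}(x))]$, where $\phi^\eps$ is the unique strong solution of the SDE \eqref{monotone:epsSDE}. For $\eps > 0$, \eqref{E:AMVTE} is uniformly parabolic with terminal data $u_T \in C(\RR^d)$, so the classical solution exists, and the formula follows in the standard way by applying It\^o's formula to $s \mapsto u^\eps(s, \phi^\eps_{t,s}(x))$ on $[t,T]$, using the PDE to cancel the drift and Laplacian terms, and taking expectation (the local martingale part is a true martingale thanks to the moment bounds \eqref{EboundSDE} and the appropriate growth of $u_T$). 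On the other side, the deterministic representation $u(t,\cdot) = u_T \circ \phi_{t,T}$ defines $u$ by \eqref{formula:AM}.

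Next, I invoke Proposition \ref{P:monotoneepsSDE}: with probability one, $\phi^\eps$ converges to $\phi$ locally uniformly on $\{(s,t,x) : 0 \le t \le s \le T,\, x \in \RR^d\}$ as $\eps \to 0$. Since $u_T$ is continuous, this yields, almost surely and locally uniformly in $(t,x)$,
\[
    u_T\bigl(\phi^\eps_{t,T}(x)\bigr) \xrightarrow[\eps \to 0]{} u_T\bigl(\phi_{t,T}(x)\bigr).
\]
Exactly as in the proof of Theorem \ref{T:AMTE}, the limiting object $u_T(\phi_{t,T}(x))$ is deterministic, so once I pass the above convergence inside the expectation, the conclusion follows.

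The only step that does not reduce to the proof of Theorem \ref{T:AMTE} is the interchange of limit and expectation, and this is the main (mild) obstacle. I handle it by uniform integrability. Fix $R > 0$ and restrict to $(t,x) \in [0,T] \times \oline{B_R}$. From \eqref{EboundSDE} applied to the SDE \eqref{monotone:epsSDE} (which satisfies the hypotheses of Lemma \ref{L:backwardsstochflow} uniformly in $\eps$), I obtain
\[
    \sup_{\eps > 0} \;\sup_{(t,x) \in [0,T] \times \oline{B_R}} \EE \bigl|\phi^\eps_{t,T}(x)\bigr|^p \le C_{R,p} \quad \text{for every } p \ge 2.
\]
Hence $\{\phi^\eps_{t,T}(x)\}_{\eps > 0,(t,x) \in [0,T] \times \oline{B_R}}$ is tight, and if $u_T$ has at most polynomial growth, the family $\{u_T(\phi^\eps_{t,T}(x))\}$ is uniformly integrable. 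For general continuous $u_T$, I truncate: replace $u_T$ by $u_T^M := \chi_M u_T$ with $\chi_M$ a smooth cutoff supported in $B_{2M}$ and equal to $1$ on $B_M$. For fixed $M$, $u_T^M$ is bounded and continuous, so bounded convergence yields $\EE[u_T^M(\phi^\eps_{t,T}(x))] \to u_T^M(\phi_{t,T}(x))$ locally uniformly in $(t,x)$ (bounded convergence combined with the a.s. locally uniform convergence of $\phi^\eps \to \phi$ and the uniform continuity of $u_T^M$ on compacts); the tail $\EE[(u_T - u_T^M)(\phi^\eps_{t,T}(x))]$ is controlled uniformly in $\eps$ by the moment bound via Markov's inequality, and vanishes as $M \to \oo$. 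Choosing $M = M(R)$ large and then $\eps$ small completes the local-uniform convergence $u^\eps \to u$ on $[0,T] \times \oline{B_R}$, which gives the theorem.
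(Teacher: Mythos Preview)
Your proof is correct and follows the same route as the paper: the Feynman--Kac representation $u^\eps(t,x)=\EE[u_T(\phi^\eps_{t,T}(x))]$, Proposition~\ref{P:monotoneepsSDE} for the almost-sure locally uniform convergence $\phi^\eps\to\phi$, and then continuity of $u_T$. The paper simply writes ``arguing just as in Theorem~\ref{T:AMTE} and invoking Proposition~\ref{P:monotoneepsSDE}'' and leaves the passage of the limit inside the expectation implicit; you spell this out via moment bounds and a truncation, which is the right thing to do. One small remark: your tail estimate $\EE[(u_T-u_T^M)(\phi^\eps_{t,T}(x))]\to 0$ as $M\to\infty$ uniformly in $\eps$ really uses a growth bound on $u_T$ (otherwise Markov's inequality on $|\phi^\eps|$ alone does not control $|u_T(\phi^\eps)|$ on $\{|\phi^\eps|\ge M\}$); this is harmless, since some growth restriction on $u_T$ is already implicit in the paper's assertion that \eqref{E:AMVTE} has a \emph{unique} classical solution.
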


\subsubsection{Viscosity solutions}

Although \eqref{formula:AM} and \eqref{formula:2AM} are the distributional solutions that arise uniquely through regularization (either of $b$ or through vanishing viscosity limits), it turns out that distributional solutions are not unique in general (see subsubsection \ref{ss:badvisc} below). It is then a natural question as to whether the ``good'' solutions can be characterized other than as limits of regularizations, or by the explicit formulae. For example, this is done for the one-dimensional problem in \cite{Pe_Po_1d_99} by introducing a sort of entropy condition.

We give a different characterization here using the theory of viscosity solutions \cite{CIL}, which covers both the first- and second-order problems. We present the results here only in the second-order case, which includes the first-order equations when $a = 0$.

We define, for $(t,x,p) \in [0,T] \times \RR^d \times \RR^d$,
\[
	\uline{b}(t,x,p) = \liminf_{z \to x} b(t,z) \cdot p \quad \text{and} \quad \oline{b}(t,x,p) = \limsup_{z \to y} b(t,z) \cdot p.
\]
For fixed $(t,x) \in [0,T] \times \RR^d$, $\uline{b}(t,x,\cdot)$ and $\oline{b}(t,x,\cdot)$ are Lipschitz continuous on $\RR^d$, and, for fixed $(t,p) \in [0,T]$, $\uline{b}(t,\cdot,p)$ and $\oline{b}(t,\cdot,p)$ are respectively lower and upper semicontinuous.

The following definition of viscosity (sup, super) solutions closely resembles the one in \cite{LP}.
\begin{definition}\label{D:visc}
	An upper-semicontinuous (resp. lower-semicontinuous) function $u$ is called a subsolution (resp. supersolution) of \eqref{E:2AMTE} if, for all $\psi: [0,T] \times \RR^d$ that are $C^1$ in $t$ and $C^2$ in $x$, it holds that
	\[
		-\frac{d}{dt} \max_{x \in \RR^d} \left\{ u(t,x)  - \psi(t,x) \right\}
		\le \inf\left\{ \tr[ a(t,y) \nabla^2 \psi(t,y)] - \uline{b}(t,y, \nabla \psi(t,y)) : y \in \argmax\{ u(t,\cdot) - \psi(t,\cdot) \} \right\}
	\]
	(resp. 
	\[
		-\frac{d}{dt} \min_{x \in \RR^d} \left\{ u(t,x)  - \psi(t,x) \right\}
		\ge \sup\left\{ \tr[ a(t,y) \nabla^2 \psi(t,y)] - \oline{b}(t,y, \nabla \psi(t,y)) : y \in \argmin\{ u(t,\cdot) - \psi(t,\cdot) \} \right\} \Big).
	\]
	If $u \in C([0,T] \times \RR^d)$ is both a sub and supersolution, we say $u$ is a solution.
\end{definition}

The comparison principle is proved by doubling the space variable. In particular, we have the following lemma, which follows exactly by methods as in \cite{CImax, N90, N92}. For $(t,x,y) \in [0,T] \times \RR^d \times \RR^d$, we define the nonnegative matrix
\[
	A(t,x,y) := 
	\begin{pmatrix}
		\sigma(t,x) \\
		\sigma(t,y)
	\end{pmatrix}
	\begin{pmatrix}
		\sigma(t,x)^T & \sigma(t,y)^T
	\end{pmatrix}.
\]

\begin{lemma}\label{L:doubled}
	Assume $u$ and $v$ are respectively a sub and supersolution of \eqref{E:2AMTE}. Then $w(t,x,y) = u(t,x) - v(t,y)$ is a subsolution of
	\[
		-\del_t w - \tr[ a(t,x,y) \nabla^2_{(x,y)}w] + \uline{b}(t,x, \nabla_x w) - \oline{b}(t,x, -\nabla_y w) \le 0.
	\]
\end{lemma}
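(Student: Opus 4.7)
The plan is to verify the doubled viscosity subsolution condition of Definition \ref{D:visc} directly at a maximizing pair. Fix a test function $\Psi$ that is $C^1$ in $t$ and $C^2$ in $(x,y)$, let $M(t) := \sup_{(x,y)} \{u(t,x) - v(t,y) - \Psi(t,x,y)\}$, and pick $(\hat x, \hat y) \in \argmax\{u(t,\cdot) - v(t,\cdot) - \Psi(t,\cdot,\cdot)\}$. At this pair, the section $x \mapsto u(t,x) - \Psi(t,x,\hat y)$ attains its maximum at $\hat x$, and $y \mapsto v(t,y) + \Psi(t,\hat x, y)$ attains its minimum at $\hat y$. This is the decoupling that lets me apply the subsolution definition for $u$ with frozen test function $(s,x) \mapsto \Psi(s, x, \hat y)$ and the supersolution definition for $v$ with frozen test function $(s,y) \mapsto -\Psi(s, \hat x, y)$.

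Summing the two resulting inequalities would then give an upper bound on $-\del_t \Psi(t, \hat x, \hat y)$ by $\tr[a(t,\hat x)\nabla_x^2 \Psi] + \tr[a(t,\hat y)\nabla_y^2 \Psi] - \uline b(t, \hat x, \nabla_x \Psi) + \oline b(t, \hat y, -\nabla_y \Psi)$, via the identifications of $\nabla_x \Psi$ and $-\nabla_y \Psi$ with the ``gradients'' of $u$ and $v$ at the maximum. To match the claimed trace term $\tr[a(t, \hat x, \hat y) \nabla_{(x,y)}^2 \Psi]$, which by the block structure of $A(t, \hat x, \hat y)$ differs from this sum by the cross contribution $\tr[\sigma(t,\hat x)\sigma(t,\hat y)^T \nabla_{yx}^2 \Psi]$, I would exploit that at the maximum of $w - \Psi$ the formal matrix inequality
\[
	\nabla_{(x,y)}^2 \Psi(t, \hat x, \hat y) \ge
	\begin{pmatrix} \nabla^2 u(t, \hat x) & 0 \\ 0 & - \nabla^2 v(t, \hat y) \end{pmatrix}
\]
holds. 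Tracing against the positive semidefinite matrix $A(t,\hat x, \hat y)$ then yields $\tr[a(t,\hat x, \hat y) \nabla_{(x,y)}^2 \Psi] \ge \tr[a(t,\hat x) \nabla^2 u] - \tr[a(t,\hat y) \nabla^2 v]$, which, combined with the individual sub/supersolution inequalities above, absorbs the cross term into a favorable direction and closes the estimate.

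The main obstacle is that $u$ and $v$ are only semicontinuous, so the pointwise Hessians and the matrix inequality just displayed must be interpreted in a rigorous viscosity sense. In the jet-based Crandall-Ishii framework \cite{CImax}, one replaces $\nabla^2 u$ and $\nabla^2 v$ by matrices in the second-order super- and subjets satisfying precisely the required inequality; here the max-test-function formulation of Definition \ref{D:visc} plays the same role, inherited from the $L^1_t$ framework of \cite{N90, N92, Idisc85, LP}, and the usual space-time mollification and doubling arguments can be adapted to regularize the maximizer before passing to the limit. Care must also be taken with the semicontinuous drift envelopes: the subsolution hypothesis on $u$ produces $\uline b(t, \hat x, \cdot)$ and the supersolution hypothesis on $v$ produces $\oline b(t, \hat y, \cdot)$ on the correct sides of the combined inequality, so that the result is precisely the viscosity subsolution inequality claimed for $w$ in the doubled variables $(x,y)$.
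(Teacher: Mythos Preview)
Your outline is correct and coincides with what the paper does: the paper gives no proof of its own for this lemma, but simply states that it ``follows exactly by methods as in \cite{CImax, N90, N92}.'' Your sketch is a faithful summary of those methods --- freeze one variable to apply the individual sub/supersolution conditions, then invoke the Crandall--Ishii maximum principle to upgrade the block-diagonal Hessian contribution $\tr[a(t,\hat x)\nabla_x^2\Psi] + \tr[a(t,\hat y)\nabla_y^2\Psi]$ to the full $\tr[A(t,\hat x,\hat y)\nabla_{(x,y)}^2\Psi]$ via the jet/matrix inequality, with the $L^1$-in-time adaptations of \cite{N90,N92,LP} handling the running-maximum formulation of Definition~\ref{D:visc}.
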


We may now state and prove the comparison principle.

\begin{theorem}\label{T:comparison}
	If $u$ and $v$ are respectively a sub and supersolution of \eqref{E:2AMTE} such that
	\[
		\sup_{(t,x) \in [0,T] \times \RR^d} \frac{ u(t,x)}{1 + |x|} + \sup_{(s,y) \in [0,T] \times \RR^d} \frac{ - v(s,y)}{1 + |y|} < \oo,
	\]
	then $t \mapsto \sup_{x \in \RR^d} \left\{ u(t,x) - v(t,x) \right\}$ is nondecreasing.
\end{theorem}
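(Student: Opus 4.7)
The plan is to prove comparison by the standard viscosity-theoretic device of doubling the spatial variable, combining Lemma \ref{L:doubled}, Definition \ref{D:visc}, and the beneficial sign provided by \eqref{A:monotoneB}. I argue by contradiction: assume there exist $0 \le t_0 < t_1 \le T$ with $M(t_0) > M(t_1)$, where $M(t) := \sup_x\{u(t,x) - v(t,x)\}$. By Lemma \ref{L:doubled}, the function $w(t,x,y) := u(t,x) - v(t,y)$ is a subsolution of the doubled equation. I then apply Definition \ref{D:visc} to $w$ with a test function of the form
\[
\psi_{\eps,\delta}(t,x,y) = \frac{|x-y|^2}{2\eps} + \delta\,\Lambda(t)\,\zeta(x,y),
\]
where $\zeta(x,y) = (1+|x|^2+|y|^2)^{1/2}$ and $\Lambda(t) = \exp\bigl(\int_0^t K(s)\,ds\bigr)$ for an $L^1_+$ weight $K$ to be chosen, in the spirit of the Nunziante--Lions--Perthame framework \cite{N90,N92,LP} for $L^1$-in-time coefficients. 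The linear growth hypotheses on $u$ and $-v$, together with the coercive penalty, ensure the sup $M_{\eps,\delta}(t) := \sup_{x,y}\{w(t,x,y) - \psi_{\eps,\delta}(t,x,y)\}$ is attained at maximizers $(x_*(t), y_*(t))$.

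The crucial pointwise estimate is
\[
\uline{b}(t,x_*,p_*) - \oline{b}(t,y_*,p_*) \ge -C_1(t)\,\frac{|x_*-y_*|^2}{\eps}, \qquad p_* := \frac{x_*-y_*}{\eps},
\]
which expresses precisely the one-sided Lipschitz structure of $b$. To prove it, I pick sequences $z_n \to x_*$ and $z'_n \to y_*$ on the full-measure set where \eqref{A:monotoneB} holds, and realizing respectively the liminf defining $\uline{b}(t,x_*,p_*)$ and the limsup defining $\oline{b}(t,y_*,p_*)$; then I apply $(b(t,z_n)-b(t,z'_n))\cdot(z_n-z'_n) \ge -C_1(t)|z_n-z'_n|^2$, controlling the discrepancy between $(z_n-z'_n)/\eps$ and $p_*$ via the uniform linear growth bound on $b$. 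The diffusion contribution reduces to $|\sigma(t,x_*)-\sigma(t,y_*)|^2/\eps \le L(t)^2 |x_*-y_*|^2/\eps$ by the spatial Lipschitz bound \eqref{A:sigma}, with $L \in L^2([0,T])$. The $\delta\Lambda\nabla\zeta$ contributions to the gradients entering $\uline{b},\oline{b}$ and to the second-order term produce corrections of order $\delta\,\Lambda(t)[C_0(t)+L(t)^2](1+|x_*|+|y_*|)$; the time weight $\Lambda$ is then chosen so that the envelope-theorem contribution $\del_t\psi_{\eps,\delta} = \delta K(t)\Lambda(t)\zeta$, which enters the left-hand side of Definition \ref{D:visc}, dominates these corrections.

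Integrating the resulting inequality over $[t_0,t_1]$ yields
\[
M_{\eps,\delta}(t_0) - M_{\eps,\delta}(t_1) \le \int_{t_0}^{t_1} [L(t)^2+C_1(t)]\,\frac{|x_*(t)-y_*(t)|^2}{\eps}\,dt + o_{\delta \to 0}(1).
\]
The classical doubling lemma (cf.\ \cite{CIL}) forces $|x_*(t)-y_*(t)|^2/\eps \to 0$ pointwise in $t$ as $\eps \to 0$, and the a priori penalty bound $|x_*-y_*|^2/\eps \le C(\delta)$ supplies an integrable dominator, so dominated convergence handles the integral. Passing $\eps \to 0$ and then $\delta \to 0$ gives $M(t_0) \le M(t_1)$, contradicting the assumption.

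The principal obstacle is the delicate bookkeeping required to absorb the $\delta$-order corrections arising from the interaction of the coercive weight $\zeta$ with the linear growth of $b$ and $\sigma$; this is the reason for the time-dependent factor $\Lambda$, whose derivative must outpace $C_0(t)+L(t)^2$ at every scale of the maximizer, and it requires the envelope-theorem formulation within the time-integrated Nunziante form of Definition \ref{D:visc}. A secondary technical point is the almost-everywhere-in-$t$ interpretation of the viscosity inequality in the $L^1$-in-time setting, together with the measure-theoretic selection of approximating sequences realizing the envelopes $\uline{b}, \oline{b}$ along the full-measure set on which \eqref{A:monotoneB} is valid.
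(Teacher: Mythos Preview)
Your approach is essentially the paper's: doubling of the space variable via Lemma~\ref{L:doubled}, then the key drift estimate from the one-sided Lipschitz condition \eqref{A:monotoneB} and the diffusion estimate from the Lipschitz bound on $\sigma$, followed by passage to the limit in the penalization parameters. The only cosmetic differences are that the paper uses a purely quadratic coercive penalty $\Phi_{\delta,\eps}(x,y)=\tfrac{1}{2\delta}|x-y|^2+\tfrac{\eps}{2}(|x|^2+|y|^2)$ and absorbs all corrections into an $L^1$-small error $C_{\delta,\eps}(t)$, proving directly that $t\mapsto \sup(w-\Phi_{\delta,\eps})-\int_t^T C_{\delta,\eps}$ is nondecreasing, whereas you use a time-weighted penalty $\delta\Lambda(t)\zeta$ and a contradiction argument. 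Both routes are standard and equivalent in spirit.

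There is, however, one genuine technical gap in your choice of coercive weight. Your $\zeta(x,y)=(1+|x|^2+|y|^2)^{1/2}$ has only \emph{linear} growth, while the hypothesis gives only a linear upper bound on $u(t,x)-v(t,y)$. Along the diagonal $x=y$ the doubling term $|x-y|^2/(2\eps)$ vanishes, so for $\delta$ sufficiently small the function $w(t,x,x)-\delta\Lambda(t)\zeta(x,x)$ need not be bounded above, and your $M_{\eps,\delta}(t)$ may fail to be finite, let alone attained. The paper's quadratic penalty sidesteps this; you can repair your argument by taking $\zeta(x,y)=1+|x|^2+|y|^2$ (or any superlinear weight with $|\nabla\zeta|+(1+|x|+|y|)\|\nabla^2\zeta\|\lesssim \zeta$), which preserves the structural inequality needed for the $\Lambda$-absorption while guaranteeing coercivity against linear growth for every $\delta>0$.
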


\begin{proof}
	Define $w(t,x,y) := u(t,x) - v(t,y)$, fix $\delta,\eps > 0$, and define $\Phi_{\delta,\eps}(x,y) = \frac{1}{2 \delta} |x-y|^2 + \frac{1}{2\eps}(|x|^2 + |y|^2)$. In view of the growth of $u$ and $v$ in $x$, for all $t \in [0,T]$, the map $w(t,\cdot,\cdot) - \Phi_{\delta,\eps}(x,y)$ attains a maximum on $\RR^d \times \RR^d$. Moreover, standard arguments from the theory of viscosity solutions (see for instance \cite[Lemma 3.1]{CIL}) imply that there exist $\rho_\delta > 0$ and $\lambda_\eps$ such that $\lim_{\delta\to 0} \rho_\delta^2/\delta = \lim_{\eps \to 0} \eps \lambda_\eps^2 = 0$, and
	\[
		|x - y| \le \rho_\delta \quad \text{and} \quad |x| + |y| \le \lambda_\eps \quad \text{for all } (x,y) \in \argmax \left\{ w(t,\cdot,\cdot) - \Phi_{\delta,\eps} \right\}, \quad t \in [0,T].
	\]
	Therefore, if $t \in [0,T]$ and $(x,y) \in \argmax\left\{ w(t,\cdot,\cdot) - \Phi_{\delta,\eps} \right\}$, we have, for some $C \in L^1_+([0,T])$,
	\begin{align*}
		\tr&[ a(t,x,y) \nabla^2_{(x,y)} \Phi_{\delta,\eps}(x,y) ]\\
		&= \tr\left[
		\left(
		\frac{1}{\delta}
		\begin{pmatrix}
			\Id & -\Id \\
			- \Id & \Id
		\end{pmatrix}
		+ 
		\eps
		\begin{pmatrix}
			\Id & 0 \\
			0  & \Id
		\end{pmatrix}
		\right)
		\begin{pmatrix}
			\sigma(t,x) \\
			\sigma(t,y)
		\end{pmatrix}
		\begin{pmatrix}
			\sigma(t,x)^T & \sigma(t,y)^T
		\end{pmatrix}
		\right]\\
		&\le C(t)\left( \frac{\rho_\delta^2}{\delta} + \eps \lambda_\eps^2 \right)
	\end{align*}
	and
	\begin{align*}
		- \uline{b}&\left(  t,  x, \nabla_x \Phi_{\delta,\eps}(x,y) \right) + \oline{b}\left(  t,  y, - \nabla_y \Phi_{\delta,\eps}(x,y) \right)\\
		&= \limsup_{(z,w) \to ( x,  y)}
		\left\{ - b( t, z) \cdot \left( \frac{ x -  y}{\delta} + \beta  x \right) 
		+ b( t, w) \cdot \left( \frac{ x -  y}{\delta} - \beta  y \right) \right\} \\
		&= \limsup_{(z,w) \to ( x,  y)}
		\left\{ - ( b( t, z) - b( t, w) ) \cdot \frac{z - w}{\delta} - b( t, z) \cdot \beta z + b( t, w) \cdot \beta w \right\} \\
		&\le C(t)\left( \frac{\rho_\delta^2}{\delta} + \eps + \eps \lambda_\eps \right).
	\end{align*}
	It now follows from Definition \ref{D:visc} and Lemma \ref{L:doubled} that, for some $C_{\delta,\eps} \in L^1_+([0,T])$ satisfying $\lim_{(\delta,\eps) \to (0,0)} C_{\delta,\eps} = 0$ in $L^1([0,T])$, 
	\[
		t \mapsto \sup_{(x,y) \in \RR^d \times \RR^d} \left\{ w(t,x,y) - \Phi_{\delta,\eps}(x,y) \right\} - \int_t^T C_{\delta,\eps}(s)ds
	\]
	is nondecreasing. The result follows upon sending $\delta$ and $\eps$ to $0$.
\end{proof}

As a consequence of the comparison theorem, the ``good'' distributional solution of \eqref{E:2AMTE} can be uniquely characterized.

\begin{theorem}
	Assume $u_T \in C(\RR^d)$ and $u_T \cdot (1 + |\cdot|^{-1}) \in L^\oo$. Then \eqref{formula:2AM} is the unique viscosity solution of \eqref{E:2AMTE}.
\end{theorem}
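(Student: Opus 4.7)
The plan is to deduce uniqueness directly from the comparison principle and to obtain existence by passing to the viscosity limit in a regularized problem.

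For uniqueness, observe first that $u$ defined by \eqref{formula:2AM} has at most linear growth: the hypothesis on $u_T$ gives $|u_T(y)| \le C(1+|y|)$, and \eqref{EboundSDE} from Lemma \ref{L:backwardsstochflow} gives $\EE|\Phi_{T,t}(x)| \le C(1+|x|)$, so $|u(t,x)| \le C(1+|x|)$. Any other viscosity solution $v$ sharing the terminal data and this growth bound can then be compared to $u$ via Theorem \ref{T:comparison} in both directions: the monotonicity of $t \mapsto \sup_x\{u(t,x) - v(t,x)\}$ and $t \mapsto \sup_x\{v(t,x) - u(t,x)\}$, together with their vanishing at $t = T$, forces $u \equiv v$.

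For existence, I would verify the subsolution and supersolution properties of $u$ by regularization. Let $(b^\eps)_{\eps > 0}$ be as in \eqref{bregs}, obtained by spatial convolution of $b$ against a nonnegative mollifier, and let $\Phi^\eps$ denote the strong solutions of \eqref{AM:SDE} with $b^\eps$ in place of $b$. Each $u^\eps(t,x) := \EE[u_T(\Phi^\eps_{T,t}(x))]$ is a classical solution of \eqref{E:2AMTE} with $b^\eps$, and by Theorem \ref{T:2AMTE}, $u^\eps \to u$ locally uniformly. To pass the subsolution property to the limit, fix a smooth test function $\psi$ (modified by a coercive perturbation $\delta(1+|x|^2)$ to force maxima to be attained, with $\delta \to 0$ at the end). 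Setting $M^\eps(t) := \max_x(u^\eps(t,x) - \psi(t,x))$, for each $\eps > 0$ and a.e. $t$, any selector $y^\eps(t) \in \argmax_x(u^\eps(t,\cdot) - \psi(t,\cdot))$ satisfies, via the envelope theorem applied to the classical equation together with $\nabla u^\eps(t, y^\eps) = \nabla \psi(t, y^\eps)$, $\nabla^2 u^\eps(t, y^\eps) \le \nabla^2 \psi(t, y^\eps)$, and $a \ge 0$, an inequality of the form
\[
-\frac{d}{dt} M^\eps(t) \le \tr[a(t, y^\eps(t)) \nabla^2 \psi(t, y^\eps(t))] - b^\eps(t, y^\eps(t)) \cdot \nabla \psi(t, y^\eps(t)).
\]
Local uniform convergence $u^\eps \to u$ yields $M^\eps \to M := \max_x(u - \psi)$ uniformly on compact time intervals and forces any accumulation point of $y^\eps(t)$ to lie in $\argmax_x(u(t,\cdot) - \psi(t,\cdot))$. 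Because $b^\eps$ is a nonnegative spatial convolution of $b$, one has $\liminf_{\eps\to 0} b^\eps(t, y^\eps(t))\cdot \nabla \psi(t, y^\eps(t)) \ge \uline b(t, y(t), \nabla \psi(t, y(t)))$ along any convergent subsequence $y^\eps(t) \to y(t)$. Pairing with a nonnegative test function in $t$ and passing to the distributional limit yields the subsolution inequality of Definition \ref{D:visc}; the supersolution inequality is obtained symmetrically by selecting $y^\eps(t) \in \argmin_x(u^\eps(t,\cdot) - \psi(t,\cdot))$ and invoking the corresponding upper-envelope estimate with $\oline b$.

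The main obstacle is that $b^\eps \to b$ only almost everywhere and $b$ is discontinuous in $x$, so the standard viscosity stability theorem does not apply directly: the value of $b^\eps(t, y^\eps(t))$ need not converge to $b(t, y(t))$ even when $y^\eps(t) \to y(t)$. This is precisely what the semicontinuous envelopes $\uline b$ and $\oline b$ in Definition \ref{D:visc} are designed to absorb, as they provide the sharpest one-sided bounds on all values that $b^\eps$ can take in shrinking neighborhoods of a limiting argmax point, and they are exactly compatible with the Filippov-sense solvability of the backward SDE underlying the representation formula \eqref{formula:2AM}.
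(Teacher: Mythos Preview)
Your approach is essentially the paper's: existence via stability under the regularization $b^\eps$ of Theorem \ref{T:2AMTE}, uniqueness via the growth bound from Lemma \ref{L:backwardsstochflow} together with the comparison principle Theorem \ref{T:comparison}. The paper condenses the existence half into the single phrase ``stability properties of viscosity solutions,'' while you unpack what stability means here given the discontinuity of $b$ and the nonstandard envelopes $\uline b,\oline b$ in Definition \ref{D:visc}; your observation that a mollification $b^\eps = b * \rho_\eps$ satisfies $\liminf_{\eps\to 0} b^\eps(t,y^\eps)\cdot p \ge \uline b(t,y,p)$ whenever $y^\eps\to y$ is exactly the point.

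One imprecision to flag: you call $u^\eps$ a \emph{classical} solution and invoke the envelope theorem on the pointwise equation. Since $a$ may be degenerate and $u_T$ is merely continuous, $u^\eps$ need not be $C^2$ in $x$. What you actually have, and all you need, is that $u^\eps$ is a viscosity solution of the equation with the continuous coefficient $b^\eps$ (for which Definition \ref{D:visc} reduces to the standard one); the viscosity subsolution property then directly delivers the inequality $-\frac{d}{dt}M^\eps(t) \le \tr[a\nabla^2\psi] - b^\eps\cdot\nabla\psi$ at any maximizer $y^\eps(t)$, without differentiating $u^\eps$. A second, more routine point: Definition \ref{D:visc} requires the inequality with the \emph{infimum} over $\argmax$, whereas your limit argument produces it only at the particular accumulation point $y(t)$ of $y^\eps(t)$. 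This is handled in the usual way by perturbing $\psi$ with a strict penalty centered at a prescribed $y^*\in\argmax$ so that $y^\eps(t)\to y^*$ is forced, then removing the perturbation; you should mention this step.
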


\begin{proof}
	The fact that \eqref{formula:2AM} defines a viscosity solution is due to Theorem \ref{T:2AMTE} and the stability properties of viscosity solutions\footnote{Note that smooth solutions of the equation corresponding to $b^\eps$ satisfying \eqref{bregs}, or of the viscous equation \eqref{E:AMVTE}, are viscosity solutions in the sense of Definition \ref{D:visc}.}. In view of Lemma \ref{L:backwardsstochflow} and the growth of $u_T$, we may appeal to Theorem \ref{T:comparison} to conclude that \eqref{formula:2AM} is the only viscosity solution of the terminal value problem \eqref{E:2AMTE}.
\end{proof}

\subsubsection{(Non)equivalence of distributional and viscosity solutions}\label{ss:badvisc}

For $x \in \RR$, set $b(t,x) = \sgn x$ and $u_T(x) = |x|$. Using the formula \eqref{sgnx} for the backward flow, the solution \eqref{formula:AM} becomes
\begin{equation}\label{goodvisc}
	u(t,x) = (|x| - (T-t))_+.
\end{equation}
However, the Lipschitz function
\begin{equation}\label{badvisc}
	v(t,x) = |x| - (T-t)
\end{equation}
is another distributional solution (and in fact satisfies the equation a.e.). It can also be checked directly that \eqref{badvisc} does not give a viscosity solution of \eqref{E:AMTE}. Indeed, note that $v(t,x) - t$ attains a global minimum at any $(t,0) \in [0,T] \times \RR$. Applying the supersolution definition with $\phi(t,x) = t$ yields the contradictory $-1 \ge 0$.

The uniqueness of distributional solutions fails even if $b$ is continuous. Indeed, if $0 < \alpha < 1$ and $b(t,x) = \sgn x |x|^{\alpha}$ and $u_T(x) = |x|^{1-\alpha}$, then, arguing similarly as in the above example, 
\begin{equation}\label{goodvisc2}
	u(t,x) = \left( |x|^{1-\alpha} - (1-\alpha)(T-t) \right)_+
\end{equation}
and
\begin{equation}\label{badvisc2}
	v(t,x) = |x|^{1-\alpha} - (1-\alpha)(T-t)
\end{equation}
are two distributional solutions, and \eqref{goodvisc2} is the one corresponding to \eqref{formula:AM}. Once again, \eqref{badvisc2} can directly be seen to fail the viscosity supersolution property.

In the first example above, $u_T$ is Lipschitz while $b$ is discontinuous, and, while $b$ is continuous in the second example, we take $u_T$ to be non-Lipschitz. This should be compared with the following sufficient criterion for equivalence.

\begin{theorem}\label{T:CbLipu}
	If $b \in C([0,T] \times \RR^d)$ satisfies \eqref{A:monotoneB} and $u_T \in C^{0,1}(\RR^d)$, then there exists a unique distributional solution $u \in C([0,T] , C^{0,1}(\RR^d))$ given by \eqref{formula:AM}.
\end{theorem}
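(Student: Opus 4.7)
The plan is to get existence directly from the formula \eqref{formula:AM}, and then to obtain uniqueness by mollifying in the space variable and integrating along the (reverse) characteristics of the drift $-b$.

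For existence, Theorem \ref{T:AMTE} already provides that $u(t,x) = u_T(\phi_{t,T}(x))$ is a distributional solution. The Lipschitz regularity drops out of Lemma \ref{L:backwardsflow}: composing $u_T \in C^{0,1}$ with the backward flow, whose Lipschitz constant in $x$ is $\exp(\int_t^T C_1(r)dr)$ and whose Lipschitz constant in $t$ is controlled by \eqref{timeLipODE}, gives $u(t,\cdot) \in C^{0,1}$ uniformly in $t$ and continuity of $t \mapsto u(t,\cdot)$.

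For uniqueness, I would take any two solutions $u_1, u_2$ in the stated class and set $w := u_1 - u_2$, a distributional solution with $w(T,\cdot) \equiv 0$ and $w(t,\cdot) \in C^{0,1}$ uniformly. The key observation is that, because $\nabla w \in L^\oo_\loc$ and $b \in C([0,T]\times\RR^d)$ is continuous, the product $b \cdot \nabla w$ is a genuine locally bounded function, so the distributional equation becomes $\del_t w = b \cdot \nabla w$ a.e., placing $w \in W^{1,\oo}_\loc$. Mollifying in space, $w_\eps := w(t,\cdot) * \rho_\eps$, the standard commutator computation yields
\begin{equation*}
	-\del_t w_\eps + b \cdot \nabla w_\eps = R_\eps, \quad R_\eps(t,x) := \int_{\RR^d} [b(t,x) - b(t,y)] \cdot \nabla w(t,y)\, \rho_\eps(x-y)\, dy,
\end{equation*}
and the uniform continuity of $b$ on compact sets together with the $L^\oo$ bound on $\nabla w$ forces $R_\eps \to 0$ locally uniformly as $\eps \to 0$.

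I would then fix $(t_0,x_0) \in [0,T) \times \RR^d$ and solve the ODE $\dot\zeta(t) = -b(t,\zeta(t))$ with $\zeta(t_0) = x_0$, which admits a unique $C^1$ solution on $[t_0,T]$: Peano existence applies since $-b$ is continuous, and forward uniqueness follows from the reversed one-sided bound $(-b(t,x) + b(t,y)) \cdot (x-y) \le C_1(t)|x-y|^2$ via a Gr\"onwall estimate on $|\zeta_1 - \zeta_2|^2$. Along this characteristic, smoothness of $w_\eps$ in $x$ and absolute continuity in $t$ give
\begin{equation*}
	\frac{d}{dt} w_\eps(t,\zeta(t)) = \del_t w_\eps(t,\zeta(t)) - b(t,\zeta(t)) \cdot \nabla w_\eps(t,\zeta(t)) = -R_\eps(t,\zeta(t)),
\end{equation*}
and integrating from $t_0$ to $T$ and letting $\eps \to 0$---using $w_\eps \to w$ locally uniformly, the compactness of $\zeta([t_0,T])$, and the local uniform decay of $R_\eps$---yields $w(T,\zeta(T)) - w(t_0,x_0) = 0$. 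The terminal condition forces $w(t_0,x_0) = 0$, and since $(t_0,x_0)$ was arbitrary, $w \equiv 0$. The hardest step is the convergence of $R_\eps$, which genuinely requires the continuity of $b$: for discontinuous $b$ (as in the $\sgn x$ counterexample of subsubsection \ref{ss:badvisc}), the commutator remainder need not vanish and uniqueness fails.
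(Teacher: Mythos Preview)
Your proof is correct. Both you and the paper mollify in the space variable and rely on the same commutator
\[
	R_\eps(t,x) = \int_{\RR^d} (b(t,x) - b(t,y)) \cdot \nabla w(t,y)\, \rho_\eps(x-y)\, dy,
\]
whose local uniform decay is exactly the place where the continuity of $b$ meets the Lipschitz regularity of the solution. The difference is in the concluding step. The paper does not integrate along characteristics: it observes that $u_\eps$ is a \emph{viscosity} solution of $-\del_t u_\eps + b\cdot\nabla u_\eps = r_\eps$ and then invokes the stability of viscosity solutions under locally uniform limits, together with the comparison principle (Theorem~\ref{T:comparison}), to conclude that the limit $u$ is the unique viscosity solution. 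Your route is more self-contained: you build the forward characteristic $\dot\zeta = -b$ directly (this is precisely the map $t\mapsto \phi_{t_0,t}(x_0)$ of \eqref{monotone:reversed}, so existence and uniqueness are already in Lemma~\ref{L:backwardsflow}) and integrate along it, avoiding any appeal to viscosity machinery. The paper's argument buys a uniform perspective consistent with the second-order theory of Section~\ref{sec:antimonotone}, where characteristics are unavailable; yours buys elementarity in this first-order setting and makes the role of the backward flow explicit.
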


\begin{proof}
	Let $\rho \in C_c^\oo$ be a standard mollifier and, for $\eps > 0$, set $\rho_\eps(x) = \eps^{-d} \rho(\eps^{-1} x)$. Let $u \in C([0,T],C^{0,1}(\RR^d))$ be a distributional solution of \eqref{E:AMTE} and define $u_\eps = u * \rho_\eps$. Then
	\begin{equation}\label{mollifiedae}
		-\del_t u_\eps + b \cdot \nabla u_\eps = r_\eps \quad \text{in } (0,T) \times \RR^d,
	\end{equation}
	where
	\[
		r_\eps(t,x) = \int_{\RR^d} \left( b(t,y) - b(t,x) \right) \cdot \nabla u(t,y) \rho_\eps(x-y)dy.
	\]
	Note that $r_\eps \in C([0,T] \times \RR^d)$, and $u_\eps$ solves \eqref{mollifiedae} in the sense of viscosity solutions. Moreover, the continuity of $b$ and boundedness of $\nabla u$ imply that $r_\eps \xrightarrow{\eps \to 0} 0$ locally uniformly. Standard stability results from the theory of viscosity solutions then imply that the limit $u$ of $u_\eps$ is the unique viscosity solution of \eqref{E:AMTE}.
\end{proof}

The above result can be extended by studying the interplay between regularity of $b$ and $u$. 

\begin{theorem}\label{T:alphabbetau}
	Suppose that $\alpha,\beta \in (0,1]$ satisfy $\alpha + \beta > 1$, $b$ satisfies \eqref{A:monotoneB} and $\sup_{t \in [0,T]} [b(t,\cdot)]_{C^\alpha} < \oo$, and $u$ is a distributional solution of \eqref{E:AMTE} such that $\sup_{t \in [0,T]} [u(t,\cdot)]_{C^\beta} < \oo$. Then $u$ is the unique viscosity solution of \eqref{E:AMTE}.
\end{theorem}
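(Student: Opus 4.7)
The plan is to mimic the regularization strategy of Theorem~\ref{T:CbLipu} at low regularity, showing that $u$ is a viscosity solution of~\eqref{E:AMTE}; uniqueness then follows from Theorem~\ref{T:comparison}. Let $\rho \in C_c^\oo(\RR^d)$ be a standard mollifier, set $\rho_\eps(\cdot) = \eps^{-d}\rho(\cdot/\eps)$, and let $u^\eps := u *_x \rho_\eps$. Because $u$ and $b$ are continuous and $\div b(t,\cdot)$ is a locally finite signed measure (Lemma~\ref{L:matrixbound}), the distributional equation can be written as $-\del_t u + \div(bu) - u\,\div b = 0$ in $\mcl D'((0,T)\times\RR^d)$. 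Convolving in space by $\rho_\eps$ and using $\int \nabla\rho_\eps = 0$ to cancel the $u(x)\,\div b^\eps(x)$ terms that arise gives, pointwise a.e., $-\del_t u^\eps + b\cdot \nabla u^\eps = r_\eps$ with
\[
r_\eps(t,x) = \int_{\RR^d}(b(t,x) - b(t,y))\cdot\nabla\rho_\eps(x-y)\,(u(t,y)-u(t,x))\,dy + \int_{\RR^d}(u(t,y)-u(t,x))\,\rho_\eps(x-y)\,d(\div b(t,\cdot))(y).
\]

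The main obstacle is to prove the quantitative commutator bound $|r_\eps(t,x)| \le C(t)\eps^{\alpha+\beta-1}$ with $C \in L^1([0,T])$; the difficulty is that $\div b$ may possess a singular part. The first integral is straightforward: combining the H\"older bounds with $|\nabla\rho_\eps| \le C\eps^{-d-1}\ind_{B_\eps}$ and rescaling $z = (x-y)/\eps$ yields the bound $C[b(t,\cdot)]_{C^\alpha}[u(t,\cdot)]_{C^\beta}\eps^{\alpha+\beta-1}$. For the second integral, decompose $\div b(t,\cdot) = \tilde\mu(t,\cdot) - dC_1(t)\,dx$, where $\tilde\mu := \div b + dC_1 \ge 0$ as a locally finite nonnegative measure (again by Lemma~\ref{L:matrixbound}). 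The absolutely continuous piece contributes at most $dC_1(t)[u(t,\cdot)]_{C^\beta}\eps^\beta$, while the positivity of $\tilde\mu$ and $\rho_\eps$ gives
\[
\left|\int(u(t,y)-u(t,x))\,\rho_\eps(x-y)\,d\tilde\mu(t,\cdot)(y)\right| \le [u(t,\cdot)]_{C^\beta}\eps^\beta\,(\tilde\mu * \rho_\eps)(x) = [u(t,\cdot)]_{C^\beta}\eps^\beta\bigl(\div b^\eps(t,x) + dC_1(t)\bigr).
\]
The critical ingredient is the mollification estimate
\[
\|\nabla b^\eps(t,\cdot)\|_{L^\oo} \le [b(t,\cdot)]_{C^\alpha}\int|z|^\alpha|\nabla\rho_\eps(z)|\,dz \le C[b(t,\cdot)]_{C^\alpha}\eps^{\alpha-1},
\]
which follows from $\nabla b^\eps(x) = \int(b(y)-b(x))\nabla\rho_\eps(x-y)\,dy$ and bounds $|\div b^\eps(t,x)|$ accordingly. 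The hypothesis $\alpha+\beta > 1$ is exactly what makes these two contributions combine into a bound of the claimed form, with $C(\cdot) \in L^1([0,T])$.

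With the commutator estimate in hand, each $u^\eps$ is a smooth---hence classical, and therefore viscosity---solution of $-\del_t u^\eps + b\cdot\nabla u^\eps = r_\eps$ (note that $\uline{b} = \oline{b} = b$, since $b$ is continuous in $x$), and $u^\eps \to u$ locally uniformly. The $L^1$-in-time viscosity stability recalled in Section~\ref{ss:AMNC} (based on \cite{N90,N92,Idisc85,LP}) then implies that $u$ is simultaneously a viscosity sub- and supersolution of~\eqref{E:AMTE}. Since $u(t,\cdot) \in C^\beta$ implies sublinear growth in $x$, Theorem~\ref{T:comparison} applies and forces $u$ to coincide with the unique viscosity solution given by~\eqref{formula:AM}.
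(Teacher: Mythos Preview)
Your proof is correct and follows the same overall strategy as the paper: mollify $u$ in space, show the commutator $r_\eps = (b\cdot\nabla u)*\rho_\eps - b\cdot\nabla u_\eps$ vanishes locally uniformly, and then invoke viscosity stability and the comparison principle.

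The difference lies in how the commutator bound is obtained. The paper simply asserts that ``standard interpolation arguments'' yield $|M_\eps[B,U]|\le C\eps^{\alpha+\beta-1}[B]_{C^\alpha}[U]_{C^\beta}$ for the bilinear form $M_\eps$, without spelling out the details. You instead integrate by parts to rewrite $r_\eps$ explicitly in terms of $\div b$ as a measure (which is legitimate precisely because of the one-sided Lipschitz assumption and Lemma~\ref{L:matrixbound}), then estimate the two pieces directly. The crucial step in your argument is the decomposition $\div b = \tilde\mu - dC_1\,dx$ with $\tilde\mu\ge 0$, combined with the mollification bound $\|\nabla b^\eps\|_{L^\oo}\le C[b]_{C^\alpha}\eps^{\alpha-1}$, which controls $(\tilde\mu*\rho_\eps)(x)=\div b^\eps(x)+dC_1$. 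This makes it transparent exactly where $\alpha+\beta>1$ enters and where the half-Lipschitz hypothesis is used. The paper's interpolation route, if carried out abstractly, would give the commutator estimate for \emph{any} $b\in C^\alpha$ without the monotonicity assumption; your direct argument is more elementary and self-contained but leans on the half-Lipschitz structure, which is harmless here since it is already part of the hypotheses.
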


\begin{remark}
	The condition on $\alpha + \beta$, and, in particular, the strict inequality, is sharp, as the example above with $b(x) = \sgn x |x|^{\alpha}$ and $u_T(x) = |x|^{1-\alpha}$ shows.
\end{remark}

\begin{proof}[Proof of Theorem \ref{T:alphabbetau}]
	Arguing similarly as for Theorem \ref{T:CbLipu}, it suffices to prove that 
	\[
		r_\eps = (b \cdot \nabla u) * \rho_\eps - b \cdot \nabla (u * \rho_\eps) \xrightarrow{\eps \to 0} 0 \quad \text{locally uniformly},
	\]
	where $\rho_\eps$ is a standard mollifier. We note that $r_\eps = M_\eps[ b(t,\cdot), u(t,\cdot)]$, where the bilinear operator $M_\eps$ is defined, for sufficiently regular $(B,U): \RR^d \to \RR^d \times \RR$, by
	\[
		M_\eps[ B,U] = \int_{\RR^d} \left( B(y) - B(x) \right) \cdot \nabla U(y) \rho_\eps(x-y)dy.
	\]
	Standard interpolation arguments give, for some $C >0$ depending on $\alpha$ and $\beta$, for all $(B,U) \in C^\alpha \times C^\beta$,
	\[
		\left| M_\eps[B,U] \right| \le C\eps^{\alpha + \beta - 1} [B]_{C^\alpha} [U]_{C^\beta}.
	\]
	Therefore $|r_\eps(t,x)| \le C [b(t,\cdot)]_{C^\alpha} [u(t,\cdot)]_{C^\beta} \eps^{\alpha + \beta - 1}$, and we conclude upon sending $\eps \to 0$.
\end{proof}

\subsection{The conservative equation}\label{ss:AMCE}

\subsubsection{Duality solutions} 
For either of the two conservative equations \eqref{E:AMCE} and \eqref{E:2AMCE}, the tendency of the backward flow to concentrate on sets of Lebesgue measure zero implies that, even if $f_0$ is absolutely continuous with respect to the Lebesgue measure, $f(t,\cdot)$ may develop a singular part for $t > 0$.

This presents an obstacle in defining solutions in the sense of distributions, since the product of the discontinuous vector field $b$ with a singular measure $f$ may not be well-defined. Instead, we directly define solutions in duality with the nonconservative equation.

\begin{definition}\label{D:dualmeasures}
	A map $f \in C([0,T], \mcl M_{\loc,\w})$ is called a solution of \eqref{E:AMCE} if, for all $t \in [0,T]$ and $g \in C_c(\RR^d)$,
	\[
		\int g(x) f(t, dx) = \int g(\phi_{0,t}(x)) f_0(dx).
	\]
\end{definition}

\begin{remark}\label{R:dualitymeaning}
	For $g \in C_c(\RR^d)$ and $t \in [0,T]$, $(s,x) \mapsto g(\phi_{s,t}(x))$ is the solution of the transport equation \eqref{E:AMTE} in $[0,t] \times \RR^d$ with terminal value $g$ at time $t$, and, hence, $f$ is called the duality solution of \eqref{E:AMCE}. Equivalently, $f(t,\cdot)$ is the pushforward by $\phi_{0,t}$ of the measure $f_0$. When $f_0$ is a probability measure, this means that $f(t,\cdot)$ is the law at time $t$ of the stochastic process $\phi_{0,t}(X_0)$, where $X_0$ is a random variable with law $f_0$.
\end{remark}

\begin{remark}\label{R:dualitytestfns}
	The notion of duality solution can be equivalently formulated in relation to nonconservative equations with a right-hand side\footnote{The theory of viscosity solutions of the terminal value problem for \eqref{E:TErhs}  can be formulated following the theory of the previous subsection with little change.}, that is, for $g \in L^1([0,T], C(\RR^d))$, 
	\begin{equation}\label{E:TErhs}
		-\del_t u + b(t,x) \cdot \nabla u = g(t,x) \quad \text{in } (0,T) \times \RR^d.
	\end{equation}
	With this perspective, although the object $\div(bf)$ does not make sense as a classical distribution, the equation can still be applied to particular singular test functions, namely, solutions of equations like \eqref{E:TErhs}. Then the pairing
	\begin{equation}\label{badpairing}
		\int_{\RR^d} u(T,x)f(T,dx) - \int_{\RR^d} u(0,x)f_0(dx) + \int_0^T \int_{\RR^d}  \underbrace{ \left[- \del_t u(t,x) + b(t,x) \cdot \nabla u(t,x) \right]}_{= g(t,x)}  f(t,dx) = 0
	\end{equation}
	has a sense, because the singular terms collapse into a continuous function, which may be paired with $f(t,\cdot)$.
	\end{remark}

\begin{theorem}\label{T:AMCE}
	There exists a unique duality solution $f$ of \eqref{E:AMCE}. If, for $\eps > 0$, $f^\eps$ is the solution corresponding to $b^\eps$ as in \eqref{bregs}, then, as $\eps \to 0$, $f^\eps$ converges weakly in the sense of measures to $f$. If $1 \le p < \oo$, $f_0, g_0 \in \mcl P_p$, and $f$ and $g$ are the corresponding duality solutions, then, for some $C > 0$ depending on $p$ and the constants in \eqref{A:monotoneB}, $\mcl W_p(f_t, g_t) \le C \mcl W_p(f_0,g_0)$.
\end{theorem}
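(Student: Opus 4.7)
The plan is to take $f(t,\cdot) := (\phi_{0,t})_\# f_0$ as the solution; Definition 4.5 essentially forces this identification, so the real content lies in (i) verifying the weak temporal continuity required by membership in $C([0,T], \mcl M_{\loc,\w})$, (ii) converting locally uniform convergence of flows into weak convergence of pushforwards, and (iii) building a good coupling to push forward for the Wasserstein estimate.

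Uniqueness is immediate: two maps satisfying Definition 4.5 must agree against every $g \in C_c(\RR^d)$ at each $t$, hence coincide as elements of $\mcl M_{\loc,\w}$. For existence, I set $f(t,\cdot) := (\phi_{0,t})_\# f_0$, for which the integral identity of Definition 4.5 is automatic. To verify weak continuity in $t$, I fix $g \in C_c(\RR^d)$ and $t_n \to t$. The time-regularity \eqref{timeLipODE} gives $\phi_{0,t_n}(x) \to \phi_{0,t}(x)$ uniformly on compact sets in $x$, while the linear growth \eqref{boundODE} ensures that the set $\{x : \phi_{0,t_n}(x) \in \supp g\}$ lies in a fixed compact ball independent of $n$. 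Dominated convergence against the Radon measure $f_0$ then yields $\int g\, df(t_n) \to \int g\, df(t)$.

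Stability under regularization is entirely analogous: Lemma \ref{L:backwardsflow} gives $\phi^\eps_{0,t} \to \phi_{0,t}$ locally uniformly as $\eps \to 0$, while the $\eps$-uniform bound in \eqref{boundODE} confines the support of $g \circ \phi^\eps_{0,t}$ inside a common compact set, so dominated convergence against $f_0$ delivers $\int g\, df^\eps(t) \to \int g\, df(t)$ for every $g \in C_c(\RR^d)$ and $t \in [0,T]$.

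For the Wasserstein bound, I pick an optimal transport plan $\pi$ between $f_0$ and $g_0$ and set $\pi_t := (\phi_{0,t} \otimes \phi_{0,t})_\# \pi$. Checking marginals shows $\pi_t$ is a coupling between $f_t$ and $g_t$, whence
\[
  \mcl W_p^p(f_t, g_t) \le \int |\phi_{0,t}(x) - \phi_{0,t}(y)|^p \, \pi(dx, dy),
\]
and the Lipschitz estimate \eqref{LipschitzODE} produces the result with $C = \exp(\int_0^T C_1(r)\, dr)$; a short computation using \eqref{boundODE} also gives $f_t \in \mcl P_p$ whenever $f_0 \in \mcl P_p$. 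The main obstacle, if any, is merely bookkeeping the local-in-$x$ nature of the topology on $\mcl M_{\loc,\w}$ and invoking \eqref{boundODE} to keep the relevant supports uniformly bounded in $t$ and $\eps$; no ingredient beyond the backward-flow theory of Section \ref{sec:flow} is needed.
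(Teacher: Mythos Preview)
Your proof is correct and follows the same overall strategy as the paper: define $f(t,\cdot)$ as the pushforward of $f_0$ by the backward flow, and read off all properties from Lemma~\ref{L:backwardsflow}. Two tactical differences are worth noting. For stability under regularization, you argue directly via dominated convergence (using the locally uniform convergence $\phi^\eps_{0,t}\to\phi_{0,t}$ and the uniform support bound from \eqref{boundODE}); the paper instead localizes $f_0$ to a ball, extracts a weak-$\star$ limit point, and identifies it via the duality identity. Your route is shorter and yields convergence of the full family without passing through subsequences. For the Wasserstein estimate, you use the primal coupling $(\phi_{0,t}\otimes\phi_{0,t})_\#\pi$ together with \eqref{LipschitzODE}, whereas the paper uses the Kantorovich dual formulation with test functions $h_1(x)+h_2(y)\le|x-y|^p$; the paper itself remarks immediately afterward that your coupling approach works equally well.
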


\begin{proof}
	The existence and uniqueness of duality solutions is a direct consequence of the definition. Moreover, the duality solution identity implies that, for any $R > 0$ and for some $C > 0$ depending on the constants in \eqref{A:monotoneB}, $\norm{f(t,\cdot)}_{TV(B_R)} \le \norm{f_0}_{TV(B_{R+C})}$. For $0 \le s < t \le T$ and $g \in C_c(\RR^d)$, we apply the duality formula with the test function $g \circ \phi_{s,t}$ and obtain the identity
	\[
		\int_{\RR^d}g(x) f(t,dx) = \int_{\RR^d} g(\phi_{s,t}(x)) f(s,dx).
	\]
	Then, by Lemma \ref{L:backwardsflow}, for some modulus of continuity $\omega$ depending on the modulus of continuity for $g$,
	\[
		\abs{\int_{\RR^d} g(x)\left[ f(t,dx) - f(s,dx) \right]} \le \omega(|t-s|) \norm{f_0}_{B_{\supp g + C}},
	\]
	and we conclude that $f \in C([0,T], \mcl M_{\loc, \w})$.
	
	For $R > 0$, define $f_{0,R} := f_0 \ind_{B_R}$, and denote by $f_R$ and $f^\eps_R$ the duality solutions of \eqref{E:AMCE} with respectively $b$ and $b^\eps$ and initial condition $f_{0,R}$. It then suffices to prove that, for fixed $R > 0$ as $\eps \to 0$, $f^\eps_R \rightharpoonup f_R$ in the sense of measures. Then, in view of Lemma \ref{L:backwardsflow}, for any $t \in [0,T]$ and $g \in C_c(\RR^d)$ for sufficiently large support,
	\[
		\int_{\RR^d} g(x) f_R(t,dx) = \int_{B_R} g(\phi_{0,t}(x)) f_0(dx) = \int_{\RR^d} g(\phi_{0,t}(x)) f_0(dx) = \int_{\RR^d} g(x) f(t,dx),
	\]
	and similarly for $f^\eps$.

 	Let then $g \in C_c(\RR^d)$ and $t \in (0,T]$ be fixed, and assume without loss of generality that $f_0$ has compact support in $B_R$ for some $R > 0$. Then, for $\eps > 0$,
	\[
		\int_{\RR^d} g(x) f^\eps(t,dx) = \int g(\phi^\eps_{0,t}(x))f_0(dx).
	\]
	so that $\norm{f^\eps}_{TV} \le \norm{f_0}_{TV}$. Moreover, if $\supp g \subset \RR^d \backslash B_{R+C}$ for some $C > 0$ sufficiently large and independent of $\eps > 0$, again by Lemma \ref{L:backwardsflow},
	\[
		\int_{\RR^d} g(x) f^\eps(t,dx) = 0.
	\]
	We may then take a weakly convergent subsequence of $f^\eps$, with limit point $F \in L^\oo([0,T], \mcl M)$, and, sending $\eps \to 0$, we obtain that $F$ satisfies the duality solution identity, and therefore $F = f$.
	
	Choose $h_1,h_2 \in C_c(\RR^d)$ such that, for all $x,y \in \RR^d$, $h_1(x) + h_2(y) \le |x-y|^p$. Then, if $\gamma$ is any coupling between $f_0$ and $g_0$, we compute, using the duality identity and Lemma \ref{L:backwardsflow},
	\begin{align*}
		\int h_1(x) f(t,dx) + \int h_2(y) g(t,dy) &= \iint \left( h_1(\phi_{0,t}(x)) + h_2(\phi_{0,t}(y)) \right) \gamma(dx,dy)\\
		&\le C \iint |x-y|^p \gamma(dx,dy).
	\end{align*}
	Taking the infimum over such $\gamma$ and supremum over such $h_1,h_2$, and using the dual formulation of the $p$-Wasserstein distance, we arrive at the estimate for the Wasserstein distances.
\end{proof}

\begin{remark}
	The final estimate can also be proved using the characterization of $f$ and $g$ as laws of certain stochastic processes (see Remark \ref{R:dualitymeaning}) and the characterization of the Wasserstein metric in terms of random variables.
\end{remark}

We may repeat the above analysis for the second-order conservative equation \eqref{E:AMCE}, the only difference being the lack of a finite speed of propagation. Therefore, all measures are taken to have finite mass over $\RR^d$. Below, $\Phi_{t,0}$ is the stochastic flow satisfying \eqref{AM:SDE}.

\begin{definition}\label{D:2dualmeasures}
	A map $f \in C([0,T], \mcl M_{\w})$ is called a solution of \eqref{E:2AMCE} if, for all $t \in [0,T]$ and $g \in C_b(\RR^d)$,
	\[
		\int g(x) f(t, dx) = \int \EE[g(\Phi_{t,0}(x))] f_0(dx).
	\]
\end{definition}

\begin{remark}
	Once again, such solutions are called duality solutions because $\EE[ g \circ \Phi_{t,0}]$ is the solution of \eqref{E:2AMTE} with terminal value $g$ at time $t$. If $f_0$ is a probability measure, then $f(t,\cdot)$ is the law of the stochastic process $\Phi_{t,0}(X_0)$, where $X_0$ is a random variable with law $f_0$, independent of the Wiener process $W$.
\end{remark}

The following may be proved exactly as for Theorem \ref{T:AMCE}, now invoking the properties of the stochastic flow described by Lemma \ref{L:backwardsstochflow}.

\begin{theorem}\label{T:2AMCE}
	There exists a unique duality solution $f$ of \eqref{E:2AMCE}. If, for $\eps > 0$, $f^\eps$ is the solution corresponding to $b^\eps$ as in \eqref{bregs}, then, as $\eps \to 0$, $f^\eps$ converges weakly in the sense of measures to $f$. If $1 \le p \le \oo$, $f_0, g_0 \in \mcl P_p$, and $f$ and $g$ are the corresponding duality solutions, then, for some $C > 0$ depending on $p$ and the constants in \eqref{A:monotoneB}, $\mcl W_p(f_t, g_t) \le C \mcl W_p(f_0,g_0)$.
\end{theorem}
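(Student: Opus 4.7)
\emph{Plan.} The proof parallels that of Theorem \ref{T:AMCE}, with the stochastic flow $\Phi_{t,0}$ of \eqref{AM:SDE} replacing the deterministic backward flow and Lemma \ref{L:backwardsstochflow} replacing Lemma \ref{L:backwardsflow}. The main structural difference is the absence of finite propagation speed: measures now must have finite total mass on all of $\RR^d$, and the finite-range truncations available in the first-order case must be replaced by arguments exploiting the moment bound \eqref{EboundSDE}.

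Existence and uniqueness of a duality solution are direct consequences of Definition \ref{D:2dualmeasures}: the formula
\[
	\int g(x)\, f(t,dx) := \int \EE[g(\Phi_{t,0}(x))]\, f_0(dx), \qquad g \in C_b(\RR^d),
\]
defines a Radon measure $f(t,\cdot)$ with $\norm{f(t,\cdot)}_{TV} \le \norm{f_0}_{TV}$, and it is uniquely pinned down by its pairing against $C_b(\RR^d)$. Weak continuity in $t$ follows from the path continuity of $\Phi$ via two applications of dominated convergence, using $\norm{g}_{L^\oo}$ as the integrable envelope against the finite measure $|f_0|$. For stability under regularization, Lemma \ref{L:backwardsstochflow} applied to $(b^\eps)_{\eps > 0}$ yields a.s.\ locally uniform convergence $\Phi^\eps \to \Phi$ together with moment bounds that are uniform in $\eps$; truncating to $|x| \le R$, applying bounded convergence on the bulk, and controlling the tail by $|f_0|(\RR^d \setminus B_R)$ then gives $f^\eps \rightharpoonup f$ in $\mcl M_\w$.

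The Wasserstein estimate I would derive from the Kantorovich duality
\[
	\mcl W_p^p(f_t, g_t) = \sup\left\{ \int h_1\, df_t + \int h_2\, dg_t : h_1, h_2 \in C_b(\RR^d),\; h_1(x) + h_2(y) \le |x-y|^p \right\}.
\]
For any coupling $\gamma$ of $(f_0, g_0)$ and any admissible $(h_1, h_2)$, inserting the duality formulae and applying Fubini yields
\[
	\int h_1\, df_t + \int h_2\, dg_t = \iint \EE\bigl[ h_1(\Phi_{t,0}(x)) + h_2(\Phi_{t,0}(y)) \bigr]\, \gamma(dx,dy) \le C \iint |x-y|^p\, \gamma(dx,dy),
\]
where the last step uses the constraint on $(h_1, h_2)$ together with the $L^p$-Lipschitz bound \eqref{ELipschitzSDE}, and crucially exploits that the same Brownian path drives $\Phi_{t,0}(x)$ and $\Phi_{t,0}(y)$. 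Taking the infimum over $\gamma$ closes the estimate for $p \in [2,\oo)$, and $p \in [1,2)$ follows by Jensen. The only genuine obstacle is the endpoint $p = \oo$: the argument above uses only $L^p$-Lipschitz control of the stochastic flow, whereas $\mcl W_\oo$ would require the a.s.\ pathwise Lipschitz estimate flagged as open in Remark \ref{R:Lipstoch?}; this case therefore requires either a separate argument or that the statement be read with $p < \oo$.
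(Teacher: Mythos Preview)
Your proposal is correct and matches the paper's own approach exactly: the paper does not give a separate proof but states that the result ``may be proved exactly as for Theorem~\ref{T:AMCE}, now invoking the properties of the stochastic flow described by Lemma~\ref{L:backwardsstochflow},'' and your outline does precisely this, including the Kantorovich-duality argument for the Wasserstein bound and the correct identification of the structural change (loss of finite propagation speed, hence the switch from $\mcl M_\loc$ to $\mcl M$).

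Your flag on the endpoint $p=\oo$ is a legitimate observation about the statement itself rather than a gap in your argument. Lemma~\ref{L:backwardsstochflow} supplies only the $L^p$-Lipschitz bound \eqref{ELipschitzSDE} for finite $p$, and the constant there grows with $p$ (the It\^o--Gr\"onwall computation produces a factor of order $p(p-1)$ from the quadratic-variation term), so one cannot pass to the limit; pathwise Lipschitz continuity is exactly what Remark~\ref{R:Lipstoch?} records as open. Note also that the first-order analogue, Theorem~\ref{T:AMCE}, is stated only for $1\le p<\oo$ despite having a genuine pointwise Lipschitz flow, which strongly suggests the ``$\le\oo$'' here is a typo rather than a claim requiring a separate argument.
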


\subsubsection{On the failure of renormalization}

In view of the formula \eqref{formula:AM}, it is immediate that (viscosity) solutions of \eqref{E:AMTE} satisfy the renormalization property, that is, if $u$ is a viscosity solution and $\beta: \RR \to \RR$ is smooth, then $\beta \circ u$ is also a solution. This is related to the existence and uniqueness of the Lipschitz backward flow; indeed, note that, coordinate by coordinate, $\phi_{t,T}(x)$ is the unique viscosity solution of \eqref{E:AMTE} with terminal value $x$ at time $T$.

We contrast this with the renormalization property for the forward, conservative problem \eqref{E:AMCE}. If $b$ is smooth, then classical computations show that $f$ is a solution if and only if $|f|$, $f_+$, and $f_-$ are all solutions. Because $f(t,\cdot)$ is the pushforward by $f_0$ of the flow $\phi_{0,t}$, this can be viewed as a generalized form of injectivity for the flow. For general $b$ satisfying \eqref{A:monotoneB}, the backward flow is not only not injective, but concentrates at null sets. We therefore cannot expect renormalization to hold in general.

As a concrete example, take again $b(x) = \sgn x$ on $\RR$, and $f_0 = \frac{1}{2} \delta_{1} - \frac{1}{2} \delta_{-1}$. Then, for $t > 0$, $f(t,\cdot) = \frac{1}{2} \delta_{(1-t)_+} - \frac{1}{2} \delta_{-(1-t)_+}$, which means that $f(t,\cdot) \equiv 0$ for $t \ge 1$. However, the solution $F$ of \eqref{E:AMCE} with $F_0 = |f_0| = \frac{1}{2}\delta_1 + \frac{1}{2} \delta_{-1}$ is equal to $F(t,\cdot) = \frac{1}{2} \delta_{(1-t)_+} + \frac{1}{2} \delta_{-(1-t)_+}$, so that $F(t,\cdot) = \delta_0$ for $t \ge 1$. Thus $F_t \ne |f_t|$ for $t \ge 1$; indeed, $|f_t|$ does not even conserve mass. 

The failure of renormalization holds even if we impose $f_0 \in L^1 \cap L^\oo$. For such $f_0$ and for $b(x) = \sgn x$, we have
\[
	f(t,dx) = \left[ f_0(x+t) \ind\left\{ x > 0\right\} + f_0(x-t) \ind\left\{ x < 0 \right\} \right]dx + \left( \int_{[-t,t]} f_0 \right) d \delta_0(x).
\]
Therefore, renormalization fails whenever $f_0$ is nonzero and odd.

We present one more counterexample to renormalization in which $b \in C$ and $f \in L^1$ (as the previous example shows, even if $f_0 \in L^1$, $f(t,\cdot)$ may not be absolutely continuous with respect to Lebesgue measure due to the concentration of the flow). Take $b(t,x) = 2\sgn x |x|^{1/2}$. The backward flow is given by $\phi_{0,t}(x) = \sgn x (|x|^{1/2} - t)_+^2$ for $(t,x) \in [0,T] \times \RR$. For $f_0 \in L^1$, the duality solution is given by
\[
	f(t,dx) = \left( \int_{[-t^2,t^2]}f_0 \right) \delta_0(dx) + f_0\left( \sgn x(|x|^{1/2} + t)^2 \right) \frac{|x|^{1/2} + t}{|x|^{1/2}} dx.
\]
We then take the odd density $f_0(x) = \sgn x |x|^{1/2} \ind_{[-1,1]}(x)$, and the duality solution takes values in $L^1$:
\begin{equation}\label{suddenL1}
	f(t,x) = \sgn x  \frac{ (|x|^{1/2} + t)^2}{|x|^{1/2}} \ind_{[-(1-t)_+^2, (1-t)_+^2]}(x).
\end{equation}
On the other hand, $|f|$ is not the duality solution, or even a distributional solution, since mass is not conserved. The unique duality solution with initial density $|f_0(x)| = |x|^{1/2} \ind_{[-1,1]}(x)$ in this case is given by
\[
	F(t,dx) = \frac{4t^3}{3} \delta_0(dx) +  \frac{ (|x|^{1/2} + t)^2}{|x|^{1/2}} \ind_{[-(1-t)_+^2, (1-t)_+^2]}(x)dx.
\]

\begin{remark}
	One consequence of the commutator lemma of DiPerna and Lions \cite[Lemma II.1]{DL89} is that, if $f \in L^p$ and $b \in W^{1,q}$ with $\frac{1}{p} + \frac{1}{q} \le 1$, then the renormalization property is satisfied. The previous example therefore indicates that these conditions cannot be weakened in general. Indeed, even though $f_0 \in L^1 \cap L^\oo$, the solution $f(t,\cdot)$ given by \eqref{suddenL1} belongs to $L^p$ only for $p \in [1,2)$ when $t > 0$, and the same is true for $\del_x b$. 
\end{remark}

\subsubsection{Equivalence of duality and distributional solutions}

We finish this section by studying the setting where $bf$ can be understood as a distribution, and, therefore, distributional solutions of \eqref{E:AMCE} can be considered.

\begin{theorem}\label{T:dualdist}
	Assume either that $b$ is continuous, or that $f(t,\cdot) \in L^1_\loc$ for all $t \in [0,T]$. Then $f$ is a distributional solution of \eqref{E:AMCE} if and only if $f$ is the unique duality solution.
\end{theorem}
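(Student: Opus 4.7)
The approach is to prove both implications by regularizing the velocity field through $b^\eps$ as in \eqref{bregs}. Under either of the two hypotheses, the distribution $bf$ has a natural meaning---as a locally bounded Radon measure when $b$ is continuous, and as an $L^1_\loc$ function when $f(t,\cdot) \in L^1_\loc$ (using the linear growth bound in \eqref{A:monotoneB})---so the distributional formulation of \eqref{E:AMCE} is well-defined in both settings. Since Theorem \ref{T:AMCE} already furnishes the existence and uniqueness of the duality solution, the task is to establish the equivalence of the two notions.

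For the implication from duality to distributional, I would exploit the characterization of $f(t,\cdot)$ as the pushforward of $f_0$ by $\phi_{0,t}$, noted in Remark \ref{R:dualitymeaning}. From the reversed ODE \eqref{monotone:reversed} and the time-Lipschitz estimate \eqref{timeLipODE}, $t \mapsto \phi_{0,t}(x)$ is absolutely continuous with $\del_t \phi_{0,t}(x) = -b(t,\phi_{0,t}(x))$ for a.e. $t$, so for $g \in C^\infty_c(\RR^d)$ the chain rule gives
\[
	\frac{d}{dt} g(\phi_{0,t}(x)) = -\nabla g(\phi_{0,t}(x)) \cdot b(t, \phi_{0,t}(x)).
\]
Integrating against $f_0$ and translating back to $f$ via the pushforward identity---which, under either hypothesis, extends from continuous to bounded Borel integrands---yields the weak form
\[
	\frac{d}{dt} \int g(x) f(t,dx) = -\int b(t,x) \cdot \nabla g(x) \, f(t,dx).
\]
Testing against tensor products $\chi(t) g(x)$ with $\chi \in C^\infty_c((0,T))$ and a density argument in $C^\infty_c((0,T) \times \RR^d)$ then recovers the full distributional formulation.

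For the converse, I would fix $s \in (0,T]$ and $g \in C_c(\RR^d)$ and test the distributional equation against $u^\eps(t,x) := g(\phi^\eps_{t,s}(x))$, the classical solution of the transport terminal-value problem for the smooth velocity $b^\eps$. By \eqref{LipschitzODE}, and since $\phi^\eps_{s,t}$ (the inverse, given by the forward ODE for $b^\eps$) satisfies the growth bound \eqref{boundODE} uniformly in $\eps$, the functions $u^\eps(t,\cdot)$ are Lipschitz with a uniform-in-$\eps$-and-$t$ spatial Lipschitz constant and are supported in a fixed compact set independent of $\eps$ and $t$. Extending the distributional equation to test functions in $C^1([0,s], C^1_c(\RR^d))$ via the weak-$\star$ continuity of $f$ in $t$, and using that $u^\eps$ solves $-\del_t u^\eps + b^\eps \cdot \nabla u^\eps = 0$, gives
\[
	\int g(x) f(s,dx) - \int u^\eps(0,x) f_0(dx) = \int_0^s \int (b^\eps - b) \cdot \nabla u^\eps \, f \, dx \, dt.
\]

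Sending $\eps \to 0$, the left-hand side converges to $\int g \, f(s,\cdot) - \int g(\phi_{0,s}(x)) f_0(dx)$ by Lemma \ref{L:backwardsflow}, so the proof reduces to showing that the right-hand side vanishes. This is the main obstacle and is exactly where the two cases separate. When $b$ is continuous, $b^\eps \to b$ locally uniformly on the fixed compact spatial support of $\nabla u^\eps$, and the local finiteness of $f$, uniform in $t$ by weak-$\star$ continuity on $[0,T]$ and the uniform boundedness principle, closes the estimate. When only $f(t,\cdot) \in L^1_\loc$ is assumed, one has just almost-everywhere convergence of $b^\eps$, but the uniform linear-growth bound combined with $f \in L^\oo([0,T], L^1_\loc)$---again from the uniform boundedness principle---enables a dominated convergence argument. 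The equivalence thus hinges precisely on having either pointwise regularity of $b$ or absolute-continuity regularity of $f$, which is exactly what is needed to pass to the limit in the flux $bf$.
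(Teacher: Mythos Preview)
Your approach for the direction ``distributional $\Rightarrow$ duality'' is correct and genuinely different from the paper's. The paper mollifies $f$ in space to $f_\eps = f * \rho_\eps$, tests against the actual viscosity solution $u(s,x) = g(\phi_{s,t}(x))$ of the transport problem with the true $b$, and is then left with a commutator $r_\eps = (bf)*\rho_\eps - b f_\eps$ whose $L^1_\loc$-vanishing it must establish separately in the two cases. Your route---testing the unmollified $f$ against the approximate solution $u^\eps = g \circ \phi^\eps_{\cdot,s}$ built from the regularized velocity $b^\eps$---trades the commutator for the simpler error $\int (b^\eps - b)\cdot \nabla u^\eps \, f$, which vanishes by the uniform Lipschitz bound and fixed compact support of $u^\eps$ together with either the locally uniform convergence $b^\eps \to b$ (continuous case) or dominated convergence against $|f| \in L^\infty_t L^1_\loc$ (the $L^1_\loc$ case). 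Both arguments are comparable in length; yours avoids the commutator lemma entirely, while the paper's makes no appeal to the regularized flow.

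There is, however, a genuine gap in your direct chain-rule argument for ``duality $\Rightarrow$ distributional'' in the case where $b$ is discontinuous. The backward flow solves the ODE only in the Filippov sense, i.e.\ $\del_t \phi_{0,t}(x) = -\alpha(t,x)$ for some measurable selection $\alpha(t,x)$ from the convex hull of limit points of $b(t,\cdot)$ at $\phi_{0,t}(x)$; it is not an equality with the a.e.-defined function $b$. After integrating against $f_0$ you obtain $-\int \nabla g(\phi_{0,t}(x))\cdot \alpha(t,x)\, f_0(dx)$, and the pushforward identity---even extended to bounded Borel integrands---does not convert this into $-\int \nabla g(y)\cdot b(t,y)\, f(t,dy)$, because $\alpha(t,x)$ is not of the form $F(\phi_{0,t}(x))$. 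The hypothesis $f(t,\cdot) \in L^1_\loc$ tells you the pushforward does not charge Lebesgue-null sets, but it does not by itself force the Filippov selection to agree $f_0$-a.e.\ with the pointwise value of $b$. The paper sidesteps this by regularizing: it passes through $f^\eps$ (the duality solution for $b^\eps$), for which the distributional identity holds classically, and then sends $\eps \to 0$ using the weak convergence $f^\eps \rightharpoonup f$ from Theorem~\ref{T:AMCE} together with either the locally uniform convergence of $b^\eps$ or the weak-$L^1_\loc$ convergence of $b^\eps f^\eps$. Your argument is easily repaired the same way.
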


\begin{proof}
	Suppose $f$ is the unique duality solution. Let $(b^\eps)_{\eps > 0}$ be as in \eqref{bregs} and let $f^\eps$ be the corresponding solution of \eqref{E:AMCE}. For $\phi \in C^1_c((0,T) \times \RR^d)$, integrating by parts yields
	\[
		\iint_{(0,T) \times \RR^d} f^\eps(t,x)\left( -\del_t \phi(t,x) + b^\eps(t,x) \cdot \nabla \phi(t,x) \right)dt dx = 0.
	\]
	In the case that $b \in C$, we may choose regularizations $b^\eps$ that converge locally uniformly to $b$. By Theorem \ref{T:AMCE}, as $\eps \to 0$, $f^\eps$ converges weakly in the sense of measures to $f$, and so we may take $\eps \to 0$ above to obtain
	\[
		\iint_{(0,T) \times \RR^d} f(t,dx)\left( -\del_t \phi(t,x) + b(t,x) \cdot \nabla \phi(t,x) \right)dt = 0.
	\]
	Otherwise, if $f \in L^1_\loc$, it follows that $f^\eps$ converges weakly in $L^1_\loc$, and therefore the same is true for $b^\eps f^\eps$ by the dominated convergence theorem. We may then take $\eps \to 0$ in this case as well.
	
	Assume now that $f$ is an arbitrary distributional solution. We aim to show the duality equality in Definition \ref{D:dualmeasures}, and, by a density argument, it suffices to do so for $g \in C_c(\RR^d) \cap C^{0,1}(\RR^d)$. Let $\rho_\eps$ be a standard mollifier as before and set $f_\eps = f * \rho_\eps$. Then $f_\eps$ satisfies
	\[
		\del_t f_\eps - \div (b f_\eps) = \div r_\eps,
	\]
	where $r_\eps = (bf) * \rho_\eps - b f_\eps$. For $t \in (0,T]$, let $u$ be the unique Lipschitz viscosity solution of the terminal value problem
	\[
		-\del_s u + b \cdot \nabla u = 0 \quad \text{in }(0,T) \times \RR^d, \quad u(t,\cdot) = g.
	\]
	By the theory in subsection \ref{ss:AMNC}, $u(s,x) = g(\phi_{s,t}(x))$ and is Lipschitz continuous with compact support. We then compute
	\[
		\del_s \int f_\eps(s,x) u(s,x)dx
		= - \int r_\eps(s,x) \cdot \nabla u(s,x)dx,
	\]
	so that
	\[
		\int f_\eps(t,x) g(x)dx - \int (f_0 * \rho_\eps)(x) g(\phi_{0,t}(x))dx
		= - \int_0^t \int_{\RR^d} r_\eps(s,x) \cdot \nabla u(s,x)dxds.
	\]
	We may then conclude by proving that $r_\eps \xrightarrow{\eps \to 0} 0$ in $L^1_\loc$.
	
	If $f \in L^1_\loc$, this is immediate because, as $\eps \to 0$, both $(bf) * \rho_\eps$ and $b f_\eps$ converge in $L^1_\loc$ to $bf$. If $b \in C$, then, as $\eps \to 0$, both $(bf) * \rho_\eps$ and $b f_\eps$ converge locally in total variation to $bf$. It follows that $r_\eps$ converges locally in total variation to $0$, but, because $r_\eps \in L^1$ for all $\eps > 0$, the convergence in $L^1_\loc$ is established.
\end{proof}

\begin{remark}
	Even in the context of Theorem \ref{T:dualdist}, the renormalization property can fail. Indeed, this is the case for the final example in the previous subsubsection, where both $b \in C$ and $f \in L^1$.
\end{remark}

\section{The expansive regime}\label{sec:monotone}

We continue our analysis of transport and continuity equations with vector fields $b$ satisfying \eqref{A:monotoneB}, and in this section we study the expansive regime. Reversing the sign appearing in front of the velocity field $b$, the initial value problem for the continuity equation becomes
\begin{equation}\label{E:MCE}
	\del_t f + \div(b(t,x)f) = 0 \quad \text{in } (0,T) \times \RR^d \quad \text{and} \quad f(0,\cdot) = f_0,
\end{equation}
and the corresponding dual terminal value problem for the non-conservative transport equation is
\begin{equation}\label{E:MTE}
	\del_t u + b(t,x) \cdot \nabla u = 0 \quad \text{in } (0,T) \times \RR^d \quad \text{and} \quad u(T,\cdot) = u_T.
\end{equation}

Equivalently, we are studying the time-reversed versions of \eqref{E:AMTE} and \eqref{E:AMCE} (in this case, $b$ is replaced with $b(T-t,\cdot)$). As such, the relevant direction of the flow \eqref{monotone:flow} changes in this context: whereas in the previous section, the compressive, backward flow gave rise to the dual solution spaces $C$ and $\mcl M$, here, the expansive, forward flow allows to develop a theory for both \eqref{E:MCE} and \eqref{E:MTE} in Lebesgue spaces. This can also be seen from formal a priori $L^p$ estimates for \eqref{E:MCE} and \eqref{E:MTE}, which follow immediately from the lower bound on $\div b$.

The regime for these equations matches those studied by Bouchut, James, and Mancini \cite{BJM}, in which emphasis is placed on the fact that distributional solutions $f \in C([0,T], L^\oo_{\ws}(\RR^d))$ of \eqref{E:MCE} are not unique in general. Our approach to these equations is similar, in that we use a particular solution of \eqref{E:MCE} to study, by duality, the transport equation \eqref{E:MTE} and the forward ODE flow to \eqref{monotone:flow}. We extend the results of \cite{BJM} by identifying a ``good'' solution of \eqref{E:MTE} for any $f_0 \in L^p_\loc$, where the continuous solution operator on $L^p$ is stable under regularizations in the weak topology of $C([0,T],L^p_\loc(\RR^d))$. 

The terminal value problem \eqref{E:MTE} is then understood both in the dual sense and through the lens of renormalization theory. It is this theory that allows, as in \cite{DL89}, to make sense of the forward ODE flow \eqref{monotone:SDE} as the right-inverse of the backward flow, completing the program initiated in Section \ref{sec:flow}. As a consequence, we then also obtain the uniqueness of nonnegative distributional solutions of \eqref{E:MCE}, and, by extension, a characterization of the ``good'' solution. 

We finish the section by making some remarks about the second-order analogues of \eqref{E:MCE} and \eqref{E:MTE}. Unlike in the previous section, we do not have a full solution theory for general second-order equations, unless the ellipticity matrix is uniformly positive (the case which has already been covered by Figalli in \cite{Fig}) or is degenerate but independent of the space variable. 

\subsection{The conservative equation}

The starting point for the study of the conservative equation \eqref{E:MCE} is that distributional solutions in the sense of distributions are not unique (see also \cite{BJ}, \cite[Section 6]{BJM}). We revisit the example, when $d = 1$, $b(t,x) = \sgn x$. Then $f(t,x) := \sgn x \ind_{|x| \le t}$ is a nontrivial distributional solution of \eqref{E:MCE} belonging to $L^1 \cap L^\oo$ with $f(0,\cdot) = 0$. The uniqueness can be seen as a consequence of the contractive nature of the backward flow \eqref{monotone:flow}, which allows for positive and negative mass to be ``cancelled'' at time $0$, only to appear immediately for $t > 0$. The same phenomenon is what leads to the failure of renormalization for the contractive regime for the continuity equation in subsection \ref{ss:AMCE}. In either case, we remark that, this particular $b$ belongs to $BV(\RR)$, while $\del_x b$ is not absolutely continuous with respect to Lebesgue measure, and so the condition in the work of Ambrosio \cite{A04} that $\div b \in L^1_\loc$ cannot indeed be weakened in general, if one is to hope for renormalization or uniqueness for the continuity equation.

One strategy is to define solutions of \eqref{E:MCE} by duality with the transport equation \eqref{E:AMTE} from the contractive setting. With the theory of Section \ref{sec:antimonotone}, for $g \in C^{0,1}_c(\RR^d)$, we may define a Lipschitz viscosity solution of the initial value problem
\[
	\del_t v + b(t,x) \cdot \nabla v = 0 \quad \text{in } (0,T) \times \RR^d, \quad v(0,\cdot) = g
\]
(because $\tilde v(t,x) := v(T-t,x)$ solves the corresponding terminal value problem \eqref{E:AMTE} with velocity $\tilde b(t,x) = b(T-t,x)$), and then, formally, for $t > 0$, $\int f(t,x) v(t,x)dx = \int f_0(x) g(x)dx$. 

The main problem with this approach is that duality does not define unique solutions, again due to the concentration effect of the backward flow. Taking once more $b(t,x) = \sgn x$, we have, by \eqref{formula:AM},
\[
	v(t,x) = 
	\begin{dcases}
		g(x - (\sgn x )t), & |x| \ge t, \\
		g(0), & |x| \le t.
	\end{dcases}
\]
Therefore, the duality equality fails to give sufficient information to identify $f$ in the cone $\{|x| \le  t\}$, in which $v$ is always constant, regardless of the initial data $g$. Indeed, the two distributional solutions $f \equiv 0$ and $f(t,x) = \sgn x \ind\{|x| \le t\}$ differ in exactly this cone, in which the Jacobian of the backward flow vanishes. It is exactly this observation that lead to the notion of ``exceptional'' solutions of \eqref{E:AMTE} and the exceptional set in \cite{BJM}.

We instead identify a ``good'' solution operator acting on all $f_0 \in L^p_\loc$, $1 \le p \le \oo$, by extending the solution formula in the smooth case, which depends on the backward flow studied in Section \ref{sec:flow}, as well as the corresponding Jacobian. In particular, the ``good solution'' is distinguished by vanishing whenever the Jacobian does. Our approach differs slightly from that of \cite{BJM}, who work with a general class of ``transport flows'' that generalize the backward ODE flow. One advantage of our analysis is that we can directly appeal to the various topological properties of the backward flow proved in Section \ref{sec:flow}.

\subsubsection{Representation formula} 

If $b$ is Lipschitz, then the solution of \eqref{E:MCE} is given by
\begin{equation}\label{formula:M}
	f(t,x) = f_0(\phi_{0,t}(x)) J_{0,t}(x),
\end{equation}
where $\phi_{0,t}(x)$ is the reverse flow defined in Section \ref{sec:flow} and $J_{0,t}(x) = \det(\nabla_x \phi_{0,t}(x))$ is the corresponding Jacobian. One way to derive this formula is through the Feynman-Kac formula for the reversed time equation
\[
	-\del_t \tilde f + b(T-t,x) \cdot \nabla \tilde f + \div_x b(T-t,x) \tilde f = 0 \quad \text{in }(0,T) \times \RR^d, \quad \tilde f(T,\cdot) = f_0,
\]
which gives 
\begin{equation}\label{M:FC}
	f(t,x) = \tilde f(T-t,x) = f_0(\phi_{0,t}(x)) \exp\left( -\int_0^t \div b(s, \phi_{0,s}(x)ds \right),
\end{equation}
and then $J_{0,t}(x) = \exp\left( -\int_0^s \div b(s, \phi_{0,s}(x)ds \right)$. 

In the general case where $b$ satisfies \eqref{A:monotoneB}, the formula \eqref{formula:M} makes sense for arbitrary $f_0 \in L^p_\loc$, $1 \le p \le \oo$. We may then use the various results in Section \ref{sec:flow} to analyze the stability properties of the solution operator defined by the formula \eqref{formula:AM}. We remark in particular that the stability results of Lemma \ref{L:backwardsJ} depend on the determinant structure of the Jacobian, which is somewhat disguised by the exponential expression in \eqref{M:FC}.

\begin{theorem}\label{T:MCE}
	Let $1 \le p \le \oo$, assume that $f_0 \in L^p_\loc(\RR^d)$, and define $f$ by \eqref{formula:M}. Then $f$ is a distributional solution of \eqref{E:MCE}. If $1 \le p < \oo$, $f \in C([0,T],L^p(\RR^d))$, and if $p = \oo$, $f \in C([0,T],L^\oo_\ws(\RR^d))$. There exists a constant $C > 0$ depending only on the assumptions in \eqref{A:monotoneB} such that, for all $R > 0$,
	\begin{equation}\label{MCE:Lp}
		\norm{f(t,\cdot)}_{L^p(B_R)} \le C\norm{f_0}_{L^p(B_{R+C})}.
	\end{equation}
	If $(b_\eps)_{\eps > 0}$ are as in \eqref{bregs} and $(f_\eps)_{\eps > 0}$ are the corresponding solutions of \eqref{E:MCE}, then, as $\eps \to 0$, $f_\eps$ converges to $f$ weakly in $C([0,T], L^p_\loc(\RR^d))$ if $1 \le p < \oo$, and weak-$\star$ in $L^\oo$ if $p = \oo$.
\end{theorem}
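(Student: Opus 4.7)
The plan is to regularize and pass to the limit. For each $\eps > 0$, $b^\eps$ from \eqref{bregs} is Lipschitz in space, so the classical method of characteristics gives a unique distributional (in fact, $C^1$) solution $f^\eps$ of the regularized continuity equation, represented by the formula $f^\eps(t,x) = f_0(\phi^\eps_{0,t}(x)) J^\eps_{0,t}(x)$, which is precisely the approximation of \eqref{formula:M} built from the regularized backward flow. The strategy is then to use the locally uniform convergence $\phi^\eps_{0,t} \to \phi_{0,t}$ from Lemma \ref{L:backwardsflow} and the weak-$\star$ convergence $J^\eps_{0,t} \rightharpoonup J_{0,t}$ from Lemma \ref{L:backwardsJ} to pass to the limit in all quantities of interest.

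I would first prove the bound \eqref{MCE:Lp} at the regularized level, uniformly in $\eps$. Since $b^\eps$ is Lipschitz, $\phi^\eps_{0,t}$ is a diffeomorphism, so changing variables $y = \phi^\eps_{0,t}(x)$ yields
\[
    \int_{B_R} |f^\eps(t,x)|^p\, dx = \int_{\phi^\eps_{0,t}(B_R)} |f_0(y)|^p \, J^\eps_{0,t}(\phi^\eps_{t,0}(y))^{p-1}\, dy,
\]
and the bounds \eqref{boundODE} and \eqref{Jestimates} (applied uniformly in $\eps$) give $\phi^\eps_{0,t}(B_R) \subset B_{R+C}$ and $\|J^\eps_{0,t}\|_{L^\oo} \le e^{d\int_0^T C_1}$, yielding \eqref{MCE:Lp} uniformly in $\eps$, and then for $f$ after passing to the limit.

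Next I would establish the convergence $f^\eps \to f$, first for continuous, compactly supported $f_0$: the composition $f_0 \circ \phi^\eps_{0,t}$ converges to $f_0 \circ \phi_{0,t}$ locally uniformly (and is bounded), which multiplied against $J^\eps_{0,t} \rightharpoonup J_{0,t}$ weak-$\star$ in $L^\oo$, gives weak convergence of $f^\eps$ to $f$ in $L^p_\loc$ (or weak-$\star$ in $L^\oo$) when tested against $L^{p'}$ functions. The uniform bound \eqref{MCE:Lp} then extends this to arbitrary $f_0 \in L^p_\loc$ by density. For the distributional identity, pass to the limit in $\iint f^\eps(\del_t \psi + b^\eps \cdot \nabla \psi)\, dt\, dx = 0$; for the second term, note $b^\eps \cdot \nabla \psi \to b \cdot \nabla \psi$ in $L^{p'}$ on the support of $\psi$ by dominated convergence, which combined with the weak convergence of $f^\eps$ gives the limit $\iint fb \cdot \nabla \psi\, dt\, dx$ (a minor modification via Egorov handles the endpoint $p = 1$).

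Finally, for the time continuity, for continuous $f_0$ with compact support I would split
\[
    f(t_1,\cdot) - f(t_2,\cdot) = (f_0 \circ \phi_{0,t_1} - f_0 \circ \phi_{0,t_2})\, J_{0,t_1} + (f_0 \circ \phi_{0,t_2})(J_{0,t_1} - J_{0,t_2}).
\]
The first term tends to zero locally uniformly via \eqref{timeLipODE} and uniform continuity of $f_0$, while the second tends to zero in $L^p_\loc$ by interpolation between the $L^\oo$-bound \eqref{Jestimates} and the $L^1_\loc$-modulus \eqref{Junifcts}. Approximation in $L^p_\loc$ using the uniform bound \eqref{MCE:Lp} extends this to general $f_0$; for $p = \oo$ the same argument with $L^1$ test functions and density yields weak-$\star$ continuity. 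The main technical point will be ensuring the weak-type convergence of $f^\eps$ for general $L^p_\loc$ data, which requires combining the pointwise flow convergence with the weak Jacobian convergence along with a density argument based on the $\eps$-uniform a priori estimate; the other steps are then routine.
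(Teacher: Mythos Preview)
Your proposal is correct and follows essentially the same approach as the paper: regularize $b$, use the representation $f^\eps = (f_0\circ\phi^\eps_{0,t})\,J^\eps_{0,t}$, obtain the $L^p$ bound uniformly in $\eps$ from \eqref{boundODE} and \eqref{Jestimates}, reduce to $f_0\in C_c$ to combine the locally uniform convergence of $f_0\circ\phi^\eps_{0,t}$ with the weak-$\star$ convergence of $J^\eps$, and deduce time continuity from $f_0\circ\phi_{0,\cdot}\in C$ together with $J_{0,\cdot}\in C([0,T],L^1_\loc)$. The only cosmetic difference is that the paper writes the $L^p$ estimate as $\int_{B_R}|f_0\circ\phi^\eps_{0,t}|^p(J^\eps)^p\le\|J^\eps\|_\oo^{p-1}\int_{B_R}|f_0\circ\phi^\eps_{0,t}|^p J^\eps$ before changing variables, whereas you change variables first; and the paper sidesteps your Egorov remark at $p=1$ by noting that once one has reduced to $f_0\in C_c$, the sequence $f^\eps$ is bounded in $L^\oo_\loc$, so weak-$\star$ convergence in $L^\oo$ against the $L^1$-convergent integrand $\partial_t\psi+b^\eps\cdot\nabla\psi$ suffices for every $p$.
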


\begin{proof}
	When $p = \oo$, the bound \eqref{MCE:Lp} follows from the $L^\oo$ bounds for the flow and Jacobian in Lemmas \ref{L:backwardsflow} and \ref{L:backwardsJ}. We prove the bound when $p < \oo$ for the solutions $f_\eps$ of the equation with $b_\eps$ as in \eqref{bregs}, for a constant independent of $\eps$, and then the estimate for $f$ follows after proving the weak convergence result.
	
	For a constant $C > 0$ independent of $\eps$, by Lemmas \ref{L:backwardsflow} and \ref{L:backwardsJ}, we have $|J_{0,t}|\le C$ and $|\phi_{0,t}(x)| \le R + C$ for $|x| \le R$. Then
	\begin{align*}
		\int_{B_R} |f^\eps(t,x)|^pdx 
		&= \int_{\RR^d} |f_0(\phi^\eps_{0,t}(x))|^p J^\eps_{0,t}(x)^pdx \\
		&\le \norm{J_{0,t}}_\oo^{p-1} \int_{B_R} |f_0(\phi^\eps_{0,t}(x))|^p J^\eps_{0,t}(x)dx
		\le C \int_{B_{R+C}} |f_0(x)|^pdx.
	\end{align*}
	
	It suffices to prove the weak convergence of $f^\eps$ when $p < \oo$ for $f_0 \in C_c$. In the general case, if $\tilde f_0$ is continuous with compact support and we let $\tilde f^\eps$ be the solution with $b^\eps$ and $\tilde f_0$, we have
	\[
		\norm{f^\eps - \tilde f^\eps}_{C([0,T],L^p(B_R))} \le C \norm{f_0 - \tilde f_0}_{L^p(B_{R+C})},
	\]
	and we may then choose $\tilde f_0$ arbitrarily close to $f_0$ in $L^p_\loc$.
	
	By Lemma \ref{L:backwardsflow}, as $\eps \to 0$, $\phi^\eps \to \phi$ uniformly in $[0,T] \times \RR^d$, and therefore $f_0 \circ \phi^\eps_{0,t})$ converges uniformly to $f_0 \circ \phi_{0,t}$. In view of Lemma \ref{L:backwardsJ}, $f^\eps$ converges weakly in the sense of distributions (and therefore, in the sense of locally bounded Borel measures) to $f$. Since $f^\eps$ is bounded in $L^\oo([0,T], L^p_\loc(\RR^d))$, the convergence is actually weak in $L^\oo([0,T], L^p_\loc(\RR^d))$.
	
	If $p = \oo$, then, in particular, $f^\eps \in C([0,T], L^p_\loc(\RR^d))$ for $p < \oo$, uniformly in $\eps$, and we have the convergence as $\eps \to 0$ in the sense of distributions to $f$. In this case, $f \in L^\oo_\loc([0,T] \times \RR^d)$, and so the convergence is weak-$\star$ in $L^\oo_\loc$.
	
	Given $g \in C^1_c((0,T) \times \RR^d)$, integrating by parts gives
	\[
		\iint_{[0,T] \times \RR^d} f^\eps(t,x)\left[ \del_t \phi(t,x) + b^\eps(t,x) \cdot \nabla \phi(t,x) \right] dxdt = 0.
	\]
	As $\eps \to 0$, the bracketed expression converges a.e. to $\del_t \phi(t,x) + b(t,x) \cdot \nabla \phi(t,x)$, and so converges weakly in $L^q$ for all $1 \le q < \oo$ by the dominated convergence theorem. We may therefore send $\eps \to 0$, using the weak convergence of $f^\eps$, to deduce that $f$ is a distributional solution. This implies in particular that $f \in C([0,T], L^p_\w(\RR^d))$, or $C([0,T], L^\oo_\ws(\RR^d))$ if $p = \oo$.
	
	To show that $f \in C([0,T], L^p(\RR^d))$ when $p < \oo$, we may again consider $f_0 \in C_c(\RR^d))$ without loss of generality. Then $f_0 \circ \phi_{0,\cdot} \in C([0,T] \times \RR^d)$, while $J_{0,\cdot} \in C([0,T], L^1_\loc(\RR^d))$ by Lemma \ref{L:backwardsJ}, and the result follows.	
\end{proof}

\begin{remark}
	We often call the solution $f$ defined through \eqref{formula:M} the ``good'' solution. In view of the stability results of Theorem \ref{T:MCE} above, this solution coincides with the notion of reversible solutions in \cite{BJ, BJM}.
\end{remark}

The following is immediate from the formula \eqref{formula:M}.

\begin{corollary}\label{C:Mrenorm}
	If $f$ is a ``good'' solution of \eqref{E:MCE}, then so is $|f|$.
\end{corollary}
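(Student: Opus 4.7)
The plan is to read off the statement directly from the representation formula \eqref{formula:M}, exploiting the crucial sign information on the Jacobian established in Section \ref{sec:flow}. Specifically, by definition, $f(t,x) = f_0(\phi_{0,t}(x)) J_{0,t}(x)$, and taking absolute values yields
\[
    |f(t,x)| = |f_0(\phi_{0,t}(x))| \, |J_{0,t}(x)|.
\]
The key input is the nonnegativity of the Jacobian $J_{0,t} \ge 0$ proved in Lemma \ref{L:backwardsJ} (which followed from the pointwise nonnegativity of the regularized Jacobians $J^\eps_{0,t}$ together with the compensated-compactness convergence $J^\eps \to J$). Thus $|J_{0,t}| = J_{0,t}$, and we obtain
\[
    |f(t,x)| = |f_0|(\phi_{0,t}(x)) \, J_{0,t}(x).
\]

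This is precisely the formula \eqref{formula:M} applied with initial datum $|f_0|$ in place of $f_0$. Since $f_0 \in L^p_\loc$ implies $|f_0| \in L^p_\loc$ for the same exponent $p$, I would then invoke Theorem \ref{T:MCE} directly: the function defined by \eqref{formula:M} with initial datum $|f_0|$ is a distributional solution of \eqref{E:MCE}, and it is by definition the ``good'' solution associated to $|f_0|$. Hence $|f|$ is a ``good'' solution, as claimed.

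No further estimates or compactness arguments are needed, so I do not foresee any genuine obstacle. The only subtlety worth underlining in the write-up is that the statement would fail without the sign information on $J_{0,t}$: heuristically, the nonnegativity of the Jacobian reflects the (almost-everywhere) orientation-preserving character of the backward flow, and it is this feature that rules out the pathological cancellation phenomenon exhibited in the compressive regime (subsubsection on the failure of renormalization), where $|f|$ can lose mass. Here, by contrast, the expansive flow never folds the initial datum onto itself in an orientation-reversing way at the level of the formula, so passing to the absolute value commutes with the solution operator.
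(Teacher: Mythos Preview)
Your argument is correct and is exactly the paper's approach: the corollary is stated as immediate from the formula \eqref{formula:M}, using $J_{0,t}\ge 0$ from Lemma \ref{L:backwardsJ} to get $|f(t,x)|=|f_0|(\phi_{0,t}(x))J_{0,t}(x)$.
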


Corollary \ref{C:Mrenorm} is in direct contrast to the continuity equation in the compressive setting of the previous section, where renormalization fails. Its proof depends on the formula for the good solution; indeed, despite the weak stability result in Theorem \ref{T:MCE}, this renormalization property cannot be proved by regularization, since we only have the weak convergence as $\eps \to 0$ of $f_\eps$ to $f$. At present, we do not know whether the convergence is strong in $L^p$. This turns out to be equivalent to the strong convergence in $L^1_\loc$ of the Jacobians, and therefore, in view of Proposition \ref{P:Jstrong}, we have the following when $d = 1$.

\begin{theorem}\label{T:strongconvd=1}
	Assume $d = 1$, $f_0 \in L^p_\loc(\RR)$ for $p < \oo$, $(b^\eps)_{\eps > 0}$ is as in \eqref{bregs}, and $f^\eps$ is the corresponding solution of \eqref{E:MCE}. Then, as $\eps \to 0$, $f^\eps$ converges strongly in $C([0,T] , L^p_\loc(\RR))$ to $f$.
\end{theorem}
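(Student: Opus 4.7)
The plan is to combine the explicit product formula \eqref{formula:M} with the one-dimensional strong Jacobian convergence from Proposition \ref{P:Jstrong}, upgraded to hold uniformly in time. First I would use the uniform-in-$\eps$ bound \eqref{MCE:Lp}, applied to the difference $f^\eps - \tilde f^\eps$ between solutions corresponding to $f_0$ and an approximation $\tilde f_0 \in C_c(\RR)$, in order to reduce the statement to the case $f_0 \in C_c(\RR)$, exactly as was done for the weak version in Theorem \ref{T:MCE}.

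Next I would upgrade Proposition \ref{P:Jstrong} to time-uniform convergence. That proposition gives $J^\eps_{0,\cdot} \to J_{0,\cdot}$ strongly in $L^1_\loc([0,T] \times \RR)$, while Lemma \ref{L:backwardsJ} provides a modulus $\omega_R$, independent of $\eps$, for $t \mapsto J^\eps_{0,t}$ into $L^1(B_R)$. A standard averaging argument based on the estimate
\[
\|J^\eps_{0,t} - J_{0,t}\|_{L^1(B_R)} \le 2\omega_R(\eta) + \eta^{-1}\|J^\eps - J\|_{L^1([0,T]\times B_R)}
\]
for any $\eta > 0$ (obtained by inserting an intermediate time $s$, applying the triangle inequality twice with \eqref{Junifcts}, and averaging in $s$ over an interval of length $\eta$) then yields $J^\eps_{0,\cdot} \to J_{0,\cdot}$ in $C([0,T], L^1_\loc(\RR))$. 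Interpolating with the uniform $L^\oo$ bound \eqref{Jestimates} upgrades this to convergence in $C([0,T], L^q_\loc(\RR))$ for every $q \in [1,\oo)$.

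With $f_0 \in C_c(\RR)$ in hand, I would conclude by splitting
\begin{align*}
f^\eps(t,x) - f(t,x) &= \bigl[f_0(\phi^\eps_{0,t}(x)) - f_0(\phi_{0,t}(x))\bigr] J^\eps_{0,t}(x) \\
&\quad + f_0(\phi_{0,t}(x))\bigl[J^\eps_{0,t}(x) - J_{0,t}(x)\bigr].
\end{align*}
The first term is pointwise bounded by $\omega_{f_0}\bigl(\|\phi^\eps - \phi\|_{L^\oo([0,T]\times B_R)}\bigr)\,\|J^\eps\|_{L^\oo}$, where $\omega_{f_0}$ denotes the modulus of continuity of $f_0$, and therefore vanishes uniformly on $[0,T] \times B_R$ as $\eps \to 0$ by Lemma \ref{L:backwardsflow} and \eqref{Jestimates}. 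The $L^p(B_R)$-norm of the second term is dominated by $\|f_0\|_\oo \,\|J^\eps_{0,t} - J_{0,t}\|_{L^p(B_R)}$, which tends to zero uniformly in $t$ by the previous paragraph with $q = p$.

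The main obstacle is the time-uniform upgrade of the Jacobian convergence, which genuinely uses both the space-time $L^1$ convergence of Proposition \ref{P:Jstrong} (available only in dimension one, as noted in Remark \ref{R:extendJstrong?}) and the uniform-in-$\eps$ equicontinuity in time from Lemma \ref{L:backwardsJ}; once these are combined, the passage from strong Jacobian convergence to strong convergence of $f^\eps$ is a routine consequence of the explicit formula \eqref{formula:M} and the stability properties already established in Section \ref{sec:flow}.
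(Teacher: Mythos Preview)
Your proposal is correct and follows essentially the same route as the paper: reduce to $f_0 \in C_c(\RR)$ via the uniform $L^p$ bound, then use the product formula $f^\eps = (f_0 \circ \phi^\eps_{0,\cdot}) J^\eps_{0,\cdot}$ together with the uniform convergence of $\phi^\eps$ and the strong $L^1_\loc$ convergence of $J^\eps$ from Proposition \ref{P:Jstrong}. The paper's own proof is terser---it simply observes that when $f_0 \in C_c$ the $f^\eps$ are bounded in $L^1 \cap L^\oo$ and states that strong $L^p$ convergence then ``reduces to'' the Jacobian convergence in $L^1_\loc([0,T]\times\RR)$---whereas you explicitly carry out the upgrade from space-time $L^1$ convergence to $C([0,T],L^1_\loc)$ via the uniform-in-$\eps$ time modulus \eqref{Junifcts}; this extra step is genuinely needed for the stated $C([0,T],L^p_\loc)$ conclusion and is left implicit in the paper.
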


\begin{proof}
	Just as in the proof of Theorem \ref{T:MCE}, we may assume without loss of generality that $f_0 \in C_c(\RR)$. In that case, $f^\eps$ is bounded in $L^1$ and $L^\oo$, and so the strong $L^p$ convergence reduces to the strong convergence of $J^\eps_{0,\cdot}$ to $J_{0,\cdot}$ in $L^1_\loc([0,T] \times \RR)$ from Proposition \ref{P:Jstrong}.
\end{proof}

\subsubsection{Vanishing viscosity approximation}

The good solution above also arises from vanishing viscosity limits, that is, the limit as $\eps \to 0$ of solutions of
\begin{equation}\label{E:monotoneeps}
	\del_t f^\eps - \frac{\eps^2}{2} \Delta f^\eps + \div(b(t,x) f^\eps) = 0 \quad \text{in } [0,T] \times \RR^d \quad \text{and} \quad f^\eps(0,\cdot) = f_0,
\end{equation}
which has as its unique solution
\begin{equation}\label{formula:SDE}
	f^\eps(t,x) := \EE[ f_0(\phi^\eps_{0,t}(x))J^\eps_{0,t}(x) ],
\end{equation}
where now $\phi^\eps$ and $J^\eps$ denote respectively the stochastic flow and Jacobian from \eqref{monotone:epsSDE}, corresponding to Proposition \ref{P:monotoneepsSDE}.

The proof of the following result follows from Proposition \ref{P:monotoneepsSDE}, and is proved almost exactly as for Theorem \ref{T:MCE}.

\begin{theorem}
	The function $f^\eps$ defined by \eqref{E:monotoneeps} belongs to $C([0,T],L^p_\loc(\RR^d))$ if $1 \le p < \oo$ and $C([0,T],L^\oo_\ws(\RR^d))$ if $p = \oo$, and, as $\eps \to 0$, $f^\eps$ converges weakly in those spaces to $f$.
\end{theorem}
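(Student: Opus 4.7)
The plan is to mirror the proof of Theorem~\ref{T:MCE} step by step, with the deterministic backward flow and its Jacobian replaced by the stochastic objects $\phi^\eps_{0,t}$ and $J^\eps_{0,t}$ from Proposition~\ref{P:monotoneepsSDE}. That proposition provides precisely the stochastic analogs of the ingredients used in the proof of Theorem~\ref{T:MCE}: pathwise locally uniform convergence $\phi^\eps \to \phi$ and weak-$\star$ convergence $J^\eps \to J$ in $L^\oo$, together with weak convergence in $C([0,T], L^1_\loc(\RR^d))$.

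The first step is to establish the uniform-in-$\eps$ bound \eqref{MCE:Lp} for $f^\eps$. Starting from the representation \eqref{formula:SDE} and applying Jensen's inequality (using $J^\eps\ge 0$),
\[
|f^\eps(t,x)|^p \le \EE\bigl[|f_0(\phi^\eps_{0,t}(x))|^p (J^\eps_{0,t}(x))^p\bigr] \le C^{p-1}\,\EE\bigl[|f_0(\phi^\eps_{0,t}(x))|^p J^\eps_{0,t}(x)\bigr],
\]
where the second inequality uses a pathwise $L^\oo$-bound $J^\eps_{0,t}(x) \le C$, uniform in $\eps$ and in the Brownian path. I would obtain this bound by noting that the change of variable $\tilde\phi^\eps(s) := \phi^\eps(s) - \eps W_s$ converts \eqref{monotone:epsSDE} (pathwise) into an ODE whose drift $(s,y) \mapsto -b(s, y + \eps W_s)$ inherits the one-sided Lipschitz condition \eqref{A:monotoneB} with the same constant $C_1$, so the proof of \eqref{Jestimates} applies pathwise to $J^\eps = \det\nabla_x \tilde\phi^\eps$. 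The same reduction yields a pathwise inclusion $\phi^\eps_{0,t}(B_R) \subset B_{R + C(1 + \eps\|W\|_{L^\oo})}$. Integrating in $x$, applying Fubini and a pathwise change of variables, and using that $\EE\|W\|_{L^\oo([0,T])}^p < \oo$, one obtains \eqref{MCE:Lp} with a constant uniform in $\eps \in (0,1]$.

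For the convergence statement, by the density argument of Theorem~\ref{T:MCE} one may assume $f_0 \in C_c(\RR^d)$. Pathwise, $f_0 \circ \phi^\eps_{0,t} \to f_0 \circ \phi_{0,t}$ locally uniformly (with a deterministic limit), and $J^\eps_{0,t} \to J_{0,t}$ weakly-$\star$ in $L^\oo$, so for any $g \in L^1(\RR^d)$ one has, almost surely,
\[
\int g(x)\, f_0(\phi^\eps_{0,t}(x))\, J^\eps_{0,t}(x)\, dx \xrightarrow{\eps\to 0} \int g(x)\, f_0(\phi_{0,t}(x))\, J_{0,t}(x)\, dx.
\]
Fubini together with dominated convergence, justified by the pathwise bounds above, yields the same limit after taking expectations, namely $\int g\, f^\eps(t,\cdot) \to \int g\, f(t,\cdot)$. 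Combined with the uniform bound \eqref{MCE:Lp}, this upgrades to weak convergence in $C([0,T], L^p_\loc(\RR^d))$ when $1 \le p < \oo$, and to weak-$\star$ convergence in $L^\oo$ when $p = \oo$. The time-continuity $f^\eps \in C([0,T], L^p_\loc)$, respectively $C([0,T], L^\oo_\ws)$, follows as in Theorem~\ref{T:MCE}, from the time-regularity estimate \eqref{EtimeregSDE} for $\phi^\eps$ and the analog of \eqref{Junifcts} for $J^\eps$, both of which are again obtained via the pathwise ODE reduction.

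The main subtle point is the pathwise uniform-in-$\eps$ bound on $J^\eps$: once it is in hand, Fubini and dominated convergence close the argument, and the remaining steps are a direct translation of the proof of Theorem~\ref{T:MCE}.
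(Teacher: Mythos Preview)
Your approach matches the paper's, which simply says the result ``follows from Proposition~\ref{P:monotoneepsSDE}, and is proved almost exactly as for Theorem~\ref{T:MCE}''; your pathwise ODE reduction $\tilde\phi^\eps = \phi^\eps - \eps W$ is precisely the mechanism behind that proposition (and is invoked explicitly later, e.g.\ in Theorem~\ref{T:fwdstochflowconst}). One minor overreach worth flagging: the local estimate \eqref{MCE:Lp} cannot hold verbatim for the parabolic equation, since there is no finite speed of propagation---your pathwise change of variables yields a bound with the random larger ball $B_{R+C(1+\eps\|W\|)}$, which is harmless once you reduce to $f_0 \in C_c$, but means the density step really wants $f_0 \in L^p(\RR^d)$ (or some growth control) rather than bare $L^p_\loc$, a point the paper's one-line proof glosses over as well.
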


\subsection{The nonconservative equation}

The next step is the study of the terminal value problem \eqref{E:MTE}. Unlike the transport equation \eqref{E:AMTE} with velocity $-b$, which was solved in the space of continuous functions, we cannot define $L^p$ solutions in the distributional sense, as the product $b \cdot \nabla u = \div(bu) - (\div b) u$ does not make sense when $\div b$ is merely a measure. Instead, we initially characterize solutions by duality with \eqref{E:MCE}, which can be seen as a way of restricting the class of test functions to deal with the singularities in $b$ (see Remark \ref{R:dualitytestfns}).

\subsubsection{$L^p$ and $BV$ estimates}

We will first prove a priori $L^p$ and $BV$ estimates for the solution of \eqref{E:MTE}, assuming all the data and solutions are smooth. The $BV$ estimates in particular are crucial to establishing the \textit{strong} convergence in $L^p$ of regularized solutions to a unique limit, which will be the duality solution, adjoint to the equation \eqref{E:MCE}. The $BV$ estimate appears already in \cite[Lemma 4.4]{BJM}. We present an alternate proof here, which is similar to the one for second-order equations we prove later.

\begin{lemma}\label{L:MTEapriori}
	Assume $b$ is smooth and satisfies \eqref{A:monotoneB}, and let $u$ be a smooth solution of \eqref{E:MTE}. Then, for all $1 \le p \le \oo$, there exist $C = C_{p,R} \in L^1_+([0,T])$ and $C_R > 0$ depending only on the bounds in \eqref{A:monotoneB} such that, for all $0 \le t \le T$,
	\[
		\norm{u(t,\cdot)}_{L^p(B_R)} \le \exp\left(\int_0^t C(s)ds \right) \norm{u_T}_{L^p(B_{R+C})} \quad 		\]
	and
	\[
		\norm{u(t,\cdot)}_{BV(B_R)} \le \exp\left(\int_0^t C(s)ds \right)  \norm{u_T}_{BV(B_{R+C})}.
	\]
\end{lemma}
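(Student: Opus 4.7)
The plan is to prove both bounds by direct energy computations, exploiting Lemma \ref{L:matrixbound} in its full matrix form. Besides the trace of that lemma, which gives the scalar bound $\div b \ge -dC_1$, I would use the complementary matrix upper bound $\Sym(\nabla b) \le ((d-1)C_1 + \div b)\Id$; this second inequality is the key ingredient for the $BV$ estimate. To eliminate the boundary contributions that would otherwise appear after integration by parts, I would employ a cutoff transported along the flow. Concretely, using the linear growth bound $|\phi_{T,t}(x)| \le C_R(1+|x|)$ from Lemma \ref{L:backwardsflow} to pick $C_R$ so that $\phi_{T,t}(B_R) \subset B_{R+C_R}$ uniformly in $t \in [0,T]$, fix a nonnegative $\eta \in C^\oo_c(\RR^d)$ equal to $1$ on $B_{R+C_R}$ and set $\chi_t(x) := \eta(\phi_{T,t}(x))$. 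The chain rule and the flow equation then give $\del_t \chi_t + b \cdot \nabla \chi_t = 0$, while $\chi_t \ge \ind_{B_R}$ by construction.

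For $1 \le p < \oo$, the $L^p$ step is straightforward: differentiating $\int \chi_t |u|^p\,dx$ in $t$, substituting $\del_t u = -b \cdot \nabla u$, and integrating by parts, the transport identity for $\chi_t$ collapses every cutoff-derivative term, leaving
\begin{equation*}
\frac{d}{dt} \int \chi_t |u|^p \,dx = \int \chi_t (\div b) |u|^p \,dx \ge -dC_1(t) \int \chi_t |u|^p \,dx.
\end{equation*}
Gr\"onwall backward from $t = T$ then yields the claimed bound; the case $p = \oo$ follows directly from the representation $u(t,x) = u_T(\phi_{T,t}(x))$ combined with the forward-flow bound.

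The $BV$ step applies the same transport trick, together with one extra algebraic identity. Since $\del_t \nabla u = -(\nabla b)^T \nabla u - (b \cdot \nabla)\nabla u$, a direct computation on $\{|\nabla u|>0\}$ gives $\del_t |\nabla u| = -\frac{\nabla u^T \Sym(\nabla b) \nabla u}{|\nabla u|} - b \cdot \nabla |\nabla u|$. Multiplying by $\chi_t$, integrating by parts, and using $\del_t \chi_t + b \cdot \nabla \chi_t = 0$, I arrive at
\begin{equation*}
\frac{d}{dt} \int \chi_t |\nabla u|\,dx = -\int \chi_t \frac{\nabla u^T \Sym(\nabla b) \nabla u}{|\nabla u|}\,dx + \int \chi_t (\div b)|\nabla u|\,dx.
\end{equation*}
The final step is to apply the matrix upper bound from Lemma \ref{L:matrixbound}, $\nabla u^T \Sym(\nabla b) \nabla u \le ((d-1)C_1 + \div b)|\nabla u|^2$, to the first term. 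The two $\int \chi_t(\div b)|\nabla u|$ contributions then cancel exactly, producing $\frac{d}{dt}\int \chi_t |\nabla u| \ge -(d-1)C_1(t)\int \chi_t|\nabla u|$, and the $BV$ estimate follows by Gr\"onwall backward in time.

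The hard part is precisely this cancellation of the $\div b$ terms, which uses the matrix form of the one-sided Lipschitz condition from Lemma \ref{L:matrixbound} rather than only a scalar lower bound on $\div b$. Without it, the computation would leave a residual $\int \chi_t (\div b)_+ |\nabla u|$ that is uncontrollable in general, since $\div b$ has no $L^\infty$ upper bound. A minor technicality is that $|\nabla u|$ is not differentiable where $\nabla u = 0$; I would resolve this by first replacing $|\nabla u|$ with $\sqrt{\eps^2 + |\nabla u|^2}$ (for which the identity for $\del_t$ acquires harmless $\eps$-dependent terms that vanish as $\eps \to 0$) and then passing to the limit.
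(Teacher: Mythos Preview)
Your proof is correct. The key step---the cancellation of the $\div b$ contributions via the matrix upper bound $\Sym(\nabla b) \le ((d-1)C_1 + \div b)\Id$ from Lemma~\ref{L:matrixbound}---is exactly the mechanism the paper exploits as well, and you identify it clearly.

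The execution, however, differs from the paper's. You work directly with $|\nabla u|$ and arrive at the Gr\"onwall inequality $\frac{d}{dt}\int \chi_t |\nabla u| \ge -(d-1)C_1(t)\int \chi_t|\nabla u|$ after the cancellation. The paper instead introduces an auxiliary variable $z \in \RR^d$, sets $w(t,x,z) = \nabla u(t,x)\cdot z$, and observes that $w$ satisfies a \emph{scalar} transport equation in the $(x,z)$ variables; it then renormalizes to $|w|$, integrates against the Gaussian weight $e^{-|z|^2}$, and recovers $\int|\nabla u|$ via the identity $\int_{\RR^d} e^{-|z|^2}|\nu\cdot z|\,dz = c_0$ for $|\nu|=1$. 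The paper's detour through $w$ buys two things: it sidesteps the nondifferentiability of $|\nabla u|$ at zeros (your $\sqrt{\eps^2+|\nabla u|^2}$ fix handles this, but the scalar renormalization of $|w|$ is slightly cleaner), and, more importantly, it is the template the paper reuses verbatim for the second-order estimate in Lemma~\ref{L:b=0BV}, where the $z$-variable absorbs the extra derivative structure. Your direct approach is more elementary for the first-order case but would need to be reworked for the parabolic extension. Your localization via the transported cutoff $\chi_t = \eta\circ\phi_{T,t}$ is also a legitimate alternative to the paper's reduction to compactly supported data; both are standard.
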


\begin{proof}
	We assume that $u_T$ has compact support, and, therefore, in view of the finite speed of propagation property, so does $u$. The general result for $L^p_\loc$ and $BV_\loc$ is proved similarly.
		
	The $L^\oo$ bound is a consequence of the maximum principle. For $p < \oo$, we compute
	\[
		\frac{\del}{\del t} \int_{\RR^d} |u(t,x)|^pdx = \int_{\RR^d} \div b(t,x) |u(t,x)|^p dx \ge -C_0(t)d \int_{\RR^d} |u(t,x)|^p dx,
	\]
	and the $L^p$ bound follows from Gr\"onwall's inequality.
	
	Now, for $t \le T$ and $x,z \in \RR^d$, set $w(t,x,z) = \nabla u(t,x) \cdot z$. Then $w$ satisfies
	\[
		-\del_t w + b \cdot \nabla_x w + (z \cdot \nabla)b \cdot \nabla_z w = 0.
	\]
	Since $b$ and $w$ are smooth, the renormalization property holds for this transport equation, and so a simple regularization argument shows, in the sense of distributions,
	\[
		\del_t |w| + b \cdot \nabla_x |w| + (z \cdot \nabla)b \cdot \nabla_z |w| = 0.
	\]
	Define $\phi(z) = e^{-|z|^2}$. Then
	\[
		\iint_{\RR^d \times \RR^d} \phi(z) b(t,x) \cdot \nabla_x |w(t,x,z)|dxdz
		 = - \iint_{\RR^d \times \RR^d} \phi(z) \div b(t,x) |w(t,x,z)|dxdz
	\]
	and
	\begin{align*}
		&\iint_{\RR^d \times \RR^d} \phi(z)(z \cdot \nabla)b(t,x) \cdot \nabla_z|w(t,x,z)|dxdz\\
		&= - \iint_{\RR^d \times \RR^d} \left[ \nabla\phi(z) \cdot (z \cdot \nabla b(t,x)) + \phi(z) \div b(t,x) \right] |w(t,x,z)|dxdz.
	\end{align*}
	Therefore, by Lemma \ref{L:matrixbound},
	\begin{align*}
		\del_t \iint_{\RR^d \times \RR^d} |w(t,x,z)|\phi(z)dxdz
		&= \iint_{\RR^d \times \RR^d} \left[ \nabla \phi(z) \cdot (z \cdot \nabla b(t,x)) + 2 \phi(z) \div b(t,x) \right]|w(t,x,z)|dxdz \\
		&=\iint_{\RR^d \times \RR^d} 2e^{-|z|^2} \left[ \div b(t,x) - \nabla b(t,x)z \cdot z\right]dxdz\\
		&\ge -2(d-1)C_0(t) \iint_{\RR^d \times \RR^d} e^{-|z|^2} |w(t,x,z)|dxdz.
	\end{align*}
	The result follows from Gr\"onwall's lemma and the fact that
	\[
		\iint_{\RR^d \times \RR^d} e^{-|z|^2} |w(t,x,z)|dxdz = c_0 \int_{\RR^d} |\nabla u(t,x)|dx,
	\]
	where the constant $c_0 = \int_{\RR^d} e^{-|z|^2} |\nu \cdot z|dz$ is independent of the choice of $|\nu| = 1$ by rotational invariance.
\end{proof}

\subsubsection{Duality solutions}

Proceeding by duality with the conservative forward equation, and using the $BV$-estimates above, then gives the following.

\begin{theorem}\label{T:MTE}
	Assume $1 \le p \le \oo$ and $u_T \in L^p_\loc$. Then there exists a unique function $u \in C([0,T], L^p_\loc(\RR^d))$ (or in $C([0,T], L^\oo_\ws(\RR^d))$ if $p = \oo$) such that, if $(b^\eps)_{\eps > 0}$ is as in \eqref{bregs} and $u^\eps$ denotes the corresponding solution of \eqref{E:MTE}, then, as $\eps \to 0$, $u^\eps$ converges strongly in $C([0,T], L^p(\RR^d))$ for $p < \oo$ and weak-$\star$ in $L^\oo$ to $u$. Moreover, the solution map $u_T \mapsto u$ is linear, order-preserving, and continuous in $L^p_\loc(\RR^d))$. 
	If $s \in [0,T)$, $f_s \in L^{p'}(\RR^d)$ and $f \in C([s,T], L^{p'}(\RR^d))$ (or $C([s,T], L^\oo_\ws(\RR^d))$ if $p = 1$) is the good solution of \eqref{E:MCE} with initial data $f(s,\cdot) = f_s$, then
	\[
		\int_{\RR^d} u(s,x)f_s(x)dx = \int_{\RR^d} u_T(x) f(T,x)dx.
	\]
\end{theorem}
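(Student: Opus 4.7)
The plan is to construct the solution as a limit of classical solutions $u^\eps$ coming from smooth regularizations $b^\eps$ of $b$, and to identify the limit uniquely via the duality identity with the good solution of the continuity equation \eqref{E:MCE} from Theorem \ref{T:MCE}. At the regularized level the duality is an elementary calculation: for $(b^\eps)_{\eps>0}$ as in \eqref{bregs}, $u^\eps$ the classical solution of \eqref{E:MTE} with velocity $b^\eps$, and $f^\eps$ the classical solution of \eqref{E:MCE} with velocity $b^\eps$ and initial datum $f_s$ at time $s$, integration by parts gives
\[
	\frac{d}{dt}\int_{\RR^d} u^\eps(t,x) f^\eps(t,x)\,dx = 0,
\]
hence $\int u^\eps(s,\cdot)\,f_s = \int u_T\,f^\eps(T,\cdot)$. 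By Theorem \ref{T:MCE}, $f^\eps(T,\cdot)\to f(T,\cdot)$ weakly in $L^{p'}_{\loc}$ (or weak-$\star$ in $L^\oo$ when $p=1$), so once $u^\eps(s,\cdot)$ converges in the appropriate dual topology the identity in the theorem follows immediately.

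I first establish strong compactness under the additional assumption $u_T \in BV_\loc \cap L^p_\loc$. By Lemma \ref{L:MTEapriori}, $(u^\eps)$ is uniformly bounded in $L^\oo([0,T], BV_\loc \cap L^p_\loc)$. From the equation $\del_t u^\eps = -b^\eps \cdot \nabla u^\eps$ and the uniform local bounds on $b^\eps$ and on $\nabla u^\eps$ in $L^\oo_t\mcl M_\loc$, the pairings $t \mapsto \int u^\eps(t,\cdot)\varphi\,dx$ are equicontinuous for every $\varphi \in C_c(\RR^d)$. The compact embedding $BV_\loc \hookrightarrow L^1_\loc$ combined with a standard Aubin--Lions argument then yields precompactness of $(u^\eps)$ in $C([0,T], L^1_\loc)$; interpolating with the uniform $L^p_\loc$ control upgrades this to $C([0,T], L^p_\loc)$ for every $p<\oo$. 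Any limit point $u$ satisfies the duality identity of the previous paragraph against every compactly supported $f_s \in L^{p'}$, and since such data form a dense family the limit $u$ is uniquely determined; in particular, the full family $(u^\eps)$ converges.

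For general $u_T \in L^p_\loc$, I approximate by $u_T^{(n)} \in C^\oo_c$ with $u_T^{(n)} \to u_T$ in $L^p_\loc$. Passing Lemma \ref{L:MTEapriori} to the limit along the smooth approximations (already justified by the previous step) and using linearity gives
\[
	\nor{u^{(n)} - u^{(m)}}{C([0,T], L^p(B_R))} \le C\,\nor{u_T^{(n)} - u_T^{(m)}}{L^p(B_{R+C})},
\]
so $(u^{(n)})_n$ is Cauchy in $C([0,T], L^p_\loc)$, with limit the desired solution $u$. Combined with the compactness for smooth terminal data, this gives $u^\eps \to u$ in $C([0,T], L^p_\loc)$ for $p < \oo$. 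When $p = \oo$, the uniform $L^\oo_\loc$ bound yields a weak-$\star$ subsequential limit, uniquely identified via the same duality identity (now tested against $f_s \in L^1$), so the full family converges weak-$\star$. Linearity is built in, order preservation follows from the maximum principle at the regularized level and passes through (strong or weak-$\star$) limits, and continuity of the solution map in $L^p_\loc$ is exactly the passage to the limit of Lemma \ref{L:MTEapriori}.

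The main obstacle is the Aubin--Lions step. The $BV_\loc$ control rests on the delicate moment computation in Lemma \ref{L:MTEapriori} (the integration against the Gaussian weight $e^{-|z|^2}$ in $z$), and one must combine this spatial compactness with the right time regularity---here, equicontinuity of pairings against $C_c$ test functions, deduced from $\del_t u^\eps = -b^\eps \cdot \nabla u^\eps$ with $\nabla u^\eps$ only a bounded measure---to reach compactness in $C([0,T], L^1_\loc)$. Once that precompactness is in hand, the identification of the limit by duality, the density extension, and the remaining algebraic and order properties are all routine.
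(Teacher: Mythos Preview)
Your proposal is correct and follows essentially the same approach as the paper: reduce by density to compactly supported smooth terminal data, use the $BV$ estimate of Lemma~\ref{L:MTEapriori} for spatial compactness, extract time regularity from the equation, and identify the limit uniquely via the duality relation with the good solution $f$ of \eqref{E:MCE}. One minor simplification relative to your write-up: since each $u^\eps$ is smooth, $\nabla u^\eps$ is a genuine $L^1_\loc$ function (with bounds uniform in $\eps$), so $\del_t u^\eps = -b^\eps\cdot\nabla u^\eps$ gives directly
\[
\nor{u^\eps(t_1,\cdot)-u^\eps(t_2,\cdot)}{L^1(B_R)} \le \Big(\int_{t_1}^{t_2} C_0(r)\,dr\Big)(1+R)\sup_{t}\nor{\nabla u^\eps(t,\cdot)}{L^1(B_R)},
\]
which is already uniform equicontinuity in $C([0,T],L^1_\loc)$ and makes the Aubin--Lions step unnecessary.
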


\begin{remark}
	The function $u$ corresponds with the notion of duality solution presented in \cite{BJM} whenever $u_T$ (and therefore $u(t,\cdot)$ for $t < T$) belongs to $BV_\loc$.
\end{remark}

\begin{proof}
	By Lemma \ref{L:MTEapriori}, $(u^\eps)_{\eps > 0}$ is bounded uniformly in $C([0,T], L^p_\loc(\RR^d))$, and so, along a subsequence, converges weakly as $\eps \to 0$ to some $u$ satisfying the same bounds. 
	
	In order to see that the convergence is strong, note that it suffices, by the $L^p$-boundedness of solution operator implied by Lemma \ref{L:MTEapriori}, to assume that $u_T\in C_c(\RR^d)$. We then have $u^\eps$ bounded in $L^\oo([0,T], BV(\RR^d))$ independently of $\eps$. The identity  $\del_t u^\eps = - b^\eps \cdot \nabla u^\eps$
	then implies that, for any $t_1 < t_2 \le T$ and $R > 0$,
	\[
		\norm{u^\eps(t_1,\cdot) - u^\eps(t_2,\cdot)}_{L^1(B_R)} \le \norm{b}_{L^\oo(B_R)} \sup_{t \in [0,T]} \norm{\nabla u^\eps}_{L^1(B_R)} |t_1 - t_2|.
	\]
	This, along with the uniform $BV$ estimates, implies that $(u^\eps)_{\eps > 0}$ is precompact in $C([0,T], L^1_\loc(\RR^d))$, and, because of the uniform $L^\oo$-bound, precompact in $C([0,T], L^p_\loc(\RR^d))$ for any $p \in [1,\oo)$. It therefore follows that any weakly convergent subsequence actually converges strongly.
	
	If $f^\eps$ is the solution of \eqref{E:MCE} with $f^\eps(s,\cdot) = f_s$, then classical computations involving integration by parts give
	\[
		\int_{\RR^d} u^\eps(s,x)f_s(x)dx = \int_{\RR^d} u_T(x) f^\eps(T,x)dx.
	\]
	Sending $\eps \to 0$ along a subsequence and using the weak convergence of $f^\eps$ and strong convergence of $u^\eps$ shows that any limit point $u$ must satisfy the duality identity with $f$, and is therefore unique. We conclude that the full sequence converges strongly. As before, when $p = \oo$, we obtain the same result since then also $u \in C([0,T], L^p_\loc(\RR^d))$ for any $p < \oo$.
	
\end{proof}

\begin{remark}
	If $u_T \in BV_\loc$, then the duality solution $u$ of \eqref{E:MTE} satisfies $\nabla u \in L^\oo([0,T], \mcl M_\loc(\RR^d))$. Note, however, that this is still not enough to make sense of $u$ as a distributional solution, unless $b$ is continuous.
\end{remark}

\subsubsection{Renormalization}

In Section \ref{sec:antimonotone}, the renormalization property for solutions of the transport equation \eqref{E:AMTE} followed from the formula \eqref{formula:AM}. We prove a similar renormalization property for the transport equation \eqref{E:MTE} in the expansive regime. Here, it depends on the strong convergence in $L^p$ of regularizations.

\begin{theorem}\label{T:MTErenorm}
	Let $1 \le p \le \oo$ and $u_T \in L^p_\loc(\RR^d)$, and let $u \in C([0,T], L^p_\loc(\RR^d))$ be the duality solution of \eqref{E:MTE}. Assume $\beta: \RR \to \RR$ is smooth and satisfies $|\beta(r)| \le (1 + |r|^\alpha)$ for some $\alpha > 0$. Then $\beta \circ u = C([0,T],L^{p/\alpha}_\loc(\RR^d))$ is the duality solution of \eqref{E:MTE} with terminal value $\beta(u(T,\cdot)) = \beta \circ u_T$.
\end{theorem}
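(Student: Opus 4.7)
The plan is to transfer the trivial chain-rule identity at the regularized level through to the limit, using the \emph{strong} $L^p$ convergence of regularizations established in Theorem \ref{T:MTE}; this is precisely the feature that distinguishes the expansive regime and makes renormalization available, in contrast with the compressive regime of Section \ref{sec:antimonotone}.

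First, fix regularizations $(b^\eps)_{\eps>0}$ as in \eqref{bregs} and let $u^\eps$ be the corresponding smooth solutions of \eqref{E:MTE} with terminal data $u_T$ (or, if needed, $u_T * \rho_\eps$, which likewise converges strongly in $L^p_\loc$). Since $b^\eps$ and $u^\eps$ are smooth, the classical chain rule yields
\[
	\del_t (\beta\circ u^\eps) + b^\eps(t,x)\cdot\nabla(\beta\circ u^\eps) = 0 \quad\text{in } (0,T)\times\RR^d, \qquad (\beta\circ u^\eps)(T,\cdot) = \beta\circ u_T,
\]
so $\beta\circ u^\eps$ is the smooth (hence duality) solution of \eqref{E:MTE} associated with $b^\eps$ and terminal datum $\beta\circ u_T$. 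Note that the growth bound on $\beta$ gives $|\beta\circ u_T|^{p/\alpha}\le C(1+|u_T|^p)\in L^1_\loc$, so the terminal datum lies in $L^{p/\alpha}_\loc$.

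Next, by Theorem \ref{T:MTE}, $u^\eps\to u$ strongly in $C([0,T], L^p_\loc(\RR^d))$ when $p<\oo$ (and, when $p=\oo$, strongly in $C([0,T], L^q_\loc)$ for every $q<\oo$, since $u_T\in L^\oo_\loc\subset L^q_\loc$). Along a subsequence we therefore have $u^{\eps_k}\to u$ almost everywhere on $[0,T]\times\RR^d$, and hence $\beta\circ u^{\eps_k}\to \beta\circ u$ a.e. by continuity of $\beta$. The polynomial bound $|\beta(r)|^{p/\alpha}\le C(1+|r|^p)$, together with the strong $L^p_\loc$ convergence of $u^\eps$, yields equi-integrability of $(|\beta\circ u^\eps|^{p/\alpha})_{\eps>0}$ on every $[0,T]\times B_R$. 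Vitali's convergence theorem then upgrades the a.e. convergence to strong convergence of $\beta\circ u^{\eps_k}$ to $\beta\circ u$ in $C([0,T], L^{p/\alpha}_\loc(\RR^d))$ (the time continuity being inherited from that of $u^{\eps_k}$ in $L^p_\loc$ by the same equi-integrability argument applied uniformly in time).

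Finally, apply Theorem \ref{T:MTE} with exponent $p/\alpha$ and terminal datum $\beta\circ u_T$: the corresponding duality solution $v$ is characterized as the \emph{unique} strong $C([0,T],L^{p/\alpha}_\loc)$ limit of the regularized solutions. Since $\beta\circ u^{\eps_k}$ is exactly one such sequence and we have just shown it converges strongly to $\beta\circ u$, we conclude $v=\beta\circ u$; as the limit is unique, the full family $(\beta\circ u^\eps)_{\eps>0}$ converges, and $\beta\circ u$ is the asserted duality solution.

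The main obstacle is the equi-integrability step: we have only the polynomial bound $|\beta(r)|\le 1+|r|^\alpha$, so the control of $\beta\circ u^\eps$ is not uniform pointwise but only in the $L^{p/\alpha}$-average sense inherited from the strong convergence of $u^\eps$ in $L^p$. Once this is in place, the passage to the limit and the identification of the limit as the duality solution reduce to invoking Theorem \ref{T:MTE}. In the case $p=\oo$, essentially the same argument applies by working with an arbitrary finite exponent $q<\oo$ and then observing that the weak-$\star$ limit in $L^\oo$ must agree with the a.e. limit.
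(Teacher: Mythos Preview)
Your proposal is correct and follows exactly the approach the paper takes: regularize $b$ as in \eqref{bregs}, use the classical chain rule at the smooth level, and pass to the limit using the \emph{strong} convergence of $u^\eps$ to $u$ furnished by Theorem \ref{T:MTE}. The paper's own proof is in fact a one-line remark to this effect; you have simply supplied the details. One minor observation: the Vitali/equi-integrability step is not strictly needed, since Theorem \ref{T:MTE} applied directly to the terminal datum $\beta\circ u_T\in L^{p/\alpha}_\loc$ already gives strong convergence of $\beta\circ u^\eps$ to the duality solution $v$, and the identification $v=\beta\circ u$ then follows from a.e.\ convergence along a subsequence.
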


\begin{proof}
	The proof is an easy consequence of regularization of $b$ as in \eqref{bregs}, and the passage to the limit follows from the strong convergence of $u^\eps$ to $u$.
\end{proof}

\subsection{The forward ODE flow} \label{ss:forwardflow}

We finally return to the study of the flow \eqref{monotone:flow}, in particular for the forward direction. A candidate for the object $\phi_{t,s}(x)$, $t > s$, a.e. $x$ was already identified in Proposition \ref{P:invertbackward} as the right inverse of the backward flow---note that the full measure set of $x \in \RR^d$ depends on $s$ and $t$. We now connect this right-inverse with the transport equation \eqref{E:MTE}, and exploit the renormalization property to identify $\phi_{t,s}(x)$ as a regular Lagrangian flow, that is, for a.e. $x \in \RR^d$, an absolutely continuous solution of the integral equation for \eqref{monotone:flow} with control on the compressibility.

\subsubsection{Properties of the right inverse}

We first record more properties of the right-inverse of the backward flow identified in Proposition \ref{P:invertbackward}. From now on, for $0 \le s \le t \le T$, we always denote by $\phi_{t,s}$ the version of the right-inverse of $\phi_{s,t}$ which is continuous almost everywhere (such a version is guaranteed to exist by Proposition \ref{P:invertbackward}). 

\begin{theorem}\label{T:regularLagrange}
	For any $t \in (0,T]$, $(s,x) \mapsto \phi_{s,t}(x)$ is (coordinate-by-coordinate) the duality solution of \eqref{E:MTE} with terminal value $x$ at time $t$. For all $1 \le p < \oo$,
	\[
		\phi_{\cdot, s} \in C([s,T], L^p_\loc(\RR^d)) \quad \text{and} \quad \phi_{t,\cdot} \in C([0,t] , L^p_\loc(\RR^d)),
	\]
	and there exists a constant $C > 0$ such that, for all $0 \le s \le t \le T$ and $x \in \RR^d$,
	\[
		|\phi_{t,s}(x)| \le C(1 + |x|) \quad \text{and} \quad
		\norm{\phi_{t,s}}_{BV_\loc} \le C.
	\]
	Finally, if $(b^\eps)_{\eps > 0}$ is as in \eqref{bregs} and $\phi^\eps_{t,s}$ is the corresponding forward flow, then, for all $1 \le p < \oo$,
	\[
		\lim_{\eps \to 0} \phi^\eps_{\cdot,s} = \phi_{\cdot,s} \quad \text{strongly in } C([s,T], L^p_\loc(\RR^d)) \quad \text{and} \quad
		\lim_{\eps \to 0} \phi^\eps_{t,\cdot} = \phi_{t,\cdot} \quad \text{strongly in } C([0,t], L^p_\loc(\RR^d)),
	\]
	and the convergence also holds in the weak-$\star$ sense in $L^\oo_\loc$.
\end{theorem}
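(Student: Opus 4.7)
The plan is to characterize $\phi_{s,t}$ as the duality solution of \eqref{E:MTE} furnished by Theorem \ref{T:MTE} when the terminal data is taken coordinate-by-coordinate to be $u_t(x)=x$, and then to transfer the bounds, continuity, and convergence from the smooth approximants. To this end, fix $t \in (0,T]$ and, for $b^\eps$ as in \eqref{bregs}, observe that the components of the smooth flows $\phi^\eps_{s,t}(x)$ are classical solutions of \eqref{E:MTE} with terminal data $\Id$ at time $t$, as follows from the smooth chain-rule identity for the flow. Since $\Id$ lies in $L^p_\loc$ and $BV_\loc$, Theorem \ref{T:MTE} furnishes a duality solution $u \in C([0,T],L^p_\loc(\RR^d;\RR^d))$ to which $\phi^\eps_{\cdot,t}$ converges strongly in that space for every $p < \oo$, and weak-$\star$ in $L^\oo_\loc$. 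This $u$ is the candidate for $\phi_{\cdot,t}$.

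The main technical step is the identification of $u$ with the pointwise object produced by Proposition \ref{P:invertbackward}. For $s \ge t$, there is no subtlety: $\phi^\eps_{s,t}$ is the smooth backward flow, which by Lemma \ref{L:backwardsflow} converges locally uniformly to $\phi_{s,t}$, and the locally uniform and $L^p_\loc$ limits must coincide almost everywhere. For $s \le t$, where $\phi^\eps_{s,t}$ is the smooth forward flow, I would extract an a.e.\ convergent subsequence and pass to the limit in the smooth identity $\phi^\eps_{t,s} \circ \phi^\eps_{s,t}(y) = y$, exploiting the uniform Lipschitz convergence of the backward flow $\phi^\eps_{t,s}$ from Lemma \ref{L:backwardsflow}. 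On the full-measure set $A_{ts}$ of Proposition \ref{P:invertbackward}, the preimage of $y$ under $\phi_{t,s}$ is the singleton $\{\phi_{s,t}(y)\}$, so the a.e.\ limit is forced to coincide with the right-inverse, and hence with $u(s,y)$.

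The a priori estimates are obtained by passing to the limit in their uniform-in-$\eps$ counterparts. The growth bound $|\phi^\eps_{s,t}(x)| \le C(1+|x|)$ comes from Gr\"onwall applied to the ODE together with the linear growth hypothesis in \eqref{A:monotoneB}, and is stable under a.e.\ convergence. The $BV_\loc$ bound follows from Lemma \ref{L:MTEapriori} applied coordinate-by-coordinate to $u_T = \Id$, which is locally Lipschitz and hence locally $BV$; this yields a uniform estimate $\norm{\phi^\eps_{s,t}}_{BV(B_R)} \le C$, and lower semicontinuity of the $BV$ seminorm under $L^1_\loc$ convergence transfers the bound to $\phi_{s,t}$.

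The remaining claims, namely the continuity of $\phi_{\cdot,s}$ and $\phi_{t,\cdot}$ in $L^p_\loc$ and the strong convergence of the regularized flows, are direct specializations of Theorem \ref{T:MTE} to the intervals $[s,T]$ and $[0,t]$ with identity terminal data, combined with the identification in the second paragraph; the weak-$\star$ convergence in $L^\oo_\loc$ then follows by interpolating the uniform linear growth bound against the strong $L^p_\loc$ convergence. The main obstacle throughout is the second paragraph, namely reconciling the functional-analytic $L^p_\loc$ limit with Proposition \ref{P:invertbackward}'s pointwise-defined right-inverse, which requires carefully exploiting the injectivity information captured by the full-measure set $A_{ts}$ together with the uniform Lipschitz control on the backward flow.
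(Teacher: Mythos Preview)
Your proposal is correct and follows essentially the same route as the paper: identify the smooth forward flow as the classical solution of \eqref{E:MTE} with terminal datum $\Id$, invoke Theorem \ref{T:MTE} to get strong $L^p_\loc$ convergence to a duality solution $u$, pass to the limit in the inverse identity $\phi^\eps_{\text{backward}} \circ \phi^\eps_{\text{forward}} = \Id$ using the locally uniform Lipschitz convergence of the backward flow from Lemma \ref{L:backwardsflow}, and then appeal to Proposition \ref{P:invertbackward} to force $u$ to coincide a.e.\ with the right-inverse. The paper does exactly this, with one cosmetic difference: it passes to the limit in the inverse identity directly in $L^p_\loc$ (Lipschitz of the outer map composed with $L^p_\loc$ convergence of the inner), rather than extracting an a.e.-convergent subsequence as you do; both work.

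One point of confusion to clean up: your case distinction ``$s \ge t$'' versus ``$s \le t$'' in the second paragraph is not needed and has the forward/backward labels inverted relative to the paper's index convention (in the paper, $\phi_{r,s}$ denotes the position at time $r$ starting from time $s$, so $r < s$ is backward and $r > s$ is forward; note also that the theorem statement itself contains a typo and should read $\phi_{t,s}$, the forward flow, as is clear from the paper's proof). The duality solution of \eqref{E:MTE} on $[0,t]$ with terminal datum $\Id$ is the \emph{forward} flow throughout, so there is only the one nontrivial identification to perform. The claim about $\phi_{\cdot,s}$ on $[s,T]$ is handled, as in the paper, by the symmetric observation that this map solves the analogous initial value problem, to which the same machinery applies.
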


\begin{proof}
	For $\eps > 0$ and $(b^\eps)_{\eps > 0}$ as in \eqref{bregs}, it is standard that, for $t \in (0,T]$, the vector-valued solution of 
	\[
		\frac{\del u^\eps}{\del s} + b^\eps \cdot \nabla u^\eps = 0 \quad \text{in } (0,t) \times \RR^d, \quad u^\eps(t,x) = x
	\]
	is given by $u^\eps(s,x) = \phi^\eps_{t,s}(x)$ for $s \in [0,t]$, where $\phi^\eps$ is the flow corresponding to $b^\eps$. By Theorem \ref{T:MTE}, we have the given convergence statements, as $\eps \to 0$, of $\phi^\eps$ to the vector valued duality solution $u$ of \eqref{E:MTE} in $[0,t] \times \RR^d$ with terminal value $u(t,\cdot) = x$.
	
	The flow property for smooth $b^\eps$ yields, for $0 \le s \le t \le T$ and $x \in \RR^d$, $\phi^\eps_{s,t}(\phi^\eps_{t,s}(x))$. By Lemma \ref{L:backwardsflow} and the above strong $L^p$-convergence statement, we may take $\eps \to 0$ to obtain $\phi_{s,t}(u(s,x)) = x$, and then, by Proposition \ref{P:invertbackward}, we must have $u(s,x) = \phi_{t,s}(x)$. The other statements now follow immediately in view of Theorem \ref{T:MTE}. Note that we are using that, for $s \in [0,T)$, the map $[s,T] \times \RR^d \ni (t,x) \mapsto \phi_{t,s}(x)$ is the duality solution of the initial value problem
	\[
		\frac{\del \tilde u}{\del t} - b(t,x) \cdot \nabla \tilde u = 0 \quad \text{in } [s,T] \times \RR^d, \quad \tilde u(s,x) = x,
	\]
	whose theory can be treated exactly as for \eqref{E:MTE}.
	
\end{proof}	
	
\subsubsection{The regular Lagrange property}

We now observe that there is a representation formula for the duality solution of the transport equation \eqref{E:MTE}.
	
\begin{theorem}\label{T:MTErep}
	Let $1 \le p \le \oo$. Then there exists a constant $C > 0$ depending only on $p$ and the constant in \eqref{A:monotoneB} such that, for all $F \in L^p_\loc \cap C$, $R > 0$, and $0 \le s \le t \le T$, 
	\begin{equation}\label{RL:Lp}
		\norm{F \circ \phi_{t,s}}_{L^p(B_R)} \le C \norm{F}_{L^p(B_{R+C})}.
	\end{equation}
	In particular, for any $A \subset \RR^d$ with finite Lebesgue measure,
	\begin{equation}\label{RL:sets}
		\left| \left\{ x : \phi_{t,s}(x) \in A \right\} \right| \le C |A|.
	\end{equation}
	If $u_T \in L^p_\loc(\RR^d)$, then the duality solution of \eqref{E:MTE} is given by
	\begin{equation}\label{formula:ATE}
		u(t,x) = u_T(\phi_{T,t}(x)).
	\end{equation}
	If $u_T$ has a version which is continuous almost everywhere, then, for $t < T$, $u(t,\cdot)$ also has a version that is continuous almost everywhere.
\end{theorem}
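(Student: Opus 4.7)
The plan is to bootstrap from the $L^p$ estimate for continuous test functions to the set bound, then to the representation formula, and finally to the almost-everywhere continuity. First I would fix a regularization $(b^\eps)_{\eps > 0}$ as in \eqref{bregs} with associated smooth backward and forward flows $\phi^\eps$. For $F \in L^p_\loc \cap C$, the function $F \circ \phi^\eps_{t,s}$ is the classical solution of \eqref{E:MTE} with velocity $b^\eps$ and terminal data $F$ at time $t$, so Lemma \ref{L:MTEapriori} supplies the estimate
\[
\norm{F \circ \phi^\eps_{t,s}}_{L^p(B_R)} \le C \norm{F}_{L^p(B_{R+C})}
\]
uniformly in $\eps$. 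Theorem \ref{T:regularLagrange} provides strong $L^p_\loc$ convergence of $\phi^\eps_{t,s}$ to $\phi_{t,s}$; extracting an a.e.\ convergent subsequence and using continuity of $F$ together with the uniform local bound \eqref{boundODE}, dominated convergence yields $F \circ \phi^\eps_{t,s} \to F \circ \phi_{t,s}$ in $L^p_\loc$, and \eqref{RL:Lp} follows by passing to the limit. The case $p = \infty$ is handled directly from the bound $|\phi_{t,s}(x)| \le C(1+|x|)$.

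Second, I would deduce \eqref{RL:sets} from \eqref{RL:Lp}. Given $A$ of finite measure, take open sets $U_n \supset A$ with $|U_n| \downarrow |A|$ and compactly supported continuous $F_{n,k} \uparrow \mathbf{1}_{U_n}$; applying \eqref{RL:Lp} with $p=1$ and monotone convergence yields $|\phi_{t,s}^{-1}(U_n)| \le C |U_n|$, and then $n \to \infty$ gives \eqref{RL:sets}.

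Third, for the formula \eqref{formula:ATE}, I would first treat continuous $u_T$. The classical solution corresponding to $b^\eps$ is $u^\eps(s,x) = u_T(\phi^\eps_{T,s}(x))$; by Theorem \ref{T:MTE} this converges in $C([0,T],L^p_\loc)$ to the duality solution $u$, while the composition argument from the first step shows it also converges to $u_T \circ \phi_{T,\cdot}$ in $L^p_\loc$, forcing the two limits to agree. For general $u_T \in L^p_\loc$, the bound \eqref{RL:sets} ensures that $u_T \circ \phi_{T,t}$ depends only on the a.e.\ equivalence class of $u_T$, so approximate $u_T$ in $L^p_\loc$ by continuous $u_T^n$; applying \eqref{RL:Lp} to the continuous differences $u_T^n - u_T^m$ shows $u_T^n \circ \phi_{T,t}$ is Cauchy in $L^p_\loc$, its limit agrees with $u_T \circ \phi_{T,t}$ (after extracting an a.e.\ convergent subsequence and using \eqref{RL:sets}), and by stability in Theorem \ref{T:MTE} this limit coincides with $u(t,\cdot)$.

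Finally, the almost-everywhere continuity follows directly from \eqref{formula:ATE}: choose a representative of $u_T$ whose discontinuity set $D$ has measure zero, and observe that $u_T \circ \phi_{T,t}$ is continuous at every $x$ where $\phi_{T,t}$ is continuous and $\phi_{T,t}(x) \notin D$. The set of continuity points of $\phi_{T,t}$ has full measure by Proposition \ref{P:invertbackward}, while $|\phi_{T,t}^{-1}(D)| \le C|D| = 0$ by \eqref{RL:sets}, so the intersection is of full measure. The main subtlety is the ordering of the argument: the formula for $u_T \in L^p_\loc$ requires \eqref{RL:sets} to make the composition meaningful, and \eqref{RL:sets} in turn is obtained from \eqref{RL:Lp} applied to continuous test functions, so the estimates must be established before the general formula and before the continuity claim. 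Beyond this sequencing, every step is a direct application of the results already proved in Section \ref{sec:flow} and the earlier parts of Section \ref{sec:monotone}.
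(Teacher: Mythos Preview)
Your proposal is correct and arrives at the same conclusions as the paper, but the two arguments are organized a bit differently. The paper first obtains the representation formula \eqref{formula:ATE} for continuous $u_T$ as a one-line consequence of the renormalization property (Theorem \ref{T:MTErenorm}) applied to the vector-valued duality solution $\phi_{T,t}$ from Theorem \ref{T:regularLagrange}; the $L^p$ estimate \eqref{RL:Lp} then drops out of the $L^p$-continuity of the solution operator (Theorem \ref{T:MTE}), and \eqref{RL:sets} follows by density. You instead establish \eqref{RL:Lp} first, by applying the a priori bound of Lemma \ref{L:MTEapriori} to $F \circ \phi^\eps_{t,s}$ and passing to the limit using the strong convergence of $\phi^\eps_{t,s}$, and only afterwards derive the representation formula.

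Both routes are regularization arguments at heart (renormalization is itself proved by regularization), so the difference is mainly in packaging: the paper leverages the renormalization theorem to shortcut the passage to the limit, while your version unpacks that step explicitly and is therefore slightly more self-contained. Your step 2 (approximating $\mathbf 1_A$ by continuous functions via open sets) makes precise what the paper leaves implicit when it writes ``take $F = \mathbf 1_A$''. The final argument for almost-everywhere continuity is identical in both.
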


\begin{remark}
	When $u_T$ is not continuous, then \eqref{formula:ATE} must be interpreted as the continuous extension of the operator $u_T \mapsto u_T \circ \phi_{T,t}$ to $u_T \in L^p_\loc$, which is well-defined in view of the estimate \eqref{RL:Lp}.
\end{remark}

\begin{remark}\label{R:regLagrange}
	The estimate \eqref{RL:sets} is called the regular Lagrange property. It reinforces the fact that $\phi_{t,s}$ does not concentrate in sets of measure zero.
\end{remark}

\begin{remark}
	The propagation of almost-everywhere continuity is a consequence of the same property for the forward flow (Proposition \ref{P:invertbackward}). Note that it is not true in general that a function $u \in BV_\loc(\RR^d)$ is continuous almost everywhere, unless $d = 1$.
\end{remark}

\begin{proof}[Proof of Theorem \ref{T:MTErep}]
	For continuous $u_T$, the representation formula is an immediate consequence of the renormalization property Theorem \ref{T:MTErenorm} and Theorem \ref{T:regularLagrange}. The estimate \eqref{RL:Lp} then follows from Theorem \ref{T:MTE}, and \eqref{RL:sets} is obtained by taking $p = 1$ and $F = \ind_A$.
	
	For the claim about almost everywhere continuity, define
	\[
		A := \left\{ y \in \RR^d: u_T \text{ is not continuous at }y\right\}.
	\]
	Then $|A| = 0$, and then \eqref{RL:sets} gives, for $0 \le t < T$,
	\[
		\left| \left\{ x \in \RR^d: u_T \text{ is not continuous at } \phi_{t,T}(x) \right\} \right| = 0.
	\]
	It follows that $u_T$ is continuous at $\phi_{t,T}(x)$ for a.e. $x$. By Proposition \ref{P:invertbackward}, $\phi_{t,T}$ is continuous almost everywhere, and the result follows.
\end{proof}

Recalling the duality relationship between \eqref{E:MCE} and \eqref{E:MTE} from Theorem \ref{T:MTE}, we then have the following.

\begin{corollary}\label{C:pushforward}
	For any $1 \le p \le \oo$ and $f_0 \in L^p_\loc(\RR^d)$, the good solution $f$ of \eqref{E:MCE} is given at time $t > 0$ by $\phi_{t,0}^\# f_0$.
\end{corollary}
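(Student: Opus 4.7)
The plan is to combine the duality identity from Theorem \ref{T:MTE} with the representation formula from Theorem \ref{T:MTErep}. Fix $t \in (0,T]$ and a test function $g \in C_c(\RR^d)$, and let $u \in C([0,t], L^{p'}_\loc(\RR^d))$ denote the duality solution of the transport equation \eqref{E:MTE} on $[0,t] \times \RR^d$ with terminal value $u(t,\cdot) = g$. By Theorem \ref{T:MTErep} applied to the continuous datum $g$, the representation formula gives
\[
	u(0,x) = g(\phi_{t,0}(x)) \quad \text{for a.e. } x \in \RR^d.
\]
Moreover, the composition identity $\phi_{s,t} \circ \phi_{t,s} = \Id$ a.e. and the linear growth bound \eqref{boundODE} for the backward flow show that $\supp u(s,\cdot) \subset B_{C(1+R)}$ whenever $\supp g \subset B_R$, so all the integrals below are well-defined for $f_0 \in L^p_\loc$.

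Next, I would apply the duality identity in Theorem \ref{T:MTE}, which here reads
\[
	\int_{\RR^d} u(0,x) f_0(x)\,dx = \int_{\RR^d} g(x) f(t,x)\,dx,
\]
where $f$ is the good solution of \eqref{E:MCE}. To legitimately invoke this identity with $f_0 \in L^p_\loc$ rather than $L^p$, I would first reduce to the case of compactly supported $f_0$ by truncating $f_0$ to a large ball $B_R$ (using the linearity of the solution map implicit in \eqref{formula:M}) and noting that, by the support information above, only the values of $f_0$ on a fixed bounded set influence the pairing against $g$. Combining the two displayed equations yields
\[
	\int_{\RR^d} g(\phi_{t,0}(x)) f_0(x)\,dx = \int_{\RR^d} g(x) f(t,x)\,dx \quad \text{for all } g \in C_c(\RR^d),
\]
which is precisely the assertion $f(t,\cdot) = \phi_{t,0}^\# f_0$ in the sense of measures.

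Finally, I would verify that the pushforward $\phi_{t,0}^\# f_0$ is indeed absolutely continuous with respect to Lebesgue measure with density in $L^p_\loc$, so that the equality makes sense at the level of $L^p_\loc$ functions rather than merely of measures. This is exactly the content of the regular Lagrange property \eqref{RL:sets}: if $|A|=0$, then $|\phi_{t,0}^{-1}(A)| \le C|A| = 0$, so the pushforward of any Lebesgue-absolutely continuous measure remains absolutely continuous, and the estimate \eqref{RL:Lp} further controls the $L^p_\loc$ norm of the resulting density. No real obstacle arises; the only mildly technical point is the truncation argument needed to bridge between the global $L^{p'}$ hypothesis in the duality statement of Theorem \ref{T:MTE} and the local hypothesis $f_0 \in L^p_\loc$ here, but the finite propagation of supports of $u(s,\cdot)$ makes this routine.
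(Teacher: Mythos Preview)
Your proof is correct and follows exactly the approach the paper intends: the corollary is stated immediately after Theorem \ref{T:MTErep} with only the sentence ``Recalling the duality relationship between \eqref{E:MCE} and \eqref{E:MTE} from Theorem \ref{T:MTE}, we then have the following,'' and you have simply spelled out that duality argument in detail. The truncation step you flag (to pass from $f_0 \in L^p_\loc$ to the global $L^{p'}$ hypothesis in Theorem \ref{T:MTE}) and the use of \eqref{RL:sets}--\eqref{RL:Lp} to confirm that the pushforward lands in $L^p_\loc$ are both appropriate and routine, as you note.
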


\begin{remark}
	The regular Lagrange property says that the measure $\phi_{t,0}^\# f_0$ is well-defined and absolutely continuous with respect to Lebesgue measure, with a density in $L^p_\loc$. If $f_0$ is the density for a probability measure, that is, $f_0 \in L^1_+(\RR^d)$ and $\int f_0 = 1$, then $f(t,\cdot)$ is the law at time $t$ of the stochastic process $\phi_{t,0}(X)$, where $X$ is a random variable with density $f_0$.
\end{remark}

A consequence of renormalization and the regular Lagrange property is the fact that the forward flow $\phi_{t,s}$ solves the ODE \eqref{monotone:flow} for a.e. initial $x \in \RR^d$. A first step is the following lemma.

\begin{lemma}\label{L:RHS}
	For all $p \in [1,\oo)$ and $s \in [0,T)$, $\{(t,x) \mapsto b(t,\phi_{t,s}(x))\} \in L^1([0,T], L^p_\loc(\RR^d))$. If $(b^\eps)_{\eps > 0}$ is as in \eqref{bregs} and $(\phi^\eps)_{\eps > 0}$ is the corresponding flow, then, for all $R > 0$,
	\[
		\lim_{\eps \to 0}\int_s^T \norm{ b^\eps(t, \phi^\eps_{t,s}) - b(t,\phi_{t,s}) }_{L^p(B_R)} dt= 0.
	\]
\end{lemma}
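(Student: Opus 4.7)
The plan is to first obtain the integrability claim via the regular Lagrange estimate \eqref{RL:Lp}, and then to handle the limit by a three-term splitting involving a spatial mollification $b^\delta$ of $b$, taking $\eps \to 0$ before $\delta \to 0$.

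For the integrability, I would apply \eqref{RL:Lp} from Theorem \ref{T:MTErep} (extended by continuity from $C \cap L^p_\loc$ to $L^p_\loc$) with $F = b(t,\cdot)$, obtaining
\[
	\nor{b(t,\cdot) \circ \phi_{t,s}}{L^p(B_R)} \le C \nor{b(t,\cdot)}{L^p(B_{R+C})} \le C' C_0(t)(1+R),
\]
where the last step uses the linear growth bound in \eqref{A:monotoneB}. Since $C_0 \in L^1([0,T])$, this yields membership in $L^1([0,T], L^p_\loc)$. For the convergence, fix $R>0$ and decompose
\[
	b^\eps(t,\phi^\eps_{t,s}) - b(t,\phi_{t,s}) = \bigl[b^\eps - b^\delta\bigr](t,\phi^\eps_{t,s}) + \bigl[b^\delta(t,\phi^\eps_{t,s}) - b^\delta(t,\phi_{t,s})\bigr] + \bigl[b^\delta - b\bigr](t,\phi_{t,s}).
\]
The first and third terms are handled by \eqref{RL:Lp} applied respectively to $\phi^\eps$ and $\phi$, with constants uniform in $\eps$ since $b^\eps$ satisfies \eqref{A:monotoneB} uniformly (for the smooth flows $\phi^\eps$, this uses the classical change of variables together with the uniform Jacobian bound \eqref{Jestimates}). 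Their combined contribution to the left-hand side is bounded by
\[
	C\int_s^T \nor{b^\eps - b^\delta}{L^p(B_{R+C})} dt + C \int_s^T \nor{b^\delta - b}{L^p(B_{R+C})} dt,
\]
which, by dominated convergence with $L^1$-in-$t$ majorant $C_0(t)(1+R+C)$, tends as $\eps \to 0$ to $2C \int_s^T \nor{b - b^\delta}{L^p(B_{R+C})}dt$, and this can be made arbitrarily small by choosing $\delta$ small.

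For the middle term, $b^\delta(t,\cdot)$ is Lipschitz in space with constant on $B_{C(1+R)}$ bounded by $C C_0(t)(1+R)/\delta$, so combined with the strong convergence $\phi^\eps \to \phi$ in $C([s,T], L^p_\loc)$ from Theorem \ref{T:regularLagrange},
\[
	\int_s^T \nor{b^\delta(t,\phi^\eps_{t,s}) - b^\delta(t,\phi_{t,s})}{L^p(B_R)} dt \le \frac{C(1+R)\nor{C_0}{L^1}}{\delta} \sup_{t \in [s,T]} \nor{\phi^\eps_{t,s} - \phi_{t,s}}{L^p(B_R)},
\]
which vanishes as $\eps \to 0$ for each fixed $\delta$. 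Taking $\limsup_{\eps \to 0}$ and then sending $\delta \to 0$ yields the claim. The only delicate point is the order of limits—$\delta$ must be fixed first, since the Lipschitz bound on $b^\delta$ blows up as $\delta \to 0$—but the decisive tool throughout is the regular Lagrange estimate, which converts convergence of vector fields into convergence of compositions without requiring any continuity of $b$.
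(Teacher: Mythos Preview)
Your proof is correct and follows essentially the same approach as the paper: both use the identical three-term splitting with an auxiliary mollification $b^\delta$, apply the regular Lagrange estimate \eqref{RL:Lp} (uniformly in $\eps$) to control the outer terms, and handle the middle term via the smoothness of $b^\delta$ together with the strong $L^p_\loc$ convergence of $\phi^\eps$ to $\phi$ from Theorem~\ref{T:regularLagrange}. The only cosmetic difference is that for the middle term you use an explicit Lipschitz bound on $b^\delta$, while the paper phrases it as a.e.\ convergence plus dominated convergence; your version is arguably more transparent.
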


\begin{proof}
	The first claim follows from \eqref{RL:Lp}: there exists $C > 0$ independent of $s$ and $R$ such that, for all $t \in [0,T]$, $\norm{b(t,\phi_{t,s})}_{L^p(B_R)} \le C \norm{b(t,\cdot)}_{L^p(B_{R+C})}$.

	For $\delta > 0$ and $0 \le s \le t \le T$, we write
	\begin{align*}
		 \norm{b^\eps(t, \phi^\eps_{t,s}) - b(t,\phi_{t,s})}_{L^p(B_R)}
		 &\le
			 \norm{b^\eps(t, \phi^\eps_{t,s}) - b^\delta(t,\phi^\eps_{t,s}}_{L^p(B_R)}
			 + 
			 \norm{b^\delta(t, \phi^\eps_{t,s}) - b^\delta(t,\phi_{t,s}}_{L^p(B_R)}\\
			&+
			 \norm{b^\delta(t, \phi_{t,s}) - b(t,\phi_{t,s}}_{L^p(B_R)}.
	\end{align*}
	By \eqref{RL:Lp}, for some $C > 0$ independent of $\delta$, $\eps$, $s$, and $t$,
	\[
		\norm{b^\eps(t, \phi^\eps_{t,s}) - b^\delta(t,\phi^\eps_{t,s})}_{L^p(B_R)}
		\le C \norm{b^\eps(t,\cdot) - b^\delta(t,\cdot)}_{L^p(B_{R+C})}
	\]
	and
	\[
		\norm{b^\delta(t, \phi_{t,s}) - b(t,\phi_{t,s})}_{L^p(B_R)}
		\le C \norm{b^\delta(t,\cdot) - b(t,\cdot)}_{L^p(B_{R+C})}.
	\]
	The smoothness of $b^\delta$ implies that, for all $t \in [s,T]$, as $\eps \to 0$, $b^\delta(t,\phi^\eps_{t,s})$ converges a.e. to $b^\delta(t,\phi_{t,s})$. Sending $\eps \to 0$ and using dominated convergence, we thus have
		\[
			\limsup_{\eps \to 0} \int_s^T
			 \norm{b^\eps(t, \phi^\eps_{t,s}) - b(t,\phi_{t,s})}_{L^p(B_R)}dt
			 \le C \int_s^T \norm{b^\delta(t,\cdot) - b(t,\cdot)}_{L^p(B_{R+C})}dt.
		\]
		The proof of the claim is finished upon sending $\delta \to 0$ and again using dominated convergence.
\end{proof}

\begin{theorem}\label{T:forwardflow}
	Fix $1 \le p < \oo$ and $s \in [0,T)$. Then
	\[
		\left\{ (t,x) \mapsto \phi_{t,s}(x) \right\} \in L^p_\loc(\RR^d, W^{1,1}([0,T]) ),
	\]
	and, for a.e. $x \in \RR^d$, $[s,T] \ni t \mapsto \phi_{t,s}(x)$ is an absolutely continuous solution of
	\[
		\phi_{t,s}(x) = x + \int_s^t b(r, \phi_{r,s}(x))dr.
	\]
	If $(b^\eps)_{\eps > 0}$ satisfy \eqref{bregs} and $\phi^\eps$ is the corresponding flow, then, for all $R > 0$,
	\[
		\lim_{\eps \to 0} \norm{ \phi^\eps_{\cdot,s} - \phi_{\cdot,s} }_{L^p(B_R, W^{1,1}([s,T])} = 0.
	\]
	For all $0 \le r \le s \le t \le T$, $\phi_{t,r} = \phi_{t,s} \circ \phi_{s,r}$ a.e.
\end{theorem}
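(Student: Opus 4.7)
The strategy is to approximate by regularizations $(b^\eps)_{\eps>0}$ as in \eqref{bregs}, for which the flow $\phi^\eps$ is smooth and satisfies \eqref{monotone:flow} classically for every $(s,x)$, and then pass to the limit. The two crucial ingredients are (i) Theorem \ref{T:regularLagrange}, which gives $\phi^\eps_{\cdot,s}\to\phi_{\cdot,s}$ strongly in $C([s,T],L^p_\loc(\RR^d))$ for every $p\in[1,\oo)$, and (ii) Lemma \ref{L:RHS}, which gives $b^\eps(\cdot,\phi^\eps_{\cdot,s})\to b(\cdot,\phi_{\cdot,s})$ strongly in $L^1([s,T],L^p_\loc(\RR^d))$. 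A third tool is the regular Lagrange estimate \eqref{RL:Lp} applied uniformly in $\eps$ to $\phi^\eps_{t,s}$, which is available either by applying Theorem \ref{T:MTE} at the level of the regularizations or directly via change of variables through the Lipschitz backward flow and Lemma \ref{L:backwardsJ}.

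First I would write $\phi^\eps_{t,s}(x)=x+\int_s^t b^\eps(r,\phi^\eps_{r,s}(x))dr$ and pass to the limit. By Minkowski's integral inequality, the $L^1([s,T],L^p(B_R))$-convergence in (ii) yields $\del_t\phi^\eps_{\cdot,s}\to b(\cdot,\phi_{\cdot,s})$ in $L^p(B_R,L^1([s,T]))$, while (i) yields $\phi^\eps_{\cdot,s}\to\phi_{\cdot,s}$ in $L^p(B_R,L^1([s,T]))$ (indeed even in $L^p(B_R,C([s,T]))$). Combining these two establishes the claimed $L^p(B_R,W^{1,1}([s,T]))$-convergence. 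Fubini then ensures that, for a.e.\ $x\in\RR^d$, the map $r\mapsto b(r,\phi_{r,s}(x))$ lies in $L^1([s,T])$; passing to the limit in the integral identity produces, for a.e.\ $x$, an absolutely continuous representative $t\mapsto\phi_{t,s}(x)$ satisfying the ODE.

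For the flow property, I would start from the identity $\phi^\eps_{t,r}=\phi^\eps_{t,s}\circ\phi^\eps_{s,r}$, valid pointwise for smooth $b^\eps$, and decompose
\[
    \phi^\eps_{t,s}(\phi^\eps_{s,r}(x))-\phi_{t,s}(\phi_{s,r}(x))
    =\bigl[\phi^\eps_{t,s}-\phi_{t,s}\bigr]\!\circ\phi^\eps_{s,r}(x)
    +\bigl[\phi_{t,s}(\phi^\eps_{s,r}(x))-\phi_{t,s}(\phi_{s,r}(x))\bigr].
\]
The first bracket tends to zero in $L^p(B_R)$ by combining the uniform-in-$\eps$ estimate \eqref{RL:Lp} for $\phi^\eps_{s,r}$ with the convergence $\phi^\eps_{t,s}\to\phi_{t,s}$ in $L^p_\loc$ from Theorem \ref{T:regularLagrange}. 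For the second bracket, I would pass to a subsequence along which $\phi^{\eps_n}_{s,r}(x)\to\phi_{s,r}(x)$ a.e.; by Proposition \ref{P:invertbackward}, the fixed version of $\phi_{t,s}$ is continuous off a null set $N$, and by the regular Lagrange property \eqref{RL:sets} applied to $\phi_{s,r}$, $\phi_{s,r}(x)\notin N$ for a.e.\ $x$, so the second bracket converges to $0$ a.e.\ on $B_R$. The linear growth bounds from Theorem \ref{T:regularLagrange} provide a uniform $L^\oo(B_R)$ dominant, so Lebesgue dominated convergence gives convergence to $0$ in $L^p(B_R)$. Comparing with $\phi^\eps_{t,r}\to\phi_{t,r}$ in $L^p(B_R)$ yields the flow identity a.e.

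The hard part is precisely the second bracket in the flow-property step. The tempting move is to Lipschitz-estimate $\phi_{t,s}(\phi^\eps_{s,r})-\phi_{t,s}(\phi_{s,r})$, but the one-sided Lipschitz condition on $b$ controls $|X-Y|$ only from \emph{below} along trajectories, so no uniform Lipschitz estimate on the forward flow $\phi^\eps_{t,s}$ (or $\phi_{t,s}$) is available in this direction; the standard equicontinuity argument is therefore unavailable. The resolution is to substitute a.e.\ continuity for Lipschitz continuity: the almost-everywhere continuity of $\phi_{t,s}$ secured by the topological analysis of Proposition \ref{P:invertbackward}, combined with the noncompressive nature of $\phi_{s,r}$ encoded by the regular Lagrange property, renders the composition stable under approximation.
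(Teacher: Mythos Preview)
Your proof is correct, and the first part (the ODE identity, the $L^p_\loc(\RR^d,W^{1,1}([s,T]))$ membership, and the convergence of the regularized flows) follows the same route as the paper: use Theorem \ref{T:regularLagrange} for $\phi^\eps\to\phi$, Lemma \ref{L:RHS} for the time derivatives, and Minkowski to interchange the order of integration. One small remark: the parenthetical claim that $\phi^\eps_{\cdot,s}\to\phi_{\cdot,s}$ in $L^p(B_R,C([s,T]))$ does not follow directly from $C([s,T],L^p(B_R))$-convergence (the inequality between these norms goes the wrong way); it is, however, an immediate consequence of the $L^p(B_R,W^{1,1})$-convergence you establish next, so nothing is lost.

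Where your argument genuinely differs from the paper is in the flow property $\phi_{t,r}=\phi_{t,s}\circ\phi_{s,r}$. You pass to the limit in $\phi^\eps_{t,s}\circ\phi^\eps_{s,r}$ by splitting into two brackets and handling the second via the almost-everywhere continuity of $\phi_{t,s}$ (Proposition \ref{P:invertbackward}) together with the regular Lagrange property \eqref{RL:sets}. This is correct and is a nice illustration of how a.e.\ continuity substitutes for the unavailable Lipschitz estimate. The paper takes a shorter, purely algebraic route: it composes on the left with the Lipschitz \emph{backward} flow $\phi_{r,t}=\phi_{r,s}\circ\phi_{s,t}$ and uses the right-inverse identity $\phi_{s,t}\circ\phi_{t,s}=\Id$ twice to obtain $\phi_{r,t}\circ(\phi_{t,s}\circ\phi_{s,r})=\Id$ a.e., whence $\phi_{t,s}\circ\phi_{s,r}$ is a right-inverse of $\phi_{r,t}$ and thus equals $\phi_{t,r}$ a.e.\ by the uniqueness in Proposition \ref{P:invertbackward}. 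The paper's approach avoids any limit-passing in compositions; your approach, while longer, is self-contained at the level of approximations and makes explicit the role of the a.e.\ continuity that you rightly identify as the key substitute for forward Lipschitz bounds.
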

	
\begin{remark}
	The fact that $\del_t \phi_{t,\cdot} \in L^1$ is due to the fact that we are assuming the weakest possible integrability of $b$ in the time variable. If $b \in L^q$ for some $q >1$, then the forward flow belongs to $W^{1,p}$ for any $p \le q$.
\end{remark}

\begin{remark}
	The composition $\phi_{t,s} \circ \phi_{s,r}$ is made sense of due to \eqref{RL:Lp} and the fact that the forward flow takes values in $L^p_\loc(\RR^d)$.
\end{remark}
	
\begin{proof}[Proof of Theorem \ref{T:forwardflow}]
	For $\eps > 0$, we have $\del_t \phi^\eps_{t,s}(x) = b^\eps(t, \phi^\eps_{t,s}(x))$. By Lemma \ref{L:RHS}, sending $\eps \to 0$, we see that the distribution $\del_t \phi_{t,s}(x)$ satisfies, in the distributional sense, $\del_t \phi_{t,s}(x) = b(t,\phi_{t,s}(x))$, and therefore, for all $R > 0$,
	\[
		\norm{ \int_s^T |\phi_{t,s}|dt }_{L^p(B_R)} \le \int_s^T \norm{\phi_{t,s}}_{L^p(B_R)}dt < \oo.
	\]
	The convergence claim and the solvability of the ODE follow immediately in view of the fact that $\phi^\eps_{s,s}(x) = \phi_{s,s}(x) = x$ for all $\eps > 0$ and $x \in \RR^d$.
	
	To prove the last claim, we note that the equality $\phi_{r,t} \circ \phi_{t,r} = \Id$ holds as functions in $L^p_\loc$, and, in view of the flow property of the backward flow,
	\[
		\phi_{r,t} \circ ( \phi_{t,s} \circ \phi_{s,r})
		= \phi_{r,s} \circ \phi_{s,t} \circ \phi_{t,s} \circ \phi_{s,r} = \phi_{r,s} \circ \phi_{s,r} = \Id.
	\]
	It follows from Proposition \ref{P:invertbackward} that $\phi_{t,r} = \phi_{t,s} \circ \phi_{s,r}$ a.e., as desired.
\end{proof}

We recall that Proposition \ref{P:invertbackward} implies that any right-inverse of the backward flow is determined uniquely almost everywhere. We remark here that this property actually follows from the duality between the transport and continuity equations.
	
\begin{theorem}\label{T:charLagrange}
	Assume $\psi \in C([0,t], L^p_\loc(\RR^d))$ satisfies $\phi_{s,t}(\psi_s(x)) = x$ for all $s \in [0,t]$, for a.e. $x \in \RR^d$. Then $\psi = \phi_{t,\cdot}$.
\end{theorem}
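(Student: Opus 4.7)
I would prove this by a direct pointwise argument using Proposition \ref{P:invertbackward}. For each fixed $s \in [0,t]$, the proposition furnishes a set $A_{s,t} \subset \RR^d$ of full Lebesgue measure on which the preimage $\phi_{s,t}^{-1}(\{y\})$ collapses to the singleton $\{\phi_{t,s}(y)\}$. The hypothesis $\phi_{s,t}(\psi_s(x)) = x$ places $\psi_s(x)$ inside $\phi_{s,t}^{-1}(\{x\})$ for almost every $x \in \RR^d$. Intersecting these two full-measure sets, one reads off $\psi_s(x) = \phi_{t,s}(x)$ for a.e. $x$, which gives $\psi_s = \phi_{t,s}$ as elements of $L^p_\loc(\RR^d)$.

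Having established this pointwise-in-$s$ equality in $L^p_\loc(\RR^d)$, the fact that both $\psi$ and $\phi_{t,\cdot}$ belong to $C([0,t], L^p_\loc(\RR^d))$ (the latter by Theorem \ref{T:regularLagrange}) upgrades the identity automatically to an equality in $C([0,t], L^p_\loc(\RR^d))$, which is the conclusion $\psi = \phi_{t,\cdot}$. There is essentially no obstacle: the only mildly delicate point is that the null set $A_{s,t}^c \cup \{x : \phi_{s,t}(\psi_s(x)) \ne x\}$ depends on $s$, but since we only need a pointwise-in-$s$ identity before reading everything in $L^p_\loc$, this dependence is harmless.

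As alluded to in the remark preceding the theorem, one can alternatively try to derive this directly from the duality theory developed in this subsection: the natural attempt is to verify that $(s,x) \mapsto g(\psi_s(x))$, for test functions $g \in C_c(\RR^d)$, satisfies the defining duality identity of Theorem \ref{T:MTE} against good solutions of the continuity equation \eqref{E:MCE}, and then invoke uniqueness of duality solutions of \eqref{E:MTE} together with the representation formula $g \circ \phi_{t,\cdot}$ of Theorem \ref{T:MTErep}. The hard part in that route would be showing the duality identity without implicitly invoking Corollary \ref{C:pushforward} (whose proof already uses Proposition \ref{P:invertbackward}); since the argument above sidesteps this altogether by a single application of Proposition \ref{P:invertbackward}, it is the most efficient path.
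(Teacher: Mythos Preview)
Your proof is correct. It is also, as you note, the shortest route: fix $s$, use Proposition~\ref{P:invertbackward} to force $\psi_s(x)$ to be the unique element of $\phi_{s,t}^{-1}(\{x\})$ for a.e.\ $x$, and conclude.

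The paper deliberately takes a different path. The sentence preceding the theorem announces the intent: to show that uniqueness of the right-inverse can be recovered \emph{from the PDE duality theory alone}, without appealing to the measure-theoretic Proposition~\ref{P:invertbackward}. Concretely, the paper verifies that $\psi_s$ satisfies the duality identity of Theorem~\ref{T:MTE} against the good solution of \eqref{E:MCE}, using the explicit representation $f(t,y)=g(\phi_{s,t}(y))J_{s,t}(y)$ from \eqref{formula:M} and a change of variables. Your concern about circularity through Corollary~\ref{C:pushforward} is unfounded: the paper uses \eqref{formula:M}, which involves only the \emph{backward} flow and its Jacobian, not the forward flow.

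That said, your instinct that the duality route is more delicate is justified. The change-of-variables step
\[
\int g(x)\,\psi_s(x)\,dx \;=\; \int g(\phi_{s,t}(y))\,y\,J_{s,t}(y)\,dy
\]
is not immediate: the area formula for the Lipschitz map $\phi_{s,t}$ gives $\int g(x)\big(\sum_{y\in\phi_{s,t}^{-1}(x)}y\big)\,dx$ on the right, and collapsing the sum to $\psi_s(x)$ requires knowing $\phi_{s,t}^{-1}(x)$ is a.e.\ a singleton. This can be extracted within the duality framework (the hypothesis gives $\#\phi_{s,t}^{-1}(x)\ge 1$ a.e., while mass conservation for the good solution forces $\int_{B_R}\#\phi_{s,t}^{-1}(x)\,dx=|B_R|$), but the paper does not spell this out. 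So: your argument is the cleaner one for the bare statement, while the paper's argument is there to make the conceptual point that the duality theory is self-contained.
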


\begin{proof}
	It suffices to show that $u(t,x) = \psi_s(x)$ is the unique (vector-valued) duality solution of \eqref{E:MTE} with terminal data equal to $x$ at time $t$. 
	
	Fix $g \in C_c(\RR^d)$. For a.e. $x \in \RR^d$, if $y = \psi_t(x)$, we have $\phi_{s,t}(y) = x$ by assumption. Therefore, the change of variables formula yields
	\[
		\int_{\RR^d} g(x) \psi_t(x)dx = \int_{\RR^d} g(\phi_{s,t}(y)) y J_{s,t}(y)dy = \int_{\RR^d} f(t,y) y dy,
	\]
	where $f$ is the good solution of the forward continuity equation with initial condition $g$ at time $s$. 
\end{proof}

\begin{remark}
	A corresponding result characterizing $\phi_{\cdot, s}$ on $[s,T]$ follows in exactly the same way, by considering the duality between the IVP and TVP for, respectively, an appropriate transport and continuity equation.
\end{remark}

\begin{remark}\label{R:uniqueness}
	The uniqueness result above demonstrates that the right-inverse property is a crucial property of the forward flow. In other words, it implies that $\phi_{t,s}$ solves the ODE, that it solves the transport PDE in the duality sense, and that it has the regularity properties laid out in Theorems \ref{T:MTErep} and \ref{T:forwardflow}.
\end{remark}

\subsection{Characterizations} 

We now present alternative ways to characterize the solutions of the forward continuity and backward transport equations identified above. Although the PDE \eqref{E:MTE} does not make sense as a distribution, we nevertheless can characterize solutions in a PDE sense through the use of $\sup$- and $\inf$-convolutions. The propagation of almost-everywhere continuity proved in Theorem \ref{T:MTErep} is a crucial ingredient.

By using this characterization in duality with the conservative equation, we then show that nonnegative distributional solutions of \eqref{E:MCE} are unique, and therefore equal to the ``good'' solution identified by the formula \eqref{formula:M}. As a consequence, we finally conclude with the uniqueness of regular Lagrangian flows, forward in time, of the ODE \eqref{monotone:flow}.

\subsubsection{The nonconservative equation: $\sup$ and $\inf$ convolutions}

Given $\delta > 0$ and $u \in L^\oo( \RR^d)$, we define the $\sup$- and $\inf$-convolutions
\[
	u^\delta(x) := \esssup_{y \in \RR^d} \left\{ u(y) - \frac{1}{2\delta}|x-y|^2 \right\}	
\]
and
\[
	u_\delta(x) := \essinf_{y \in \RR^d} \left\{ u(y) + \frac{1}{2\delta}|x-y|^2 \right\}.
\]
These regularizations are common in the theory of viscosity solutions, or generally for equations satisfying a maximum principle in spaces of continuous functions. The supremum and infimum must be essential, because $u$ is only defined almost everywhere.

\begin{lemma}\label{L:supinf}
	Assume that $u \in L^\oo(\RR^d)$ is continuous almost everywhere. Then, for all $\delta > 0$, $u_\delta, u^\delta$ are globally Lipschitz with constant
	\[
		(\esssup u - \essinf u)^{1/2} \delta^{-1/2},
	\]
	and
	\[
		u_\delta \le u \le u^\delta \quad \text{a.e.}
	\]
	As $\delta \to 0$, $u^\delta$ decreases to $u$ and $u_\delta$ increases to $u$ a.e. Finally, the $\esssup$ and $\essinf$ in the definitions of $u^\delta$ and $u_\delta$ can be restricted to respectively $y \in B_{R^\delta(x)}(x)$ and $B_{R_\delta(x)}(x)$, where
	\[
		R^\delta(x) = 2(u^{2\delta}(x) - u^\delta(x))^{1/2} \delta^{1/2}.
	\]
	and
	\[
		R_\delta(x) = 2(u_\delta(x) - u_{2\delta}(x))^{1/2} \delta^{1/2}.
	\]
\end{lemma}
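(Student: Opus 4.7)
The strategy is to derive all four claims from two main ingredients: (i) a natural localization of the essential supremum/infimum defining $u^\delta$ and $u_\delta$, which follows from comparison with the $2\delta$-convolution, and (ii) the almost-everywhere continuity of $u$, which allows the essential suprema/infima to recover pointwise values of $u$. I would treat $u^\delta$ only, since the identity $u_\delta = -(-u)^\delta$ reduces the statements for the inf-convolution to the corresponding ones for the sup-convolution, and $(-u)$ is also continuous a.e. with $\esssup(-u)-\essinf(-u)=\esssup u-\essinf u$.

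For the localization, the definition of $u^{2\delta}$ yields $u(y) \le u^{2\delta}(x) + |x-y|^2/(4\delta)$ for a.e.\ $y$, hence
\[
u(y) - \frac{|x-y|^2}{2\delta} \;\le\; u^{2\delta}(x) - \frac{|x-y|^2}{4\delta},
\]
and the right-hand side falls strictly below $u^\delta(x)$ precisely when $|x-y| > R^\delta(x) = 2\sqrt{\delta(u^{2\delta}(x) - u^\delta(x))}$, forcing the essential supremum to be attained in $B_{R^\delta(x)}(x)$; the analogous $R_\delta(x)$ is obtained symmetrically. For the comparison $u \le u^\delta$ a.e., at any point $x$ of approximate continuity of $u$ (a full-measure set), the set $\{y : |u(y) - u(x)| < \varepsilon\}$ has positive density at $x$, so $\esssup_y\{u(y) - |x-y|^2/(2\delta)\} \ge u(x) - \varepsilon$ for every $\varepsilon > 0$.

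For the Lipschitz estimate, any near-maximizer $z^\ast$ in the definition of $u^\delta(x)$ satisfies $u(z^\ast) - |x-z^\ast|^2/(2\delta) \ge u^\delta(x) \ge \essinf u$ (using the comparison just proved), which gives $|x-z^\ast| \lesssim \sqrt{\delta(\esssup u - \essinf u)}$. Semiconvexity of $u^\delta$ (the function $x \mapsto u^\delta(x) + |x|^2/(2\delta)$ is a supremum of affine functions, hence convex) combined with the a.e.\ differentiability identity $\nabla u^\delta(x) = (z^\ast(x) - x)/\delta$ then yields the claimed global Lipschitz bound with constant of order $(\esssup u - \essinf u)^{1/2}\delta^{-1/2}$.

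Finally, monotonicity of $\delta \mapsto u^\delta$ is immediate from monotonicity of the penalty $|x-y|^2/(2\delta)$ in $\delta$, so $u^\delta$ decreases to some limit $\ge u$ as $\delta \downarrow 0$. Convergence $u^\delta \to u$ almost everywhere follows by combining the localization bound $u^\delta(x) \le \esssup_{B_{R^\delta(x)}(x)} u$ with the a priori estimate $R^\delta(x) \le \sqrt{2\delta(\esssup u - \essinf u)}$ from the near-maximizer step and the elementary fact that at every point of continuity of $u$, the essential supremum over shrinking balls returns $u(x)$. The chief subtlety here is this last step: the definition of $R^\delta$ through the difference $u^{2\delta}-u^\delta$ looks circular when passing to the limit, but the unconditional near-maximizer bound $R^\delta(x) \le \sqrt{2\delta(\esssup u-\essinf u)}$ severs the circularity and closes the argument uniformly in the set of continuity points of $u$.
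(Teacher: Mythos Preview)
Your argument is correct and matches the paper's proof in all essential points: both localize the $\esssup$ via comparison with $u^{2\delta}$, use a.e.\ continuity for the ordering $u \le u^\delta$ and for the limit $\delta \to 0$, and control the distance to near-maximizers to obtain the Lipschitz bound. The only variation is that you package the Lipschitz step through semiconvexity of $u^\delta$ and the envelope identity $\nabla u^\delta(x) = (z^\ast - x)/\delta$, whereas the paper compares near-maximizer sets at two nearby base points directly; since the $\esssup$ need not be attained for merely $L^\infty$ data, your ``identity'' should be understood as a bound coming from limits of near-maximizers (equivalently, from the subdifferential of the convex function $u^\delta + |\cdot|^2/(2\delta)$), which is precisely what the paper's direct argument makes explicit.
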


\begin{proof}
	Fix $x \in \RR^d$ and $r > 0$. We thus have
	\[
		u^\delta(x) \ge \esssup_{y \in B_r(x)} u(y) - \frac{r^2}{2\delta}.
	\]
	Sending $r \to 0$, we see that $u^\delta(x) \ge u(t,x)$ whenever $u$ is continuous at $x$, and therefore $u^\delta \ge u$ a.e. Similarly, $u_\delta \le u$ a.e.
	
	We now observe that, if $R > (\esssup u - \essinf u)^{1/2}$, then, for a.e. $y \notin B_{R\delta^{1/2}}$,
	\[
		u(y) - \frac{|x-y|^2}{2\delta} \le \esssup u - R^2 < \essinf u \le u^\delta(x).
	\]
	By also using a similar argument for $u_\delta$, we see that
	\[
		u^\delta(x) := \esssup_{|y-x| \le R\delta^{1/2}} \left\{ u(y) - \frac{1}{2\delta}|x-y|^2 \right\}	
	\]
	and
	\[
		u_\delta(x) := \essinf_{|y-x| \le R\delta^{1/2}} \left\{ u(y) + \frac{1}{2\delta}|x-y|^2 \right\}.
	\]
	It is then straightforward to see that $u^\delta$ and $u_\delta$ are respectively decreasing and increasing pointwise as $\delta$ decreases to $0$, and converge whenever $u$ is continuous at $x$ (and thus a.e.) to $u(x)$.
	
	For fixed $x \in \RR^d$, $\delta > 0$, and $\eta > 0$, define
	\[
		A_{\delta,\eta}(x) := \left\{ y \in \RR^d: u(y) - \frac{|x-y|^2}{2\delta} > u^\delta(x) - \eta \right\}.
	\]
	Then, by definition, $A_{\delta,\eta}(x)$ has nonzero Lebesgue measure. Therefore, for any $x' \in \RR^d$, there exists $y \in A_{\delta,\eta}(x)$ such that
	\[
		u(y) - \frac{|x'-y|^2}{2\delta} \le u^\delta(x'),
	\]
	and so
	\[
		u^\delta(x) - u^\delta(x') \le \frac{|x'-y|^2}{2\delta} - \frac{|x-y|^2}{2\delta} + \eta \le \frac{R}{\delta^{1/2}}|x'-x| + \frac{|x'-x|^2}{\delta} + \eta.
	\]
	Switching the roles of $x$ and $x'$ and using the fact that $\eta$ was arbitrary, we see that, for all $x \in \RR^d$,
	\[
		\limsup_{x' \to x} \frac{|u^\delta(x') - u^\delta(x)|}{|x'-x|} \le \frac{R}{\delta^{1/2}}.
	\]
	We may then let $R$ decrease down to $(\esssup u - \essinf u)^{1/2}$, and the same proof for $u_\delta$ holds.
	
	For any $\eta > 0$ and a.e. $y \in A^\eta_\delta$,
	\[
		u^{2\delta}(x) \ge u(y) - \frac{|x-y|^2}{4\delta} > u^\delta(x) + \frac{|x-y|^2}{2\delta} - \eta,
	\]
	and so
	\[
		|y-x| \le 2( u^{2\delta}(x) - u^\delta(x) + \eta )^{1/2} \delta^{1/2}.
	\]
	Therefore, for a.e. $y$ such that $|y-x| > R^\delta(x)$, we must have $u(y) - \frac{|x-y|^2}{2\delta} < u^\delta(x)$, and the statement about restricting the $\esssup$ follows. The corresponding result for $u_\delta$ is proved in the same way.
\end{proof}

\begin{theorem}\label{T:MTEchar}
	Assume $u \in C([0,T], L^1_\loc(\RR^d)) \cap L^\oo([0,T] \times \RR^d)$ is continuous almost everywhere and $u(T,\cdot) = u_T \in L^\oo(\RR^d)$. Then $u$ is the duality solution of \eqref{E:MTE} if and only if there exist $r^\delta, r_\delta \in L^1_\loc([0,T] \times \RR^d))$ such that $\lim_{\delta \to 0} r^\delta = \lim_{\delta \to 0} r_\delta = 0$ in $L^1_\loc$, and the $\sup$- and $\inf$-convolutions
	\[
		u^\delta(t,x) := \esssup_{y \in \RR^d} \left\{ u(t,y) - \frac{1}{2\delta}|x-y|^2 \right\}	
	\]
	and
	\[
		u_\delta(t,x) := \essinf_{y \in \RR^d} \left\{ u(t,y) + \frac{1}{2\delta}|x-y|^2 \right\}
	\]
	satisfy in the sense of distributions on $[0,T] \times \RR^d$ the inequalities
	\[
		\frac{\del u^\delta}{\del t} + b(t,x) \cdot \nabla u^\delta \le r^\delta(t,x)
		\quad \text{and} \quad
		\frac{\del u_\delta}{\del t} + b(t,x) \cdot \nabla u_\delta \ge -r_\delta(t,x).
	\]
\end{theorem}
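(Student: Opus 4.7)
The plan is to prove the two implications separately. The reverse direction ($\Leftarrow$) rests on the duality between good solutions of the forward continuity equation \eqref{E:MCE} and the duality solutions of \eqref{E:MTE} established in Section \ref{ss:forwardflow}, while the forward direction ($\Rightarrow$) will follow from a regularization argument combined with an envelope-theorem computation and the one-sided Lipschitz condition.

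For the reverse direction, I would fix $g \in C_c(\RR^d)$ with $g \ge 0$ and $s \in [0,T)$, and let $f \in C([s,T], L^\oo_{\ws})$ be the good solution of \eqref{E:MCE} with $f(s,\cdot) = g$; by Theorem \ref{T:MCE} and the formula \eqref{formula:M}, $f \ge 0$. Since $u^\delta$ is Lipschitz in $x$ and bounded, integration by parts against $f$ yields
\[
\int u^\delta(T,x) f(T,x)\,dx - \int u^\delta(s,x) g(x)\,dx = \int_s^T\!\!\int (\partial_t u^\delta + b \cdot \nabla u^\delta)\,f\,dx\,dt \le \int_s^T\!\!\int r^\delta f\,dx\,dt.
\]
Sending $\delta \to 0$, using $u^\delta \to u$ in $C([s,T], L^1_{\loc})$ (by Lemma \ref{L:supinf} and the a.e. continuity of $u$), $r^\delta \to 0$ in $L^1_{\loc}$, and boundedness of $f$, one gets $\int u_T f(T)\,dx \le \int u(s,x) g(x)\,dx$. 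The inf-convolution inequality yields the opposite, so equality holds. By Corollary \ref{C:pushforward}, $f(T,\cdot) = \phi_{T,s}^{\#} g$, hence $\int u_T f(T)\,dx = \int u_T(\phi_{T,s}(x)) g(x)\,dx$; as $g \ge 0$ is arbitrary, $u(s,x) = u_T(\phi_{T,s}(x))$ for a.e. $x$, and Theorem \ref{T:MTErep} identifies $u$ with the duality solution.

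For the forward direction, I would regularize $b$ by smooth $b^\eps$ satisfying \eqref{bregs} and $u_T$ by smooth $u_T^\eta$ bounded uniformly in $L^\oo$, and consider the classical smooth solution $u^{\eps,\eta}$ of the regularized terminal value problem. By Theorem \ref{T:MTE}, $u^{\eps,\eta} \to u$ in $C([0,T], L^p_{\loc})$ as $\eta \to 0$ then $\eps \to 0$. For fixed $\delta > 0$, the envelope theorem applied to $(u^{\eps,\eta})^\delta$ with the essentially unique maximizer $y^* = y^{*,\eps,\eta,\delta}(t,x)$ gives $\nabla (u^{\eps,\eta})^\delta = (y^* - x)/\delta$ and $\partial_t (u^{\eps,\eta})^\delta = \partial_t u^{\eps,\eta}(t, y^*)$. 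Substituting the classical PDE at the smooth point $y^*$ and applying the one-sided Lipschitz property of $b^\eps$ from \eqref{A:monotoneB} together with Lemma \ref{L:supinf},
\[
\partial_t (u^{\eps,\eta})^\delta + b^\eps(t,x) \cdot \nabla (u^{\eps,\eta})^\delta = -\frac{(b^\eps(t,x) - b^\eps(t,y^*)) \cdot (x - y^*)}{\delta} \le \frac{C_1(t)|x - y^*|^2}{\delta} \le 4 C_1(t)\bigl((u^{\eps,\eta})^{2\delta} - (u^{\eps,\eta})^\delta\bigr).
\]
Passing to the limits $\eta, \eps \to 0$, exploiting the Lipschitz bound on $(u^{\eps,\eta})^\delta$ in $x$ depending only on $\delta$ and $\|u_T\|_{L^\oo}$ and the strong $L^p_{\loc}$ convergence of the regularized solutions, leads to the desired inequality $\partial_t u^\delta + b \cdot \nabla u^\delta \le r^\delta$ with $r^\delta \to 0$ in $L^1_{\loc}$ as $\delta \to 0$ (controlled by $4 C_1(t)(u^{2\delta} - u^\delta)$ together with any vanishing correction terms arising from the passage to the limit). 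The inf-convolution is handled symmetrically, giving $r_\delta$ controlled by $4 C_1(t)(u_\delta - u_{2\delta})$.

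The main technical obstacle will be the passage to the limit $\eta, \eps \to 0$ in the sup-convolutions: because sup-convolution is a non-local, pointwise-sensitive operation while the approximations $u^{\eps,\eta}$ converge to $u$ only in $L^p_{\loc}$, the convergence $(u^{\eps,\eta})^\delta \to u^\delta$ is subtle. The hypothesis that $u$ is a.e. continuous and the explicit representation $u(t,x) = u_T(\phi_{T,t}(x))$ from Theorem \ref{T:MTErep} should provide the needed compactness, but care is required: examples such as $b(x) = \sgn x$ with $u_T(x) = -|x|$ demonstrate that classical solutions of regularized problems can retain narrow peaks which disappear in the $L^p$-limit, so identifying an admissible $r^\delta$ and verifying that the distributional inequality survives the double limit is the delicate part of the argument.
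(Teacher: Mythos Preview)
Your approach matches the paper's in both directions. Two points worth noting.

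In the reverse direction, the paper first mollifies $u^\delta$ and $u_\delta$ in the time variable before pairing with $f$, so that the resulting functions are Lipschitz in $(t,x)$ and the integration by parts against the distributional solution $f$ is fully justified; spatial Lipschitz regularity and boundedness alone do not immediately give the identity you write. This is a minor technical addition.

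For the forward direction, the paper regularizes only $b$ (not $u_T$), and it resolves precisely the obstacle you flag by proving pointwise convergence $u^{\eps,\delta}(t,x) \to u^\delta(t,x)$ for every $(t,x)$. The argument is: for the lower bound, the $\esssup$ defining $u^\delta(t,x)$ is nearly attained on a set of positive measure, and on that set $u^\eps(t,\cdot) \to u(t,\cdot)$ for at least one point (from the a.e.\ convergence implied by strong $L^p$ convergence); for the upper bound, one uses the density of the full-measure set $B$ on which both $u(t,y) - \tfrac{|x-y|^2}{2\delta} \le u^\delta(t,x)$ and $u^\eps(t,y) \to u(t,y)$ hold, together with the continuity of $u^\eps(t,\cdot)$, to approximate the maximizer in $u^{\eps,\delta}(t,x)$ by points of $B$. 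Once pointwise convergence is established, the uniform-in-$\eps$ Lipschitz bound on $u^{\eps,\delta}$ upgrades it to locally uniform convergence, so $\nabla u^{\eps,\delta} \rightharpoonup \nabla u^\delta$ weak-$\star$ in $L^\infty$ while $b^\eps \to b$ in $L^1_{\loc}$, and the distributional inequality passes to the limit with $r^\delta = 4C_1(t)(u^{2\delta} - u^\delta)$ exactly as you anticipated; no further correction terms arise.
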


\begin{proof}
	Assume first that the $\sup$- and $\inf$-convolutions have the stated properties. For standard mollifiers $(\rho_\eta)_{\eta > 0}$ on $\RR$, define $u^\delta_\eta(t,x) = (u^\delta(\cdot, x) *_t \rho_\eta)(t)$ and $u_{\delta,\eta}(t,x) = (u_\delta(\cdot, x) *_t \rho_\eta)(t)$. Then, by Lemma \ref{L:supinf}, $u^\delta_\eta$ and $u_{\delta,\eta}$ are Lipschitz continuous on $[0,T] \times \RR^d$, and satisfy a.e. in $[0,T] \times \RR^d$
	\[
		\frac{\del u^\delta_\eta}{\del t} + b(t,x) \cdot \nabla u^\delta_\eta \le r^\delta_\eta(t,x)
		\quad \text{and} \quad
		\frac{\del u_{\delta,\eta}}{\del t} + b(t,x) \cdot \nabla u_{\delta,\eta} \ge -r_{\delta,\eta}(t,x),
	\]
	where
	\[
		r^\delta_\eta(t,x) = (r^\delta(\cdot,x) *_t \rho_\eta)(t) + \int_\RR (b(t,x) - b(s,x)) \cdot \nabla u^\delta(s,x) \rho_\eta(s-t)ds
	\]
	and
	\[
		r_{\delta,\eta}(t,x) = (r_\delta(\cdot,x) *_t \rho_\eta)(t) + \int_\RR (b(t,x) - b(s,x)) \cdot \nabla u_\delta(s,x) \rho_\eta(s-t)ds.
	\]
	The (local) boundedness of $b$, $\nabla u_\delta$, and $\nabla u^\delta$ then allow us to invoke the dominated convergence theorem to say that, for fixed $\delta$, $\lim_{\eta \to 0} r^\delta_\eta = r^\delta$ and $\lim_{\eta \to 0} r_{\delta,\eta} = r_\delta$ in $L^1_\loc$.
	
	Now let $f_0 \in C_c(\RR^d)$ be nonnegative and let $f$ be the ``good'' solution of \eqref{E:MCE}. In view of the nonnegativity of $J$, $f$ given by \eqref{formula:M} is nonnegative on $[0,T] \times \RR^d$, and the bounds for the backward flow in Lemma \ref{L:backwardsflow} imply that $f$ has compact support in $[0,T] \times \RR^d$. By Theorem \ref{T:MCE}, $f$ is a distributional solution, and therefore
	\begin{align*}
		\int_{\RR^d} f(T,x) u^\delta_\eta(T,x)dx - \int_{\RR^d} f_0(x) u^\delta_\eta(0,x)dx
		&= \int_0^T \int_{\RR^d}f(t,x) \left[ \del_t u^{\delta,\eta}(t,x) + b(t,x) \cdot \nabla u^{\delta,\eta}(t,x) \right]dxdt\\
		&\le \int_0^T \int_{\RR^d} f(t,x) r^\delta_\eta(t,x)dxdt.
	\end{align*}
	Sending first $\eta \to 0$ and then $\delta \to 0$, using Lemma \ref{L:supinf} and the dominated convergence theorem, we conclude that
	\[
		\int_{\RR^d} f(T,x) u_T(x)dx \le \int_{\RR^d} f_0(x) u(0,x)dx.
	\]
	Arguing similarly with $u_{\delta,\eta}$ as a test function, we achieve the opposite inequality. By linearity, the duality identity holds for any $f_0 \in L^\oo$ with bounded support, and we conclude that $u$ is the unique duality solution.
	
	Assume now conversely that $u$ is the duality solution. Let $(b^\eps)_{\eps > 0}$ be as in \eqref{bregs}, let $u^\eps$ be the corresponding solution, and define
	\[
		u^{\eps,\delta}(t,x) := \sup_{y \in \RR^d} \left\{ u^\eps(t,y) - \frac{1}{2\delta}|x-y|^2 \right\}	
	\]
	and
	\[
		u^\eps_\delta(t,x) := \inf_{y \in \RR^d} \left\{ u^\eps(t,y) + \frac{1}{2\delta}|x-y|^2 \right\}.
	\]
	By Lemma \ref{L:supinf}, for fixed $\delta > 0$, $u^{\eps,\delta}$ and $u^\eps_\delta$ are Lipschitz continuous in the space variable, uniformly over $[0,T] \times \RR^d$ and $\eps > 0$. Moreover, the $\sup$ and $\inf$ are actually a $\max$ and $\min$, and may be restricted to
	\[
		|y-x| \le (\max u_0 - \min u_0)^{1/2}\delta^{1/2}
	\]
	(note that we have used the maximum principle for the transport equation to control the maximum and minimum of $u^\eps$ and $u_\eps$). We may alternatively restrict the $y$ for which the maximum in the definition of $u^{\eps,\delta}(t,x)$ is attained to satisfy
	\begin{equation}\label{udeltaepsmax}
		|y-x| \le 2(u^{\eps,2\delta}(t,x) - u^{\eps,\delta}(t,x))^{1/2} \delta^{1/2},
	\end{equation}
	and the minimum in the definition of $u^\eps_\delta$ is attained by $y$ satisfying
	\begin{equation}\label{udeltaepsmin}
		|y-x| \le 2 (u^\eps_\delta(t,x) - u^\eps_{2\delta}(t,x))^{1/2} \delta^{1/2}.
	\end{equation}
	Standard properties of envelopes then give the identities, for any $(t,x) \in [0,T] \times \RR^d$,
	\[
		\frac{\del u^{\eps,\delta}}{\del t}(t,x) = \frac{\del u^\eps}{\del t}(t,y)
		\quad \text{and} \quad
		\nabla u^{\eps,\delta}(t,x) = \nabla u^\eps(t,y) = \frac{y-x}{\delta}
	\]
	for some $y$ satisfying \eqref{udeltaepsmax}. Therefore
	\[
		\del_t u^{\eps,\delta}(t,x) = - b^\eps(t,y) \cdot \nabla u^{\eps,\delta}(t,x), 
	\]
	from which we deduce that $u^{\eps,\delta}$ is uniformly Lipschitz continuous in the time variable over $[0,T] \times B_R$ for any $R > 0$, independently of $\eps$. Further developing the equality gives
	\begin{equation}\label{E:u^eps,delta}
	\begin{split}
		\frac{\del u^{\eps,\delta}}{\del t}(t,x) + b^\eps(t,x) \cdot \nabla u^{\eps,\delta}(t,x)
		&= \frac{\del u^{\eps}}{\del t}(t,y) + b^\eps(t,x) \cdot \nabla u^{\eps}(t,y)\\
		&= -(b^\eps(t,x) - b^\eps(t,y)) \cdot \frac{x-y}{\delta} \\
		&\le C_0(t) \frac{|x-y|^2}{\delta} \le 4C_0(t)(u^{\eps,2\delta}(t,x) - u^{\eps,\delta}(t,x)).
	\end{split}
	\end{equation}
	We similarly have that $u^\eps_\delta$ is Lipschitz continuous in the time variable, locally in space, uniformly over $\eps > 0$, and
	\begin{equation}\label{E:u^eps_delta}
		\frac{\del u^{\eps}_\delta}{\del t}(t,x) + b^\eps(t,x) \cdot \nabla u^{\eps}_\delta(t,x) \ge
		-4C_0(t) ( u^\eps_\delta(t,x) - u^\eps_{2\delta}(t,x)).
	\end{equation}
	
	We now claim that, as $\eps \to 0$, $u^{\eps,\delta}$ and $u^\eps_\delta$ converge pointwise to respectively $u^\delta$ and $u_\delta$, and then, by the uniform-in-$\eps$ Lipschitz regularity, the convergence is locally uniform. To see this, fix $x \in \RR^d$ and $\eta > 0$, and let $A \subset \RR^d$ be a set of positive measure such that
	\[
		u^\delta(t,x) \le u(t,y) - \frac{|x-y|^2}{2\delta} + \eta.
	\]
	We then have, for all $y \in A$,
	\[
		u^{\delta,\eps}(t,x) \ge u^\eps(t,y) - \frac{|x-y|^2}{2\delta}.
	\]
	For at least one such $y$, we then have $u^\eps(t,y) \xrightarrow{\eps \to 0} u(t,y)$, and we thus have
	\[
		\limsup_{\eps \to 0} \left( u^\delta(t,x) - u^{\delta,\eps}(t,x) \right) \le \eta.
	\]
	It follows that $\limsup_{\eps \to 0} \left( u^\delta(t,x) - u^{\delta,\eps}(t,x) \right) \le 0$ since $\eta$ was arbitrary. 
	
	Now, there exists a full measure set $B \subset \RR^d$ such that, for all $y \in B$,
	\[
		u^\delta(t,x) \ge u(t,y) - \frac{|x-y|^2}{2\delta} \quad \text{and} \quad \lim_{\eps \to 0} u^\eps(t,y) = u(t,y).
	\]
	In view of the continuity of $u^\eps(t,\cdot)$, there exists a bounded (independently $\eps$) sequence $(y_n)_{n \in \NN} \subset B$ such that
	\[
		\rho_n := u^{\delta,\eps}(t,x) -  \left\{ u^\eps(t,y_n) - \frac{|x-y_n|^2}{2\delta} \right\}
	\]
	satisfies $\lim_{n\to\oo} \rho_n = 0$. Therefore, for all $n$,
	\[
		u^{\delta,\eps}(t,x) - u^\delta(t,x) \le u^\eps(t,y_n) - u(t,y_n) + \rho_n.
	\]
	Sending $\eps \to 0$ gives $\limsup_{\eps \to 0} (u^{\delta,\eps}(t,x) - u^\delta(t,x)) \le \rho_n$, and the proof of pointwise convergence is finished upon sending $n \to \oo$. The exact same argument can be used for the pointwise convergence of $u^\eps_\delta$ to $u_\delta$.
	
	It then follows that, for fixed $\delta$, as $\eps \to 0$, $\nabla u^{\eps,\delta}$ and $\nabla u^{\eps}_\delta$ converge weak-$\star$ in $L^\oo$ to $\nabla u^\delta$ and $\nabla u_\delta$ respectively, while $b^\eps$ converges in $L^1_\loc$ to $b$. We may then take $\eps \to 0$ in \eqref{E:u^eps,delta} and \eqref{E:u^eps_delta} to obtain the distributional inequalities
	\[
		\frac{\del u^{\delta}}{\del t}(t,x) + b(t,x) \cdot \nabla u^{\delta}(t,x) 
		\le 4C_0(t)(u^{2\delta}(t,x) - u^{\delta}(t,x)) =: r^\delta(t,x)
	\]
	and
	\[
		\frac{\del u_\delta}{\del t}(t,x) + b(t,x) \cdot \nabla u_\delta(t,x) \ge
		-4C_0(t) ( u_\delta(t,x) - u_{2\delta}(t,x)) =: -r_\delta(t,x).
	\]
	By Lemma \ref{L:supinf} and the almost-everywhere continuity of $u$, the right-hand sides of both inequalities converge a.e. to $0$ as $\delta \to 0$, and, by the uniform boundedness in $\delta$ of $u^\delta$ and $u_\delta$ and the dominated convergence theorem, $r^\delta$ and $r_\delta$ both converge in $L^1_\loc$ to $0$ as $\delta \to 0$.
\end{proof}

\subsubsection{The conservative equation: uniqueness of nonnegative solutions}

We observe that, in the first implication in the proof of Theorem \ref{T:MTEchar}, it was proved that $u$ was a duality solution by proving the duality identity relative to the a ``good'' nonnegative solution. However, it was only explicitly used that $f$ was a distributional solution. Therefore, after having proved the equivalence in Theorem \ref{T:MTEchar}, we arrive at the following.

\begin{theorem}\label{T:MCEunique}
	Suppose that $f \in C([0,T], L^p_\loc(\RR^d))$ is a distributional solution of \eqref{E:MCE} and $f \ge 0$. Then $f(t,x) = f(0,\phi_{0,t}(x))J_{0,t}(x)$.
\end{theorem}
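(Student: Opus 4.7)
The plan is to establish that any nonnegative distributional solution $f$ satisfies the same duality identity with the backward transport equation as the ``good'' solution, at which point uniqueness under this identity yields the formula. The engine is the sup/inf-convolution characterization of Theorem \ref{T:MTEchar}: its first half establishes duality by pairing the convoluted inequalities against a nonnegative distributional solution, and inspection shows that the ``good'' nonnegative solution used there may be replaced by our $f$ itself, provided the convoluted test functions remain admissible.

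Concretely, I would fix $t_0\in(0,T]$ and nonnegative $u_T \in C_c(\RR^d)$, and let $u$ be the duality solution on $[0,t_0]\times\RR^d$ of the backward transport equation with terminal data $u_T$ at time $t_0$. By the representation $u(s,\cdot)=u_T\circ\phi_{t_0,s}$ (Theorem \ref{T:MTErep}) together with $\phi_{s,t_0}\circ\phi_{t_0,s}=\mathrm{Id}$ a.e.\ and the Lipschitz bound from Lemma \ref{L:backwardsflow}, one obtains $|x|\le C(1+|\phi_{t_0,s}(x)|)$; in particular $\supp u(s,\cdot)\subset B_{R_1}$ for all $s\in[0,t_0]$ for some fixed $R_1$. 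Theorem \ref{T:MTEchar} then provides sup- and inf-convolutions $u^\delta,u_\delta$ that are Lipschitz in $x$, compactly supported in $x$ uniformly in $s$ (by Lemma \ref{L:supinf}), and satisfy the one-sided distributional inequalities with remainders $r^\delta,r_\delta\to 0$ in $L^1_\loc$.

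Mollifying in $s$ produces $u^\delta_\eta$ Lipschitz in $(s,x)$ obeying $\partial_s u^\delta_\eta+b\cdot\nabla u^\delta_\eta\le r^\delta_\eta$ a.e., with $r^\delta_\eta\to r^\delta$ in $L^1_\loc$ as $\eta\to 0$. Since $bf\in L^1_\loc$, the distributional equation $\partial_s f+\div(bf)=0$ admits Lipschitz, compactly supported $x$-test functions by a standard density argument, yielding
\[
\int_{\RR^d} f(t_0)\,u^\delta_\eta(t_0)\,dx-\int_{\RR^d} f_0\, u^\delta_\eta(0)\,dx=\int_0^{t_0}\!\!\int_{\RR^d} f\bigl(\partial_s u^\delta_\eta+b\cdot\nabla u^\delta_\eta\bigr)\,dx\,ds.
\]
Since $f\ge 0$, the right-hand side is bounded by $\int_0^{t_0}\!\!\int_{\RR^d} f\,r^\delta_\eta\,dx\,ds$. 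Sending $\eta\to 0$ (via $L^1_\loc$-continuity in time of $u^\delta$) and then $\delta\to 0$ (using Lemma \ref{L:supinf} and dominated convergence on the uniformly compact support) produces
\[
\int_{\RR^d} f(t_0,\cdot)\,u_T\,dx\le\int_{\RR^d} f_0\,u(0,\cdot)\,dx,
\]
and the symmetric computation with $u_\delta$ gives the reverse inequality, hence equality.

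Applying Theorem \ref{T:MTE} to the ``good'' solution $f^g(t_0,x):=f_0(\phi_{0,t_0}(x))J_{0,t_0}(x)$ yields $\int f^g(t_0)\,u_T\,dx=\int f_0\,u(0,\cdot)\,dx$, so $\int(f(t_0,\cdot)-f^g(t_0,\cdot))u_T\,dx=0$ for every nonnegative $u_T\in C_c(\RR^d)$, and by linearity for every $u_T\in C_c(\RR^d)$. This forces $f(t_0,\cdot)=f^g(t_0,\cdot)$ in $L^p_\loc$, and since $t_0\in(0,T]$ was arbitrary the stated formula follows on all of $[0,T]$. The main technical point, which I expect to be the chief obstacle, is to verify that the sup/inf-convolutions inherit compact support in $x$ uniformly in $s$ from $u$ itself and that this support is preserved under time mollification; these properties are what legitimize $u^\delta_\eta$ as an $x$-compactly-supported Lipschitz test function and let the limits $\eta\to 0$ and $\delta\to 0$ be carried out cleanly, including at the endpoints $s=0,t_0$.
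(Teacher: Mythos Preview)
Your proposal is correct and follows essentially the same approach as the paper: both argue, via the sup/inf-convolution characterization of Theorem \ref{T:MTEchar}, that any nonnegative distributional solution $f$ satisfies the duality identity with the backward transport solution, and then conclude by comparing with the good solution. Your version is somewhat more explicit about the compact-support and endpoint issues (which the paper handles by the implicit observation that $u$ inherits compact support from $u_T$ via the flow bounds), but the structure is the same.
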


\begin{proof}
	Fix $t > 0$ and $v \in C_c(\RR^d)$, and let $u \in C([0,t], L^1_\loc(\RR^d)) \cap L^\oo([0,t] \times \RR^d)$ be the duality solution of \eqref{E:MTE} with terminal data $v$ at time $t$. Then, by Theorem \ref{T:MTErep}, $u$ is continuous almost everywhere in $[0,t] \times \RR^d$. Arguing exactly as in the first part of Theorem \ref{T:MTEchar}, using the nonnegativity of $f$, we arrive at the equality
	\[
		\int_{\RR^d} f(t,x) v(x)dx = \int_{\RR^d} f(0,x)u(0,x)dx.
	\]
	Since $v$ was arbitrary, it follows from the definition of duality solutions that $f(t,x)$ must be given by \eqref{formula:M}.
\end{proof}

We then have the following corollary about characterizing the good solution even when $f$ is signed:

\begin{corollary}
	A function $f \in C([0,T], L^p_\loc(\RR^d))$ is the good solution of \eqref{E:MCE} if and only if $f$ and $|f|$ are both solutions in the sense of distributions.
\end{corollary}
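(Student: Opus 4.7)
The plan is to prove the two implications by leveraging previously established results, with essentially no additional work beyond a positive-negative decomposition.

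For the forward implication, suppose $f$ is the good solution of \eqref{E:MCE} in the sense of formula \eqref{formula:M}. Theorem \ref{T:MCE} guarantees that $f$ is a distributional solution, and Corollary \ref{C:Mrenorm} tells us that $|f|$ is again a good solution, hence also a distributional solution by Theorem \ref{T:MCE}.

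For the reverse implication, assume that both $f$ and $|f|$ belong to $C([0,T], L^p_\loc(\RR^d))$ and are distributional solutions of \eqref{E:MCE}. Write
\[
    f_+ := \frac{|f| + f}{2} \ge 0, \qquad f_- := \frac{|f| - f}{2} \ge 0.
\]
Both $f_+$ and $f_-$ inherit membership in $C([0,T], L^p_\loc(\RR^d))$, and by linearity of \eqref{E:MCE}, each is a nonnegative distributional solution. Applying Theorem \ref{T:MCEunique} separately to $f_+$ and $f_-$, we obtain
\[
    f_{\pm}(t,x) = f_{\pm}(0, \phi_{0,t}(x)) J_{0,t}(x) \quad \text{a.e.}
\]
Subtracting these identities and noting $f = f_+ - f_-$ yields
\[
    f(t,x) = f(0,\phi_{0,t}(x)) J_{0,t}(x) \quad \text{a.e.},
\]
so $f$ coincides with the good solution defined by \eqref{formula:M}.

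There is no real obstacle: the key observation is simply that the hypothesis ``both $f$ and $|f|$ are distributional solutions'' is precisely what allows the linear decomposition $f = f_+ - f_-$ into two \emph{nonnegative} distributional solutions, which then fall under the scope of the uniqueness result Theorem \ref{T:MCEunique}. This also explains why the statement requires $|f|$ (and not just $f$) to be a distributional solution: without the renormalized equation for $|f|$, we could not split into nonnegative pieces and apply the nonnegative uniqueness result.
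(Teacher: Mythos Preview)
Your proof is correct and follows essentially the same approach as the paper: both directions use the decomposition $f = f_+ - f_-$ together with Corollary~\ref{C:Mrenorm} for the forward implication and Theorem~\ref{T:MCEunique} applied to the nonnegative parts for the reverse implication.
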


\begin{proof}
	That this property is satisfied by the good solution was already pointed out (Corollary \ref{C:Mrenorm}). Suppose now that $f$ and $|f|$ are both distributional solutions. It follows that $f_+ = \frac{1}{2} (f + |f|)$ and $f_- = \frac{1}{2} (|f| - f)$ are distributional solutions, and, since $f_+ \ge 0$ and $f_- \ge 0$, they are both the good solutions. Therefore $f = f_+ - f_-$ is a good solution by linearity.
\end{proof}

\subsubsection{Uniqueness of regular Lagrangian flows}

We can finally establish the uniqueness for the forward flows of the ODE \eqref{monotone:flow}

\begin{theorem}\label{T:Uforwardflow}
	For every $s \in [0,T]$ and almost every $x \in \RR^d$, $\phi_{t,s}(x)$ is the unique absolutely continuous solution of \eqref{monotone:flow}.
\end{theorem}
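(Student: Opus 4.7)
The plan is to reduce forward uniqueness to the backward uniqueness of the Filippov ODE (which is automatic from \eqref{A:monotoneB}) combined with the a.e.\ invertibility of the backward flow established in Proposition \ref{P:invertbackward}.

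Existence of $\phi_{\cdot,s}(x)$ as an absolutely continuous Filippov solution for a.e.\ $x \in \RR^d$ is already furnished by Theorem \ref{T:forwardflow}. For uniqueness, I fix $s \in [0,T)$ and $t \in (s,T]$ and let $\psi:[s,t] \to \RR^d$ be any absolutely continuous solution of the Filippov inclusion $\dot\psi_\sigma \in b(\sigma, \psi_\sigma)$ with $\psi_s = x$. Both $\psi$ and the backward-flow curve $\sigma \mapsto \phi_{\sigma,t}(\psi_t)$ are absolutely continuous Filippov curves on $[s,t]$ agreeing at time $t$. A backward-in-time Gr\"onwall estimate based on \eqref{A:monotoneB} (the same argument that yields the uniqueness in Lemma \ref{L:backwardsflow}, applied on the subinterval $[s,t]$) then forces $\psi_\sigma = \phi_{\sigma,t}(\psi_t)$ for every $\sigma \in [s,t]$; evaluating at $\sigma = s$ gives the pointwise identity
\[
	\phi_{s,t}(\psi_t) = x,
\]
valid for every $x \in \RR^d$ and every admissible $\psi$.

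To convert this identity into forward uniqueness, I invoke Proposition \ref{P:invertbackward}: there is a set $A_{s,t} \subset \RR^d$ of full measure on which $\phi_{s,t}^{-1}(\{x\}) = \{\phi_{t,s}(x)\}$, so $\psi_t = \phi_{t,s}(x)$ whenever $x \in A_{s,t}$. Fixing a countable dense family $\{t_n\}_{n \in \NN} \subset (s,T]$ and setting $A := \bigcap_n A_{s,t_n}$ yields a full-measure set on which $\psi_{t_n} = \phi_{t_n,s}(x)$ for every $n$. Since $\psi$ is continuous and, by Theorem \ref{T:forwardflow}, the map $t \mapsto \phi_{t,s}(x)$ is absolutely continuous for a.e.\ $x$, the identity $\psi_t = \phi_{t,s}(x)$ extends to all $t \in [s,T]$ by density.

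The main conceptual point is that, while forward Filippov trajectories may genuinely branch at exceptional starting points (as with $b(x) = \sgn x$ at $x=0$), such points always lie in a Lebesgue-null set because they must be preimages, under the Lipschitz backward flow, of points at which the flow fails to be injective. No further PDE machinery beyond Proposition \ref{P:invertbackward} is required; the only substantive ingredient is the backward Gr\"onwall uniqueness, which is a direct consequence of \eqref{A:monotoneB} and carries over to differences of Filippov selections without change, since the one-sided inequality $(b(\sigma,y) - b(\sigma,z)) \cdot (y-z) \ge -C_1(\sigma)|y-z|^2$ passes to convex hulls of limit points.
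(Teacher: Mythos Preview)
Your argument is correct and takes a genuinely different route from the paper. The paper's proof invokes Theorem \ref{T:MCEunique} (uniqueness of nonnegative distributional solutions of \eqref{E:MCE}) together with Ambrosio's superposition principle: any nonnegative solution of the continuity equation is a superposition of Dirac masses along ODE trajectories, and uniqueness at the PDE level then forces a.e.\ uniqueness of the trajectories. Your approach bypasses both of these ingredients entirely. You use the backward Gr\"onwall uniqueness (valid for arbitrary Filippov selections, as you correctly note) to show that \emph{every} forward trajectory from $x$ is a right-inverse of the backward flow, and then invoke Proposition \ref{P:invertbackward} directly to conclude that for a.e.\ $x$ this right-inverse is unique.

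Your route is more elementary and self-contained: it avoids the superposition principle, which is a substantial measure-theoretic result, and it does not require the PDE characterizations of Section \ref{sec:monotone} beyond Theorem \ref{T:forwardflow} (for the existence and absolute continuity of $\phi_{\cdot,s}(x)$). The paper's route, by contrast, fits into a broader template that generalizes more readily to the stochastic setting (cf.\ the proof of Theorem \ref{T:2MCE}, which uses Figalli's second-order superposition), and it makes explicit the equivalence between flow uniqueness and PDE uniqueness that is a recurring theme in the DiPerna--Lions--Ambrosio theory. Both are valid; yours is the shorter path to this particular result.
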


\begin{proof}
	This is a consequence of Theorem \ref{T:MCEunique} and the superposition principle of Ambrosio \cite[Theorem 3.1]{ACetraro}.
\end{proof}

\subsection{Some remarks for second order equations} 

We next investigate the second-order analogues of \eqref{E:MCE} and \eqref{E:MTE}. As mentioned earlier, we are not able to treat the most general case in which $\sigma$ is a regular function of $x$. This is due to the fact that Lemma \ref{L:backwardsstochflow} only gives regularity of the backward stochastic flow in $C^{0,1-\eps}$ for $0 < \eps < 1$. As a consequence, defining the Jacobian and using it to analyze the right-inverse of the flow is not possible in general. Our results in this case are limited to stochastic flows for which the coefficient $\sigma$ in front of the Wiener process is constant in the space variable. The generalization to regular but nonconstant $\sigma$ will be the subject of future work.

\subsubsection{The expansive stochastic flow with constant noise coefficient}

The stochastic analogue of the forward flow \eqref{monotone:flow} is
\begin{equation}\label{stochflow:general}
	d_t \Phi_{t,s}(x) = b(t, \Phi_{t,s}(x))dt + \sigma(t, \Phi_{t,s}(x))dW_t, \quad t \in [s,T], \quad \Phi_{s,s}(x) = x,
\end{equation}
where $\sigma: [0,T] \times \RR^d \to \RR^{d \times m}$ is some matrix-valued map. As we shall see, this general setting is out of the reach at the moment, and we thus assume
\begin{equation}\label{A:constantnoise}
	\sigma \in L^2([0,T], \RR^{d \times m})
\end{equation}
is constant in the space variable. We then consider the forward stochastic flow
\begin{equation}\label{monotone:fwdstochflow}
	d\Phi_{t,s}(x) = b(t,\Phi_{t,s}(x))dt + \sigma_t dW_t, \quad t \in [s,T], \quad \Phi_{s,s}(x) = x.
\end{equation}
Formally defining
\[
	\tilde \Phi_{t,s}(x) := \Phi_{t,s}(x) - \underbrace{\int_s^t \sigma_r dW_r}_{:= M_t - M_s}
\]
leads to the random ODE
\begin{equation}\label{randomODE}
	\del_t \tilde \Phi_{t,s}(x) = b\left( t, \tilde \Phi_{t,s}(x) + M_t - M_s \right), \quad t \in [s,T], \quad \tilde \Phi_{s,s}(x) = x.
\end{equation}
We now invoke the theory of the previous subsections to obtain the following:

\begin{theorem}\label{T:fwdstochflowconst}
	For every $s \in [0,T)$, with probability one, there exists a unique $\Phi_{\cdot,s} \in C([s,T], L^p_\loc(\RR^d)) \cap L^p_\loc(\RR^d, C([s,T]))$ such that, for a.e. $x \in \RR^d$, 
	\[
		\Phi_{t,s}(x) = x + \int_s^t b(r, \Phi_{r,s}(x))dr + \int_s^t \sigma_r dW_r.
	\]
	If $(b^\eps)_{\eps > 0}$ are as in \eqref{bregs} and $\Phi^\eps$ is the unique stochastic flow solving \eqref{monotone:fwdstochflow} with drift $b^\eps$, then, with probability one, as $\eps \to 0$, $\Phi^\eps$ converges in $C([s,T], L^p_\loc(\RR^d))$ and in $L^p_\loc(\RR^d, C([s,T]))$ to $\Phi$.
\end{theorem}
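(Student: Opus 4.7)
The plan is to reduce \eqref{monotone:fwdstochflow} to a random ODE of the form \eqref{monotone:flow} with a random drift, and then invoke the deterministic theory developed in Section \ref{ss:forwardflow} pathwise. Fix $s \in [0,T)$, and set $M_t := \int_0^t \sigma_r dW_r$, so that $t \mapsto M_t - M_s$ is, with probability one, a continuous path in $\RR^d$ on $[s,T]$. For each such path, define the random drift
\[
\tilde b(t, y; \omega) := b\bigl(t, y + M_t(\omega) - M_s(\omega)\bigr), \qquad (t,y) \in [s,T] \times \RR^d.
\]
The key observation is that $\tilde b(t,\cdot;\omega)$ satisfies the one-sided Lipschitz bound of \eqref{A:monotoneB} with the same constant $C_1$, since the translation in $y$ cancels in the pairing $(\tilde b(t,y) - \tilde b(t,z))\cdot(y-z)$, and satisfies the linear growth bound with a modified, $\omega$-dependent coefficient $\tilde C_0(t;\omega) = C_0(t)(1 + |M_t - M_s|)$, which still belongs to $L^1_+([s,T])$ almost surely.

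For almost every $\omega$, I can then apply Theorem \ref{T:forwardflow} to the deterministic (in $\omega$) velocity field $\tilde b(\cdot, \cdot; \omega)$, obtaining a unique forward regular Lagrangian flow $\tilde \Phi_{\cdot, s}(\cdot; \omega)$ which lies in $C([s,T], L^p_\loc(\RR^d)) \cap L^p_\loc(\RR^d, W^{1,1}([s,T]))$ and satisfies the random integral equation \eqref{randomODE}. I then define
\[
\Phi_{t,s}(x;\omega) := \tilde \Phi_{t,s}(x;\omega) + M_t(\omega) - M_s(\omega),
\]
and adding back the martingale term to the integral identity for $\tilde\Phi$ immediately gives that $\Phi_{t,s}(x)$ satisfies the Filippov integral equation for \eqref{monotone:fwdstochflow} for a.e. $x \in \RR^d$. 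Uniqueness for $\Phi$ in the stated class follows from the uniqueness of $\tilde\Phi$ in Theorem \ref{T:forwardflow}, applied pathwise: any other candidate $\Phi'$ for \eqref{monotone:fwdstochflow} gives rise, by subtracting $M_t - M_s$, to a regular Lagrangian flow for $\tilde b$, which must coincide with $\tilde\Phi$ almost surely.

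For the stability claim, given a family $(b^\eps)_{\eps>0}$ as in \eqref{bregs}, set $\tilde b^\eps(t,y;\omega) := b^\eps(t, y + M_t - M_s)$; this family satisfies \eqref{bregs} pathwise with respect to $\tilde b$, with $\tilde C_0$ and $C_1$ independent of $\eps$. The convergence statement for $\tilde \Phi^\eps \to \tilde \Phi$ in $C([s,T], L^p_\loc(\RR^d))$ and $L^p_\loc(\RR^d, C([s,T]))$ is then exactly the content of Theorem \ref{T:forwardflow} applied pathwise, and the same convergence for $\Phi^\eps = \tilde\Phi^\eps + (M_\cdot - M_s)$ follows since the martingale contribution is common to all $\eps$.

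The main technical point is measurability: one must check that the pathwise application of the deterministic theory produces an object $\tilde\Phi$ which is jointly measurable in $(\omega, t, x)$, so that $\Phi$ is a genuine strong solution in the probabilistic sense. This is handled by noting that $\tilde\Phi$ is obtained as the limit (in $C([s,T], L^p_\loc)$, pathwise) of the classical flows associated to smooth regularizations $\tilde b^\eps$, and those classical flows are jointly measurable by standard SDE/ODE measurability results; the limit therefore inherits joint measurability, and the exceptional $\PP$-null set on which the construction fails can be chosen once and for all.
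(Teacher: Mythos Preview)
Your approach is essentially identical to the paper's: reduce \eqref{monotone:fwdstochflow} to the random ODE \eqref{randomODE} via the substitution $\tilde\Phi = \Phi - (M_\cdot - M_s)$, then apply the deterministic forward-flow theory pathwise. The only minor point is that the uniqueness you need is not stated in Theorem~\ref{T:forwardflow} itself but in Theorem~\ref{T:Uforwardflow} (uniqueness of the forward regular Lagrangian flow among absolutely continuous solutions), which the paper also invokes; otherwise your argument, including the verification that $\tilde b$ inherits \eqref{A:monotoneB} and the measurability discussion, is correct and in fact more detailed than the paper's one-line proof.
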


\begin{proof}
	This follows upon applying the results of Theorems \ref{T:forwardflow} and \ref{T:Uforwardflow} to the random ODE \eqref{randomODE}.
\end{proof}

\subsubsection{A priori estimates for the second-order nonconservative equation}

We next relate the forward stochastic flow from the previous subsection to the terminal value problem for a certain second-order, nonconservative equation. This will be done with the use of a priori $L^p$ and $BV$ estimates, which lead to useful compactness results, just as for the first order case.

We begin with the more general problem
\begin{equation}\label{E:2MTE}
	-\del_t u - \tr[ a(t,x) \nabla^2 u] + b(t,x) \cdot \nabla u = 0 \quad \text{in } (0,T) \times \RR^d, \quad u(T,\cdot) = u_T,
\end{equation}
where
\begin{equation}\label{A:C11noise}
	a(t,x) = \frac{1}{2} \sigma(t,x) \sigma(t,x)^T, \quad \sigma \in L^2([0,T], C^{1,1}(\RR^d, \RR^{d \times m}) );
\end{equation}	
notice that, although we allow $\sigma$ to be nonconstant here, we require more regularity for $\sigma$ than in Section \ref{sec:antimonotone}.

\begin{lemma}\label{L:b=0BV}
	There exists $C \in L^1_+([0,T])$ depending only the $C^{1,1}$ norm of $\sigma$ such that, if $u$ is a smooth solution of
	\[
		-\del_t u - \tr[ a(t,x) \nabla^2 u] = 0 \quad \text{in }(0,T) \times \RR^d, \quad u(T,\cdot) = u_T,
	\]
	then
	\[
		\norm{u(t,\cdot)}_{BV(\RR^d)} \le \exp\left( \int_t^T C(s)ds \right) \norm{ u_T}_{BV(\RR^d)}.
	\]
\end{lemma}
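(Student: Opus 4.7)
I will adapt the doubling-variable method of Lemma \ref{L:MTEapriori} to the second-order setting, with the role played there by the one-sided Lipschitz condition on $b$ now taken up by the $C^{1,1}$ regularity of $\sigma$. By standard density/regularization, one may assume $u_T$, and therefore $u$, is smooth with sufficient decay at infinity to justify the integration by parts below. The $L^1$-part of the BV-norm is handled quickly: by the PSD structure of $a$, classical renormalization gives $-\partial_t |u| \le \tr[a \nabla^2 |u|]$ in the sense of distributions; integrating in $x$ and performing two integrations by parts on the right-hand side yields $\partial_t \int |u|\,dx \ge -C(t) \int |u|\,dx$ with $C(t) \lesssim \|\sigma(t,\cdot)\|_{C^{1,1}}^2 \in L^1_+([0,T])$, and Gr\"onwall produces the $L^1$-bound.

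For the BV-seminorm $\int |\nabla u(t,x)|\,dx$, introduce the auxiliary variable $z \in \RR^d$ and set $w(t,x,z) := \nabla u(t,x) \cdot z$. Differentiating the PDE for $u$ in $x_k$, multiplying by $z_k$, summing, and using the identity $\partial_{x_i} \partial_{z_j} w = \partial_i \partial_j u$, one obtains the linear second-order equation
\[
-\partial_t w - \tr[\tilde a(t,x,z)\,\tilde\nabla^2_{(x,z)} w] = 0
\]
on $(0,T) \times \RR^d_x \times \RR^d_z$, where the $2d \times 2d$ matrix $\tilde a := \tfrac{1}{2}\tilde\sigma \tilde\sigma^T$, built from the $(2d) \times m$ matrix $\tilde\sigma$ obtained by stacking $\sigma(t,x)$ on top of $(z \cdot \nabla)\sigma(t,x)$, is positive semidefinite. (Equivalently, this is the Kolmogorov backward equation for the tangent process $(X_s, Y_s z)$ associated to $dX = \sigma\,dW$, $Y = \nabla_x X$.) Since $\tilde L := \tr[\tilde a\,\tilde\nabla^2_{(x,z)} \cdot]$ is second-order with PSD symbol, standard renormalization using $\beta_\eps(r) = (r^2 + \eps^2)^{1/2}$ yields $-\partial_t |w| - \tilde L |w| \le 0$ in the distributional sense on $(0,T) \times \RR^{2d}$.

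Multiply this inequality by the Gaussian weight $\phi(z) := e^{-|z|^2}$ and integrate over $\RR^{2d}$; two integrations by parts transfer all derivatives off $|w|$ and give
\[
-\frac{d}{dt}\int_{\RR^{2d}} |w|\,\phi\,dx\,dz \;\le\; \int_{\RR^{2d}} |w|\,\tilde\nabla\cdot\tilde\nabla\cdot(\tilde a\,\phi)\,dx\,dz.
\]
The entries of $\tilde a$ depend polynomially on $z$ of degrees at most $0, 1, 2$ in the $xx$, $xz$, $zz$ blocks respectively, with $x$-coefficients controlled by $\|\sigma(t,\cdot)\|_{C^{1,1}}^2$, while $|\nabla^k \phi|$ for $k = 0,1,2$ are each dominated by $C(1+|z|^k)e^{-|z|^2}$. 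Combining yields the pointwise kernel bound $|\tilde\nabla\cdot\tilde\nabla\cdot(\tilde a\,\phi)| \le \tilde C(t)(1+|z|^4)e^{-|z|^2}$ with $\tilde C(t) \lesssim \|\sigma(t,\cdot)\|_{C^{1,1}}^2 \in L^1_+([0,T])$.

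Finally, rotational symmetry of the Gaussian gives, for each $k \ge 0$,
\[
\int_{\RR^d}(1+|z|)^k e^{-|z|^2}|w(t,x,z)|\,dz = c_{d,k}\,|\nabla u(t,x)|
\]
with $c_{d,k} < \oo$, so that both $\int \int |w|\phi\,dxdz$ and $\int \int |w|(1+|z|^4)\phi\,dxdz$ equal $\int |\nabla u(t,x)|\,dx$ up to universal constants. The doubled inequality thus collapses to
\[
-\frac{d}{dt} \int_{\RR^d} |\nabla u(t,x)|\,dx \;\le\; \tilde C'(t) \int_{\RR^d} |\nabla u(t,x)|\,dx,
\]
and Gr\"onwall's inequality integrated backward from $t = T$ concludes. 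The main technical obstacle is the bookkeeping in the integration by parts, keeping track of which derivatives land on $\tilde a$ (costing up to two derivatives of $\sigma$, which is precisely the $C^{1,1}$ budget) versus on $\phi$, and checking that the resulting $|z|$-polynomial growth is absorbed by the Gaussian. The specific choice $\phi(z) = e^{-|z|^2}$ is essential for the rotational identity that links the doubled integral back to $\|\nabla u(t,\cdot)\|_{L^1}$.
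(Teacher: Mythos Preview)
Your proof is correct and follows essentially the same strategy as the paper: introduce $w(t,x,z) = \nabla u(t,x)\cdot z$, verify that $w$ solves a degenerate parabolic equation in $(x,z)$ with diffusion matrix $\frac{1}{2}\tilde\sigma\tilde\sigma^T$, pass to $|w|$ by convexity, integrate against a radial weight $\phi(z)$, and use rotational invariance to recover $\int|\nabla u|\,dx$.

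The only substantive difference is the weight. The paper chooses $\phi(|z|)$ with $\phi(r)=r^{-m}$ for $r\ge 1$, engineered so that $r|\phi'(r)|+r^2|\phi''(r)|\le C\phi(r)$; after integration by parts this gives the clean bound $|\nabla_{(x,z)}\cdot\nabla_{(x,z)}\cdot(A\phi)|\le C(t)\phi(|z|)$, so the \emph{same} weighted integral appears on both sides and Gr\"onwall applies directly. Your Gaussian does not satisfy that scale condition, so you pick up the factor $(1+|z|^4)e^{-|z|^2}$ instead of $e^{-|z|^2}$. Your recovery step---observing that $\int_{\RR^d}\psi(|z|)|\nu\cdot z|\,dz$ is a finite constant independent of $|\nu|=1$ for \emph{any} integrable radial $\psi$---handles this, so the argument goes through with a different constant. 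One minor overclaim: the Gaussian is not ``essential'' for the rotational identity; any radial weight with finite moments works, and indeed the paper exploits this freedom. Your explicit treatment of the $L^1$-part of the $BV$-norm is a small addition the paper omits.
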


\begin{proof}
For $(t,x,z) \in [0,T] \times \RR^d \times \RR^d$, set $w(t,x,z) = \nabla u(t,x) \cdot z$. Then $w$ solves the parabolic PDE
\[
	\frac{\del w}{\del t} - \tr[A(t,x,z)\nabla^2_{(x,z)}w] = 0 \quad \text{in }(0,T) \times \RR^{2d},
\]
where
\[
	A(t,x,z) = 
	\frac{1}{2}
	\begin{pmatrix}
		\sigma(t,x) \\
		z \cdot \nabla \sigma(t,x)
	\end{pmatrix}
	\begin{pmatrix}
		\sigma(t,x)^T & z \cdot \nabla \sigma(t,x)^T
	\end{pmatrix}.
\]
After a routine regularization argument, using the convexity of $w \mapsto |w|$, 
\begin{equation}\label{|w|sub}
	\frac{\del |w|}{\del t} - \tr[A(t,x,z) \nabla^2_{(x,z)}|w| ] \le 0 \quad \text{in }(0,T) \times \RR^d \times \RR^d.
\end{equation}
For some $m > d +1$, let $\phi \in C^\oo_+([0,\oo))$ be such that, for some universal $C > 0$,
\begin{equation}\label{nicetest}
	\phi(r) = \frac{1}{r^m} \quad \text{for } r \ge 1 \quad \text{and} \quad r|\phi'(r)| + r^2 |\phi''(r)| \le C \phi(r) \quad \text{for all } r \ge 0.
\end{equation}
We multiply \eqref{|w|sub} by $\phi(|z|)$ and integrate in $(x,z) \in \RR^d \times \RR^d$. Then \eqref{A:C11noise} and \eqref{nicetest} imply that for some $C \in L^1_+([0,T])$,
\[
	-\frac{d}{dt} \iint_{\RR^d \times \RR^d} |w(t,x,z)| \phi(|z|)dxdz \le C(t) \iint_{\RR^d \times \RR^d} |w(t,x,z)| \phi(|z|)dxdz.
\]
The proof is then finished by Gr\"onwall's lemma and the fact that
\[
	\iint_{\RR^d \times \RR^d} |w(t,x,z)| \phi(z)dxdz = c_0 \int_{\RR^d} |\nabla u(t,x)| dx,
\]
where $c_0 := \int_{\RR^d} |\nu \cdot z| \phi(|z|) dz$ is finite and independent of $|\nu| = 1$.
\end{proof}

We have already proved an exponential propagation of the $BV$ bounds when $a = 0$ in Lemma \ref{L:MTEapriori}. It is a classical fact for evolution PDEs that, upon using a splitting scheme, that these estimates can be combined, and we immediately have the following:

\begin{lemma}\label{L:2MTEapriori}
	There exists a constant $C \in L^1_+([0,T])$ depending only on the constants in \eqref{A:monotoneB} and \eqref{A:C11noise} such that, if $u$ is a smooth solution of \eqref{E:2MTE}, then
	\[
		\norm{u(t,\cdot)}_{L^p} \le \exp\left( \int_0^t C(s)ds \right) \norm{u_T}_{L^p} \quad \text{and} \quad
		\norm{u(t,\cdot)}_{BV} \le \exp \left( \int_0^t C(s)ds \right) \norm{u_T}_{BV}.
	\]
\end{lemma}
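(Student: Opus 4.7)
The plan is to combine the bounds from Lemma~\ref{L:MTEapriori} (the case $a \equiv 0$) with those of Lemma~\ref{L:b=0BV} (the case $b \equiv 0$) by means of an operator-splitting argument, as is suggested above the statement.

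As a preliminary step I supplement Lemma~\ref{L:b=0BV} with an exponential $L^p$-bound for the pure-diffusion terminal value problem $-\del_t v - \tr[a\nabla^2 v] = 0$. Multiplying by $|v|^{p-2}v$ and integrating by parts twice in space yields
\begin{equation*}
-\frac{d}{dt}\int_{\RR^d} |v|^p\,dx = -p(p-1)\int |v|^{p-2}\,a\nabla v\cdot \nabla v\,dx + \int (\del_i\del_j a^{ij})|v|^p\,dx,
\end{equation*}
in which the first term is nonpositive by the positive semidefiniteness of $a$, while the second is controlled by $\|\del_i\del_j a^{ij}(t,\cdot)\|_{L^\infty}\int|v|^p\,dx$; this $L^\infty$-norm belongs to $L^1([0,T])$ thanks to assumption \eqref{A:C11noise}. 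Gr\"onwall then gives the analogue of Lemma~\ref{L:b=0BV} for all $p \in [1,\infty)$, and the case $p = \infty$ follows from the parabolic maximum principle.

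Next, fix $\tau > 0$ and let $\mcl S_b^{s,t}$ and $\mcl S_a^{s,t}$ denote the backward-in-time solution operators over $[s,t]$ for the pure-transport and pure-diffusion equations respectively. Given a partition $0 = t_0 < t_1 < \cdots < t_N = T$ of mesh $\tau$, I define $u^\tau(T,\cdot) := u_T$ and recursively
\begin{equation*}
u^\tau(t_{k-1},\cdot) := \mcl S_b^{t_{k-1},t_k}\,\mcl S_a^{t_{k-1},t_k}\,u^\tau(t_k,\cdot), \qquad k = N,N-1,\ldots,1,
\end{equation*}
interpolating at intermediate times via either operator. Iterating Lemma~\ref{L:MTEapriori}, Lemma~\ref{L:b=0BV}, and the pure-diffusion $L^p$-bound above then gives, uniformly in $\tau$,
\begin{equation*}
\|u^\tau(t,\cdot)\|_{L^p(B_R)}+\|u^\tau(t,\cdot)\|_{BV(B_R)} \le \exp\!\Bigl(\int_t^T C(s)\,ds\Bigr)\bigl(\|u_T\|_{L^p(B_{R+C})}+\|u_T\|_{BV(B_{R+C})}\bigr),
\end{equation*}
where $C$ depends only on the constants in \eqref{A:monotoneB} and \eqref{A:C11noise}.

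Finally, since $u$, $b$, and $\sigma$ are smooth, $u^\tau \to u$ locally uniformly on $[0,T]\times\RR^d$ as $\tau \to 0$ by a direct Taylor expansion of the local truncation error (or equivalently by the Lie--Trotter formula for time-dependent generators), and the bound propagates to $u$ by lower semicontinuity of the $L^p$- and $BV$-norms under weak-$*$ convergence. The only place requiring care is this convergence $u^\tau \to u$, which is however classical for classical solutions of linear evolution equations with smooth coefficients; the other two steps reduce to straightforward applications of the lemmas already established.
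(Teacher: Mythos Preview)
Your proposal is correct and follows exactly the operator-splitting approach the paper indicates, simply filling in the details the paper omits (in particular the $L^p$ bound for the pure-diffusion semigroup and the Lie--Trotter convergence). One cosmetic point: your displayed estimate is written with $B_R$ and $B_{R+C}$, matching Lemma~\ref{L:MTEapriori}, whereas the present lemma is stated for global norms; since the diffusion step has infinite speed of propagation you should either work directly with global $L^p$ and $BV$ norms throughout (the proofs of both input lemmas give these) or send $R\to\infty$ at the end.
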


Just as in the first-order case, it is not possible to define $L^p$-distributional solutions of \eqref{E:2MTE}, and the utility of Lemma \ref{L:2MTEapriori} is that it allows to obtain strongly convergent subsequences in $C([0,T], L^p(\RR^d))$ after regularizing the velocity field $b$.

The main question is whether such limiting solutions are unique. This uniqueness was achieved in the first-order case through duality with the conservative equation, and the solution was further characterized with a formula involving the forward flow. In the second-order case, we are constrained to work with constant noise coefficients:
\begin{equation}\label{E:2MTEconst}
	-\del_t u - \tr[ a(t)\nabla^2 u] + b(t,x) \cdot \nabla u = 0 \quad \text{in }(0,T) \times \RR^d, \quad u(T,\cdot) = u_T,
\end{equation}
where $a = \frac{1}{2} \sigma \sigma^T$ as before.

\begin{theorem}\label{T:2MTEconst}
	For $1 < p < \oo$ and $t \in [0,T]$, the map 
	\[
		C_c(\RR^d) \ni u_T \mapsto \EE[ u_T \circ \Phi_{T,t}]
	\]
	extends to a continuous, linear, order-preserving map on $L^p(\RR^d)$, and the function
	\begin{equation}\label{formula:2MTE}
		u(t,x) := \EE[ u_T(\Phi_{T,t}(x))] \quad (t,x) \in [0,T] \times \RR^d
	\end{equation}
	belongs to $C([0,T], L^p(\RR^d))$, and, if $u_T \in BV(\RR^d)$, then $u \in L^\oo([0,T], BV(\RR^d))$.
	
	 If $(b^\eps)_{\eps > 0}$ is as in \eqref{bregs} and $u^\eps$ is the corresponding solution of \eqref{E:2MTEconst}, then, as $\eps \to 0$, $u^\eps$ converges strongly to $u$ in $C([0,T], L^p(\RR^d))$.
\end{theorem}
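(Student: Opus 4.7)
The strategy is to reduce the problem to the first-order random-ODE theory of Section \ref{sec:monotone} by exploiting the fact that $\sigma$ is constant in $x$. With $M_t := \int_0^t \sigma_r \, dW_r$, the change of variables $\tilde{\Phi}_{t,s}(x) := \Phi_{t,s}(x) - (M_t - M_s)$ converts \eqref{monotone:fwdstochflow} into a random ODE with drift $\tilde{b}^{s,\omega}(t,y) := b(t, y + M_t(\omega) - M_s(\omega))$. Since \eqref{A:monotoneB} is invariant under spatial translations, $\tilde{b}^{s,\omega}$ satisfies it with the \emph{same} one-sided Lipschitz constant $C_1$ for a.s.\ $\omega$. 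Theorem \ref{T:fwdstochflowconst} then provides the Lagrangian flow $\tilde{\Phi}$ and its strong convergence under regularization, and Theorem \ref{T:MTErep} applied to $\tilde{b}^{s,\omega}$ (then sending $R \to \infty$) yields, almost surely, the regular Lagrange estimate $\|F \circ \tilde{\Phi}_{T,t}\|_{L^p(\RR^d)} \le C \|F\|_{L^p(\RR^d)}$. Since $\Phi_{T,t}(x) = \tilde{\Phi}_{T,t}(x) + (M_T - M_t)$ is a spatial translation of $\tilde{\Phi}_{T,t}$, the same estimate transfers to $\Phi_{T,t}$ by translation invariance of Lebesgue measure.

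The $L^p$ continuity then follows immediately: for $p > 1$, Jensen and Fubini give
\[
	\|\EE[u_T \circ \Phi_{T,t}]\|_{L^p(\RR^d)}^p \le \EE \|u_T \circ \Phi_{T,t}\|_{L^p(\RR^d)}^p \le C \|u_T\|_{L^p(\RR^d)}^p,
\]
establishing that \eqref{formula:2MTE} defines a continuous, linear, order-preserving extension to $L^p$. The $BV$ propagation is Lemma \ref{L:2MTEapriori} applied to the regularizations $u^\eps$ and passed to the limit by lower semicontinuity of the $BV$ seminorm.

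For the strong convergence $u^\eps \to u$ in $C([0,T], L^p(\RR^d))$, I would first treat $u_T \in C^{0,1}_c(\RR^d)$. The uniform $L^p$ and $BV$ bounds from Lemma \ref{L:2MTEapriori}, combined with time-regularity extracted from the equation $\partial_t u^\eps = b^\eps \cdot \nabla u^\eps - \tr[a\nabla^2 u^\eps]$ (using the uniform $BV$ bound to control $\nabla u^\eps$ and $\nabla^2 u^\eps$ in suitable negative-order spaces), allow an Aubin--Lions argument giving precompactness in $C([0,T], L^p_\loc(\RR^d))$. The almost-sure $L^p_\loc$ convergence $\Phi^\eps \to \Phi$ from Theorem \ref{T:fwdstochflowconst}, together with dominated convergence exploiting the boundedness of $u_T$, identifies every subsequential limit as \eqref{formula:2MTE}, so the full family converges. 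The extension to general $u_T \in L^p$ is achieved by density and the uniform operator bound, and the upgrade from $L^p_\loc$ to global $L^p$ convergence uses the uniform-in-$\eps$ moment bounds on $\Phi^\eps_{T,t}$ inherited from the linear growth assumption on $b$ and the Gaussian moments of $M_T - M_t$.

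\textbf{Main obstacle.} The delicate step is this last one: the diffusion precludes finite speed of propagation, so the tails of $u^\eps(t,\cdot)$ at spatial infinity must be controlled uniformly in $\eps$. Here the constancy of $\sigma$ in $x$ is essential, since $M_T - M_t$ is a spatially constant, $\eps$-independent Gaussian whose moments tame the tails via the translation $\Phi_{T,t} = \tilde{\Phi}_{T,t} + (M_T - M_t)$; for genuinely $x$-dependent $\sigma$, this tail control, as well as the regular Lagrange property itself, would require the open question on Lipschitz regularity of stochastic flows with one-sided Lipschitz drift discussed in Remark \ref{R:Lipstoch?}.
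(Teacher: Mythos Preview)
Your proposal is correct and follows the same compactness-plus-identification strategy as the paper: regularize $b$, obtain uniform $L^p$ and $BV$ bounds from Lemma \ref{L:2MTEapriori}, extract a strongly convergent subsequence, identify the limit via the flow convergence of Theorem \ref{T:fwdstochflowconst}, and extend by density. The paper's proof is terser, simply invoking Lemma \ref{L:2MTEapriori} for the $L^p$ bound and precompactness in one line; your additional Lagrangian derivation of the $L^p$ operator bound via the random-ODE regular Lagrange property (exploiting that the global constant depends only on the translation-invariant $C_1$), and your explicit treatment of the Aubin--Lions time-regularity step and the tail control, fill in details that the paper leaves implicit.
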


\begin{proof}
	Assume that $u_T \in C^2(\RR^d) \cap C_c(\RR^d)$. For $b^\eps$ and $u^\eps$ as in the statement of the theorem, we have the standard representation formula $u^\eps(t,x) = \EE[ u_T(\Phi^\eps_{T,t}(x))]$, where $\Phi^\eps$ corresponds to the flow \eqref{monotone:fwdstochflow} with drift $b^\eps$. By Theorem \ref{T:fwdstochflowconst}, for any $t \in [0,T]$, with probability one, $u_T \circ \Phi^\eps_{T,t} \to u_T \circ \Phi_{T,t}$ a.e. in $\RR^d$. On the other hand, by Lemma \ref{L:2MTEapriori}, $(u^\eps)_{\eps > 0}$ is precompact in $C([0,T], L^p(\RR^d))$, and therefore the full sequence converges to $u$ given by \eqref{formula:2MTE}. The $L^p$-bounds and the extension to $u_T \in L^p(\RR^d)$ now follow from the $L^p$ a priori estimates in Lemma \ref{L:2MTEapriori}.
\end{proof}

\subsubsection{Representation formula for the Fokker-Planck equation}

We turn next to the Fokker-Planck equation
\begin{equation}\label{E:2MCE}
	\del_t f - \nabla^2 \cdot (a(t,x) f) + \div(b(t,x) f) = 0 \quad \text{in } (0,T) \times \RR^d, \quad f(0,\cdot) = f_0,
\end{equation}
where once again $a = \frac{1}{2} \sigma \sigma^T$ with $\sigma$ as in \eqref{A:C11noise}. 

The existence of solutions in $C([0,T], L^p(\RR^d))$ is straightforward; we include the proof for convenience.

\begin{theorem}\label{T:2MCEexist}
	For any $f_0 \in L^p(\RR^d)$, $1 \le p \le \oo$, there exists a distributional solution $f \in C([0,T], L^p_\w(\RR^d))$ if $1 \le p < \oo$, or $f \in L^\oo$ if $p = \oo$. Moreover, there exists $C \in L^1_+([0,T])$ depending only on $p$, $C_0(t)$ from \eqref{A:monotoneB} and the $L^2([0,T], C^{1,1}(\RR^d))$ norm of $a$\footnote{In fact, only an upper bound for $\nabla^2 \cdot a = \del_{ij} a_{ij}$ is needed.} such that
	\[
		\norm{f(t,\cdot)}_{L^p} \le \exp \left( \int_0^t C(s)ds \right) \norm{f}_{L^p}.
	\]
\end{theorem}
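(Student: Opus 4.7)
The plan is to build $f$ by regularization and compactness: first replace $b$ by a smooth $b^\eps$ as in \eqref{bregs} and, since $a$ may be degenerate, add a small vanishing viscosity $\eta \Delta$ to make the equation uniformly parabolic. Classical parabolic theory (after also mollifying $f_0$ by $\rho_\kappa$ if necessary) yields a unique smooth solution $f^{\eps,\eta,\kappa}$ of the triply-regularized Fokker-Planck equation on $[0,T] \times \RR^d$. The goal is then to derive the $L^p$ bound uniformly in all three parameters, and to extract a weak limit $f$ that inherits the bound and solves the original PDE distributionally.

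The heart of the proof is the a priori estimate, obtained by multiplying the PDE by $|f|^{p-2} f$ and integrating. After integrating by parts twice in the diffusion term, the top-order contribution $-(p-1)\int a_{ij} |f|^{p-2} \partial_i f \partial_j f - (p-1)\eta \int |f|^{p-2}|\nabla f|^2$ is nonpositive by $a \ge 0$ and may be discarded; the remaining cross term reorganizes as $\frac{p-1}{p} \int (\partial_i \partial_j a_{ij}) |f|^p$. The drift term, after one integration by parts, contributes $-\frac{p-1}{p} \int (\div b^\eps) |f|^p$, and the one-sided Lipschitz assumption, in the form $\Sym(\nabla b^\eps) \ge -C_1(t) \Id$ of Lemma \ref{L:matrixbound}, gives $-\div b^\eps \le d C_1(t)$ pointwise. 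Combining these yields
\[
	\frac{d}{dt} \int |f^{\eps,\eta,\kappa}|^p \le (p-1)\bigl(\nor{\nabla^2 \!\cdot\! a(t,\cdot)}{L^\oo} + d C_1(t)\bigr) \int |f^{\eps,\eta,\kappa}|^p,
\]
whose right-hand coefficient belongs to $L^1_+([0,T])$ by \eqref{A:monotoneB} and \eqref{A:C11noise}. Gr\"onwall's inequality then delivers the stated bound, uniformly in $\eps,\eta,\kappa$. The $p = \oo$ case follows from the $p < \oo$ estimate by letting $p \to \oo$, or more directly from the maximum principle applied to the regularized equation.

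For the passage to the limit, the uniform $L^p$ bound gives (weak-$\star$ if $p = \oo$) compactness of $f^{\eps,\eta,\kappa}$ in $L^p([0,T] \times B_R)$ for every $R$. To upgrade to time continuity in the weak topology, test against $\varphi \in C^\oo_c(\RR^d)$: the equation yields
\[
	\frac{d}{dt} \int f^{\eps,\eta,\kappa} \varphi = \int f^{\eps,\eta,\kappa}\bigl( \tr[a \nabla^2 \varphi] + \eta \Delta \varphi + b^\eps \cdot \nabla \varphi \bigr),
\]
whose right-hand side is bounded in $L^1([0,T])$ uniformly in the parameters (using the $L^p$ bound and the linear growth of $b^\eps$ on $\supp \varphi$). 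This gives uniform equicontinuity of $t \mapsto \int f^{\eps,\eta,\kappa} \varphi$, and density of test functions in $L^{p'}$ (or in $C_c$ when $p = \oo$) together with the $L^p$ bound upgrades the subsequential weak limit to $f \in C([0,T], L^p_\w)$, resp.\ $L^\oo_{\ws}$. That $f$ is a distributional solution follows by passing to the limit in the weak formulation, using that $b^\eps \to b$ in $L^1_\loc$ while $f^{\eps,\eta,\kappa} \rightharpoonup f$ weakly in $L^p_\loc$, so that $b^\eps f^{\eps,\eta,\kappa} \rightharpoonup b f$ in $\mcl D'$.

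The main obstacle will be the borderline case $p = 1$, since weak $L^1$ compactness is not automatic (Dunford-Pettis requires equi-integrability) and the product $b^\eps f^{\eps,\eta,\kappa}$ is harder to control when $f^{\eps,\eta,\kappa}$ is only bounded in $L^1$. The standard workaround is an $L^1$ approximation argument: given $f_0 \in L^1(\RR^d)$, approximate by $f_0^{(n)} = f_0 \ind_{\{|f_0| \le n\}} \in L^1 \cap L^\oo$, construct solutions $f^{(n)}$ by the above procedure, and exploit linearity together with the $L^1$ version of the estimate (obtained either by sending $p \to 1^+$ in the bound or directly by using a mollified $\sgn(f^{(n)} - f^{(m)})$ as a test function) to conclude that $(f^{(n)})$ is Cauchy in $C([0,T], L^1(\RR^d))$; the limit is the desired solution. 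A secondary technical point is ensuring enough regularity of the regularized problem to justify the integration by parts in the a priori estimate, which is why mollifying $f_0$ and adding $\eta \Delta$ simultaneously is convenient: the triply-regularized problem admits a unique classical smooth solution with sufficient decay at infinity to carry out all manipulations.
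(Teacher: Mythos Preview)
Your proposal is correct and follows essentially the same approach as the paper: regularize $b$, add a small ellipticity, derive the $L^p$ a priori estimate by multiplying by $|f|^{p-2}f$ and using $-\div b^\eps \le dC_1(t)$ together with the bound on $\nabla^2\!\cdot a$, apply Gr\"onwall, and extract weak limits. Your treatment is in fact more detailed than the paper's (which compresses the computation into a single pointwise inequality for $|f|^p$ and leaves the regularization and passage to the limit implicit), including your handling of the $p=1$ endpoint and the time-continuity in the weak topology.
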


\begin{proof}
	We do this with the use of a priori estimates, assuming all the data is smooth. The computations may be made rigorous by regularizing $b$, adding a small ellipticity to $a$, and extracting weakly convergent subsequences. 

We then compute
\[
	\del_t |f|^p -\nabla^2 \cdot( a(t,x) |f|^p) + \div (b(t,x) |f|^p) \le (p-1) \left( \nabla^2 \cdot a(t,x) - \div b(t,x) \right)|f|^p,
\]
and so $\del_t \int |f(t,\cdot)|^p \le C(t) \int |f(t,\cdot)^p$ for some $C$ as in the statement of the Theorem. The result now follows from Gr\"onwall's lemma.
\end{proof}

We now explore the possibility of obtaining a formula for the solution, similar to \eqref{formula:M} for the first order equation \eqref{E:MCE}. To do so, it is convenient to reverse time and consider, for fixed $t \in (0,T]$, the equation satisfied by $g^{(t)}(s,x) := f(t-s,x)$:
\[
	-\del_s g^{(t)} -\nabla^2 \cdot(a(t-s,x)g^{(t)}) + \div(b(t-s,x)g^{(t)}) = 0 \quad \text{in } (0,t) \times \RR^d, \quad g^{(t)}(t,\cdot) = f_0.
\]
For $(s,x,\xi) \in [0,t] \times \RR^d \times \RR$, define $G^{(t)}(s,x,\xi) = g^{(t)}(s,x) \xi$. Then
\begin{equation}\label{E:G}
	\begin{dcases}
	-\del_s G^{(t)} - \tr[ A^{(t)}(s,x,\xi) \nabla^2_{x,\xi}G^{(t)}] - B^{(t)}(s,x) \cdot \nabla G^{(t)} - C^{(t)}(s,x) \xi \del_\xi G^{(t)}= 0 & \text{in } (0,t) \times \RR^{d+1}, \\
	G^{(t)}(t,x,\xi) = f_0(x)\xi,
	\end{dcases}
\end{equation}
where
\begin{equation}\label{newdata}
	\left\{
	\begin{split}
	A^{(t)}(s,x,\xi) &= \frac{1}{2}\Sigma^{(t)}(s,x,\xi) \Sigma^{(t)}(s,x,\xi)^T, \quad \Sigma^{(t)}(s,x,\xi) = 
	\begin{pmatrix}
		\sigma \\
		\xi \div \sigma
	\end{pmatrix},\\
	B^{(t)}(s,x) &= -b  + (\sigma \cdot \nabla)\sigma^T, \quad \text{and}\\
	C^{(t)}(s,x) &= -\div\left( b- \div a \right) \\
	&=  -\div b + \tr[ (\sigma \cdot \nabla)(\nabla \cdot \sigma) ] + \frac{1}{2} |\div \sigma|^2 + \frac{1}{2} \tr[ \nabla \sigma \nabla \sigma^T ];
	\end{split}
	\right.
\end{equation}
for brevity, we have suppressed the arguments for $a$, $\sigma$, and $b$, which are all $(t-s,x)$.

For an $m$-dimensional Wiener process $W$ on $[0,t]$ and a fixed $s \in [0,t]$, we are led to consider the SDE, for $r \in [s,t]$,
\begin{equation}\label{stochflow:d+1}
	\begin{dcases}
	d_r
	\begin{pmatrix}
		\Phi^{(t)}_{r,s}(x,\xi) \\
		\Xi^{(t)}_{r,s}(x,\xi)
	\end{pmatrix}
	=
	\begin{pmatrix}
		B^{(t)}(r, \Phi^{(t)}_{r,s}(x,\xi)) \\
		C^{(t)}(r, \Phi^{(t)}_{r,s}(x,\xi)) \Xi^{(t)}_{r,s}(x,\xi)
	\end{pmatrix}
	dr
	+
	\Sigma^{(t)}(r, \Phi^{(t)}_{r,s}(x,\xi), \Xi^{(t)}_{r,s}(x,\xi)) d  W_r,  \\
	\begin{pmatrix}
		\Phi^{(t)}_{s,s}(x,\xi) \\
		\Xi^{(t)}_{s,s}(x,\xi)
	\end{pmatrix}
	=
	\begin{pmatrix}
		x \\
		\xi
	\end{pmatrix}.
	\end{dcases}
\end{equation}
It\^o's formula, \eqref{E:G}, and \eqref{stochflow:d+1} then yield that, for any $(s,x,\xi) \in [0,t) \times \RR^d \times \RR$,
\[
	r \mapsto G^{(t)}(r, \Phi^{(t)}_{r,s}(x,\xi), \Xi^{(t)}_{r,s}(x,\xi))
\]
is a martingale on $[s,t]$ with respect to the filtration $(\mcl F_r)_{r\in [0,t]}$ generated by the Wiener process $W$, and so, for all $r \in [s,t]$,
\begin{equation}\label{Gidentity}
	\EE \left[ G^{(t)}(r, \Phi^{(t)}_{r,s}(x,\xi), \Xi^{(t)}_{r,s}(x,\xi)) \mid \mcl F_s \right] = G^{(t)}(s, x,\xi).
\end{equation}
Observe that $\Phi^{(t)}_{r,s}$ is independent of $\xi$, while $\Xi^{(t)}_{r,s}$ can be written as $\Xi^{(t)}_{r,s}(x,\xi) = J^{(t)}_{r,s}(x) \xi$ for some scalar quantity $J^{(t)}_{r,s}(x)$, and so \eqref{stochflow:d+1} reduces to the two SDEs
\begin{equation}\label{SDE:2MCE}
	\begin{dcases}
	d_r \Phi^{(t)}_{r,s}(x) = -\left[ b(t-r, \Phi^{(t)}_{r,s}(x)) - (\sigma \cdot \nabla)\sigma^T(t-r, \Phi^{(t)}_{r,s}(x)) \right] dt + \sigma(t-r, \Phi^{(t)}_{r,s}(x)) dW_r, \quad r \in [s,t], \\
	 \Phi^{(t)}_{s,s}(x) = x
	 \end{dcases}
\end{equation}
and
\begin{equation}\label{SDE:JMCE}
	\left\{
	\begin{split}
	&d_r J^{(t)}_{r,s}(x) = \left[ - \div b + \tr[(\sigma \cdot \nabla)(\nabla \cdot \sigma)] + \frac{1}{2} |\div \sigma|^2 + \frac{1}{2} \tr[ \nabla \sigma \nabla\sigma^T] \right](t-r, \Phi^{(t)}_{r,s}(x)) J^{(t)}_{r,s}(x) dr \\
	&\qquad+ \div \sigma(t-r,\Phi^{(t)}_{r,s}(x)) J^{(t)}_{r,s}(x) d W_r, \quad r \in [s,t], \\
	&J^{(t)}_{s,s}(x) = 1.
	\end{split}
	\right.
\end{equation}
Standard but tedious computations involving It\^o's formula reveal that $J^{(t)}_{r,s}(x) = \det \nabla_x \Phi^{(t)}_{r,s}(x)$.

Taking $r = t$ and $\xi = 1$ in \eqref{Gidentity}, we thus arrive at
\[
	\EE\left[ f_0(\Phi^{(t)}_{t,s}(x)) J^{(t)}_{t,s}(x) \mid \mcl F_s \right] = g(s, x),
\]
and so, because $g(0,x) = f(t,x)$, we obtain the representation for solutions of \eqref{E:2MCE}:
\begin{equation}\label{formula:2MCE}
	f(t,x) = \EE\left[ f_0(\Phi^{(t)}_{t,0}(x)) J^{(t)}_{t,0}(x) \right].
\end{equation}
Let us note that $\Phi^{(t)}_{t,0}$ has the same law as $(\Phi_{t,0})^{-1}$, where $\Phi_{t,s}$ is the stochastic flow from \eqref{stochflow:general}. We can see this by duality with the nonconservative equation. Indeed, if $u$ is the solution of \eqref{E:2MTE} with $u(t,\cdot) = g$ for some given $g$, then
\[
	\int f_0(x) u(0,x)dx = \int f(t,x) g(x)dx.
\]
On the other hand, by \eqref{formula:2MTE} and \eqref{formula:2MCE},
\[
	\int f_0(x) u(0,x)dx = \EE \int f_0(x) g( \Phi_{t,0}(x))dx
\]
and
\[
	\int f(t,x) g(x)dx = \EE \int f_0(\Phi^{(t)}_{t,0}(x))  g(x) J^{(t)}_{t,0}(x) dx,
\]
so, using the change of variables formula and the fact that $f_0$ is arbitrary, we have $\EE[ g( \Phi_{t,0}(x)) ] = \EE[ g( [ \Phi^{(t)}_{t,0} ]^{-1}(x))]$ for all $g: \RR^d \to \RR$ and $x \in \RR^d$.

We now note that the SDE \eqref{SDE:2MCE} falls under the assumptions of Lemma \ref{L:backwardsstochflow}, and therefore, for every $0 \le s < t \le T$, there exists a unique solution $\Phi^{(t)}_{\cdot,s}$ with the properties laid out by that result. However, the main difficulty is that we do not know whether $\Phi^{(t)}_{t,0}$ is Lipschitz continuous on $\RR^d$ (see Remark \ref{R:Lipstoch?}). This prevents us from bounding $J^{(t)}_{t,0}$ uniformly in $L^\oo$ and passing to weak distributional limits. This is a major obstacle in using the formula \eqref{formula:2MCE} to identify the unique limiting distributional solution of \eqref{E:2MCE}, as we did for the first order equation \eqref{E:MCE}.

The exception is when $\sigma$ is independent of $x$. In that case, \eqref{SDE:2MCE} and \eqref{SDE:JMCE} become
\begin{equation}\label{SDE:2MCEconst}
	d_r \Phi^{(t)}_{r,s}(x) = - b(t-r, \Phi^{(t)}_{r,s}(x)) dr + \sigma(t-r) dW_r, \quad r \in [s,t], \quad
	 \Phi^{(t)}_{s,s}(x) = x
\end{equation}
and
\begin{equation}\label{SDE:JMCEconst}
	\del_r J^{(t)}_{r,s}(x) = -\div b(t-r, \Phi^{(t)}_{r,s}(x))J^{(t)}_{r,s}(x), \quad r \in [s,t], \quad J^{(t)}_{s,s}(x) = 1.
\end{equation}
The SDE \eqref{SDE:JMCEconst} is in fact an ODE with random coefficients. In particular, $J^{(t)}_{\cdot,s}$ has a deterministic bound.

We then characterize uniquely the limiting distributional solution of
\begin{equation}\label{E:2MCEconst}
	\del_t f - \nabla^2 \cdot (a(t)f) + \div( b(t,x) f) = 0 \quad \text{in } (0,T) \times \RR^d, \quad f(0,\cdot) = f_0.
\end{equation}

\begin{theorem}\label{T:2MCE}
	For $1 \le p < \oo$, the formula \eqref{formula:2MCE}, where $\Phi^{(t)}_{\cdot,s}$ and $J^{(t)}_{\cdot,s}$ are specified by respectively \eqref{SDE:2MCEconst} and \eqref{SDE:JMCEconst}, extends continuously to any $f_0 \in L^p(\RR^d)$. If $f_0 \in L^p(\RR^d)$ and $(b^\eps)_{\eps > 0}$ are as in \eqref{bregs} and $f^\eps$ is the corresponding solution of \eqref{E:2MCEconst}, then, as $\eps \to 0$, $f^\eps$ converges weakly in $C([0,T], L^p_\w(\RR^d))$ to $f$. If $f_0 \ge 0$, then there exists a unique nonnegative distributional solution of \eqref{E:2MCEconst}, which is given by \eqref{formula:2MCE}.
\end{theorem}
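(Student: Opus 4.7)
The plan is to exploit the spatial independence of $\sigma$ to reduce the relevant SDEs to random ODEs whose drifts satisfy \eqref{A:monotoneB} pathwise with the same constant $C_1$, so that the deterministic theory of Sections \ref{sec:flow}--\ref{sec:monotone} can be invoked $\omega$-by-$\omega$ and expectations then taken. Setting $M^{(t)}_r := \int_0^r \sigma(t-r')\,dW_{r'}$ and $Y^{(t)}_{r,s}(x) := \Phi^{(t)}_{r,s}(x) - (M^{(t)}_r - M^{(t)}_s)$, the SDE \eqref{SDE:2MCEconst} becomes pathwise
\[
\del_r Y^{(t)}_{r,s}(x) = -b\bigl(t-r,\, Y^{(t)}_{r,s}(x) + M^{(t)}_r - M^{(t)}_s\bigr), \qquad Y^{(t)}_{s,s}(x) = x,
\]
whose drift satisfies \eqref{A:monotoneB} with the same $C_1$ for $\PP$-a.e.\ $\omega$. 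Lemmas \ref{L:backwardsflow} and \ref{L:backwardsJ} therefore yield deterministic pathwise bounds: $\Phi^{(t)}_{t,0}(\omega,\cdot)$ is Lipschitz in $x$ with constant $\exp(\int_0^t C_1)$ and $J^{(t)}_{t,0}$ is bounded in $L^\oo$ by $\exp(d\int_0^t C_1)$ (noting that $\Phi^{(t)}$ and $Y^{(t)}$ have the same spatial derivatives). The same reduction applied to the regularizations $(b^\eps)$ from \eqref{bregs} provides pathwise locally uniform convergence of $\Phi^{(t),\eps}_{t,0}$ to $\Phi^{(t)}_{t,0}$ and weak-$\star$ $L^\oo$ convergence of $J^{(t),\eps}_{t,0}$ to $J^{(t)}_{t,0}$.

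For the $L^p$ extension, I would combine Jensen's inequality with the pathwise area formula: for $f_0\in C_c$,
\[
|\EE[f_0(\Phi^{(t)}_{t,0}(x))J^{(t)}_{t,0}(x)]|^p \le \|J^{(t)}_{t,0}\|_\oo^{p-1}\,\EE[|f_0(\Phi^{(t)}_{t,0}(x))|^p J^{(t)}_{t,0}(x)],
\]
and integration over $B_R$, Fubini, and the pathwise change of variables with bound $\Phi^{(t)}_{t,0}(B_R)\subset B_{R+C}$ then give
\[
\bigl\|\EE[f_0\circ\Phi^{(t)}_{t,0}\cdot J^{(t)}_{t,0}]\bigr\|_{L^p(B_R)} \le C\|f_0\|_{L^p(B_{R+C})}.
\]
This allows continuous extension of \eqref{formula:2MCE} to $f_0\in L^p(\RR^d)$. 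Since each classical solution $f^\eps$ is given by \eqref{formula:2MCE} with regularized data, the pathwise convergences from the previous paragraph, dominated convergence, and the a priori bound of Theorem \ref{T:2MCEexist} identify the weak-$L^p$ limit as $f$; passing to the limit in the distributional formulation for $f^\eps$ shows $f$ is itself a distributional solution, and equicontinuity in time together with the weak-$L^p$ bound gives $f\in C([0,T],L^p_\w)$.

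The main obstacle is the uniqueness of nonnegative distributional solutions. Given such an $F\in C([0,T],L^p_\w)$ and a test function $g\in C^\oo_c$, let $u^\eps$ denote the smooth solution of \eqref{E:2MTEconst} with $b^\eps$ in place of $b$ and terminal datum $g$ at time $t$, so that by Theorem \ref{T:2MTEconst} $u^\eps\to u(s,x)=\EE[g(\Phi^{(t)}_{t,s}(x))]$ strongly in $C([0,t],L^p_\loc)$. Direct pairing of $F$ with $u^\eps$ fails because $\nabla u^\eps$ is only uniformly in $L^1$ (via Lemma \ref{L:2MTEapriori}) while $F\in L^p$; I would therefore adapt the sup- and inf-convolution machinery of Theorem \ref{T:MTEchar} to the second-order setting. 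The new point is control of $\tr[a(t)\nabla^2 u^{\eps,\delta}]$: at the maximizer $y=y(x)$ in the definition of $u^{\eps,\delta}(t,x)$, the classical semiconvexity identity $\nabla^2 u^{\eps,\delta}(t,x) = \nabla^2 u^\eps(t,y)(I-\delta\nabla^2 u^\eps(t,y))^{-1}$ and the second-order maximum condition $\nabla^2 u^\eps(t,y)\le I/\delta$ give $\nabla^2 u^{\eps,\delta}(t,x)\ge\nabla^2 u^\eps(t,y)$, which, because $a(t)\ge 0$ and is independent of $x$, yields $\tr[a(t)\nabla^2 u^{\eps,\delta}(t,x)]\ge \tr[a(t)\nabla^2 u^\eps(t,y)]$. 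Combining with $\del_t u^{\eps,\delta}(t,x)=\del_t u^\eps(t,y)$ and $\nabla u^{\eps,\delta}(t,x)=(y-x)/\delta=\nabla u^\eps(t,y)$, the PDE for $u^\eps$ and the one-sided Lipschitz bound on $b^\eps$ lead to
\[
-\del_t u^{\eps,\delta}-\tr[a(t)\nabla^2 u^{\eps,\delta}]+b^\eps(t,x)\cdot\nabla u^{\eps,\delta}\le 4C_1(t)(u^{\eps,2\delta}-u^{\eps,\delta}),
\]
and analogously $\ge -4C_1(t)(u^\eps_\delta-u^\eps_{2\delta})$ for $u^\eps_\delta$. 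Sending $\eps\to 0$ (using strong $L^p_\loc$ convergence) produces corresponding distributional sub-/supersolution inequalities for $u^\delta,u_\delta$ with right-hand sides vanishing in $L^1_\loc$ as $\delta\to 0$ by almost-everywhere continuity of $u$ (inherited from the Lipschitz regularity of $\Phi^{(t)}$). Pairing against the nonnegative $F$, integrating by parts, and sending $\delta\to 0$ yields matching two-sided bounds $\int F(t)g\,dx=\int F_0 u(0)\,dx$. By Theorem \ref{T:2MTEconst} and Fubini, the right side coincides with $\int f(t)g\,dx$ for $f$ given by \eqref{formula:2MCE}, and arbitrariness of $g$ gives $F=f$. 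The hardest technical point will be the rigorous handling of the semiconvexity identity at the $\eps$-regularized level and the verification that the resulting right-hand sides collapse in $L^1_\loc$, since this is where the assumption that $\sigma$ is independent of $x$ enters decisively.
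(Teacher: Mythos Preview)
Your argument for the first two assertions (continuous $L^p$ extension of \eqref{formula:2MCE} and weak convergence of $f^\eps$) follows essentially the same pathwise-reduction-plus-deterministic-theory strategy as the paper, which invokes ``exactly the same arguments as in Lemma \ref{L:backwardsJ} and Theorem \ref{T:MCE}.'' One small correction: your local estimate $\Phi^{(t)}_{t,0}(B_R)\subset B_{R+C}$ is false because the It\^o integral $M^{(t)}_t$ is unbounded in $\omega$; the bound should read $B_{R+C(\omega)}$ with $C(\omega)$ involving $\sup_r |M^{(t)}_r|$, so only the global estimate $\|f(t)\|_{L^p(\RR^d)}\le C\|f_0\|_{L^p(\RR^d)}$ survives---but this is all that is needed, and the paper works in $L^p(\RR^d)$ throughout this theorem anyway.

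The genuine divergence is in the uniqueness of nonnegative solutions. The paper dispatches this in one line by invoking the uniqueness of the forward stochastic flow (Theorem \ref{T:fwdstochflowconst}) together with Figalli's superposition principle for degenerate Fokker--Planck equations \cite[Lemma 2.3]{Fig}. You instead give an intrinsic PDE argument, extending the sup/inf-convolution characterization of Theorem \ref{T:MTEchar} to second order via the touching inequality $\nabla^2 u^{\eps,\delta}(t,x)\ge\nabla^2 u^\eps(t,y^*)$ and the crucial fact that $a(t)$ is constant in $x$, so that the second-order term may be passed to the test function and the limit $\eps\to 0$ taken at the level of $u^{\eps,\delta}$ and $\nabla u^{\eps,\delta}$ only. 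This is correct and more self-contained, at the cost of more bookkeeping: you must mollify $u^\delta$ (say in space) before pairing against $F\in L^p$, and the almost-everywhere continuity of $u$ you need does not come from the Lipschitz flow $\Phi^{(t)}$ (the compressive, backward object) but from the pathwise a.e.\ continuity of the \emph{forward} flow $\Phi_{T,t}$ via Proposition \ref{P:invertbackward} and Fubini over $\omega$. With those adjustments your route goes through and has the merit of avoiding the superposition black box; the paper's route is shorter but imports a nontrivial external result.
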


\begin{proof}
	Let $(b^\eps)_{\eps > 0}$ and $f^\eps$ be as in the statement of the theorem, and assume $f_0 \in C^2_c(\RR^d)$. Let $u^\eps$ be the solution of \eqref{E:2MTEconst} with velocity $b^\eps$ and with terminal data $u(t,\cdot) = g \in C^2_c(\RR^d)$ for some fixed $t \in [0,T]$. Then integration by parts yields
	\[
		\int f^\eps(t,x)g(x)dx = \int f_0(x) u^\eps(0,x)dx.
	\]
	By Theorem \ref{T:2MTEconst}, as $\eps \to 0$, $u^\eps$ converges strongly in $L^{p'}(\RR^d)$ to the function $u$ defined uniquely by $u(s,x) = g(\Phi_{t,s}(x))$. Therefore, any $C([0,T], L^p_\w(\RR^d))$-weak limit $f$ of $f^\eps$ as $\eps \to 0$ must satisfy
	\[
		\int f(t,x) g(x) dx = \int f_0(x) u(0,x)dx,
	\]
	and it follows that there is a unique such limiting function $f$.
	
	On the other hand, for $\eps > 0$,
	\[
		f^\eps(t,x) = \EE\left[ f_0\left( \Phi^{(t),\eps}_{t,0}(x) \right) J^{(t),\eps}_{t,0}(x) \right],
	\]
	where $\Phi^{(t),\eps}_{\cdot,s}$ and $J^{(t),\eps}_{\cdot,s}$ are as in respectively \eqref{SDE:2MCEconst} and \eqref{SDE:JMCEconst} with $b$ replaced everywhere by $b^\eps$. For fixed $t \in [0,T]$, uniformly in $\eps$, $\Phi^{(t),\eps}_{t,0}$ is Lipschitz continuous on $\RR^d$, and so $J^{(t),\eps}_{t,0} = \det \nabla_x \Phi^{(t),\eps}_{t,0}$ is bounded in $L^\oo$. By exactly the same arguments as in Lemma \ref{L:backwardsJ} and Theorem \ref{T:MCE}, we see that, as $\eps \to 0$, $\EE f_0\left( \Phi^{(t),\eps}_{t,0} \right) J^{(t),\eps}_{t,0}$ converges weakly in $L^p$ to $ \EE f_0\left( \Phi^{(t)}_{t,0} \right) J^{(t)}_{t,0} $. It follows that $f$ must be given by \eqref{formula:2MCE}. The fact that the formula extends to arbitrary $f_0 \in L^p(\RR^d)$ now follows from the a priori $L^p$ bounds in Theorem \ref{T:2MCEexist}.
	
	The uniqueness of nonnegative distributional solutions is then a consequence of the uniqueness of the forward flow established in Theorem \ref{T:fwdstochflowconst}, as well as the generalization of superposition to second-order Fokker-Planck equations (see Figalli \cite[Lemma 2.3]{Fig}).
\end{proof}

\bibliography{transportbib}{}
\bibliographystyle{acm}

\end{document}